\definecolor{citegreen}{rgb}{0,0.3,0}
\definecolor{refred}{rgb}{0.5,0,0}
\def\author@andify{
	\nxandlist {\unskip{} $\cdot$ \penalty-2}
	{\unskip {} $\cdot$ \penalty-2}
	{\unskip {} $\cdot$ \penalty-2}}
\newcommand{\orcid}[1]{\unskip {} \raisebox{-.3ex}{\href{https://orcid.org/#1}{\includegraphics{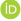}}}}
\title[Second-order estimates for the $p$-Laplacian in RCD spaces]{Second-order estimates for the $p$-Laplacian\\in  RCD spaces}
\let\oldemail\email
\def\email#1{\oldemail{\href{mailto:#1}{\textcolor{black}{#1}}}}
\author[L.~Benatti]{Luca Benatti\orcid{0000-0002-4685-7443}}
\address{L.~Benatti, Universit\`a di Pisa, Largo Bruno Pontecorvo 5, 56127 Pisa, Italy}
\email{luca.benatti@dm.unipi.it}
\author[I. Y.~Violo]{Ivan Yuri Violo\orcid{0000-0001-9592-5150}}
\address{I. Y.~Violo,  Centro di Ricerca Matematica Ennio De Giorgi, Scuola Normale Superiore, Piazza dei Cavalieri 3, 56126 Pisa (PI), Italy}
\email{ivan.violo@sns.it}
\newcommand{\eps}{\varepsilon}
\newcommand{\rr}{\mathbb{R}}
\newcommand{\nn}{\mathbb{N}}
\newcommand{\nchi}{{\raise.3ex\hbox{$\chi$}}}
\newcommand{\sfd}{{\sf d}}
\newcommand{\Lip}{{\rm Lip}}
\renewcommand{\phi}{\varphi}
\newcommand{\restr}[1]{\lower3pt\hbox{$|_{#1}$}}
\newcommand{\X}{{\rm X}}
\newcommand{\fr}{\penalty-20\null\hfill$\blacksquare$} 
\definecolor{mygray}{gray}{0.9}
\newcommand{\diam}{{\rm diam}}
\newcommand{\la}{\langle}
\newcommand{\ra}{\rangle}
\renewcommand{\div}{{\rm div}}
\newcommand{\mea}{\mathfrak{m}}
\newcommand{\mm}{\mathfrak{m}}
\newcommand{\LIP}{\mathsf{LIP}}
\newcommand{\test}{{\sf{Test}}}
\newcommand{\lims}{\varlimsup}
\renewcommand{\d}{{\mathrm d}}
\newcommand{\loc}{\mathsf{loc}}
\newcommand{\W}{\mathit{W}^{1,2}}
\renewcommand{\H}[1]{{\rm Hess}(#1)}
\newcommand{\supp}{\mathop{\rm supp}\nolimits}
\newcommand{\lip}{{\rm lip}}
\DeclareMathOperator*{\esssup}{ess\,sup}
\DeclareMathOperator*{\essinf}{ess\,inf}
\newcommand{\Xdm}{(\X,\sfd,\mm)}
\newcommand{\RCD}{\mathrm{RCD}}
\newcommand{\R}{\mathbb{R}}
\newcommand{\tr}{{\rm tr}}
\newcommand{\rcd}{\mathrm{RCD}}
\newcommand{\dom}{{\sf D}}
\renewcommand{\limsup}{\varlimsup}
\renewcommand{\liminf}{\varliminf}
\newcommand{\LL}{\mathcal{L}}
\newcommand{\alme}{\mea\text{-a.e.}}
\theoremstyle{plain}
\newtheorem{theorem}{Theorem}[section]
\newtheorem{lemma}[theorem]{Lemma}
\newtheorem{prop}[theorem]{Proposition}
\newtheorem{cor}[theorem]{Corollary}
\theoremstyle{definition}
\newtheorem{definition}[theorem]{Definition}
\newtheorem{remark}[theorem]{Remark}
\numberwithin{equation}{section}
\begin{document}
	\begin{abstract}
		We establish quantitative second-order Sobolev regularity for functions having a $2$-integrable $p$-Laplacian in bounded metric spaces satisfying the Riemannian Curvature Dimension condition, with $p$ in a suitable range. In the finite-dimensional case, we also obtain Lipschitz regularity under the assumption that the $p$-Laplacian is sufficiently integrable. Our results cover both $p$-Laplacian eigenfunctions and $p$-harmonic functions having relatively compact level sets.
		
	\end{abstract}
	\maketitle
	\noindent MSC (2020): 
	35B65, 
	46E36, 
	30L15, 
	58J37, 
	47H14.
	\medskip
	
	\noindent \underline{\smash{Keywords}}: nonlinear potential theory, metric measure spaces, RCD spaces, regularity estimates, degenerate elliptic PDEs.

	\section{Introduction and main results}
	After providing the existence of a weak solution to an elliptic PDE, it is natural to wonder whether it fits its strong formulation. This issue boils down to investigating the regularity of the solution, more specifically if it belongs to a Sobolev class with a higher order of derivation. The most classical example dates back to the solution of the Poisson equation. This result can be summarized as follows: given any open set $\Omega \subset \rr^n$ one has
	\begin{equation}\label{eq:2 reg intro}
		u \in W^{1,2}_\loc(\Omega),\,\,  \Delta u=f  \in L^2_\loc(\Omega) \implies  u \in W^{2,2}_\loc(\Omega).
	\end{equation}
	An analogous result can also be found in the setting of metric measure spaces. More precisely, to deal with second-order Sobolev spaces, the class of $\RCD(K,n)$ spaces provides a natural, albeit quite broad, framework within which to focus our analysis. With this locution, one addresses that subclass of metric measure spaces $\Xdm$ satisfying a synthetic notion of Ricci curvature bounded below by $K\in \rr$ (see \cite{AmbICM,G23} for the relevant background and for historical notes on the topic). In \cite{Gigli14}, building upon \cite{Savare13}, it is showed that given $\Xdm$ an $\RCD(K,\infty)$ space
	\begin{equation}\label{eq:2reg rcd}
		u \in \W(\X),\, \Delta u =f \in L^2(\mm) \implies u \in W^{2,2}(\X)
	\end{equation}
	holds, together with the quantitative estimate 
	\begin{equation}\label{eq:CZintro}
		\int |\H u|_{HS}^2 \d \mm \le \|\Delta u\|_{L^2(\mm)}^2+K^-\||\nabla u|\|_{L^2(\mm)}^2,
	\end{equation}
	where $\H u$ represents the Hessian and $|\,\cdot\,|_{HS}$ the Hilbert-Schmidt norm. In this setting, \eqref{eq:2reg rcd} plays a fundamental role in the RCD theory which goes beyond the regularity statement itself. It shows that there are many functions in $W^{2,2}(\X)$, which is the cornerstone to develop a rich second-order calculus which has been exploited in many recent papers (see e.g.\  \cite{gigli_monotonicityformulasharmonicfunctions_2023,H19,MS22,KKL23,DGP21,BS18,GiMar23,caputo2021parallel}).

	We aim to investigate the regularity of solutions to the $p$-Poisson equation
	\begin{equation}\label{eq:p pois intro}
		\Delta_p u=f,
	\end{equation}
	where $u \in W^{1,p}(\X)$, $p\in (1,\infty)$ and  $f$ is a sufficiently integrable function. Given $u \in W^{1,p}(\X)$ we say that $u$ has a $p$-Laplacian, writing $u\in \dom(\Delta_p)$, if there exists (a unique) $f \in L^1_\loc(\mm)$ such that
	\begin{equation}\label{eq:weak sol intro}
		\int_{\X}\la |\nabla u|^{p-2}\nabla u,\nabla \phi\ra \,\d \mm=-\int_{\X} \phi f\, \d \mm, \quad \,\,\, \forall \phi \in \LIP_{bs}(\X).
	\end{equation}
	In this case, we set $\Delta_p u \coloneqq f$ (see Definition \ref{def:plapl} for details). The PDE \eqref{eq:p pois intro} is a natural nonlinear generalization of the Poisson equation, which corresponds to the case $p=2$. In the smooth setting, a statement akin to \eqref{eq:2 reg intro} is already at our disposal in literature. In \cite{CiaMaz18}, the authors showed that for any open set $\Omega\subset \rr^n$ and for all $p\in(1,\infty)$ it holds
	\begin{equation}\label{eq:p reg intro}
		u \in W^{1,p}_\loc(\Omega),\,\,  \Delta_p u=f  \in L^2_\loc(\Omega) \implies |\nabla u|^{p-2}\nabla u \in \W_\loc(\Omega).
	\end{equation}
	Observe that the vector field $|\nabla u|^{p-2}\nabla u$ appearing here is precisely the one in the definition of the $p$-Laplacian $\Delta_pu=\div(|\nabla u|^{p-2}\nabla u)$. This addresses the question of whether the solution can be regarded in a strong sense. In light of this, our aim is to prove the counterpart of \eqref{eq:p reg intro} in the nonsmooth setting, much like \eqref{eq:2reg rcd} is for \eqref{eq:2 reg intro}. For technical reasons, we have to restrict our analysis to those $p$'s belonging to a certain \textit{regularity interval}, which is given in the following definition.
	\begin{definition}[Regularity interval]\label{def:regularity interval}
		Let $\Xdm$ be a bounded $\rcd(K,N)$, with  $N\in[2,\infty]$ and set $\delta_\X\coloneqq \frac{\lambda_1(\X)K^-}{1+\lambda_1(\X)K^-} \in [0,1)$ ($\lambda_1(\X)$ being the first non-zero Laplacian eigenvalue, see \eqref{eq:spectral gap}). We define the open interval $\mathcal{RI}_{\X}\subset  (1,3+\frac{2}{N-2})$ by
		\begin{equation}\label{eq:regularity interval}
			\mathcal{RI}_{\X}\coloneqq \begin{cases}
				(1,\infty), & \text{if $N=2$ and $\delta_\X=0$,}\\
				\left(2-\sqrt{1-\delta_\X},\, 2+ \sqrt{1-\delta_\X}\right)& \text{if $N=\infty$,}\\
				\left(2-\sqrt{1-\delta_\X},\, 2+ \sqrt{1-\delta_\X}\frac{N-\delta_\X}{N-2+\delta_\X}\right)& \text{otherwise.}
			\end{cases}
		\end{equation}
	\end{definition}
	As one can expect, $\mathcal{RI}_{\X}$ is always a non-trivial interval containing $p=2$ and it is invariant under scaling of both the distance and measure. The generalizations of \eqref{eq:2reg rcd} and \eqref{eq:CZintro} to the case $p\neq 2 $ read now as follows. 
	
	\begin{theorem}[Regularity of $p$-Laplacian, $N=\infty$]\label{thm:main rcd inf}
		Let $\Xdm$ be an $\rcd(K,\infty)$ space with $\diam(\X)
			\le D<\infty
		$. Fix $p \in \mathcal{RI}_{\X}$ and suppose that $u \in \dom(\Delta_p)$ with  $\Delta_p u \in L^2(\mm)$. Then $|\nabla u|^{p-2}\nabla u\in H^{1,2}_{C}(T\X)$ and in particular $|\nabla u|^{p-1}\in \W(\X)$. Moreover 
		\begin{equation}\label{eq:p-calderon intro}
			\int |\nabla (|\nabla u|^{p-2}\nabla u)|^2\d \mm\le C ( \|\Delta_pu\|_{L^2(\mm)}^2+K^{-}\||\nabla u|^{p-1}\|_{L^1(\mm)}^2 ), 
		\end{equation}
		where $C>0$ is a constant depending only on $p,K$ and $D.$
	\end{theorem}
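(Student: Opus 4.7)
The plan is to establish \eqref{eq:p-calderon intro} first on a dense subclass of $\dom(\Delta_p)$ where the RCD second-order calculus applies directly, and then to pass to the limit. The backbone is a weighted Bochner-type integral inequality, obtained by probing the measure-valued Bochner inequality with a weight adapted to the $p$-Laplacian.

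As a first step, I would record the pointwise algebraic identity
\begin{equation*}
\bigl|\nabla\bigl(|\nabla u|^{p-2}\nabla u\bigr)\bigr|^2 = |\nabla u|^{2(p-2)}|\H{u}|_{HS}^2 + p(p-2)\,|\nabla u|^{2(p-3)}\,|\H{u}\,\nabla u|^2,
\end{equation*}
which expresses the integrand on the LHS of \eqref{eq:p-calderon intro} in terms of Hessian quantities. Since $|\H{u}\,\nabla u|^2\le|\H{u}|_{HS}^2|\nabla u|^2$, the two terms together are always comparable (with constants of the form $(p-1)^2$) to $|\nabla u|^{2(p-2)}|\H{u}|_{HS}^2$, so it suffices to control the integral of the latter.

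Next, I would integrate the Bochner inequality
\begin{equation*}
\tfrac{1}{2}\bd|\nabla u|^2 \ge |\H{u}|_{HS}^2\,\mm + \la\nabla u,\nabla\Delta u\ra\,\mm + K|\nabla u|^2\,\mm
\end{equation*}
against a bounded approximation $(|\nabla u|^2+\varepsilon)^{p-2}$ of the singular weight $|\nabla u|^{2(p-2)}$, use integration by parts to move derivatives off $\Delta u$, and then invoke the pointwise $p$-Laplacian identity
\begin{equation*}
\Delta_p u = |\nabla u|^{p-2}\Delta u + (p-2)|\nabla u|^{p-4}\H{u}(\nabla u,\nabla u)
\end{equation*}
to substitute $\Delta u$ by $\Delta_p u\in L^2(\mm)$. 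The curvature term $K^-\int|\nabla u|^{2(p-1)}\,\d\mm$ produced along the way is split via the $\lambda_1(\X)$-Poincar\'e inequality into a mean-value part (contributing the $K^-\||\nabla u|^{p-1}\|_{L^1(\mm)}^2$ summand in \eqref{eq:p-calderon intro}) and a gradient part that is reabsorbed into the Hessian term on the LHS. The range of $p$ for which the final quadratic form remains coercive is exactly $\mathcal{RI}_\X$: the explicit shape of its endpoints in \eqref{eq:regularity interval} reflects a discriminant involving $\delta_\X=\lambda_1 K^-/(1+\lambda_1 K^-)$, which encodes the trade-off between the absorbed Poincar\'e coefficient and the cross-term $p(p-2)|\nabla u|^{2(p-3)}|\H{u}\,\nabla u|^2$.

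To pass from this regularized/test setting to a general $u\in\dom(\Delta_p)$ with $\Delta_p u\in L^2(\mm)$, I would approximate via the non-degenerate problem $\div\bigl((|\nabla u_\varepsilon|^2+\varepsilon)^{(p-2)/2}\nabla u_\varepsilon\bigr)=\Delta_p u$; the uniform bound obtained in the previous step, together with lower semicontinuity of the Cheeger energy and of the covariant-derivative norm in $\HH(T\X)$, allows one to extract the covariant gradient of $V\coloneqq|\nabla u|^{p-2}\nabla u$ as a weak limit of the regularized gradients, and yields \eqref{eq:p-calderon intro}. The conclusion $|\nabla u|^{p-1}=|V|\in\W(\X)$ then follows from the general inequality $|\nabla|V||\le|\nabla V|$ in the RCD calculus. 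The main obstacle I anticipate is the constant tracking in the Bochner step, especially when $p<2$: there the cross term $p(p-2)|\nabla u|^{2(p-3)}|\H{u}\,\nabla u|^2$ has the wrong sign and can only be controlled by combining the algebraic identity above with a sharp choice of weight; only after this absorption is performed optimally does the precise range $\mathcal{RI}_\X$ emerge. A secondary difficulty is making the integration by parts rigorous in the measure-valued framework of $\bd$, which forces truncations/regularizations of the singular weight throughout.
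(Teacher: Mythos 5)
Your a priori estimate step — testing the Bochner inequality with a weight $(|\nabla u|^2+\eps)^{p-2}$, substituting $\Delta_p u$ for $\Delta u$ through the identity $\Delta_{p,\eps}u = (|\nabla u|^2+\eps)^{(p-2)/2}\bigl(\Delta u + (p-2)\tfrac{\H u(\nabla u,\nabla u)}{|\nabla u|^2+\eps}\bigr)$, absorbing the cross term, and splitting the curvature term via Poincar\'e — is in the same spirit as Lemma \ref{lem:bochner modified}, Theorem \ref{thm:p-calderon}, and Corollary 4.8 of the paper, and the pointwise identity you record for $|\nabla(|\nabla u|^{p-2}\nabla u)|^2$ is correct. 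But there is a genuine missing step that your proposal does not address and which the paper treats as the central difficulty: \emph{you cannot run the Bochner computation unless the approximating function $u_\eps$ is already known to lie in $\dom(\Delta)\subset W^{2,2}(\X)$}. In $\rr^n$ a solution of the uniformly elliptic regularized equation $\Delta_{p,\eps}u_\eps=f$ is automatically smooth by difference quotients, but no such argument exists on an $\RCD$ space — Schauder/De Giorgi theory there stops at H\"older estimates. The paper fills this gap with a two-stage fixed point: first a contraction mapping (Proposition \ref{prop:step 1}, via the Cordes-type closeness estimate of Lemma \ref{lem:near laplacian}) produces $W^{2,2}$ solutions of a \emph{linear} frozen-coefficient operator $\LL_{v,\eps}$, and then Schauder's fixed point theorem (Theorem \ref{thm:regularity epsilon}) upgrades this to a solution $u_\eps\in\dom(\Delta)$ of the genuine nonlinear regularized equation. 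Without this existence-of-regular-solutions step, your Bochner integration by parts is formal and the plan does not close.

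A second, smaller inaccuracy: you attribute the shape of $\mathcal{RI}_\X$ in \eqref{eq:regularity interval} to the coercivity of the weighted quadratic form emerging from the Bochner estimate. In fact the a priori Hessian/covariant estimates in Theorem \ref{thm:p-calderon} and its corollaries are valid on the strictly larger range $p\in(1,3+\tfrac{2}{N-2})$ (equivalently $p\in(1,3)$ for $N=\infty$), and do not involve $\delta_\X$ at all. The endpoints $2\pm\sqrt{1-\delta_\X}$ arise from requiring the contraction constant $\alpha_p(1+K^-\lambda_1(\X))<1$ in the fixed-point existence argument of Proposition \ref{prop:step 1}, not from the a priori estimate. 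So even once the estimate is established, the restriction to $\mathcal{RI}_\X$ enters at a different point of the proof than your sketch suggests.
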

	Note that the right-hand side of \eqref{eq:p-calderon intro} is finite, indeed $\||\nabla u|^{p-1}\|_{L^1(\mm)}<+\infty$ since $u \in W^{1,p}(\X)$. 
	Nevertheless,  except for the case $p=2$, we do not conclude that $u \in W^{2,2}(\X)$. This would be false even in the smooth setting (see Remark \ref{rmk:counterexampleW22}).

	\smallskip
	
	A well-known consequence of second-order estimates is Lipschitz regularity results for solutions to PDEs. A path classically followed is to apply a De Giorgi-Nash-Moser iteration scheme to the modulus of the gradient $|\nabla u|$. Recall that this iteration method can be used in high generality to obtain $L^\infty$-bounds for subsolutions to elliptic partial differential equations and that ultimately relies on the Sobolev inequality. Consider as an example a harmonic function in a Riemannian manifold with non-negative Ricci curvature. In this framework, we know the validity of a (local) Sobolev inequality while the Bochner identity says precisely that $|\nabla u|^2$ is subharmonic. 
	
	In the setting of $\RCD(K,N)$ spaces with $N < \infty$, both a Sobolev inequality (see \textsection\ref{sec:sobolev}) and the following weak Bochner inequality \begin{equation}\label{eq:bochner}
		\frac12 \Delta |\nabla u|^2 \ge \frac{(\Delta u)^2}{N}+\la \nabla u, \nabla \Delta u\ra + K|\nabla u|^2,
	\end{equation}
	for $u$ sufficiently regular (see e.g.\ \cite{Hua-Kell-Xia13}), are available. On Alexandrov spaces the local Lipschitzianity of harmonic functions was established in \cite{Pet03,ZZ12}, while in the larger class of $\RCD(K,N)$ spaces, with $N<\infty$, this follows from  \cite{Jiang} and building upon \cite{Koskela-Rajala-Shanmugalingam03}. In the recent \cite{G23bis,MS22}  Lipschitz regularity was established for harmonic maps from $\rcd$ spaces to ${\sf CAT}(0)$ spaces, generalizing the previous results in Alexandrov spaces  \cite {ZZ18} in turn extending the ones in Riemannian manifolds \cite{ES64} (see the introduction of \cite{MS22} for more references on this topic). The local Lipschitzianity result for the Poisson equation $\Delta u =f \in L^q$ in RCD setting was instead established in \cite{Jiang} for $q=\infty$ and in \cite{Kell13} for  $q>N$.
	
	A key ingredient in the De Giorgi-Nash-Moser method is that the target function, the modulus of the gradient in our case, is of class $W^{1,2}_\loc$. Hence, thanks to our second-order Sobolev regularity result, we can apply this scheme for $p\in \mathcal{RI}_{\X}$.
	\begin{theorem}[Regularity of $p$-Laplacian, $N<\infty$]\label{thm:main intro}
		Let $\Xdm$ be an $\rcd(K,N)$ with  $N\in[2,\infty)$, $\diam(\X)<\infty$ and fix $p \in \mathcal{RI}_{\X}$ where $ \mathcal{RI}_{\X}\subset(1,\infty)$ is given by \eqref{eq:regularity interval}. Let $u \in \dom(\Delta_p)$ (with $\Delta_p u \in L^1(\mm))$ and fix $\Omega\subset \X$ open. 
		All the following hold.
		\begin{enumerate}[label=\roman*)]
			\item\label{item:main intro item1} If $\Delta_p u \in L^2(\Omega;\mea)$  then $|\nabla u|^{p-2}\nabla u\in H^{1,2}_{C,\loc}(T\X;\Omega)$ and in particular $|\nabla u|^{p-1}  \in W^{1,2}_\loc(\Omega)$.
			\item\label{item:main intro item2} If $\Delta_p u \in L^q(\Omega;\mea)$ for some $q>N$, then $u\in \LIP_{\loc}(\Omega).$
		\end{enumerate}
	\end{theorem}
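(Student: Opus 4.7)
The plan is to handle the two items in sequence: \ref{item:main intro item1} supplies the second-order Sobolev regularity which is, as anticipated in the introduction, the decisive input for the De~Giorgi–Nash–Moser iteration used in \ref{item:main intro item2}.

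For \ref{item:main intro item1}, the goal is to localize the global estimate of Theorem~\ref{thm:main rcd inf}. A direct application is not possible since $\Delta_p u$ is only assumed to be in $L^2$ on $\Omega$ and not on the full $\X$. I would fix $\Omega'\Subset\Omega$, choose a cutoff $\eta\in \LIP_{bs}(\Omega)$ with $\eta\equiv 1$ on $\Omega'$, and repeat the derivation of \eqref{eq:p-calderon intro} after inserting the weight $\eta^2$ in the test vector fields. This introduces extra terms of the schematic form $\int |\nabla u|^{p-1}|\nabla \eta^2|\cdot|\nabla(|\nabla u|^{p-2}\nabla u)|\,\d\mm$, which Young's inequality absorbs into a small fraction of the top-order quantity plus a remainder
\begin{equation*}
    C\,\|\nabla\eta\|_\infty^2\int_{\supp\eta}|\nabla u|^{2(p-1)}\,\d\mm,
\end{equation*}
finite because $u\in W^{1,p}(\X)$ and $\X$ is bounded. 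This yields $|\nabla u|^{p-2}\nabla u\in H^{1,2}_{C}(T\X;\Omega')$ and, by arbitrariness of $\Omega'$, proves \ref{item:main intro item1}. The delicate point is that the cutoff must be incorporated at the level of the Bochner/integration-by-parts identity underlying Theorem~\ref{thm:main rcd inf}, rather than inserted \emph{ex post}.

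For \ref{item:main intro item2} the plan is a De~Giorgi–Nash–Moser iteration on powers of $|\nabla u|$, made legitimate by \ref{item:main intro item1} which ensures $|\nabla u|^{p-1}\in W^{1,2}_\loc(\Omega)$ so that the standard chain rule and cutoff test functions are available. The first step is to derive a Bochner-type inequality for $|\nabla u|^{p-1}$: starting from \eqref{eq:bochner} applied to a suitable regularization of $u$, combined with the weak formulation of the $p$-Laplacian, one obtains a distributional inequality making a power of $|\nabla u|$ a subsolution to a linear elliptic PDE whose forcing is controlled by $\Delta_p u$. Testing this inequality against $\eta^2|\nabla u|^{(p-1)(2s-1)}$ for $s\ge 1$ and a Lipschitz cutoff $\eta$ yields a Caccioppoli-type estimate for $|\nabla u|^{(p-1)s}$ which, coupled with the Sobolev inequality on $\rcd(K,N)$ recalled in Section~\ref{sec:sobolev}, produces a self-improving step of the form
\begin{equation*}
    \bigl\||\nabla u|^{p-1}\bigr\|_{L^{\chi s}(\Omega'')} \le (Cs^2)^{1/s}\bigl\||\nabla u|^{p-1}\bigr\|_{L^{s}(\Omega')}+\text{forcing}(\Delta_p u),
\end{equation*}
for $\Omega''\Subset \Omega'\Subset\Omega$ and $\chi=\frac{N}{N-2}>1$. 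Iterating $s\to\infty$ along a sequence of geometrically shrinking nested sets delivers the desired $L^\infty$ bound on $|\nabla u|$; the condition $q>N$ is precisely what guarantees that the forcing contributions along the iteration sum to a finite quantity, matching the classical Serrin–Trudinger threshold for boundedness of gradients.

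The main obstacle is the rigorous derivation of the Bochner–Caccioppoli inequality for $p\ne 2$ in the nonsmooth setting: the degeneracy of $\Delta_p$ forces one to argue via a regularization $\Delta_p^\varepsilon$ of the operator, and one has to ensure both that each manipulation passes to the $\varepsilon\to 0$ limit (using \ref{item:main intro item1} to make sense of the limiting derivatives) and that the constants along the iteration stay uniform in $\varepsilon$ and $s$, so that the Moser scheme closes.
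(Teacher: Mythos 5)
Your overall architecture — localize the second-order estimate from Theorem~\ref{thm:main rcd inf} via a cutoff inserted at the Bochner level, then run a De~Giorgi--Nash--Moser iteration on powers of $|\nabla u|$ — is the same one the paper follows (through Theorem~\ref{thm:main detailed}, Theorem~\ref{thm:p-calderon}, and Proposition~\ref{prop:lip a priori}). However, there are two genuine gaps in the execution.

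First, the claimed remainder term $C\|\nabla\eta\|_\infty^2\int_{\supp\eta}|\nabla u|^{2(p-1)}\,\d\mm$ is \emph{not} finite a priori for $p>2$: knowing $u\in W^{1,p}(\X)$ on a bounded space gives $|\nabla u|\in L^p(\mm)$, and $2(p-1)>p$ as soon as $p>2$, so Hölder/boundedness cannot promote the integrability. The paper's actual mechanism is more delicate: Theorem~\ref{thm:p-calderon} is proved with the truncation $(|\nabla u|\wedge M)^2+\eps$ built into the weight, Corollary~\ref{cor:p-calderon covariant} then uses the sharpened Sobolev inequality \eqref{eq:sobolev trick} to replace the problematic $L^2$-type remainder by the square of $\int |v_\eps|\,\d\mm$, i.e.\ an \emph{$L^1$}-quantity in $|\nabla u|^{p-1}$ which \emph{is} controlled by $\|u\|_{W^{1,p}}$, and $M\to\infty$ is taken only at the end via the lower-semicontinuity Lemma~\ref{lem:lsc local energy}. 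Without this truncation-plus-Sobolev trick, your Young's inequality step does not close.

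Second, and more structurally, ``repeating the derivation of \eqref{eq:p-calderon intro} with a cutoff'' presupposes that the Bochner manipulations can be applied to $u$ (or an $\eps$-regularization of it). But the a priori estimates of Sections 4--5 are valid only for functions in $\dom(\Delta)\subset W^{2,2}$, and neither the original solution $u$ nor a naive $\eps$-mollification of it is known to have this regularity. The decisive input you omit is the existence result Theorem~\ref{thm:regularity epsilon}: via a Cordes-type fixed-point argument for the auxiliary linear operator $\LL_{v,\eps}$ (Proposition~\ref{prop:step 1}) followed by a Schauder fixed point, the paper \emph{constructs} a $u_\eps\in\dom(\Delta)$ solving $\Delta_{p,\eps}(u_\eps)=f$, and it is $u_\eps$ — not $u$ — to which the weighted Hessian estimate and the Moser iteration are applied, with uniform constants. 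The $\eps\to 0$ passage is then performed on the final scalar bounds via Proposition~\ref{prop:existence of weak solutions} and Lemma~\ref{lem:lsc local energy}; it is \emph{not} the case that one first obtains $|\nabla u|^{p-1}\in W^{1,2}_\loc$, then derives a distributional Bochner inequality for the limit, and only then iterates. Relatedly, since you only assume $\Delta_p u\in L^1(\mm)$ globally, the paper also needs the $L^1$-stability result Proposition~\ref{prop:L1 theory} to first reduce to a bounded source, and the final passage from $|\nabla u|\in L^\infty_\loc$ to $u\in\LIP_\loc$ invokes the Sobolev-to-Lipschitz property of RCD spaces; both steps are absent from your sketch.
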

	
	Both the regularity results in \ref{item:main intro item1} and \ref{item:main intro item2} come with quantitative estimates (see Theorem \ref{thm:main detailed}). Notice that in Theorem \ref{thm:main intro}, even if the conclusions are local, we had to assume that $u$ has a $p$-Laplacian in the whole space $\X$. At the moment, excluding the case $p=2$, we are not able to achieve a purely local regularity result as stated in \eqref{eq:p reg intro} in the flat Euclidean setting. In the case $p=2$, one can multiply $u$ by a suitable cut-off supported in $\Omega$ and extend it to be $0$ outside $\Omega$. The new function coincides with $u$ on a given compact subset of $\Omega$ and its Laplacian belongs to $L^1(\mm)$ (see e.g.\ \cite{AMS16}). Differently from the Poisson equation, this argument collides with the nonlinearity of the operator when $p\neq 2$.

	\medskip
	As a consequence of Theorem \ref{thm:main intro} we can obtain the following regularity result for eigenfunctions of the $p$-Laplacian.
	\begin{cor}[Regularity of $p$-eigenfunctions]\label{cor:p eigen}
		Let $\Xdm$ be a bounded $\rcd(K,N)$, with  $N\in[2,\infty)$ and fix $p \in \mathcal{RI}_{\X}$. Suppose $u\in W^{1,p}(\X)$ satisfies $\Delta_p u=-\lambda u |u|^{p-2}$ for some $\lambda \ge0.$ Then $|\nabla u|^{p-2}\nabla u\in H^{1,2}_{C}(\X)$, $|\nabla u|^{p-1}  \in W^{1,2}(\X)$
		and  $u\in \LIP(\X).$
	\end{cor}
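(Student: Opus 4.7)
The strategy is to reduce the statement to Theorem~\ref{thm:main intro} by establishing an a priori $L^\infty$ bound for $u$. Since $\X$ is a bounded $\rcd(K,N)$ space with $N<\infty$, it is compact (completeness together with the Bishop--Gromov volume doubling ensures total boundedness), hence $\mm(\X)<\infty$. Once $u\in L^\infty(\mm)$ is known, the eigenvalue equation gives $\Delta_p u = -\lambda u|u|^{p-2}\in L^\infty(\mm)\subset L^q(\mm)$ for every $q\in[1,\infty]$. Then Theorem~\ref{thm:main intro}\ref{item:main intro item1} applied with $\Omega=\X$ yields $|\nabla u|^{p-2}\nabla u\in H^{1,2}_{C}(T\X)$ and $|\nabla u|^{p-1}\in W^{1,2}(\X)$, while Theorem~\ref{thm:main intro}\ref{item:main intro item2} with any fixed $q>N$ produces $u\in\LIP_\loc(\X)$, which upgrades to $u\in\LIP(\X)$ by compactness of $\X$.

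The remaining task is to prove $u\in L^\infty(\mm)$, which I would carry out by a Moser-type iteration exploiting the Sobolev inequality available on bounded $\rcd(K,N)$ spaces (see \textsection\ref{sec:sobolev}). Testing the weak formulation \eqref{eq:weak sol intro} against the admissible functions $\phi_M\coloneqq \sgn(u)\min(|u|,M)^{q-1}$ (for $q>1$ and $M>0$), applying the standard chain rule for Sobolev functions, and sending $M\to\infty$ by monotone convergence, one obtains
\begin{equation*}
    (q-1)\int_{\X}|\nabla u|^{p}|u|^{q-2}\,\d\mm = \lambda\int_{\X}|u|^{p+q-2}\,\d\mm.
\end{equation*}
Setting $v\coloneqq |u|^{(p+q-2)/p}$, so that $|\nabla v|^p = \bigl(\tfrac{p+q-2}{p}\bigr)^p |u|^{q-2}|\nabla u|^p$ $\mm$-a.e., and combining this identity with the $(p,p^*)$-Sobolev inequality on $\X$ (with $p^*\coloneqq Np/(N-p)$ if $p<N$, and any sufficiently large exponent otherwise), one arrives at a recurrence of the form
\begin{equation*}
    \|u\|_{L^{p^*(p+q-2)/p}(\mm)} \le (Cq)^{1/(p+q-2)}\|u\|_{L^{p+q-2}(\mm)},
\end{equation*}
for some $C=C(\lambda,p,K,N,\diam(\X))$. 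Iterating along exponents $p+q_k-2 = (p^*/p)^k p$, and noting that $\sum_k (p/p^*)^k<\infty$ keeps the product of prefactors bounded, one deduces $\|u\|_{L^\infty(\mm)}\le C'\|u\|_{L^p(\mm)}<\infty$, using that $u\in W^{1,p}(\X)\subset L^p(\mm)$.

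The main technical obstacle lies in this Moser step: justifying the admissibility of $\phi_M$ as a test function via truncation and density of $\LIP_{bs}(\X)$ in $W^{1,p}(\X)$, verifying the chain rule for compositions of the form $|u|^{\alpha}$ with $\alpha\ge 1$ in the $\rcd$ setting, and tracking the $q$-dependence of the Sobolev constants so that the iteration converges. Once the $L^\infty$ bound is secured, the conclusions of the corollary follow immediately from Theorem~\ref{thm:main intro} as indicated in the first paragraph.
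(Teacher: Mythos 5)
Your proof is correct and the overall structure matches the paper's: first secure $u\in L^\infty(\mm)$, then feed $\Delta_p u=-\lambda u|u|^{p-2}\in L^\infty(\mm)$ into Theorem~\ref{thm:main intro}. The difference lies in how the $L^\infty$ bound is obtained. The paper simply cites Theorem~\ref{thm:holder} from the appendix, applied with $g\equiv -\lambda$ and $f\equiv 0$: that Harnack-type result gives local H\"older continuity of $u$, hence global boundedness by compactness of $\X$. You instead run a direct global Moser iteration from the eigenvalue equation, climbing the Sobolev ladder $\gamma_{k+1}=\gamma_k\,p^*/p$ starting from $\gamma_0=p$. Both routes rest on the same machinery (local Sobolev inequality plus De Giorgi--Nash--Moser), so neither is fundamentally more economical in a from-scratch presentation; given that Theorem~\ref{thm:holder} is already available, the paper's route is shorter, while yours has the modest advantage of yielding an explicit global bound of the form $\|u\|_{L^\infty}\le C\|u\|_{L^p}$ with transparent dependence on $\lambda$, $p$, $K$, $N$, $\diam(\X)$.

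Two small points worth spelling out in a clean write-up. First, your test functions $\phi_M=\sgn(u)\min(|u|,M)^{q-1}$ are Lipschitz compositions of $u$ (hence in $W^{1,p}\cap L^\infty(\X)$) only when $q\ge 2$; this is automatically maintained since the iteration starts at $q_0=2$ and $q_k$ increases, but it should be noted. Moreover their admissibility as test functions in duality with $\Delta_p u$ requires $\Delta_p u\in L^{p'}(\mm)$ (Remark~\ref{rmk:test w1p plapl}), which holds because $|\Delta_p u|^{p'}=\lambda^{p'}|u|^{p}\in L^1(\mm)$. Second, the identity after $M\to\infty$ is obtained by monotone convergence in $[0,+\infty]$ on both sides; finiteness of the left-hand side at stage $k$ is inherited from $u\in L^{\gamma_k}(\mm)$, which the previous step of the iteration (or $u\in L^p$ for $k=0$) provides, so there is no circularity. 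Finally, when $p\ge N$ the Sobolev exponent $p^*$ in your recurrence is not available directly; either invoke the remark after Theorem~\ref{thm:holder} (that $W^{1,p}_\loc$ functions are locally H\"older for $p>s$) or replace $N$ by some $N'>p$ using that $\rcd(K,N)\subset\rcd(K,N')$, as is done elsewhere in the paper.
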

	For $p=2,$ the Lipschitzianity of eigenfunctions follows from \cite{Jiang} (see also \cite[Proposition 7.1]{AHT17} for a direct proof using heat kernel estimates). Gradient estimates for $p$-eigenfunctions in weighted Riemannian manifolds satisfying the $\RCD(K,N)$ condition were also obtained in \cite{DD16}.

	Our last result concerns the Lipschitzianity and second-order regularity of a class of $p$-harmonic functions in $\RCD$ spaces.

	\begin{cor}[Regularity of $p$-harmonic function with relatively compact level sets]\label{cor:pharm}
		Let $\Xdm$ be a bounded $\rcd(K,N)$ space, with  $N\in[2,\infty)$ and fix $p \in \mathcal{RI}_{\X}$. Suppose $u$ is $p$-harmonic in some open set $\Omega\subset \X$ and that $U\coloneqq u^{-1}(a,b)\subset\subset \Omega$ for some $a,b\in \rr$. Then $u \in H^{2,2}_\loc(U)$, $|\nabla u|^{p-1}\in W^{1,2}_\loc(U)$ and $u\in \LIP_\loc(U)$.
	\end{cor}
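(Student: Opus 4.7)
The plan is to reduce Corollary~\ref{cor:pharm} to Theorem~\ref{thm:main intro} by composing $u$ with a smooth cutoff of its range. Since $p$-harmonic functions on doubling metric measure spaces supporting a Poincaré inequality (in particular on $\rcd(K,N)$ with $N<\infty$) are locally Hölder continuous by standard metric-space regularity results, we may treat $u$ as continuous; hence $U=u^{-1}(a,b)$ is open. Fix any $a<a_1<a_2<b_2<b_1<b$; then $u^{-1}[a_1,b_1]\subset\subset U\subset\subset\Omega$. Choose $\phi\in C^\infty(\R)$ with $\phi\equiv 0$ outside $(a_1,b_1)$ and $\phi(t)=t-a_2$ on $[a_2,b_2]$, and set $v\coloneqq\phi(u)$ on $\Omega$, extended by $0$ to $\X\setminus\Omega$. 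By continuity of $u$ and compactness of $\supp\phi$, we have $v\in W^{1,p}(\X)$ with $\supp v\subset\subset\Omega$. A chain-rule computation, using $\Delta_p u=0$ on $\Omega$, then yields
\[
\Delta_p v = (p-1)\,|\phi'(u)|^{p-2}\,\phi''(u)\,|\nabla u|^p\quad\text{on }\Omega,
\]
with $\Delta_p v=0$ off $\supp v$, so $v\in\dom(\Delta_p)$ globally and $\Delta_p v$ is supported on $T\coloneqq u^{-1}(\supp\phi'')\subset\subset U$.

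If $\Delta_p v\in L^2(\mm)$, Theorem~\ref{thm:main intro}\ref{item:main intro item1} gives $|\nabla v|^{p-2}\nabla v\in H^{1,2}_{C,\loc}(T\X;\X)$; since $v=u-a_2$ on $u^{-1}(a_2,b_2)$, this transfers to $|\nabla u|^{p-1}\in W^{1,2}_\loc(u^{-1}(a_2,b_2))$. Similarly, $\Delta_p v\in L^q(\mm)$ with $q>N$ yields $u\in\LIP_\loc(u^{-1}(a_2,b_2))$ via item~\ref{item:main intro item2}. Varying $a_2,b_2$ over all subintervals with $a<a_2<b_2<b$ then covers $U$. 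The final conclusion $u\in H^{2,2}_\loc(U)$ follows by inverting the nonlinearity $y\mapsto|y|^{q-2}y$ (with $q=p/(p-1)$) on $\{|\nabla u|>0\}$, where this inversion is locally Lipschitz once we know $|\nabla u|\in L^\infty_\loc$, combined with the a.e.\ vanishing of $\H u$ on $\{|\nabla u|=0\}$.

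The main obstacle is upgrading the a priori bound $\Delta_p v\in L^1(\mm)$ (which only uses $|\nabla u|^p\in L^1_\loc(\Omega)$) to $L^2$ and then to $L^q$ with $q>N$. I would tackle this via a nested bootstrap. Start from a Gehring--Meyers self-improvement for $p$-harmonic functions in PI-spaces, giving $|\nabla u|\in L^{p+\eta}_\loc$ for some $\eta>0$. Then choose a nested family of cutoffs $\phi_k$ with shrinking transition sets $T_k\subset T_{k-1}$, and iterate: once $|\nabla u|^p\in L^r_\loc(T_k)$ with $r\ge 2$, Theorem~\ref{thm:main intro}\ref{item:main intro item1} applied to the corresponding $v_k$ yields $|\nabla u|^{p-1}\in W^{1,2}_\loc(T_{k+1})$, and the RCD Sobolev embedding $W^{1,2}\hookrightarrow L^{2N/(N-2)}$ then gives $|\nabla u|^p\in L^{2N(p-1)/(p(N-2))}_\loc(T_{k+1})$. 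The hypothesis $p\in\mathcal{RI}_\X$ is precisely the arithmetic condition ensuring that this iteration strictly improves integrability and eventually drives us to $|\nabla u|\in L^\infty_\loc$. The delicate part is initializing the bootstrap---getting from $L^1$ past the $L^2$ threshold at which the theorem first kicks in---and verifying the closing-up arithmetic of the Sobolev exponents against the $\mathcal{RI}_\X$ constraint.
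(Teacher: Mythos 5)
Your overall strategy---compose $u$ with a range cutoff to obtain a globally defined function with a $p$-Laplacian, then invoke the global-to-local regularity theorem---is the right one, and matches the paper's. But there is a genuine gap that sends you down an unnecessary and difficult detour.

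\textbf{The bootstrap is not needed.} You have misread the hypotheses of Theorem~\ref{thm:main intro}. That theorem requires $u\in\dom(\Delta_p)$ with $\Delta_p u\in L^1(\mm)$ \emph{globally}, and then the $L^2$ (resp.\ $L^q$) integrability of $\Delta_p u$ is only required on the open set $\Omega$ where the conclusion is claimed, \emph{not} on all of $\X$. In your setup, taking $\Omega\coloneqq u^{-1}(a_2,b_2)$, one has $\Delta_p v\equiv 0$ there (since $\phi$ is linear on $[a_2,b_2]$), so $\Delta_p v\in L^q(\Omega;\mea)$ for every $q$, trivially. The only thing you need globally is $\Delta_p v\in L^1(\mm)$, which you already have from compact support of $v$ and $|\nabla u|^p\in L^1_\loc(\Omega)$. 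The entire Gehring--Meyers bootstrap, the exponent arithmetic, and the appeal to $\mathcal{RI}_\X$ to ``close up'' the iteration are solving a problem that does not exist. The paper applies Theorem~\ref{thm:main detailed} directly in exactly this way.

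Two secondary issues. First, for $p<2$ a generic $\phi\in C^\infty_c$ need not make $\psi\coloneqq\phi'|\phi'|^{p-2}$ Lipschitz: where $\phi'\to 0$, the factor $|\phi'|^{p-2}$ blows up, and $\psi'=(p-1)|\phi'|^{p-2}\phi''$ can be unbounded; the paper constructs a suitable cutoff in Lemma~\ref{lem:p-cutoff} with an explicit polynomial rate of vanishing. Second, your derivation of $u\in H^{2,2}_\loc$ by inverting $y\mapsto|y|^{q-2}y$ with $q=p/(p-1)$ does not go through: when $p>2$ (so $q<2$) this map fails to be Lipschitz near $y=0$, precisely where $|\nabla u|$ is small, and the locality ``$\H u=0$ a.e.\ on $\{|\nabla u|=0\}$'' presupposes $u\in W^{2,2}_\loc$, making the argument circular. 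The paper instead obtains $H^{2,2}_\loc$ from item~\ref{item:3theoremmain} of Theorem~\ref{thm:main detailed}, which produces a direct local $L^2$-Laplacian bound for $u$ (via Lemma~\ref{lem:pre w22} and the Cordes-type estimate of Lemma~\ref{lem:near laplacian}).
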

	As explained earlier, it is unclear how to obtain from Theorem \ref{thm:main intro} regularity estimates for functions $u$ defined only in some open subset of the space due to the non-linearity of the operator. Corollary \ref{cor:pharm} is obtained by exploiting the homogeneity of $\Delta_p$. The key observation here is that a function $u$ with relatively compact level sets can be extended to the whole space by post-composition with a cut-off function.
 
    The (local) H\"older regularity of $p$-harmonic functions for $p\neq 2$ is well known in the far more general setting of \textit{doubling} spaces supporting a local \textit{Poincaré inequality} (see  \cite[Chapter 8]{BB13} or the Appendix at the end of this note), via standard iteration methods. Corollary  \ref{cor:pharm} on the other hand shows  local Lipschitzianity of some class  $p$-harmonic functions for $p\neq 2$ in a suitable range. This last result, the best of the authors' knowledge,   is  new  even for Alexandrov spaces.

	\subsection*{Overview of the strategy}
	The usual method to obtain regularity results for the $p$-Laplacian is to consider a sequence of more regular approximating operators. To illustrate this, consider the case of $p$-harmonic functions with Dirichlet boundary conditions in an open set $\Omega\subset \rr^n$
	\[
	\Delta_p(u)=0, \quad u \in W^{1,p}_0(\Omega).
	\]
	If we consider the regularized problems
	\[
	\Delta_{p,\eps}(u_\eps)\coloneqq  \div((|\nabla u_\eps|^2+\eps)^{\frac{p-2}2}\nabla u_\eps)=0, \quad u_\eps \in W^{1,p}_0(\Omega),
	\]
	then it holds $u_\eps \to u$ in $W^{1,p}(\Omega)$. Moreover, by classical elliptic regularity (see \cite[Chapter 4]{UralLadybook}) we have $u_\eps\in C^\infty(\Omega)$. Therefore, to obtain the regularity for $u$ it is sufficient to derive appropriate estimates for $u_\eps$ independent of $\eps$ (see e.g.\ \cite{Lewis83,Evans82,Lou08,CiaMaz18}). However, deriving uniform a \textit{priori} estimates is not the main obstacle in our metric setting, rather, it is the regularity of the solutions $u_\eps$.
	
	The convergence of $u_\eps$ to $u$ as $\eps \to 0$ can be proved also in this framework (see \textsection \ref{sec:approximationresults}). On the other hand, in $\rcd$ spaces the elliptic regularity theory stops at  H\"older estimates à la De Giorgi-Nash-Moser (see Appendix \ref{sec:moser}),  with the only exception of the Laplacian (recall \eqref{eq:2reg rcd}). The main obstacle is the lack of a difference quotients method.  To overcome this issue we exploit the fact that for $p$ close to $2$ the operator $\Delta_{p,\eps}$ is in a sense close to the Laplacian, indeed
	\[
	(|\nabla u|^2+\eps)^{\frac{p-2}2}\Delta_{p,\eps}(u)=\Delta u+(p-2)\frac{\H u(\nabla u, \nabla u)}{|\nabla u|^2+\eps}.
	\]
	This was observed in \cite{ManWeit88} where it was used to show $W^{2,2}$-regularity of harmonic functions for $1<p<3+\frac{2}{n-2}$, by exploiting the so called \emph{Cordes conditions} for elliptic equations in non-divergence form (see \cite{TalentiSopra,cordesbook,Cordes56,Campanato67}).  In our case, we will use this idea to obtain the regularity of solutions to $\Delta_{p,\eps}(u)=f$ (see \textsection \ref{sec:existence_regularised}), from the regularity property of the Laplacian. This will be done in two steps. First for any $w\in \W(\X)$ we obtain a solution $U_w\in \W(\X)$ 
	\[
	\Delta U_w+(p-2)\frac{\H w(\nabla w, \nabla w)}{|\nabla w|^2+\eps}=\frac{f}{(|\nabla w|^2+\eps)^{\frac{p-2}2}}\in L^2(\mm),\quad  \Delta U_w \in L^2(\mm),
	\]
	by a fixed point method inspired by \cite[$\mathsection$ 1.2]{cordesbook} and \cite{TalentiSopra} (see \textsection \ref{sec:existence pt1}). Then we show that there exists a further fixed point for the map $w\mapsto U_w$ and thus obtain a function $u\in \W(\X)$ such that $\Delta_{p,\eps}(u)=f$ and $\Delta u\in L^2(\mm)$ (see \textsection\ref{sec:existence pt2}). From it, the regularity of solutions to $\Delta_p(u)=f$ will be obtained via uniform estimates in $\eps$. These estimates are derived by taking suitable test functions in the Bochner inequality (see \textsection \ref{sec:eps regularity} and \textsection \ref{sec: main results}).

	\section{Preliminaries}
	\subsection{First order calculus on metric measure spaces}\label{sec:first order}
	We will denote by the triple $\Xdm$ a \emph{metric measure space}, i.e.\  a complete and separable metric space $(\X,\sfd)$ endowed with a boundedly finite Borel measure $\mea\ge 0$ such that $\supp(\mm)=\X.$
	For any open set $\Omega\subset \X$ we will denote by \(\LIP_{\mathrm{loc}}(\Omega)\) and by $\LIP_{bs}(\Omega)$ respectively the space of all locally Lipschitz functions in $\Omega$ and Lipschitz functions with support bounded and contained in $\Omega$. By $L^{p}_\loc(\Omega)$, $p \in [1,\infty)$, we denote the space of $\mea$-a.e.\ equivalence classes of Borel functions $f:\Omega \to \rr$ such that $f\restr B\in L^p(\Omega;\mea)$ for every $B\subset \Omega$ closed and bounded.
	The \emph{slope} \(\lip(f)\colon\X\to[0,+\infty)\) of a function \(f\in\LIP_{\mathrm{loc}}(\X)\) is defined as \(\lip(f)(x)\coloneqq 0\) if \(x\in\X\) is an isolated point and
	\[
	\lip(f)(x)\coloneqq\lims_{y\to x}\frac{|f(x)-f(y)|}{\sfd(x,y)}\quad\text{ if }x\in\X\text{ is an accumulation point.}
	\]
	
	We assume the reader to be familiar with the definition and properties of Sobolev spaces in metric measure spaces and normed modules, referring to \cite{BB13} and \cite{GP20} for detailed accounts on these topics. We will denote by $W^{1,p}(\X)$, with $p\in(1,\infty)$, the $p$-Sobolev space on a metric measure space $\Xdm$ and by $|D f|_{p,w}\in L^p(\mm)$ the minimal $p$-weak upper gradient of a function $f \in W^{1,p}(\X).$  In this note we will always work on  a metric measure space \ $\Xdm$ satisfying:
	\begin{itemize}
		\item \emph{independent weak upper gradient},
		\item $\W(\X)$ is a Hilbert space, i.e.\ $\Xdm$ is \emph{infinitesimally Hilbertian} (see \cite{Gigli12}).
	\end{itemize} Examples of spaces satisfying the above are $\RCD(K,\infty)$ spaces and infinitesimally Hilbertian PI-spaces, which we will introduce in the sequel (see also \cite{GN22,GH14,Cheeger99}). 
	By independent weak upper gradient, we mean that
	\begin{enumerate}[label=\alph*)]
		\item\label{item:firststrongindep} $W^{1,p}\cap W^{1,q}(\X)$ is dense in $W^{1,p}(\X)$ for all $p,q \in (1,\infty),$ 
		\item\label{item:secondindep}if $f \in W^{1,p}\cap W^{1,q}(\X)$, then $|D f|_{p,w}=|D f|_{q,w}$ $\mea$-a.e.,
	\end{enumerate}
	we refer to \cite{GN22} for further details.  In particular condition $b)$ allows us to drop the subscript for the minimal w.u.g.\ and simply write $|D f|_w$.   As observed e.g.\ in \cite{GN22}, it is possible to give a universal notion of gradient operator for functions in $W^{1,p}(\X)$ for arbitrary $p$, whenever $\X$ is infinitesimally Hilbertian and the $p$-independence weak upper gradient assumptions are fulfilled.  We recall here a simplified construction which is enough for our purposes.

	Since for every $p\in(1,\infty)$ the restriction 
	\[
	\nabla : W^{1,p}\cap W^{1,2}(\X)\to L^p(T\X)\subset L^0(T\X)
	\]
	is bounded in $L^p(T\X)$ and since $W^{1,p}\cap W^{1,2}(\X)$ is dense in $W^{1,p}(\X)$ (see $a)$ above) we can uniquely extended $\nabla $ to a linear operator on  the whole $W^{1,p}(\X),$ denoted with the same symbol. It is also easy to check that $\nabla$ retains the usual calculus rules  and satisfies
	\[
	|\nabla f|=|D f|_w, \quad \forall f \in W^{1,p}(\X).
	\]
	Thanks to the above we will always write $|\nabla f|$ and actually drop the notion $|D f|_w.$
	For any $\Omega \subset \X$ open we denote by  $W^{1,p}_\loc(\Omega)$ the space of functions $f\in L^{p}_\loc(\Omega)$ such that $f\eta \in W^{1,p}(\X)$ for every $\eta \in \LIP_{bs}(\Omega).$
	Thanks to the locality property, we can define a notion of gradient also for functions in $W^{1,p}_\loc(\Omega)$, 
	\[
	\nabla : W^{1,p}_\loc(\Omega)\to  L^0(T\X)\restr\Omega
	\]
	($L^0(T\X)\restr\Omega$ denotes the module-localization, see e.g.\ \cite[Definition 3.1.11]{GP20}) satisfying $|\nabla f|\in L^{p}_\loc(\Omega)$ and 
		the usual calculus rules.
	\begin{remark}\label{rmk:lip dense}
		By the density in energy of Lipschitz functions (see \cite{AGS13}), if $\X$ is infinitesimally Hilbertian and has independent weak upper gradient, then $\LIP_{bs}(\X)$ is dense in $W^{1,p}(\X)$ for every $p\in(1,\infty)$ (see e.g.\ the argument in \cite[Proposition 5.9]{GN22}). In particular, by cut-off, given any open set $\Omega\subset \X$, we have also that $\LIP_{bs}(\Omega)$ is dense in the subset of functions of $ W^{1,p}\cap L^\infty(\X)$ having support bounded and contained in $\Omega$.\fr 
	\end{remark}

	To conclude, we recall the Sobolev and Poincaré inequalities on PI spaces, whose definition is given in the following.
	\begin{definition}[PI space]\label{def:PI}
		A m.m.s.\ \((\X,\sfd,\mm)\) is said to be a \emph{PI space} if:
		\begin{itemize}
			\item it is \emph{uniformly locally doubling}, i.e.\ there exists a function   $C_D:(0,\infty)\to (0,\infty)$ such that
			\[
			\mm\big(B_{2r}(x)\big)\leq C_D(R)\,\mm\big(B_r(x)\big)\quad\text{ for every }0<r<R\text{ and }x\in\X,
			\]
			\item it supports a \emph{weak local \((1,1)\)-Poincar\'{e} inequality}, i.e.\   there exist a constant $\lambda\ge 1$ and a function  $ C_P:(0,\infty)\to (0,\infty)$ such that
			for every $f \in \LIP_\loc(\X)$ it holds
			\[
			\fint_{B_r(x)}\bigg|f-\fint_{B_r(x)}f\,\d\mm\bigg|\,\d\mm\leq C_P({R})\,r\fint_{B_{\lambda r}(x)}\lip(f)\,\d\mm
			\quad\text{ for every }0<r<R\text{ and }x\in\X.
			\]
		\end{itemize}
	\end{definition}

	It is  well known that PI spaces support  Sobolev-type inequalities reported below (see \cite[Theorem 5.1]{HK00}, \cite[Theorem 4.21]{BB13} and also \cite[Theorem 5.1]{BB18}.  For the statement, we recall that $(\X,\sfd)$ is said to be $L$-quasiconvex for some constant $L>0$ if for every $x,y \in \X$ there exists a curve from $x$ to $y$ of length less than or equal to $ L\sfd(x,y)$. Recall also that in any PI-space inequality \eqref{eq:pi dimension} always holds for some $s>1$  (see Remark \ref{rmk:app}).
	
	\begin{theorem}[Sobolev and Poincar\'e inequalities]\label{thm:improved poincaret}
		Let $\Xdm$ be a PI-space satisfying
		\begin{equation}\label{eq:pi dimension}
			\frac{\mea(B_r(y))}{\mea(B_{R}(y))}\ge c(R_0)\left(\frac rR\right)^s, \quad \forall\, 0<r<R\le R_0, \, \forall y \in \X,
		\end{equation}
		for some constant $s>1$ and some function $c:(0,\infty)\to (0,\infty)$.  Then for every $p\in(1,s]$ and every $R_0>0$ there exists a constant $C$ depending only on $p$, $\lambda$, $R_0$ and on the functions $c$,  $C_D$, $C_P$   such that for every $B_R(x)\subset\X$ with $R\le R_0$ the following hold.
		\begin{enumerate}[label=\roman*)]
			\item\hfill$
			\displaystyle\left(\fint_{B_R(x)} |f-f_{B_R(x)}|^{p^*} \, \d \mm \right)^\frac{1}{p^*} \le CR \left(\fint_{B_{2\lambda R}(x)} |\nabla f|^{p}\, \d \mm \right)^\frac{1}{p}, \qquad \forall \, f \in W^{1,p}(\X),$\hfill\refstepcounter{equation}\label{eq:improved poincaret}{\normalfont(\theequation)}\\
			with $f_{B_R(x)}\coloneqq\fint_{B_R(x)}f\, \d \mm$ and where $p^*\coloneqq \frac {ps}{s-p}$ (if $p<s$) and $p^*<\infty$ is arbitrary for $p=s$ (in which case $C$ depends also on the choice of $p^*$).
			\item \hfill$\displaystyle
			\left(\fint_{B_R(x)} |f|^{p^*} \d \mm \right)^\frac p{p^*}\le C \left(	\fint_{B_{2\lambda R}(x)} R^p|\nabla f|^p +|f|^p\, \d \mm\right), \qquad \forall f \in W^{1,p}(\X).$ \hfill \refstepcounter{equation}\label{eq:local sobolev}{\normalfont(\theequation)}
		\end{enumerate}
		Moreover if $(\X,\sfd)$ is $L$-quasiconvex then the constant $2\lambda$ in both \eqref{eq:improved poincaret} and \eqref{eq:local sobolev} can be replaced by $L.$
	\end{theorem}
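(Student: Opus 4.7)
The plan is to deduce both statements from the standard Hajlasz--Koskela self-improvement machinery for PI-spaces, tracking the explicit dependence on the reverse-doubling exponent $s$ encoded in \eqref{eq:pi dimension}.

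For \eqref{eq:improved poincaret}, I would first observe that from the weak $(1,1)$-Poincar\'e inequality of Definition \ref{def:PI}, H\"older's inequality immediately yields a weak $(1,p)$-Poincar\'e inequality on $B_{\lambda R}(x)$ for any $p\in(1,\infty)$. The standard Hajlasz--Koskela chaining argument then gives, at each Lebesgue point $y\in B_R(x)$, a pointwise bound of the form
\[
|f(y)-f_{B_R(x)}|\lesssim \sum_{k\ge 0} 2^{-k}R\,\Big(\fint_{B_{\lambda 2^{-k}R}(y)}|\nabla f|^{p}\,\d\mm\Big)^{1/p}.
\]
The reverse-doubling bound \eqref{eq:pi dimension} quantifies how fast the measure of small balls decays, and combining this pointwise inequality with a fractional maximal function estimate upgrades the $(1,p)$-Poincar\'e inequality to a $(p^*,p)$-Poincar\'e inequality with $p^*=ps/(s-p)$. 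The enlargement factor $2\lambda$ arises from combining one $\lambda$-dilation (for applying Poincar\'e to each ball of the chain) with a factor $2$ needed to cover the chain. In the borderline case $p=s$, a truncation argument yields any finite $p^*<\infty$ at the cost of a constant depending on the chosen $p^*$.

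Estimate \eqref{eq:local sobolev} would then follow from \eqref{eq:improved poincaret} via Minkowski's inequality applied to $f=(f-f_{B_R(x)})+f_{B_R(x)}$, combined with Jensen, which gives $|f_{B_R(x)}|\le(\fint_{B_R(x)}|f|^p\,\d\mm)^{1/p}$; raising to the $p$-th power and absorbing constants yields \eqref{eq:local sobolev}. For the quasiconvexity improvement, I would appeal to the Hajlasz--Koskela principle according to which, on an $L$-quasiconvex doubling PI-space, the weak Poincar\'e inequality self-improves to a strong one: the chain of balls used in the proof above can be replaced by balls centered along an almost length-minimizing curve joining two points of $B_R(x)$, which stays entirely inside $B_{LR}(x)$, so that the factor $2\lambda$ can be replaced by $L$ in both inequalities.

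The main obstacle will be purely bookkeeping: carefully tracking the dependence of the constant $C$ on the data $(p, \lambda, R_0, C_D, C_P, c)$ (and on $p^*$ in the borderline case $p=s$) through the classical chaining and maximal-function arguments. No new conceptual input is required, but rewriting the standard proof with these quantitative dependencies exposed is what makes the statement directly usable in the elliptic regularity arguments in the sequel.
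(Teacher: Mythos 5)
Your proposal is correct and follows essentially the same route as the paper: for (i) you spell out the Haj\l asz--Koskela chaining/maximal-function self-improvement argument that the paper obtains directly by citing \cite[Theorem 5.1]{HK00} (together with \cite{BB18} to cover the case where $C_D$, $C_P$, $c$ are functions of $R_0$ rather than constants), and for (ii) the paper uses exactly your Minkowski--Jensen--doubling reduction from (i). The quasiconvexity remark is likewise handled in the standard way you describe.
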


	If $\Xdm$ satisfies only a Poincar\'e inequality, but it is not locally doubling,  Theorem \ref{thm:improved poincaret} does not apply. Nevertheless, a $(p,p)$-type Poincar\'e inequality is available as shown in the next result, which we could not find explicitly stated in the literature.
	For the statement, it will be relevant to recall the notion of spectral gap for an arbitrary metric measure space $\Xdm$ with $\mea(\X)<+\infty$, that is
	\begin{equation}\label{eq:spectral gap}
		\lambda_1(\X)\coloneqq \inf \left\{\int |D u|_{2,w}^2 \d\mm \ : \ u \in \W(\X),\, \int u \,\d \mm=0,\, \int u^2\d \mm=1  \right\}.
	\end{equation}
	We also recall that for every $A\subset \X$ with $\mea(A)<\infty$ and every function $f \in L^1(\mm;A)$, there exists a \emph{median} for $f$ in $A$, i.e.\ a number $m\in \rr$ satisfying $\mea(\{f< m\}\cap A)\le \frac{\mea(A)}2$ and $\mea(\{f> m\}\cap A)\le \frac{\mea(A)}2.$ It is well known that $m$  realizes the minimum in $\inf_{c \in \rr} \int_A|f-c|\d \mm. $
	
	\begin{theorem}[$(p,p)$-Poincar\'e inequality]\label{thm:pPoincare}
		Let $\Xdm$ be a m.m.s.\  supporting a weak local $(1,1)$-Poincar\'e inequality and  with $\diam(\X)\le D<+\infty$. Then for every $p\in(1,\infty)$ there exists a constant $C>0$ such that
		\begin{equation}\label{eq:pPoincare}
			\int |f-f_{\X}|^p\d \mm\le C \int |D f|_{p,w}^p\d \mm, \quad \forall f \in W^{1,p}(\X),
		\end{equation}
		where $f_{\X}\coloneqq \fint_{{\X}} f \d \mm$. In particular, it holds $\lambda_1(\X)>0$.
	\end{theorem}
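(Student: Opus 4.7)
The second assertion $\lambda_1(\X)>0$ is immediate from the $p=2$ case of \eqref{eq:pPoincare}: substituting any admissible $u$ into \eqref{eq:pPoincare} gives $1 = \int u^2\,\d\mm \le C\int |D u|_{2,w}^2\,\d\mm$, so $\lambda_1(\X)\ge C^{-1}$. The main work is therefore in \eqref{eq:pPoincare}, and I would proceed in two steps: first deduce a global $(1,1)$-Poincar\'e inequality from boundedness, then bootstrap to the $(p,p)$ form via a median-truncation argument.

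\textbf{Step 1: Global $(1,1)$-Poincar\'e inequality.} Because $\diam(\X)\le D$, for any $x_0\in\X$ we have $\X=B_{2D}(x_0)=B_{2\lambda D}(x_0)$. Plugging $r=2D$ into the local $(1,1)$-Poincar\'e inequality of Definition \ref{def:PI} yields, for every $f\in\LIP(\X)$,
\[
\int_\X |f-f_\X|\,\d\mm \le C_1 \int_\X \lip(f)\,\d\mm,
\]
with $C_1=C_1(D,C_P)$. A standard density argument then extends the inequality to $W^{1,1}(\X)$ with $|Df|_{1,w}$ replacing $\lip(f)$, and since $\mm(\X)<\infty$ the same inequality holds with $|Df|_{p,w}$ on the right for every $f\in W^{1,p}(\X)$.

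\textbf{Step 2: Bootstrap via the median.} Let $m$ be a median of $f$, so that $\mm(\{f>m\})\le \mm(\X)/2$ and $\mm(\{f<m\})\le \mm(\X)/2$. Set $g\coloneqq (f-m)_+^p$; then $g\ge 0$ and $g=0$ on $\{f\le m\}$, whose measure is $\ge \mm(\X)/2$. This vanishing gives the key \emph{anticoncentration} lower bound
\[
\int_\X |g-g_\X|\,\d\mm \ge \int_{\{g=0\}} g_\X\,\d\mm = g_\X\,\mm(\{g=0\}) \ge \tfrac12\int_\X g\,\d\mm.
\]
On the other hand, applying the global $(1,1)$-Poincar\'e inequality of Step 1 to $g$, invoking the chain rule $|Dg|_w\le p(f-m)_+^{p-1}|Df|_w$, and using H\"older's inequality with exponents $(p/(p-1),p)$ yields
\[
\int_\X |g-g_\X|\,\d\mm \;\le\; C_1 p\,\Big(\int_\X (f-m)_+^p\,\d\mm\Big)^{\frac{p-1}{p}} \Big(\int_\X |Df|_{p,w}^p\,\d\mm\Big)^{\frac1p}.
\]
Combining the two estimates and dividing through gives $\int_\X(f-m)_+^p\,\d\mm \le (2C_1 p)^p \int_\X |Df|_{p,w}^p\,\d\mm$. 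The symmetric argument applied to $(f-m)_-^p$ yields the analogous bound, so summing produces $\int|f-m|^p\le 2(2C_1 p)^p\int|Df|_{p,w}^p$. To replace $m$ by $f_\X$, Jensen gives $\mm(\X)|f_\X-m|^p\le \int|f-m|^p\,\d\mm$, and the convexity estimate $|f-f_\X|^p\le 2^{p-1}(|f-m|^p+|m-f_\X|^p)$ closes the argument.

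\textbf{Expected difficulty.} The conceptual heart of the argument is the median truncation and the anticoncentration bound; these are elementary once set up. The only delicate point I anticipate is the passage from the $(1,1)$-Poincar\'e inequality as stated in Definition \ref{def:PI} (for Lipschitz functions, with $\lip$) to the Sobolev formulation with the minimal $p$-weak upper gradient --- the hypotheses of the theorem do not include infinitesimal Hilbertianity or $p$-independence of weak upper gradients. This should nonetheless be routine, relying on the standard replacement of $\lip$ by $|Df|_{1,w}$ for Lipschitz functions combined with the inclusion $W^{1,p}(\X)\subset W^{1,1}(\X)$ valid on the finite-measure space $\X$.
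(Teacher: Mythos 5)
Your proof is correct and follows essentially the same route as the paper: reduce to a median-based estimate, apply the $(1,1)$-Poincar\'e inequality to the $p$-th power of the positive/negative part of $f-m$, and close with the chain rule and H\"older. The only cosmetic difference is that the paper works throughout with Lipschitz functions and $\lip(\cdot)$ and relaxes to $W^{1,p}$ only at the very end (citing density of Lipschitz functions in energy), whereas you upgrade the $(1,1)$-Poincar\'e inequality to $W^{1,1}$ first; the paper also replaces your explicit anticoncentration bound by the median's $L^1$-minimizing property, $\int|g-m|\,\d\mm\le\int|g-g_\X|\,\d\mm$, which is equivalent and saves a factor of $2$.
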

	\begin{proof}
		It is enough to prove  \eqref{eq:pPoincare} for  $f \in \LIP(\X)$ and with $\lip(f)$ in place of $|\nabla f|$ and then argue by relaxation. Moreover, since $\int |f-f_{\X} |^p\d \mm\le 2^p \int |f-c |^p\d \mm$ for every $c \in \rr$ and $f \in L^1(\mm)$ (see e.g.\ \cite[Lemma 4.17]{BB13}), it is enough to show
		\begin{equation}\label{eq:poincare zero median}
			\int |f|^p\d \mm\le \tilde C \int \lip(f)^p\d \mm
		\end{equation}
		for some constant $\tilde C>0$ depending only on $R_0,\,K$ and $p$ and every $f \in \LIP(\X)$ for which zero is a median for $f$ in ${\X}$. Fix one of such $f \in \LIP(\X)$ and define $f^+\coloneqq f\wedge 0$ and $f^-\coloneqq -(f\vee 0).$ Note that zero is a median in ${\X}$ also for $(f^+)^p$ and $(f^-)^p.$ By the $(1,1)$-Poincar\'e inequality we have that 
		\begin{equation}\label{eq:11p}
			\int_{\X} |g-m|\d \mm\le \int_{\X} |g-g_\X|\d \mm\le C_1 \int_{\X} \lip(g)\d \mm, \quad \forall g \in \LIP(\X),
		\end{equation}
		where $C_1$ is a constant depending only on $K$, $D$ and $m$ is any median of $g$, having used that $\int_{\X} |g-m|\d \mm\le \inf_{c \in \rr} \int_{\X} |g-c|\d \mm.$ 
		Therefore by \eqref{eq:11p} and since $\lip((f^+)^p)\le p |f|^{p-1}\lip(f)$, $\lip((f^-)^p)\le p |f|^{p-1}\lip(f)$ we have
		\[
		\int_{\X} |f|^p \d \mm= \int_{\X} |f^+|^p+|f^-|^p \d \mm\le 2pC_1 \int_{\X} \lip(f)|f|^{p-1}\d \mm\le 2pC_1 \left(\int_{\X} \lip(f)^p\right)^\frac1p\left(\int_{\X} |f|^p\right)^{1-\frac1p},
		\]
		which proves \eqref{eq:poincare zero median} and finishes the proof.
	\end{proof}

	\subsection{\texorpdfstring{$p$}{p}-Poisson equation in metric measure spaces}
	
	We can now introduce the main subject of this paper, expanding on what we have already mentioned in the introduction.
	\begin{definition}[$p$-Laplacian]\label{def:plapl}
		Fix $p \in (1,\infty)$. Let $\Xdm$ be inf.\ Hilbertian with independent weak upper-gradient and $\Omega\subset \X$ be open. A function $u \in W^{1,p}_{\loc}(\Omega)$ belongs to $\dom(\Delta_p,\Omega)$ if and only if there exists (unique) $\Delta_p u\in L^1_{\loc}(\Omega)$ such that
		\begin{equation}\label{eq:def plapl}
			\int_{\Omega}\la |\nabla u|^{p-2}\nabla u,\nabla \phi\ra \d \mm=-\int_\Omega \phi \Delta_{p}u  \d \mm, \quad \forall \phi \in \LIP_{bs}(\Omega).
		\end{equation}
		For simplicity  when $\Omega=\X$ we will simply write $\dom(\Delta_{p})$.
	\end{definition}
	The assumption of independent weak upper-gradient in the previous definition is needed to consider the gradient of functions in $W^{1,p}$ (see \textsection \ref{sec:first order}). In particular, for $p=2$ this assumption is not needed.
	Recall also that in our convention $\nabla$ takes value in  $L^0(T\X)$, hence the scalar product in \eqref{eq:def  plapl} is well defined as the scalar product in $L^0(T\X).$ The above definition makes sense since $|\la |\nabla u|^{p-2}\nabla u,\nabla \phi\ra|\le |\nabla u|^{p-1}|\nabla \phi| \in L^{1}(\mm)$.

	\begin{remark}\label{rmk:test w1p plapl}
		By the density of Lipschitz functions in $W^{1,p}$ (recall Remark \ref{rmk:lip dense}), we have that \eqref{eq:def plapl} also holds for every $\phi \in W^{1,p}\cap L^\infty(\X)$ having support bounded and contained in $\Omega$. Moreover, in the case $\Omega=\X$ and $\Delta_{p} u \in L^{p'}(\mm)$, $p'\coloneqq \frac p{p-1}$, the validity of \eqref{eq:def plapl} extends also in duality with all $\phi \in W^{1,p}(\X)$. \fr
	\end{remark}

	We also introduce the set of functions with $L^2$-Laplacian in $\Omega$ as:
	\[
	\dom(\Delta,  \Omega)\coloneqq \{u \in \dom(\Delta_2,\Omega)\ : \ \Delta_2u\in L^2_\loc(\Omega)\}
	\]
	and we will write $\Delta u$ in place of $\Delta_2u$  wherever $u \in \dom(\Delta,\Omega)$. Finally, we set $\dom(\Delta)\coloneqq \dom(\Delta,  \X)\cap \W(\X)$ (note that by definition we have only $\dom(\Delta,  \X)\subset \W_\loc(\X)$). In particular, the notation $\dom(\Delta)$ is consistent with the usual one used in literature (see e.g.\ \cite[Section 5.2.1]{GP20}). If $\diam(\X)<+\infty$ it follows immediately from \eqref{eq:def plapl} that
	\begin{equation}\label{eq:zero mean laplacian}
		\int \Delta u\d \mm =0, \quad \forall  u \in \dom(\Delta).
	\end{equation}
	Whenever $\mm(\X)<\infty$, the Laplacian can also be used to characterize the spectral gap $\lambda_1(\X)$  (defined in \eqref{eq:spectral gap}) as follows (see e.g.\ \cite[Proposition 4.8.3]{BakryGentilLedoux14}) as the maximimal constant such that 
	\begin{equation}\label{eq:Laplacian poincare}
		\lambda_1(\X)\int |D u|_{2,w}^2 \d\mm\le  \int (\Delta u)^2\d \mm, \quad \forall u \in \dom(\Delta).
	\end{equation}
	The Laplacian $\Delta$ is a closed operator in the following sense:
	\begin{equation}\label{eq:closure laplacian}
		\parbox{13cm}{given $u_n \in \dom(\Delta,  \Omega)$ and $u \in W^{1,2}_\loc(\Omega)$, if $|\nabla u-\nabla u_n|\to 0$ in $L^2(\Omega)$ and $\Delta u_n \rightharpoonup G$ in $L^2(\Omega)$, then $u \in \dom(\Delta,  \Omega)$ with $\Delta u=G.$}
	\end{equation}
	The following existence result is a standard consequence of the direct method of the calculus of variations 
		and the Poincaré inequality \eqref{eq:pPoincare}.
	\begin{prop}\label{prop:p-existence}
		Fix $p \in (1,\infty)$ and set $p'\coloneqq\frac{p}{p-1}.$ Let $\Xdm$ be a bounded m.m.s.\ with independent weak upper gradient  and satisfying a weak local $(1,1)$-Poincar\'e inequality. Then, for every $f \in L^{p'}(\mm)$ such that $\int f \d \mm=0$ there exists a unique $u \in \dom(\Delta_p)$ satisfying $\int u \d \mm=0$ and $\Delta_p u=f.$
	\end{prop}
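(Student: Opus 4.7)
My approach is the direct method of the calculus of variations applied to the functional
\[
J(u) \coloneqq \frac{1}{p}\int |\nabla u|^p \d\mm + \int f u \d\mm
\]
on the closed affine subspace $V \coloneqq \{u \in W^{1,p}(\X) : \int u \d\mm = 0\}$ of $W^{1,p}(\X)$. The setup is well-posed: $\mm(\X) < \infty$ since $\X$ is bounded and $\mm$ is boundedly finite, and the linear term is finite by H\"older's inequality, as $f \in L^{p'}(\mm)$ and $W^{1,p}(\X) \subset L^p(\mm)$.

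The essential input is the $(p,p)$-Poincar\'e inequality on $\X$ given by Theorem~\ref{thm:pPoincare}, which yields coercivity of $J$ on $V$: for $u \in V$ one has $\|u\|_{L^p(\mm)} \le C\|\nabla u\|_{L^p(\mm)}$, so $J(u) \ge \frac{1}{p}\|\nabla u\|_{L^p}^p - C\|f\|_{L^{p'}}\|\nabla u\|_{L^p} \to +\infty$ as $\|u\|_{W^{1,p}} \to \infty$ within $V$. Since $u \mapsto \int |\nabla u|^p \d\mm$ is convex and strongly continuous, hence weakly lower semicontinuous, and the linear term is weakly continuous, a bounded minimizing sequence admits, by standard weak compactness in the reflexive space $L^p(\mm) \times L^p(T\X)$, a subsequence with $u_n \rightharpoonup u$ in $L^p(\mm)$ and $\nabla u_n \rightharpoonup \nabla u$ in $L^p(T\X)$, with $u \in V$ realizing the infimum. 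Uniqueness follows from strict convexity of $J$ on $V$: for any two minimizers $u_1, u_2 \in V$, strict convexity of $v \mapsto \|\nabla v\|_{L^p(\mm)}^p$ forces $\nabla u_1 = \nabla u_2$ a.e., and the Poincar\'e inequality combined with the zero-mean constraint then yields $u_1 = u_2$.

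For the Euler--Lagrange equation, given any $\phi \in \LIP_{bs}(\X)$, the function $\tilde\phi \coloneqq \phi - \fint \phi \d\mm$ lies in $V$, and differentiating $t \mapsto J(u + t\tilde\phi)$ at $t=0$ gives
\[
\int \la |\nabla u|^{p-2}\nabla u, \nabla \phi\ra \d\mm + \int f\phi \d\mm - \Big(\fint \phi \d\mm\Big)\int f \d\mm = 0.
\]
The last term vanishes by the assumption $\int f \d\mm = 0$, so \eqref{eq:def plapl} holds with $\Delta_p u = f \in L^{p'}(\mm) \subset L^1(\mm)$, whence $u \in \dom(\Delta_p)$ as desired. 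No serious obstacle is expected: the proof is textbook, with the only mild subtlety being the use of the zero-mean condition on $f$ to upgrade the variational identity from admissible variations in $V$ to arbitrary test functions in $\LIP_{bs}(\X)$.
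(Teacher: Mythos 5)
Your proof proposal follows the same approach as the paper: direct method on the zero-mean subspace, coercivity via the $(p,p)$-Poincar\'e inequality of Theorem~\ref{thm:pPoincare}, weak lower semicontinuity of the $p$-Dirichlet energy, and the Euler--Lagrange identity upgraded to arbitrary test functions in $\LIP_{bs}(\X)$ using the zero-mean hypothesis on $f$. The only minor differences are that you argue uniqueness through strict convexity of the energy in $\nabla u$ where the paper invokes the monotonicity inequality of Lemma~\ref{lem:monotonicity}, and you leave implicit the justification for differentiating $t\mapsto J(u+t\tilde\phi)$ at $t=0$, which in the metric-module setting the paper carries out via the pointwise convexity identities in \eqref{eq:variational} together with dominated convergence; both routes are correct.
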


	For future reference, we report the following classical monotonicity inequalities for the $p$-Laplacian, which proof is the same as in the Euclidean setting (see e.g.\ \cite[Lemma A.0.5]{peral}).
\begin{lemma}[Monotonicity inequalities]\label{lem:monotonicity}
	For every $p \in(1,\infty)$ there exists a constant $c_p>0$ such that the following is true.
	Let $\Xdm$ be an inf.\ Hilbertian metric measure space and $v,w \in L^0(T\X)$. Then
	\begin{equation}\label{eq:monotonicity}
		\la |v|^{p-2}v-|w|^{p-2}w ,v-w \ra\ge \begin{cases}
			c_p |v-w|^p & \text{ if $p\ge 2$,}\\
			c_p \frac{ |v-w|^2}{ (|v|+|w|)^{2-p}}& \text{ if $1<p<2$,} 
		\end{cases} \quad \mea\text{-a.e.,}
	\end{equation}
	with the convention that, if $p<2$,  $|v|^{p-2}v=0$ (resp. $|w|^{p-2}w=0$) whenever $|v|=0$ (resp. $|w|=0$) and that right-hand side is zero when $|w|=|v|=0.$
\end{lemma}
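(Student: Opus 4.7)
The inequality is pointwise in $\mm$-a.e.\ sense, and at each point the fibre of $L^0(T\X)$ is an inner product space, so the claim reduces to a purely vectorial inequality in a Hilbert space (equivalently, in the $2$-dimensional span of $v$ and $w$). The plan is to derive the key identity
\[
\la |v|^{p-2}v - |w|^{p-2}w,\,v-w\ra = \int_0^1 \Big( |\xi_t|^{p-2}|v-w|^2 + (p-2)|\xi_t|^{p-4}\la \xi_t,\,v-w\ra^2 \Big)\, dt,
\]
where $\xi_t := w + t(v-w)$, via the fundamental theorem of calculus applied to $t \mapsto \la |\xi_t|^{p-2}\xi_t,\,v-w\ra$. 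The integrand is precisely the pointwise Hessian of $\tfrac{1}{p}|\cdot|^p$ at $\xi_t$ evaluated in the direction $v-w$, and is therefore nonnegative by convexity of the $p$-th power of the norm.

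For $p \ge 2$, I would discard the manifestly nonnegative second term and reduce to bounding $\int_0^1 |\xi_t|^{p-2}\,dt$ from below. Since $t \mapsto |\xi_t|^2$ is a convex quadratic with leading coefficient $|v-w|^2$, one has $|\xi_t| \ge |v-w|\cdot|t-t^*|$ where $t^* \in \rr$ is the minimizer of $|\xi_\cdot|^2$, whence
\[
\int_0^1 |\xi_t|^{p-2}\, dt \;\ge\; |v-w|^{p-2} \inf_{t^*\in\rr}\int_0^1 |t-t^*|^{p-2}\, dt \;=:\; c_p\, |v-w|^{p-2},
\]
with $c_p > 0$ (the infimum over $t^*\in\rr$ is strictly positive, the worst case being $t^*=1/2$). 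Multiplying by $|v-w|^2$ gives the claim in this range.

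For $1 < p < 2$ the second term in the integrand is nonpositive, but Cauchy--Schwarz $\la \xi_t, v-w\ra^2 \le |\xi_t|^2 |v-w|^2$ and $p-2<0$ yield the lower bound $(p-1)|\xi_t|^{p-2}|v-w|^2$ for the integrand. Since $|\xi_t| \le (1-t)|w|+t|v|\le |v|+|w|$ and $p-2<0$, one gets $|\xi_t|^{p-2} \ge (|v|+|w|)^{p-2}$; integrating in $t$ produces the desired bound with $c_p = p-1$.

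The main technical subtlety I expect is that $|\xi_t|$ may vanish for one value of $t$ (when $w$ and $v-w$ are antiparallel and the line through $w$ in direction $v-w$ crosses the origin), which makes $|\xi_t|^{p-4}$ and even $|\xi_t|^{p-2}$ singular in the range $1 < p < 2$. The singularity is nevertheless integrable since $p - 2 > -1$, and together with the conventions in the statement (setting $|v|^{p-2}v = 0$ at $v=0$) it can be handled by first establishing the identity under the assumption $\xi_t \neq 0$ for all $t \in [0,1]$ and then concluding by a small perturbation and limiting argument.
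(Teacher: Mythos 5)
Your proof is correct and takes a genuinely different route from the paper's. The paper does not prove the vectorial inequality from scratch: it cites the known Euclidean version (from \cite[Lemma A.0.5]{peral}) and then transfers it to $L^0$-modules by dividing through by $|v|^p$ and reparametrizing everything in terms of the two scalars $t=|w|/|v|$ and $s=\la v,w\ra/(|v||w|)$, checking that any admissible pair $(t,s)$ can be realized by a pair of Euclidean vectors. You instead prove the inequality from first principles via the fundamental-theorem-of-calculus identity along the segment $\xi_t=w+t(v-w)$, which expresses the left-hand side as an integral of the Hessian of $\tfrac1p|\cdot|^p$. Your treatment of the two regimes is sound: for $p\ge 2$ the lower bound $|\xi_t|\ge|v-w||t-t^*|$ (from completing the square in the quadratic $t\mapsto|\xi_t|^2$) gives $c_p=\inf_{t^*\in\rr}\int_0^1|t-t^*|^{p-2}\,dt>0$ (attained at $t^*=\tfrac12$, giving $c_p=2^{2-p}/(p-1)$); for $1<p<2$, Cauchy--Schwarz combined with $p-2<0$ bounds the integrand below by $(p-1)|\xi_t|^{p-2}|v-w|^2\ge(p-1)(|v|+|w|)^{p-2}|v-w|^2$, giving $c_p=p-1$. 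Your concern about the singularity when some $\xi_{t_0}=0$ is the right one and is correctly dispatched: $|\xi_t|^{p-2}\sim|v-w|^{p-2}|t-t_0|^{p-2}$ near $t_0$ is integrable since $p-2>-1$, and $t\mapsto|\xi_t|^{p-2}\xi_t$ is absolutely continuous, so the FTC holds. One advantage of your argument is that it is self-contained and yields explicit constants; the paper's is shorter given the external reference. Both approaches ultimately hinge on the same structural fact that only $|v|$, $|w|$, and $\la v,w\ra$ enter, so the $L^0$-module statement reduces to a scalar/Euclidean one.
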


\begin{remark}\label{rmk:existence}
	In our definition of $\Delta_p u$ we assume that $u \in W^{1,p}(\X).$ In Proposition \ref{prop:p-existence}, we showed existence and uniqueness in this space of solutions to $\Delta_p (u)=f$ when $f \in L^{p'}$, $p'\coloneqq \frac{p}{p-1}$. However, when $f$ is in merely in $L^2(\mm)$ (or even $L^1(\mm)$), as in our main results, if $p$ is not large enough solutions might not exist. This is a well-known issue for $p$-Laplacian type operators that already occurs in the Euclidean framework. As a matter of fact, in \cite{CiaMaz18} the authors prove that second-order Sobolev regularity estimates for the $p$-Laplacian in $\rr^n$ hold for suitable generalized solutions defined via approximation and not necessarily in $W^{1,p}_\loc$ (see \cite[Remark 2.8]{CiaMaz18}).  In fact, also our main result could be extended to more general notions of solutions to \eqref{eq:p pois intro}, see Remark \ref{rmk:sola} for a further discussion.\fr
\end{remark}

\subsection{Second-order calculus on RCD spaces}	\label{sec:rcd}

In this note, we will mainly work on $\rcd(K,N)$ spaces for $N\in [1,\infty]$, which is a well-known subclass of metric measure spaces. We will not recall their definition and main properties and refer to \cite{AmbICM,G23} for surveys on this topic and further references. 

We will assume the reader to be familiar with the theory of second-order calculus on $\rcd$ spaces developed in \cite{Gigli14} (see also \cite{GP20}). 

We will denote by $\dim(\X)$
the \emph{essential-dimension}  of an  $\rcd(K,N)$ space with $N<\infty$ (see \cite{Gigli14,GP16,BS18}), which is an integer not greater than $N$.
In the case $\dim(\X)=N$ by \cite[Prop.\ 4.1]{Han14} it holds
\begin{equation}\label{eq:tr=lapl}
	\tr \H f=\Delta f, \quad \forall f \in \dom(\Delta). 
\end{equation}

We recall the space of test functions introduced in \cite{Savare13}

\[
\test(\X)\coloneqq\{f \in \dom(\Delta)\cap L^\infty(\mea)\cap\LIP(\X) \ | \ \Delta f \in \W(\X)\}.
\]
By \cite[Theorem 3.3.8]{Gigli14} we have $\test(\X)\subset W^{2,2}(\X)$ and we can define the space $H^{2,2}(\X)$ as the closure of $\test(\X)$ in $W^{2,2}(\X).$

\begin{prop} \label{prop:gradgrad}
	Let $\Xdm$ be an $\rcd(K,\infty)$ space. For every $v \in H^{1,2}_C(T\X)$ it holds that $|v|\in \W(\X)$ and 
	\begin{equation}\label{eq:gradient covariant}
		|\nabla |v||\le |\nabla v|, \quad \alme.
	\end{equation}
	It holds that $\dom(\Delta)\subset H^{2,2}(\X)$. In particular  for every $u \in \dom(\Delta)$ it holds that $\nabla u \in H^{1,2}_C(\X)$, $|\nabla u|\in \W(\X)$ and 
	\begin{equation}\label{eq:musical}
		\begin{split}
			&\nabla \nabla u=\H u,\\
			&|\nabla u|\nabla |\nabla u|=\H u\left(\nabla u\right), 
		\end{split}
	\end{equation}
\end{prop}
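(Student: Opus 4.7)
The first claim is established by a density argument on test vector fields. For a test vector field $v = \sum_i g_i \nabla h_i$ with $g_i, h_i \in \test(\X)$, the Leibniz rule for the pointwise scalar product on $L^0(T\X)$ developed in \cite{Gigli14} gives $|v|^2 \in \W(\X)$ with $\tfrac12 \nabla |v|^2 = (\nabla v) v$, where $(\nabla v)v$ denotes the action of the covariant-derivative endomorphism of $L^0(T\X)$; the module-level Cauchy--Schwarz inequality yields the pointwise bound $|(\nabla v)v| \le |\nabla v||v|$. The regularisation $|v|_\eps \coloneqq \sqrt{|v|^2+\eps^2}$ then lies in $\W(\X)$ by the chain rule, with
$
|\nabla |v|_\eps| = |v|_\eps^{-1}|(\nabla v)v| \le |\nabla v|
$
$\mea$-a.e.; sending $\eps \to 0^+$ via closedness of the Cheeger gradient produces $|v| \in \W(\X)$ together with \eqref{eq:gradient covariant} when $v$ is a test vector field.

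For a general $v \in H^{1,2}_C(T\X)$, pick test vector fields $v_n \to v$ in $H^{1,2}_C(T\X)$. From $||v_n|-|v_m|| \le |v_n - v_m|$ the scalar sequence $|v_n|$ is Cauchy in $L^2(\mm)$, and the uniform bound $\int |\nabla |v_n|\,|^2\,\d \mm \le \int |\nabla v_n|^2\,\d \mm$ combined with the reflexivity of $\W(\X)$ yields $|v| \in \W(\X)$ as a weak $W^{1,2}$-limit of $|v_n|$. To upgrade the resulting integrated bound to the $\mea$-a.e.\ pointwise inequality \eqref{eq:gradient covariant}, extract a further subsequence along which $|v_n| \to |v|$ and $|\nabla v_n| \to |\nabla v|$ $\mea$-a.e., then combine a Mazur-type argument with the locality of the minimal weak upper gradient on arbitrary Borel subsets to transfer the inequality from the $L^2$ to the pointwise level.

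For the second half, the inclusion $\dom(\Delta) \subset H^{2,2}(\X)$ is obtained from the Bochner-type estimate \eqref{eq:CZintro}: if $u \in \dom(\Delta)$, a heat-flow regularisation $u_n \in \test(\X)$ with $u_n \to u$ in $\W(\X)$ and $\Delta u_n \to \Delta u$ in $L^2(\mm)$ satisfies, applying \eqref{eq:CZintro} to $u_n - u_m$, a Cauchy estimate on $\{\H u_n\}$ in $L^2$, whence $u \in H^{2,2}(\X)$. The same approximation gives $\nabla u_n \to \nabla u$ in $L^2(T\X)$ together with $\nabla\nabla u_n = \H{u_n} \to \H u$ in $L^2$, so that $\nabla u \in H^{1,2}_C(T\X)$ with $\nabla\nabla u = \H u$; applying the first half to $v = \nabla u$ then gives $|\nabla u| \in \W(\X)$. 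Finally, the identity $|\nabla u|\nabla|\nabla u| = \H u(\nabla u)$ is derived from the pointwise formula $\tfrac12 \nabla |\nabla u|^2 = \H u(\nabla u)$ (valid for test $u$ and stable under the approximation) combined with the chain rule for $\sqrt{\,\cdot\,}$ on $\{|\nabla u|>0\}$, while locality of $\nabla$ ensures both sides vanish on $\{|\nabla u|=0\}$.

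The main technical hurdle is the $\mea$-a.e.\ pointwise upgrade at the end of the second paragraph: an integrated inequality would follow at once from lower semicontinuity, but the pointwise statement requires essential use of the $L^0$-module structure of $L^2(T\X)$ and of the localisability of the minimal weak upper gradient on arbitrary Borel subsets. This is precisely the kind of step for which the second-order calculus of \cite{Gigli14} is engineered, and all the other steps, including the $W^{2,2}$-Calder\'on--Zygmund estimate \eqref{eq:CZintro} and the closedness of the covariant derivative, are direct consequences of that framework.
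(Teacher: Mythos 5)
Your proof is correct, and the techniques you use — regularisation of $|v|$ for test vector fields via $\sqrt{|v|^2+\eps^2}$, closure under $H^{1,2}_C$-density with a Mazur-type passage to the pointwise bound, approximation of $u\in\dom(\Delta)$ by test functions with Cauchyness in $W^{2,2}$ via the estimate \eqref{eq:CZintro}, and the chain rule to pass from $\frac12\nabla|\nabla u|^2=\H u(\nabla u)$ to the final identity — are exactly the ingredients in the results the paper cites (\cite[Lemma 3.5]{DGP21}, \cite[Thm.~3.4.2(iv), Prop.~3.3.18, Prop.~3.3.22]{Gigli14}). The paper's proof is purely citational, so your write-up is in effect an unpacking of those references rather than a different route.
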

\begin{proof}
	The first part of the statement together with \eqref{eq:gradient covariant} is proved in \cite[Lemma 3.5]{DGP21}. The fact that $\dom(\Delta)\subset H^{2,2}(\X)$ together with first in \eqref{eq:musical} is instead a consequence of Theorem 3.4.2-(iv)  and Proposition 3.3.18 in \cite{Gigli14}. From this by definition of $H^{1,2}_C(T\X)$ follows that $\nabla u \in H^{1,2}_C(\X)$ and the fact that $|\nabla u|\in \W(\X)$ follows then from the first part. Finally, the second in \eqref{eq:musical} is contained in  \cite[Proposition 3.3.22]{Gigli14}. 
\end{proof}

In the sequel, we will need the following approximation result which, to the best of our knowledge, is not present in the literature.
\begin{lemma}\label{lem:test dense}
	Let $\Xdm$ be an $\rcd(K,\infty)$ space. Then for all $u \in \dom(\Delta)$ there exists a sequence $(u_n)\subset \test(\X)$ such that $\Delta u_n\to \Delta u$ in $L^2(\mm)$,  $u_n \to u $ in $H^{2,2}(\X)$  and $|\nabla u_n|\to |\nabla u|$ in $\W(\X)$.  
\end{lemma}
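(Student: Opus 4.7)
My plan is to regularize $u$ via the heat semigroup $(h_t)_{t\ge 0}$ of $\Xdm$. When $u\in \dom(\Delta)\cap L^\infty(\mm)$, the function $u_t:=h_t u$ belongs to $\test(\X)$ for every $t>0$: $u_t\in L^\infty(\mm)$ by the $L^\infty$-contractivity of $h_t$, $u_t\in \LIP(\X)$ via the Bakry--Émery estimate $2t|\nabla h_t u|^2\le e^{-2Kt}(h_t(u^2)-(h_t u)^2)\le e^{-2Kt}\|u\|_\infty^2$ combined with the Sobolev-to-Lipschitz property of $\rcd(K,\infty)$ spaces, and $\Delta u_t = h_t\Delta u$ lies in $\W(\X)$ by the $L^2\to \W(\X)$ smoothing of $h_{t/2}$ applied to $h_{t/2}\Delta u$. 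For a general $u\in\dom(\Delta)$, I would first pre-truncate via $\eta_k(u)$ with $\eta_k\in C^2(\rr)$ satisfying $\eta_k(s)=s$ for $|s|\le k$, $|\eta_k'|\le 1$ and $\|\eta_k''\|_\infty\le C/k$, $\mathrm{supp}(\eta_k'')\subset\{|s|\ge k\}$; the chain rule gives $\eta_k(u)\in \dom(\Delta)\cap L^\infty$ with $\Delta(\eta_k(u))=\eta_k'(u)\Delta u + \eta_k''(u)|\nabla u|^2$.

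Granted the $L^\infty$ reduction, I send $t\to 0^+$. Strong $L^2$-continuity of $(h_t)$ yields $u_t\to u$ in $\W(\X)$ and $\Delta u_t=h_t\Delta u\to \Delta u$ in $L^2$. For the Hessians, I apply the a priori bound \eqref{eq:CZintro} to $u_t-u_s\in \dom(\Delta)$:
\[
\int\bigl|\H{u_t-u_s}\bigr|^2\d\mm\le \|\Delta u_t-\Delta u_s\|_{L^2}^2 + K^-\|\nabla u_t-\nabla u_s\|_{L^2}^2\xrightarrow{s,t\to 0^+} 0,
\]
whence $\H{u_t}$ is Cauchy in $L^2$; weak uniqueness identifies its limit with $\H u$, so $u_t\to u$ in $H^{2,2}(\X)$.

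For $|\nabla u_t|\to |\nabla u|$ in $\W(\X)$, the $L^2$-part is immediate from $\bigl||\nabla u_t|-|\nabla u|\bigr|\le |\nabla u_t-\nabla u|$. For the gradient, I combine the identity $|\nabla u|\,\nabla|\nabla u|=\H u(\nabla u)$ from Proposition~\ref{prop:gradgrad} with the locality $|\H u|=0$ a.e.\ on $\{|\nabla u|=0\}$. Along a subsequence $\nabla u_t\to \nabla u$ and $\H{u_t}\to \H u$ pointwise $\mm$-a.e., allowing to pass to the limit pointwise on $\{|\nabla u|>0\}$; on $\{|\nabla u|=0\}$ the Kato-type bound $\bigl|\nabla|\nabla u_t|\bigr|\le |\H{u_t}|$ together with locality gives
\[
\int_{\{|\nabla u|=0\}}\bigl|\nabla|\nabla u_t|\bigr|^2\d\mm \le \int_{\{|\nabla u|=0\}}|\H{u_t}-\H u|^2\d\mm \to 0.
\]
A dominated convergence argument then yields $\int\bigl|\nabla|\nabla u_t|\bigr|^2\d\mm\to \int\bigl|\nabla|\nabla u|\bigr|^2\d\mm$, which combined with weak $L^2$-convergence (from the uniform $L^2$-bound on $\H{u_t}$) gives the required strong convergence of $\nabla|\nabla u_t|$ in $L^2(T\X)$.

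The main technical obstacle is the pre-truncation step: the chain-rule error $\eta_k''(u)|\nabla u|^2$ is a priori only in $L^1(\mm)$, and making it vanish in $L^2(\mm)$ as $k\to \infty$ requires leveraging both $|\eta_k''|=O(1/k)$ and $\mm(\{|u|\ge k\})\to 0$, possibly via a diagonal extraction over $k$ and the heat-flow parameter $t$. Once this reduction is in place, the remainder of the argument is routine: heat-flow smoothing, a Cauchy argument based on \eqref{eq:CZintro}, and the Hessian--gradient identity from Proposition~\ref{prop:gradgrad}.
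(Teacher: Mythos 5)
Your third part — showing $|\nabla u_t|\to|\nabla u|$ in $\W(\X)$ via the identity $|\nabla u|\nabla|\nabla u|=\H u(\nabla u)$, the locality of the Hessian on $\{|\nabla u|=0\}$, and a dominated-convergence argument — is essentially Lemma~\ref{lem:rly} in the paper, and your Cauchy-in-$H^{2,2}$ argument via \eqref{eq:CZintro} is the content of the cited \cite[Proposition 3.3.9]{Gigli14}. Where you depart from the paper is the first part: the paper simply quotes \cite[Lemma 2.2]{H19} for the existence of $(u_n)\subset\test(\X)$ with $\Delta u_n\to\Delta u$ in $L^2(\mm)$ and $u_n\to u$ in $\W(\X)$, while you attempt to construct it from scratch by truncating and then applying the heat semigroup.

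This is exactly where there is a genuine gap, which you partly acknowledge. For the truncation step you need $\eta_k(u)\in\dom(\Delta)$ with $\Delta(\eta_k(u))\to\Delta u$ in $L^2(\mm)$. The chain-rule error is $\eta_k''(u)|\nabla u|^2$, and your proposed control $|\eta_k''|\le C/k$, $\supp(\eta_k'')\subset\{|s|\ge k\}$ yields
\[
\bigl\|\eta_k''(u)\,|\nabla u|^2\bigr\|_{L^2(\mm)}^2 \;\le\; \frac{C^2}{k^2}\int_{\{|u|\ge k\}}|\nabla u|^4\,\d\mm,
\]
which is not even finite a priori: for $u\in\dom(\Delta)$ on an $\rcd(K,\infty)$ space one knows $|\nabla u|\in L^2(\mm)$ and $|\H u|\in L^2(\mm)$, but there is no dimensional Sobolev embedding giving $|\nabla u|\in L^4$. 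So one cannot conclude $\eta_k(u)\in\dom(\Delta)$, let alone $\Delta(\eta_k(u))\to\Delta u$ in $L^2(\mm)$. A diagonal extraction over $k$ and $t$ does not repair this, because without $\eta_k(u)\in\dom(\Delta)$ the identity $\Delta h_t(\eta_k(u))=h_t\Delta(\eta_k(u))$ is unavailable and $\|\Delta h_t(\eta_k(u))\|_{L^2}$ is only controlled by $t^{-1}\|\eta_k(u)\|_{L^2}$, which does not converge to $\|\Delta u\|_{L^2}$. Note also that on $\rcd(K,\infty)$ spaces (no finite-$N$ bound, no ultracontractivity in general) the semigroup does not map $L^2$ into $L^\infty$, so the $L^\infty$ reduction cannot be bypassed either. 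Until this first step is supplied — for instance by invoking \cite[Lemma 2.2]{H19} as the paper does, or by a more careful joint construction — the argument is incomplete.
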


For the proof, we need the following technical convergence result.
\begin{lemma}\label{lem:rly}
	Let $\Xdm$ be an $\rcd(K,\infty)$ space and suppose that $u_n \to u$ in $H^{2,2}(\X)$. Then $|\nabla u_n|\to |\nabla u|$ in $\W(\X).$
\end{lemma}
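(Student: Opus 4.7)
The plan is to first unpack the hypothesis: $u_n\to u$ in $H^{2,2}(\X)$ amounts to $\nabla u_n\to\nabla u$ in $L^2(T\X)$ together with $\H{u_n}=\nabla\nabla u_n\to\nabla\nabla u=\H u$ in $L^2$, i.e.\ $\nabla u_n\to\nabla u$ in $H^{1,2}_{C}(T\X)$. From Proposition \ref{prop:gradgrad} and the pointwise bound $\big||\nabla u_n|-|\nabla u|\big|\le|\nabla u_n-\nabla u|$ I immediately obtain $|\nabla u_n|\to|\nabla u|$ in $L^2(\mm)$, while \eqref{eq:gradient covariant} gives $|\nabla|\nabla u_n||\le|\H{u_n}|$, so $\{|\nabla u_n|\}$ is bounded in $\W(\X)$. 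Reflexivity of $\W(\X)$ combined with strong $L^2$-convergence then forces $\nabla|\nabla u_n|\weakto \nabla|\nabla u|$ weakly in $L^2(T\X)$.

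To upgrade to strong convergence, since $L^2(T\X)$ is a Hilbert space it suffices to show $\int|\nabla|\nabla u_n||^2\,\d\mm\to\int|\nabla|\nabla u||^2\,\d\mm$, and I plan to verify this by splitting according to $\{|\nabla u|=0\}$ and $\{|\nabla u|>0\}$. On $\{|\nabla u|=0\}$, the locality of the Hessian (recalled right after Proposition \ref{prop:gradgrad}) gives $\H u=0$ $\mm$-a.e., so the $L^2$-convergence $\H{u_n}\to\H u$ implies $\int_{\{|\nabla u|=0\}}|\H{u_n}|^2\,\d\mm\to 0$; combined with $|\nabla|\nabla u_n||\le|\H{u_n}|$, the contribution of this set to both integrals vanishes. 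On $\{|\nabla u|>0\}$, I invoke the pointwise identity $\nabla|\nabla u_n|=\H{u_n}(\nabla u_n/|\nabla u_n|)$ coming from \eqref{eq:musical} (valid $\mm$-a.e.\ on $\{|\nabla u_n|>0\}$), pass to a subsequence with $|\nabla u_n-\nabla u|\to 0$ and $|\H{u_n}-\H u|\to 0$ $\mm$-a.e., and use the polarization identity
$$\left|\tfrac{\nabla u_n}{|\nabla u_n|}-\tfrac{\nabla u}{|\nabla u|}\right|^2=\frac{|\nabla u_n-\nabla u|^2-(|\nabla u_n|-|\nabla u|)^2}{|\nabla u_n|\,|\nabla u|}$$
to deduce that the unit directions also converge $\mm$-a.e.\ on $\{|\nabla u|>0\}$. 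The triangle bound
$$\left|\H{u_n}\!\left(\tfrac{\nabla u_n}{|\nabla u_n|}\right)-\H u\!\left(\tfrac{\nabla u}{|\nabla u|}\right)\right|\le|\H{u_n}-\H u|+|\H u|\cdot\left|\tfrac{\nabla u_n}{|\nabla u_n|}-\tfrac{\nabla u}{|\nabla u|}\right|$$
then yields $\mm$-a.e.\ convergence of the integrands $|\nabla|\nabla u_n||^2$. Since $|\H{u_n}|^2\to|\H u|^2$ in $L^1(\mm)$ (direct from $\H{u_n}\to\H u$ in $L^2$) supplies a uniformly integrable majorant for $|\nabla|\nabla u_n||^2\le|\H{u_n}|^2$, Vitali's convergence theorem delivers the convergence of integrals on $\{|\nabla u|>0\}$, and adding the two contributions closes the argument.

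The hard part will be the careful handling of the possibly non-trivial zero set $\{|\nabla u|=0\}$ and the justification of pointwise convergence of the unit vector fields $\nabla u_n/|\nabla u_n|$ on its complement; both hinge on the locality of the Hessian and on the polarization identity displayed above. Everything else reduces to standard Hilbert-space arguments plus Vitali's convergence theorem.
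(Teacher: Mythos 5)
Your argument is correct, and it relies on the same key technical ingredients as the paper's proof: the identity $|\nabla u|\nabla|\nabla u|=\H u(\nabla u)$ from \eqref{eq:musical}, the locality of the Hessian on the set $\{|\nabla u|=0\}$, and reduction to $\mm$-a.e.\ convergence along a subsequence. The route is, however, a bit more roundabout. The paper targets strong $L^2$-convergence of $\nabla|\nabla u_n|$ directly: it estimates the difference $|\nabla(|\nabla u_n|-|\nabla u|)|$ pointwise via the Hessian-evaluated-at-unit-directions representation (handling the zero-gradient sets with characteristic functions and locality in a single display) and then applies dominated convergence, using the classical observation that $L^2$-convergence of $|\H{u_n}|$ furnishes a single majorant $g\in L^2(\mm)$ along a subsequence. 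You instead establish weak convergence of $\nabla|\nabla u_n|$ from boundedness and reflexivity, and upgrade to strong convergence by proving norm convergence, splitting the domain into $\{|\nabla u|=0\}$ and $\{|\nabla u|>0\}$ and invoking a generalized dominated convergence theorem (what you call Vitali) with the $L^1$-convergent majorant $|\H{u_n}|^2$ on the latter. Your polarization identity for the unit directions is a pleasant alternative to the paper's cruder triangle-inequality bound; the paper's approach avoids the weak-convergence detour and is slightly leaner. One small caution: since $\rcd(K,\infty)$ spaces need not have finite measure, the Vitali step must be read as requiring both uniform integrability and tightness; these do follow from $L^1$-convergence of the majorants $|\H{u_n}|^2$, but it is cleanest to invoke the generalized dominated convergence (Pratt) lemma directly, which is what your $\int g_n\to\int g$ observation is really delivering. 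With that understanding, the proof is complete.
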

\begin{proof}
	Fix an arbitrary subsequence, still denoted by $u_n$. To conclude it is enough to show that there exists a further subsequence such that $|\nabla u_n|\to |\nabla u|$ in $\W(\X).$  Up to passing to a (non-relabelled) subsequence we can assume that $|\nabla u_n-\nabla u|\to 0$ $\mea$-a.e.\ and $|\H {u-u_n}|\to 0$ $\mea$-a.e..
	We need to prove that 
	\[
	\int |\nabla (|\nabla u_n|-|\nabla u|)|^2\d \mm\to 0, \quad \text{as $n\to +\infty$}.
	\]
	The goal is to apply the dominated convergence theorem. First observe that  $|\nabla (|\nabla u_n|-|\nabla u|)|^2\le 2|\H u|^2+2|\H {u_n}|^2$ and that since $|\H u_n|$ are convergent in $L^2(\mm)$, up to a subsequence, it holds that  $|\H {u_n}|\le g$ for some $g \in L^2(\mm)$. Hence to conclude it suffices to show
	\begin{equation}\label{eq:gradgradtozero}
		|\nabla (|\nabla u_n|-|\nabla u|)|\to 0 \quad \alme.
	\end{equation} 
	To see this we recall that by the second in \eqref{eq:musical} we have
	\[
	|\nabla (|\nabla u_n|-|\nabla u|)|=\left|\H{u_n}\left(\frac{\nchi_{A_n}\nabla u_n}{|\nabla u_n|}\right) - \H{u}\left(\frac{\nchi_{A_0}\nabla u}{|\nabla u|}\right)\right|, \quad \alme,
	\]
	where $A_n\coloneqq \{|\nabla u_n|>0\}$ and ${A_0}=\{|\nabla u|>0\}$.
	We then proceed to estimate the above expression as follows
	\begin{equation}
		\begin{split}
			&\left|\H{u_n}\left(\frac{\nchi_{A_n}\nabla u_n}{|\nabla u_n|}\right) - \H{u}\left(\frac{\nchi_{A_0}\nabla u}{|\nabla u|}\right)\right|
			\\&\qquad=\left|\H{u_n}\left(\frac{\nchi_{A_n}\nabla u_n}{|\nabla u_n|}\pm\frac{\nchi_{A_0}\nabla u}{|\nabla u|}\right) - \H{u}\left(\frac{\nchi_{A_0}\nabla u}{|\nabla u|}\right) \right|\\
			&\qquad\le \big|\H{u_n}|\left(\frac{|\nabla u-\nabla u_n|}{|\nabla u|}\nchi_{A_n\cap A_0} + \nchi_{\X\setminus A_0} \right)+ |\H{u-u_n}|,\qquad \alme,
		\end{split}
	\end{equation}
	where in the last step we used that  $|\H {u_n}|=0$ $\mea$-a.e.\ in $\X\setminus A_n$ by the locality of the Hessian. Moreover, again by the locality of the Hessian 
		(see \cite[Proposition 3.4.9]{Gigli14}) 
		we have
		\begin{equation}\label{eq:hessian collapsing}
			|\H{u_n}|=|\H{u_n}-\H{u}|\to 0,\quad  \alme \text{ in $\X\setminus A_0$}.
		\end{equation}
		From this and recalling that $|\nabla u_n-\nabla u|\to 0$ $\mea$-a.e.\ we obtain \eqref{eq:gradgradtozero}, which concludes the proof of the lemma.
	\end{proof}
	
	\begin{proof}[Proof of Lemma \ref{lem:test dense}]
		The existence of a sequence $(u_n)\subset \test(\X)$ such that $\Delta u_n\to \Delta u$ and $u_n \to u $ in $\W(\X)$ is proved in \cite[Lemma 2.2]{H19}. This implies that $u_n \to u $ in $H^{2,2}(\X)$ by \cite[Proposition 3.3.9]{Gigli14}. The fact that $|\nabla u_n|\to |\nabla u|$ in $\W(\X)$ then follows from Lemma \ref{lem:rly}
	\end{proof}

	We introduce the improved Bochner inequality proved in \cite{Han14} (see \cite{Gigli14} for the case $N=\infty$).
	\begin{theorem}[Improved Bochner-inequality]\label{thm:improved bochner}
		Let $\Xdm$ be any ${\rm RCD}(K,N)$  space with $K\in \rr$ and $N \in [1,\infty]$. Then for any $f \in \test(\X)$ it holds that $|\nabla f|^2\in \W(\X)$ and
		\begin{equation}\label{eq:improvedboch}
			-\int \left\langle \nabla \phi ,\frac{\nabla |\nabla f|^2}{2} \right\rangle\d\mea\ge \begin{multlined}[t][.6\textwidth]\int \left(|\H f|^2+\frac{(\Delta f-{\rm tr}\H f)^2}{N-\dim(\X)} \right.\\\left.\vphantom{\displaystyle\frac{(\Delta f-{\rm tr}\H f)^2}{N-\dim(\X)} }+\la \nabla f, \nabla \Delta f\ra +K|\nabla f|^2 \right)\phi \d \mea, 
			\end{multlined}
		\end{equation}
		for any $ \phi \in \W\cap L^\infty_+(\X)$ with bounded support, where the term containing $\dim(\X)$ is taken to be 0 if $\dim(\X)=N$ or $N=\infty$. 
	\end{theorem}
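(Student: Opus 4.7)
The statement is the improved Bochner inequality of Han \cite{Han14} (with the case $N=\infty$, due to Gigli \cite{Gigli14}, corresponding to the convention that the term in $N-\dim(\X)$ vanishes), so the plan is to sketch how the arguments of those papers go rather than re-derive everything from scratch. The starting point is the basic weak Bochner inequality built into the definition of $\rcd(K,N)$:
\[
-\int \bigl\la \nabla \phi, \tfrac{1}{2}\nabla |\nabla f|^2 \bigr\ra \d\mm \ge \int \Bigl(\tfrac{(\Delta f)^2}{N} + \la \nabla f, \nabla \Delta f\ra + K|\nabla f|^2\Bigr) \phi \d\mm,
\]
valid for all $f \in \test(\X)$ and all admissible $\phi$. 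Since the right-hand side, as a linear functional of $\phi$, is continuous in $W^{1,2}(\X)$, this already implies that $|\nabla f|^2$ belongs to $W^{1,2}(\X)$.

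The central step is the Bakry-Émery-Savaré self-improvement carried out in the $\rcd$ setting in \cite{Savare13,Gigli14}: one applies the above inequality to suitable polynomial combinations of several test functions, expands, and uses the $L^0$-module structure of the tangent module to extract a pointwise quadratic form whose Cauchy-Schwarz lower bound reproduces the Hilbert-Schmidt norm of the Hessian. This upgrades the scalar term $(\Delta f)^2/N$ (which is the trivial Cauchy-Schwarz lower bound for $|\H f|^2$ coming from the trace) to the full $|\H f|^2$, yielding the desired inequality in the case $N = \infty$, where by convention there is nothing further to prove.

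For the finite-dimensional sharpening one observes that $\H f$ is a symmetric bilinear form on the $\dim(\X)$-dimensional tangent module and therefore $(\tr \H f)^2 \le \dim(\X)\,|\H f|^2$ by Cauchy-Schwarz, while the discrepancy $\Delta f - \tr \H f$ encodes the contribution of the ``missing'' $N - \dim(\X)$ dimensions allowed by $\rcd(K,N)$. Combining the two already established Bochner inequalities (with $|\H f|^2$ and with $(\Delta f)^2/N$) and invoking the elementary identity $\tfrac{a^2}{n_1}+\tfrac{b^2}{n_2}\ge \tfrac{(a+b)^2}{n_1+n_2}$ with $a=\tr\H f$, $b=\Delta f-\tr\H f$, $n_1=\dim(\X)$ and $n_2=N-\dim(\X)$, one recovers exactly the sharper bound $|\H f|^2 + (\Delta f - \tr \H f)^2/(N - \dim(\X))$, which is the content of \cite[Thm.~4.3]{Han14}. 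When $\dim(\X) = N$ the identity \eqref{eq:tr=lapl} forces $\tr \H f = \Delta f$ and the extra term vanishes, consistently with the convention. The main technical obstacle in this program is the self-improvement step, which is delicate and requires the full $L^0$-module calculus to justify the pointwise manipulation of the Bochner inequality; once this is in place, the finite-dimensional refinement is essentially linear algebra combined with a pointwise optimization.
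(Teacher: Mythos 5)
The paper does not prove this statement; it cites \cite{Han14} (and \cite{Gigli14} for $N=\infty$), so the comparison is really against those references. Your sketch of the Savar\'e-type self-improvement for the $N=\infty$ case is in the right spirit, but the claimed reduction of the finite-dimensional statement to the $N=\infty$ statement plus the elementary identity $\frac{a^2}{n_1}+\frac{b^2}{n_2}\ge\frac{(a+b)^2}{n_1+n_2}$ has the logic reversed and does not work.

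Concretely: with $a=\tr\H f$, $b=\Delta f-\tr\H f$, $n_1=\dim\X$, $n_2=N-\dim\X$ the elementary inequality (together with the Cauchy--Schwarz bound $|\H f|^2\ge(\tr\H f)^2/\dim\X$) shows
\[
|\H f|^2+\frac{(\Delta f-\tr\H f)^2}{N-\dim\X}\ \ge\ \frac{(\tr\H f)^2}{\dim\X}+\frac{(\Delta f-\tr\H f)^2}{N-\dim\X}\ \ge\ \frac{(\Delta f)^2}{N}.
\]
That is, the improved Bochner bound \emph{implies} the basic $\mathrm{RCD}(K,N)$ Bochner bound; it is strictly stronger, and hence cannot be deduced from it (nor from it together with the $N=\infty$ self-improved inequality). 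Taking convex combinations of the two lower bounds $|\H f|^2$ and $(\Delta f)^2/N$ only gives $\max\bigl(|\H f|^2,(\Delta f)^2/N\bigr)$, which in general is strictly less than $|\H f|^2+(\Delta f-\tr\H f)^2/(N-\dim\X)$. For instance, if $\dim\X=2$, $N=3$, $\H f=\tfrac12 \id$ (so $|\H f|^2=\tfrac12$, $\tr\H f=1$) and $\Delta f=2$, the improved bound equals $3/2$ while the max of the two partial bounds is $4/3$. In \cite{Han14} the dimensional term is obtained not by recombining weaker inequalities but by running the Bakry--\'Emery self-improvement with the $\tfrac1N(\Delta f)^2$ term carried through, i.e.\ by applying the full $N$-Bochner inequality to polynomial combinations of test functions and extracting the correct quadratic form, which produces the $(\Delta f-\tr\H f)^2/(N-\dim\X)$ defect directly. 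That refined self-improvement step is the missing ingredient in your argument.
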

	
	From the previous result, we immediately obtain the following inequality, which will play a key role in the note (see also \cite[Corollary 3.3.9]{Gigli14} for the case $N=\infty$).
	\begin{cor}
		Let $\Xdm$ be a bounded ${\rm RCD}(K,N)$  space with $K\in \rr$ and $N \in [1,\infty]$. Then 
		\begin{equation}\label{eq:talenti}
			\int \left(|\H f|^2+\frac{(\Delta f-{\rm tr}\H f)^2}{N-\dim(\X)}\right)\d \mm \le (1+K^{-}\lambda_1(\X))\int (\Delta u)^2\d \mm, \quad \forall u \in \dom(\Delta),
		\end{equation}
		where the term containing $\dim(\X)$ is taken to be 0 if $\dim(\X)=N$ or $N=\infty$. 
	\end{cor}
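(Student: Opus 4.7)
The plan is to apply the improved Bochner inequality from Theorem \ref{thm:improved bochner} with the trivial test function $\phi \equiv 1$, exploiting the fact that $\X$ is bounded so that the constant function 1 belongs to $\W \cap L^\infty_+(\X)$ with bounded support. For a function $u \in \test(\X)$ (so the hypotheses of the improved Bochner inequality are satisfied), taking $\phi = 1$ makes the left-hand side of \eqref{eq:improvedboch} vanish, and we obtain
\begin{equation*}
0 \ge \int \left( |\H u|^2 + \frac{(\Delta u - \tr \H u)^2}{N - \dim(\X)} + \langle \nabla u, \nabla \Delta u\rangle + K |\nabla u|^2 \right) \d\mm.
\end{equation*}

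Next, I would integrate by parts the cross term: since $u \in \test(\X)$ implies $\Delta u \in \W(\X)$, one has $\int \langle \nabla u, \nabla \Delta u\rangle \d\mm = -\int (\Delta u)^2 \d\mm$. The term $K|\nabla u|^2$ is bounded below by $-K^- |\nabla u|^2$, and then the Poincar\'e-type estimate \eqref{eq:Laplacian poincare} gives $\int |\nabla u|^2 \d\mm \le \lambda_1(\X)^{-1} \int (\Delta u)^2 \d\mm$. Rearranging yields
\begin{equation*}
\int \left( |\H u|^2 + \frac{(\Delta u - \tr \H u)^2}{N - \dim(\X)} \right)\d\mm \le \left(1 + \frac{K^-}{\lambda_1(\X)}\right) \int (\Delta u)^2 \d\mm,
\end{equation*}
which is the desired inequality (the statement as written appears to have a transposed $\lambda_1$, but the argument produces the form compatible with \eqref{eq:Laplacian poincare} and with the scale invariance of $\delta_\X$).

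To upgrade from $u \in \test(\X)$ to a general $u \in \dom(\Delta)$, I would invoke the approximation Lemma \ref{lem:test dense} to produce a sequence $(u_n) \subset \test(\X)$ with $\Delta u_n \to \Delta u$ in $L^2(\mm)$ and $u_n \to u$ in $H^{2,2}(\X)$. The $H^{2,2}$-convergence immediately gives $|\H {u_n}| \to |\H u|$ in $L^2(\mm)$ and, since $|\tr \H v| \le \sqrt{\dim(\X)}\,|\H v|$, also $\tr \H {u_n} \to \tr \H u$ in $L^2(\mm)$; hence $(\Delta u_n - \tr \H{u_n})^2 \to (\Delta u - \tr \H u)^2$ in $L^1(\mm)$. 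Applying the inequality above to each $u_n$ and passing to the limit completes the proof.

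I do not expect a genuine obstacle here: the Bochner inequality provides the pointwise estimate, integration by parts plus the spectral gap convert $\int |\nabla u|^2$ into $\int (\Delta u)^2$, and the approximation lemma handles the density. The only point requiring a minimum of care is verifying that constants are admissible test functions in \eqref{eq:improvedboch} under $\diam(\X) < \infty$, and justifying the strong $L^2$-convergence of $\tr \H{u_n}$, both of which are straightforward consequences of what is already recorded in \textsection\ref{sec:rcd}.
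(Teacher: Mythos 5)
Your proof is correct and follows essentially the same route as the paper's (which is terse: "Combining \eqref{eq:improvedboch} with $\phi=1$ and \eqref{eq:Laplacian poincare} ... then ... by the approximation using Lemma \ref{lem:test dense} ... observing that $|\tr \H g|^2\le \dim(\X)|\H g|^2$"). Taking $\phi\equiv 1$, integrating by parts, using $K|\nabla u|^2\ge -K^-|\nabla u|^2$, applying \eqref{eq:Laplacian poincare}, and upgrading from $\test(\X)$ to $\dom(\Delta)$ via Lemma \ref{lem:test dense} together with $|\tr\H g|^2\le\dim(\X)|\H g|^2$ are exactly the paper's steps, spelled out. Your parenthetical observation about the constant is also well taken: as written, \eqref{eq:Laplacian poincare} gives $\int|\nabla u|^2\le\lambda_1(\X)^{-1}\int(\Delta u)^2$, so the argument yields the prefactor $1+K^-/\lambda_1(\X)$ (which is both scale-invariant and dimensionally consistent), while the paper's displayed constant $1+K^-\lambda_1(\X)$ — which also appears in Definition \ref{def:regularity interval} and in the proof of Proposition \ref{prop:step 1} — is a consistent notational slip. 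Your derivation produces the mathematically correct quantity.
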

	\begin{proof}
		Combining \eqref{eq:improvedboch} with $\phi=1$ and \eqref{eq:Laplacian poincare}, we obtain that \eqref{eq:talenti} holds for all $f \in \test(\X).$ The validity for a general $f \in \dom(\Delta)$ then follows by the approximation using Lemma \ref{lem:test dense} and in the case $N<\infty$ observing that by definition  $|\tr \H g|^2\le \dim(\X)|\H g|^2$ for every $g \in W^{2,2}(\X).$ 
	\end{proof}
	
	We now report a version of the Leibniz rule for the covariant derivative.
	\begin{lemma}\label{lem:H12sobolev}
		Let $\Xdm$ be an $\rcd(K,\infty)$ space. Then for every $v \in H^{1,2}_C(T\X)$ and $f \in L^\infty\cap W^{1,2}(\X)$ it holds that $fv\in H^{1,2}_C(T\X)$ and
		\begin{equation}
			\nabla(fv)=f \nabla v + \nabla f \otimes v.
		\end{equation}
	\end{lemma}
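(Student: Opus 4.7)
The plan is to prove the Leibniz formula by approximation from the case of test objects, exploiting the closability of the covariant derivative $\nabla:H^{1,2}_C(T\X)\to L^2(T^{\otimes 2}\X)$, which is built into the very definition of $H^{1,2}_C$ as the closure of the test vector fields in $W^{1,2}_C$. The argument then proceeds in two successive extension steps.

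First I would verify the identity in the fully test case. If $f\in \test(\X)$ and $v=\sum_{i=1}^k g_i\nabla h_i$ is a test vector field (with $g_i,h_i\in \test(\X)$), then the product $fv=\sum_i (fg_i)\nabla h_i$ is again a test vector field, and the basic formula $\nabla(g\nabla h)=\nabla g\otimes\nabla h+g\,\H{h}$ from Gigli's second-order calculus, combined with the scalar Leibniz rule $\nabla(fg_i)=f\nabla g_i+g_i\nabla f$, recovers $\nabla(fv)=f\nabla v+\nabla f\otimes v$ by linearity.

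Next I would extend from test $f$ to a general $f\in L^\infty\cap \W(\X)$ while keeping $v$ test (so that $|v|,|\nabla v|\in L^\infty$). I would pick $f_n\in \test(\X)$ with $f_n\to f$ in $\W(\X)$ and $\|f_n\|_{L^\infty}\le \|f\|_{L^\infty}$, as available through heat-semigroup mollification in $\rcd$ spaces, which preserves $L^\infty$-norms. The $L^\infty$-bounds on $v$ and $\nabla v$ promote the convergences $f_n v\to fv$, $f_n\nabla v\to f\nabla v$, and $\nabla f_n\otimes v\to \nabla f\otimes v$ all the way to $L^2$. Together with the identity from the first step and the closedness of $\nabla$, this yields $fv\in H^{1,2}_C(T\X)$ with the claimed formula.

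Finally I would remove the assumption that $v$ is test, approximating $v\in H^{1,2}_C(T\X)$ by test vector fields $v_n\to v$ in $W^{1,2}_C$. The previous step gives $\nabla(fv_n)=f\nabla v_n+\nabla f\otimes v_n$, and the $L^\infty$-bound on $f$ makes $fv_n\to fv$ and $f\nabla v_n\to f\nabla v$ converge in $L^2$. The main obstacle is the $L^2$-convergence of the tensor term $\nabla f\otimes v_n\to \nabla f\otimes v$: since $\nabla f\in L^2$ and $v_n\to v$ only in $L^2$ (without uniform $L^\infty$-control), the pointwise product $|\nabla f||v_n-v|$ is a priori only in $L^1$. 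The natural resolution is to refine the approximation so that $|v_n|$ is controlled pointwise by a majorant compatible with $|\nabla f|$—for instance by replacing $v_n$ with the module-truncation $v_n\,\min(1,|v|/|v_n|)$, which equals $v_n$ on $\{|v_n|\le |v|\}$ and has pointwise norm bounded by $|v|$ elsewhere, and then diagonalizing—so that dominated convergence forces $L^2$-convergence of the tensor term; note that this is precisely the regime in which $\nabla f\otimes v$ lies in $L^2(T^{\otimes 2}\X)$, so the identity is well-posed. A final appeal to the closedness of $\nabla$ then delivers the Leibniz rule for the general pair $(f,v)$.
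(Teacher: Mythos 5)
Your overall strategy is a self-contained build-up, whereas the paper simply invokes a result from the literature (Lemma~2.17 of Antonelli--Pasqualetto--Pozzetta, which already covers \emph{general} $v\in H^{1,2}_C(T\X)$ for $f\in \LIP_{bs}(\X)$) and then performs a single approximation in the scalar factor $f$ by uniformly bounded Lipschitz functions. Your first two steps (algebraic Leibniz rule on test objects, then mollification of $f$ via the heat flow while keeping $v$ test) are a perfectly sound way of reproducing that cited lemma from scratch; they cost more work but are otherwise fine. A small imprecision: for a test vector field $v$ one has $|v|\in L^\infty$, but \emph{not} in general $|\nabla v|\in L^\infty$ (the Hessian of a test function is only in $L^2$). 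This does not break Step 2, since the convergence $f_n\nabla v\to f\nabla v$ in $L^2$ follows from dominated convergence using only $|f_n|\le\|f\|_\infty$ and $|\nabla v|\in L^2$, but the parenthetical claim should be corrected.

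The genuine gap is in Step 3, the removal of the assumption that $v$ is test. You correctly identify the obstruction ($\nabla f\otimes v_n\to\nabla f\otimes v$ is only $L^1$-convergent when $\nabla f\in L^2$ and $v_n\to v$ in $L^2$), but the proposed fix does not work as written. Replacing $v_n$ by the module truncation $\tilde v_n:=v_n\,\min(1,|v|/|v_n|)$ does give $|\tilde v_n|\le|v|$ and $|\tilde v_n-v|\le 2|v_n-v|$, so dominated convergence would indeed handle $\nabla f\otimes\tilde v_n$ provided $|\nabla f||v|\in L^2$. However, $\tilde v_n$ is obtained by pointwise multiplication by the merely measurable $L^\infty$-function $\min(1,|v|/|v_n|)$, which is neither Lipschitz nor in $W^{1,2}$ and in particular need not preserve $H^{1,2}_C$; there is no control on $\nabla\tilde v_n$ and no reason $\tilde v_n$ should be a test vector field. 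Consequently Step 2 cannot be applied to the pair $(f,\tilde v_n)$, and the closedness of $\nabla$ has nothing to bite on -- the argument is circular. The paper's route sidesteps this entirely: because the cited lemma is stated for all $v\in H^{1,2}_C(T\X)$, one never needs to approximate in $v$ at all, only in $f$. If you want a self-contained proof you should mimic that structure, first establishing the result for $f\in\LIP_{bs}(\X)$ and arbitrary $v\in H^{1,2}_C(T\X)$ (there $|\nabla f|\in L^\infty$ and compactly supported, so the tensor term poses no difficulty for approximating $v$ by test vector fields), and then approximating $f$. Note also that in both routes the conclusion $fv\in H^{1,2}_C(T\X)$ only makes sense when $\nabla f\otimes v\in L^2(T^{\otimes 2}\X)$; this is automatic when $f$ is Lipschitz or $v\in L^\infty(T\X)$, and holds in all of the paper's applications, but it is not a formal consequence of $f\in L^\infty\cap W^{1,2}$ and $v\in H^{1,2}_C$ alone.
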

	\begin{proof}
		The case $f \in \LIP_{bs}(\X)$ is proved in \cite[Lemma 2.17]{AntPasPoz21}. The general case follows by approximation in $W^{1,2}(\X)$ by a sequence of functions in $\LIP_{bs}(\X)$ and uniformly bounded in $L^\infty(\mm)$.
	\end{proof}
	
	We conclude by introducing the space of vector fields with local covariant derivative in $L^2$.
	\begin{definition}[Local covariant derivative]\label{def:local covariant}
		Let $\Xdm$ be an $\rcd(K,\infty)$ space and $\Omega\subset \X$ be open. We define the space 
		\begin{equation}\label{eq:def loc covariant}
			H^{1,2}_{C,\loc}(T\X;\Omega)\coloneqq \{ v \in L^0(T\X)\restr{\Omega} \ : \ \eta v\in H^{1,2}_C(T\X),\, \forall \, \eta \in \LIP_{bs}(\Omega)\}.
		\end{equation}
		Moreover, for every $v \in H^{1,2}_{C,\loc}(T\X;\Omega)$ we can define $|\nabla v|\in L^2_\loc(\Omega)$ as follows: for every open set $\Omega'\subset \Omega$ such that $\overline \Omega'\subset \Omega$ we define
		\[
		\nchi_{\Omega'}|\nabla v|\coloneqq \nchi_{\Omega'}	|\eta \nabla v|,\quad \alme,
		\]
		where $\eta\in \LIP_{bs}(\Omega)$ and $\eta=1$ in $\Omega'$. This definition is well-posed thanks to the locality property of the covariant derivative (see \cite[Proposition 3.4.9]{Gigli14}).
	\end{definition}
	It also holds that
	\begin{equation}\label{eq:grad grad local}
		v\in H^{1,2}_{C,\loc}(T\X;\Omega)\implies |v|\in W^{1,2}_\loc(\Omega).
	\end{equation}
	Indeed for every $\eta \in \LIP_{bs}(\Omega)$ we have $\eta|v|=|\eta^+v|-|\eta^-v|\in W^{1,2}(\X)$, since $\eta^+,\eta^-\in \LIP_{bs}(\X)$ and by the first part of Proposition \ref{prop:gradgrad}.
	
	The above definition is motivated by the following result.
	\begin{lemma}\label{lem:lsc local energy}
		Let $\Xdm$ be an $\rcd(K,\infty)$ space and $\Omega\subset \X$ be open. Suppose that the sequence $v_n \in H^{1,2}_C(T\X)$ satisfies 
		\begin{equation}\label{eq:finite covariant energy}
			\sup_n \int_{\Omega} |v_n|^2+|\nabla v_n|^2 \d \mm <+\infty,\quad |v_n-v|\to0\quad \alme, 
		\end{equation}
		for some $v \in L^0(T\X)\restr\Omega.$ Then $v \in H^{1,2}_{C,\loc}(T\X;\Omega)$ and for every $B_{2R}(x)\subset \Omega$ it holds
		\begin{equation}\label{eq:liminf convariant energy}
			\int_{B_R(x)} |v|^2+|\nabla v|^2 \d \mm \le 4\liminf_n \int_{B_{2R}(x)} (1+R^{-1})^{2}|v_n|^2+|\nabla v_n|^2 \d \mm.
		\end{equation}
	\end{lemma}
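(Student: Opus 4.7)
The natural strategy is to reduce the local statement to a global one by multiplying with Lipschitz cut-offs and then exploiting weak compactness in the Hilbert space $H^{1,2}_C(T\X)$. Fix $\eta\in\LIP_{bs}(\Omega)$ and set $w_n\coloneqq \eta v_n$. By Lemma \ref{lem:H12sobolev} each $w_n$ belongs to $H^{1,2}_C(T\X)$, with
\[
\nabla w_n=\eta\,\nabla v_n+\nabla\eta\otimes v_n,\qquad |\nabla w_n|^2\le 2\eta^2|\nabla v_n|^2+2|\nabla\eta|^2|v_n|^2.
\]
Integrating this pointwise bound, and using that $w_n$ is supported on $\supp\eta\subset\Omega$ together with the uniform bound in \eqref{eq:finite covariant energy}, shows that $(w_n)$ is bounded in $H^{1,2}_C(T\X)$.

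Since $H^{1,2}_C(T\X)$ is a closed subspace of the Hilbert space $W^{1,2}_C(T\X)$, up to a non-relabelled subsequence one has $w_n\weakto w$ weakly in $H^{1,2}_C(T\X)$. To identify $w$, I would invoke Mazur's lemma: suitable convex combinations of $(w_n)$ converge strongly in $L^2(T\X)$ to $w$, hence $\mm$-a.e.\ along a further subsequence. On the other hand the hypothesis $|v_n-v|\to 0$ $\mm$-a.e.\ gives $w_n\to\eta v$ $\mm$-a.e., which is inherited by every convex combination. Consequently $w=\eta v$ $\mm$-a.e., so that $\eta v\in H^{1,2}_C(T\X)$. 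As $\eta\in\LIP_{bs}(\Omega)$ was arbitrary, this gives $v\in H^{1,2}_{C,\loc}(T\X;\Omega)$.

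For the quantitative estimate \eqref{eq:liminf convariant energy}, I would choose the explicit cut-off $\eta(y)\coloneqq \min\{1,(2-\sfd(y,x)/R)^+\}$, which satisfies $0\le\eta\le 1$, $\eta\equiv 1$ on $B_R(x)$, $\supp\eta\subset\overline{B_{2R}(x)}\subset\Omega$, and $\Lip(\eta)\le R^{-1}$. By Definition \ref{def:local covariant} and the locality of the covariant derivative, since $\eta=1$ on $B_R(x)$ one has $\nchi_{B_R(x)}|\nabla v|=\nchi_{B_R(x)}|\nabla(\eta v)|$, and therefore
\[
\int_{B_R(x)}|v|^2+|\nabla v|^2\,\d\mm\le \int|\eta v|^2+|\nabla(\eta v)|^2\,\d\mm.
\]
By lower semicontinuity of the $H^{1,2}_C$-norm under the weak convergence $\eta v_n\weakto \eta v$, the right-hand side is bounded above by $\liminf_n\int|\eta v_n|^2+|\nabla(\eta v_n)|^2\,\d\mm$. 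An elementary estimate based on the pointwise Leibniz bound above and the inequality $2|\nabla v_n|^2+(1+2R^{-2})|v_n|^2\le 4|\nabla v_n|^2+4(1+R^{-1})^2|v_n|^2$ then yields exactly the constants appearing in \eqref{eq:liminf convariant energy}.

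The main obstacle is, in essence, only the identification of the weak limit, which is handled by the Mazur argument combined with the a.e.\ convergence hypothesis; the rest is standard cut-off calculus and lower semicontinuity, with the specific cut-off tuned to produce the sharp-looking constants on the right-hand side of \eqref{eq:liminf convariant energy}.
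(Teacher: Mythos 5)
Your proof is correct and follows essentially the same approach as the paper: multiply by a Lipschitz cut-off, obtain uniform bounds via Lemma \ref{lem:H12sobolev}, extract a weak limit, identify it through Mazur's lemma combined with the $\mm$-a.e.\ convergence hypothesis, and conclude by lower semicontinuity; the only cosmetic difference is that the paper first extracts weak convergence in $L^2(T\X)$ and then appeals to closability of the covariant derivative, whereas you extract weak convergence directly in $H^{1,2}_C(T\X)$, which is equivalent. One very minor imprecision: for your explicit cut-off $\eta(y)=\min\{1,(2-\sfd(y,x)/R)^+\}$ the support is $\overline{B_{2R}(x)}$, which need not be contained in $\Omega$ (the hypothesis gives only $B_{2R}(x)\subset\Omega$), so strictly speaking $\eta\notin\LIP_{bs}(\Omega)$; this is repaired by taking $\eta$ supported in $B_{(2-\delta)R}(x)$ with $\Lip(\eta)\le((1-\delta/2)R)^{-1}$ and sending $\delta\to 0$, which leaves the constants in \eqref{eq:liminf convariant energy} unaffected.
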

	\begin{proof}
		Fix $\eta \in \LIP_{bs}(\Omega)$. Set $w_n\coloneqq \eta v_n \in L^2(T\X)$. By \eqref{eq:finite covariant energy} we have that the sequence $(w_n)$ is bounded in $L^2(T\X)$. Hence, up to a subsequence, it converges weakly in $L^2(T\X)$ to some $w \in L^2(T\X).$ Applying Mazur's lemma, for every $n \in \nn$ we can find $N_n\in \nn$, $N_n\ge n$ and numbers $\{a_{k,n}\}_{k=n}^{N_n}\subset[0,1]$ satisfying $\sum_{k=1}^{N_n} a_{k,n}=1$ and such that $W_n\coloneqq \sum_{k=n}^{N_n} a_{k,n} w_n\to w$ in $L^2(T\X).$ In particular $|W_n-w|\to 0$ $\mea$-a.e.. However, by the second in \eqref{eq:finite covariant energy} it also holds that $|W_n-\eta v|\to 0$ $\mea$-a.e., which shows that $w=\eta v.$ Next, by Lemma \ref{lem:H12sobolev} we have $w_n=\eta v_n\in H^{1,2}_C(T\X)$  for every $n \in \nn$ with $\nabla w_n =\nabla \eta \otimes v_n+\eta \nabla v_n$. Therefore also $W_n \in  H^{1,2}_C(T\X)$  for every $n \in \nn$ and
		\begin{align*}
			\sqrt{\Vert|W_n|\Vert_{L^2(\mm)}^2+\||\nabla W_n|\|_{L^2(\mm)}^2}&\le \sum_{k=n}^{N_n} a_{k,n} \||w_n|\|_{L^2(T\X)}+  \sum_{k=n}^{N_n} a_{k,n} \||\nabla w_n|\|_{L^2(\mm)}\\
			&\le (\|\eta\|_\infty +\Lip(\eta)) \||v_n|\|_{L^2(\supp(\eta))} +   \|\eta\|_\infty \||\nabla v_n|\|_{L^2(\supp(\eta))}.
		\end{align*}
		This and the lower semicontinuity of $\||\nabla W_n|\|_{L^2(\mm)}$ under convergence in $L^2(T\X)$ (see \cite[Theorem 3.4.2]{Gigli14}) prove that $\eta v \in H^{1,2}_C(T\X)$ and by the arbitrariness of $\eta$ also that  $v \in H^{1,2}_{C,\loc}(T\X;\Omega)$. Finally \eqref{eq:liminf convariant energy} follows by taking $\eta$ such that $\eta=1$ in $B_R(x)$ with $\supp(\eta)\subset B_{2R}(x)$ and $\Lip(\eta)\le 2R^{-1}$.
	\end{proof}
	
	\begin{remark}\label{rmk:local to global}
		If $(\X,\sfd)$ is also proper and $v\in L^0(T\X)\restr\Omega$, for some $\Omega\subset \X$ open, satisfies $v \in H^{1,2}_{C,\loc}(T\X;B_r(x))$ for every $B_{2r}(x)\subset \Omega$, then $v \in  H^{1,2}_{C,\loc}(T\X;\Omega)$. The proof is a standard argument using partitions of unity (see e.g.\ the proof of \cite[Proposition\ 3.17]{Gigli-Mondino12}). \fr
	\end{remark}
	
	\subsection{Sobolev-Poincaré inequalities in RCD spaces}\label{sec:sobolev}
	
	Recall that any $\RCD(K,N)$ spaces satisfies a weak local   $(1,1)$-Poincar\'e inequality  with $C_P$ depending only on $N$ and $K$ \cite{Rajala12,Rajala12-2}. Moreover,
	by the Bishop-Gromov inequality \cite{Sturm06-2}, any $\rcd(K,N)$ space $\Xdm,$ with $N<\infty$, satisfies 
	\begin{equation}\label{eq:RCD doulbing}
		\frac{\mea(B_r(x))}{\mea(B_R(x))}\ge C_{R_0,K,N} \left(\frac{r}{R}\right)^N, \quad \forall x \in \X, \, \forall 0<r<R\le R_0,
	\end{equation}
	where  $C_{R_0,K,N}$ is a constant depending only on $R_0,K,N$. In particular, $\Xdm$ is PI space and by Theorem \ref{thm:improved poincaret} it supports a Sobolev inequality, which we report in the following statement. 
	\begin{prop}\label{prop:sobolev rcd}
		Let $\Xdm$ be an $\rcd(K,N)$ space with $N<\infty$. Then   for every $B_R(x)\subset\X$ with $R\le R_0$ inequalities \eqref{eq:improved poincaret} and \eqref{eq:local sobolev} hold with $s=N$, with the constant $C$ depending only on $K,\,N,\,p$ and $R_0$ and with $2\lambda$ replaced by  one. 
		
		Moreover for every $\delta>0$ it holds 
		\begin{equation}\label{eq:sobolev trick}
			\int_{B_R(x)} f^2 \d \mm\le \delta \int_{B_R(x)} R^2|\nabla f|^2 \d \mm  +  \frac{\tilde C\delta^{-\frac N2} }{\mea(B_R(x))} \left(\int_{B_R(x)} |f|\d \mm\right)^2, \quad \forall f \in W^{1,2}(\X),
		\end{equation}
		where $\tilde C$ is a constant depending only on $K,\,N$ and $R_0$.
	\end{prop}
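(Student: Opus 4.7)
My plan is as follows. The first claim is a direct invocation of Theorem \ref{thm:improved poincaret}: every $\rcd(K,N)$ space with $N<\infty$ supports a weak local $(1,1)$-Poincaré inequality with PI constants depending only on $K$ and $N$ (as recalled at the opening of this subsection), while Bishop--Gromov \eqref{eq:RCD doulbing} yields \eqref{eq:pi dimension} with $s=N$ and $c$ depending only on $K,N,R_0$. Thus $\Xdm$ is PI with parameters depending only on $K,N,R_0$, and since $\rcd$ spaces are length spaces---hence $1$-quasiconvex---the last sentence of Theorem \ref{thm:improved poincaret} lets me replace the enlargement factor $2\lambda$ by $1$.

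For \eqref{eq:sobolev trick} my strategy is a Gagliardo--Nirenberg-type interpolation. Setting $\bar f \coloneqq \fint_{B_R(x)} f\,\d\mm$, the orthogonality identity
\[
\fint_{B_R(x)} f^2\,\d\mm = \fint_{B_R(x)}(f-\bar f)^2\,\d\mm + \bar f^2
\]
together with the trivial bound $\bar f^2 \le (\fint_{B_R(x)} |f|\,\d\mm)^2$ reduces matters to controlling the oscillation term. For this I would apply Hölder's inequality in the probability measure $\mu \coloneqq \mea(B_R(x))^{-1}\mea\restr{B_R(x)}$, interpolating with parameter $\alpha = N/(N+2)$ between $L^1(\mu)$ and $L^{2^*}(\mu)$, where $2^* = 2N/(N-2)$, to obtain
\[
\|f-\bar f\|_{L^2(\mu)}^2 \le \|f-\bar f\|_{L^1(\mu)}^{4/(N+2)}\,\|f-\bar f\|_{L^{2^*}(\mu)}^{2N/(N+2)}.
\]
The $(2,2^*)$-Sobolev--Poincaré inequality from the first part bounds $\|f-\bar f\|_{L^{2^*}(\mu)} \le CR\|\nabla f\|_{L^2(\mu)}$, after which Young's inequality with conjugate exponents $p=(N+2)/N$ and $q=(N+2)/2$ (so that $q/p = N/2$) decouples the product into an $\epsilon R^2\|\nabla f\|_{L^2(\mu)}^2$ term plus a $C\epsilon^{-N/2}\|f\|_{L^1(\mu)}^2$ term. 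Relabelling $\epsilon$ as a fixed multiple of $\delta$ and multiplying through by $\mea(B_R(x))$ then yields \eqref{eq:sobolev trick}; it suffices to treat $\delta$ below a threshold depending only on $K,N,R_0$, since for larger $\delta$ the $(2,2)$-Poincaré inequality of the first part already subsumes the oscillation term.

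The only genuine subtlety---and the place I expect to have to be careful---is the borderline case $N\le 2$, where the exponent $2^*$ degenerates. This is routinely bypassed by replacing $N$ with any fixed $N'>\max(N,2)$ via the inclusion $\rcd(K,N)\subset \rcd(K,N')$; the final constants then depend only on $K,N,R_0$ as required.
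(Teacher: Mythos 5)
Your first part is identical to the paper's: invoke Bishop--Gromov for the volume exponent $s=N$, recall that $\rcd$ spaces support the $(1,1)$-Poincar\'e inequality with constants depending only on $K,N$, and use quasiconvexity (geodesicity) to drop the enlargement factor. No comment needed there.

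For \eqref{eq:sobolev trick} your route is a close variant of the paper's rather than identical. The paper interpolates $\|f\|_{L^2(\mu)}$ between $\|f\|_{L^1(\mu)}$ and $\|f\|_{L^{2^*}(\mu)}$ directly and then applies \eqref{eq:local sobolev}, which yields an extra $\|f\|_{L^2(\mu)}^2$ term on the right that is absorbed by taking $\delta$ small. You instead split $f$ into mean plus oscillation and interpolate only the oscillation, reaching for the Sobolev--Poincar\'e inequality \eqref{eq:improved poincaret} in place of \eqref{eq:local sobolev}. The underlying interpolation-plus-Young mechanism (with conjugate exponents $(N+2)/N$ and $(N+2)/2$, producing the $\delta^{-N/2}$ power) is the same. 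The bookkeeping is marginally different --- your route produces a stray $\bar f^2 \le (\fint|f|)^2$ term that needs absorbing into the $\delta^{-N/2}(\fint|f|)^2$ term for small $\delta$, while the paper's route needs to absorb a $\delta^2\fint|f|^2$ term --- but both require the same restriction to $\delta$ small, so neither gains in generality. You are right to flag the borderline $N\le 2$ case, which the paper's terse proof passes over; note however that replacing $N$ by $N'>\max(N,2)$ changes the exponent in the final estimate to $\delta^{-N'/2}$, which for small $\delta$ is a strictly weaker conclusion than $\delta^{-N/2}$, so this fix does not literally recover the stated inequality for $N<2$ (it is fine for the paper's actual uses, which all have $N\ge 2$). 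Finally, both your argument and the paper's establish \eqref{eq:sobolev trick} only for $\delta$ below a fixed threshold; the stated ``for every $\delta>0$'' is slightly loose (for $f$ constant and $\delta$ large the inequality fails), so your choice to treat $\delta$ small is the right call, not a defect in your proof.
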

	\begin{proof}
		From \eqref{eq:RCD doulbing} we have that \eqref{eq:pi dimension} holds with $s=N$ and with a constant $c$ again depending only on $N$ and $K$. In particular, $\Xdm$ is uniformly locally doubling with doubling constant $C_D$ depending only on $N$ and $K$. Moreover $\Xdm$ supports a weal local   $(1,1)$-Poincar\'e inequality as mentioned above. Hence the first part of the statement follows directly from Theorem \ref{thm:improved poincaret}. The fact that $2\lambda$ can be taken replaced by the constant one follows by the last part of Theorem \ref{thm:improved poincaret} and the fact that $(\X,\sfd)$ is geodesic (see \cite{Sturm06-2}).
		
		Finally inequality \eqref{eq:sobolev trick} follows from  \eqref{eq:local sobolev} with $p=2$ by using that
		$$ \left(\fint_{B_R(x)} f^{2} \d \mm \right)^\frac 1{2}\le \delta  \left(\fint_{B_R(x)} f^{2^*} \d \mm \right)^\frac 1{2^*}+\delta^{-N/2}\fint_{B_R(x)} |f| \d \mm $$ for every $\delta>0.$
	\end{proof}
	Inequality \eqref{eq:sobolev trick}, contained in the previous statement, was inspired by \cite[$(5.4)$]{CiaMaz18}.
	\medskip
	
	The next result is a slight variation of inequality \eqref{eq:improved poincaret}.
	\begin{lemma}
		Let $\Xdm$ be a bounded $\rcd(K,N)$ space, $N<\infty,$ and fix $p\in (1,N).$ Then for every $B_R(x)\subset \X$ with $R\le R_0$ there exists a constant $C>0$ depending only on $R_0,\,K,\,N$ and $p$ such that
		\begin{equation}\label{eq:sobolev-poincare median}
			\left(\fint_{B_R(x)}|f-m_f|^{p^*}\d \mm \right)^{\frac1{p^*}}\le C  \left(\fint_{B_R(x)}|\nabla f|^{p}\d \mm \right)^{\frac1{p}},\quad \forall \, f \in W^{1,p}(\X),
		\end{equation}
		where $p^*\coloneqq \frac{Np}{N-p}$ and $m_f$ is any median for $f$ in $B_R(x).$
	\end{lemma}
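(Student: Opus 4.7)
The plan is to deduce \eqref{eq:sobolev-poincare median} directly from the Sobolev-Poincaré inequality with average \eqref{eq:improved poincaret} (applied with $s=N$ and $2\lambda$ replaced by $1$, as granted by Proposition \ref{prop:sobolev rcd}), by comparing any median with the mean $f_{B_R(x)}$. Since the constant is allowed to depend on $R_0$ and $R\le R_0$, the factor $R$ appearing on the right-hand side of \eqref{eq:improved poincaret} is harmless and will be absorbed into $C$.

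First, I would control $|f_{B_R(x)}-m_f|$. By the defining property of the median (recalled in the excerpt), $m_f$ minimizes $c\mapsto \int_{B_R(x)}|f-c|\d\mm$, hence
\[
\fint_{B_R(x)}|f-m_f|\,\d\mm\;\le\; \fint_{B_R(x)}|f-f_{B_R(x)}|\,\d\mm.
\]
Jensen's inequality (or the triangle inequality for integrals) then yields
\[
|f_{B_R(x)}-m_f|=\Big|\fint_{B_R(x)}(f-m_f)\,\d\mm\Big|\le \fint_{B_R(x)}|f-m_f|\,\d\mm,
\]
and combining with Hölder's inequality and \eqref{eq:improved poincaret} I obtain
\[
|f_{B_R(x)}-m_f|\;\le\;\Big(\fint_{B_R(x)}|f-f_{B_R(x)}|^{p^*}\d\mm\Big)^{1/p^*}\le CR\Big(\fint_{B_R(x)}|\nabla f|^p\,\d\mm\Big)^{1/p}.
\]

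Then I would close the argument by the triangle inequality $|f-m_f|\le|f-f_{B_R(x)}|+|f_{B_R(x)}-m_f|$, which after taking the $L^{p^*}$ average gives
\[
\Big(\fint_{B_R(x)}|f-m_f|^{p^*}\d\mm\Big)^{1/p^*}\le \Big(\fint_{B_R(x)}|f-f_{B_R(x)}|^{p^*}\d\mm\Big)^{1/p^*}+|f_{B_R(x)}-m_f|.
\]
Both summands are now bounded by $CR\big(\fint_{B_R(x)}|\nabla f|^p\d\mm\big)^{1/p}$ via \eqref{eq:improved poincaret} and the previous step. Using $R\le R_0$ to absorb $R$ into the constant yields \eqref{eq:sobolev-poincare median} with $C$ depending only on $R_0,K,N,p$. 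No real obstacle is expected here: the reduction from the mean-based to the median-based Sobolev-Poincaré inequality is entirely formal and relies only on the minimization property of the median plus the standard inequality.
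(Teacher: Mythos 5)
Your proof is correct, and it takes a genuinely different route from the paper's. You compare the median directly to the mean: the minimization property of $m_f$ gives $\fint_B|f-m_f|\le\fint_B|f-f_B|$, hence $|f_B-m_f|\le\big(\fint_B|f-f_B|^{p^*}\big)^{1/p^*}$, and then the triangle inequality splits $\|f-m_f\|_{L^{p^*}}$ into two pieces that are each controlled by the mean-based Sobolev--Poincar\'e inequality \eqref{eq:improved poincaret} (with the same ball on both sides, as allowed by Proposition \ref{prop:sobolev rcd}). The paper instead decomposes $f-m_f=(f-m_f)^+-(f-m_f)^-$ and proves the auxiliary inequality \eqref{eq:pre median} valid for all $g$ whose support occupies at most half the measure of $B$; the defining property of the median ensures both $(f-m_f)^\pm$ satisfy this condition, and an absorption step based on $\mea(\{|g|>0\}\cap B)^{1-1/p^*}\le (1/2)^{1-1/p^*}<1$ closes the argument. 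Your route is arguably more elementary, using only the $L^1$-optimality of the median, while the paper's route yields an inequality \eqref{eq:pre median} for small-support functions that is interesting in its own right; both deliver the stated estimate with a constant depending only on $R_0,K,N,p$, after absorbing the factor $R\le R_0$.
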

	\begin{proof}
		Set $B\coloneqq B_R(x)$. Without loss of generality, we can assume that $\mea(B)=1.$
		It is sufficient to show that
		\begin{equation}\label{eq:pre median}
			\left(\int_{B}|g|^{p^*}\d \mm \right)^{\frac1{p^*}}\le \tilde C \left(\int_{B}|\nabla g|^{p}\d \mm \right)^{\frac1{p}},\quad \forall \, g\in W^{1,p}(\X) \text{ such that $\mea(\{|g|>0\}\cap B)\le 1/2$,}
		\end{equation}
		where $\tilde C>0$ is a constant depending only on $R_0,\,K,\,N$ and $p$. Indeed to conclude it would be enough to apply \eqref{eq:pre median}  to  $g\coloneqq (f-m_f)^+$ and  $g\coloneqq (f-m_f)^-$. To show \eqref{eq:pre median} we use the Sobolev-Poincar\'e inequality \eqref{eq:improved poincaret} in combination with the H\"older inequality
		\begin{align*}
			\|g\|_{L^{p^*}(B)}&\le \tilde C \|\nabla g\|_{L^p(B)}+ \left|\int g\,\d \mm \right|\le \tilde C \|\nabla g\|_{L^p(\mm)} +\mea(\{|g|>0\}\cap B)^{1-\frac1{p^*}}  	\|g\|_{L^{p^*}(B)} \\
			&\le \tilde C \|\nabla g\|_{L^p(B)}+ \left(\frac12\right)^{1-\frac1{p^*}}  	\|g\|_{L^{p^*}(B)},
		\end{align*}
		which proves \eqref{eq:pre median}.
	\end{proof}

	We conclude with a version of \eqref{eq:sobolev trick} for $\rcd(K,\infty)$ spaces.
	\begin{prop}
		Let $\Xdm$ be a bounded $\rcd(K,\infty)$ space with $\diam(\X)\le D<+\infty$. Then for every $\delta<\min(1,  (K^{-}D^2)^{-1})$ it holds
		\begin{equation}\label{eq:sobolev trick inf}
			\int f^2 \d \mm\le 32\delta^2 D^2 \int |\nabla f|^2 \d \mm  +  \frac{8e^{\frac2 \delta}}{\mm(\X)} \left(\int |f|\d \mm\right)^2, \quad \forall f \in W^{1,2}(\X).
		\end{equation}
	\end{prop}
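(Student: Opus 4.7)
The approach is via the heat semigroup $(P_t)_{t\ge 0}$ on $\X$. For a parameter $t>0$ to be chosen, the starting point is the identity
\[
\|f\|_2^2 = \|P_t f\|_2^2 + 2\int_0^t\|\nabla P_s f\|_2^2\,\d s,
\]
which follows from $\partial_s\|P_s f\|_2^2=-2\|\nabla P_s f\|_2^2$ on any infinitesimally Hilbertian space. The two pieces on the right will be estimated separately and then summed.

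For the second summand, I would apply the $L^2$-form of the Bakry--\'Emery gradient estimate $\|\nabla P_s f\|_2^2\le e^{2K^-s}\|\nabla f\|_2^2$ (valid on any $\rcd(K,\infty)$ space) and integrate to get $2\int_0^t\|\nabla P_s f\|_2^2\d s\le 2te^{2K^-t}\|\nabla f\|_2^2$. Setting $t=\delta^2 D^2$ and using the assumptions $\delta<1$ and $\delta K^- D^2<1$, which together give $K^-t<\delta<1$ and hence $e^{2K^-t}<e^2$, yields the bound $2e^2\delta^2 D^2\|\nabla f\|_2^2 \le 16\delta^2 D^2\|\nabla f\|_2^2$, since $2e^2\approx 14.78<16$. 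This accounts for the gradient term.

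For the first summand $\|P_t f\|_2^2$, the goal is to compare it with $\frac{1}{\mm(\X)}\|f\|_1^2$ up to an exponential factor, by means of a dimension-free hypercontractivity bound. A natural candidate is Wang's dimension-free Harnack inequality on $\rcd(K,\infty)$ spaces,
\[
(P_t f(x))^2\le P_t(f^2)(y)\exp\!\bigl(K^-d(x,y)^2/(e^{2K^-t}-1)\bigr),\qquad f\ge 0,
\]
which after integrating against $y\in\X$ with $d(x,y)\le D$ and using $\int P_t(f^2)\d\mm=\|f\|_2^2$ produces an $L^2\to L^\infty$ estimate for $P_t$. Combined with the crude $\|P_t f\|_2^2\le \|P_t f\|_\infty\|f\|_1$ and a Young splitting with a small parameter $\eta>0$, a fraction $\eta\|f\|_2^2$ can be absorbed onto the left-hand side, leaving an inequality of the shape $\tfrac12\|f\|_2^2\le 16\delta^2D^2\|\nabla f\|_2^2+\tfrac{C}{\mm(\X)}\|f\|_1^2$ for an explicit $C$.

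The main obstacle is the tight numerical bookkeeping on the exponential constant. A direct use of Wang's Harnack at $t=\delta^2 D^2$ produces an exponent of order $1/(2\delta^2)$ on the $\|f\|_1^2$ term, whereas the statement asks for only $2/\delta$. Closing this gap requires balancing two competing roles of~$t$: the gradient step wants $t\sim\delta^2 D^2$, while the Harnack step wants $t\sim\delta D^2$. I would set up the heat-semigroup argument cleanly first and then devote the bulk of the effort to optimizing these scales, likely by exploiting a sharper form of dimension-free Harnack tailored to the bounded-diameter regime (using $\diam(\X)\le D$ together with $\delta K^-D^2<1$) or by iterating the Harnack bound at several time scales.
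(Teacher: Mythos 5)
You propose a heat–semigroup route (energy identity $\|f\|_2^2=\|P_tf\|_2^2+2\int_0^t\|\nabla P_sf\|_2^2\,\d s$, Bakry--\'Emery gradient bound, Wang's dimension-free Harnack inequality), whereas the paper obtains \eqref{eq:sobolev trick inf} in a few lines from the HWI inequality: after normalizing $\mm(\X)=1$ and $\|f\|_2=1$, HWI gives
\[
\int f^2\log(f^2)\,\d\mm\le 2W_2(f^2\mm,\mm)\Bigl(\int|\nabla f|^2\,\d\mm\Bigr)^{1/2}+\tfrac{K^-}{2}W_2(f^2\mm,\mm)^2\le 2D\Bigl(\int|\nabla f|^2\Bigr)^{1/2}+\tfrac{K^-}{2}D^2,
\]
and combining this with the elementary pointwise inequality $t^2\le\delta t^2\log(t^2)+e^{1/\delta}t$ (integrated against $\mm$) immediately produces the claimed estimate after absorbing the term $\delta\tfrac{K^-}{2}D^2$ using $\delta K^-D^2<1$ and squaring. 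The key advantage of the HWI route is that the Wasserstein distance $W_2(f^2\mm,\mm)$ is trivially bounded by $D$, so entropy is controlled directly by $D\cdot\sqrt{\text{Dirichlet energy}}$ with no iteration or time scale to tune.

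By contrast, your proposal has a genuine, and as you note yourself unresolved, gap. Wang's Harnack inequality with $d(x,y)\le D$ at time $t$ yields an $L^2\to L^\infty$ bound with exponent of order $D^2/(2t)$ in the small-$K^-t$ regime, so choosing $t=\delta^2 D^2$ (forced by the requirement $2t e^{2K^-t}\le 16\delta^2 D^2$ on the gradient term) leaves an exponent of order $1/(2\delta^2)$, not $2/\delta$. The two time scales $t\sim\delta^2 D^2$ (gradient) and $t\sim\delta D^2$ (Harnack) genuinely conflict, and the remarks about ``a sharper form of Harnack'' or ``iterating at several time scales'' are speculative and not carried out, so the argument does not close as written. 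In short: you identify a correct alternative framework (heat flow plus hypercontractivity does prove Nash-type inequalities on $\rcd(K,\infty)$ spaces), but it does not yield the claimed exponent $e^{2/\delta}$ without substantially more work, and you have not supplied that work. The HWI approach sidesteps the issue entirely because it requires no intermediate time parameter.
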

	\begin{proof}
		We can assume that $\mm(\X)=1$ and $\|f\|_{L^2(\mm)}=1.$
		From the {\rm HWI} inequality (see e.g.\ \cite[eq.\ $(5.7),(5.9)$]{GMS15} or \cite[Corollary 30.22]{Villani09}) we have 
		\begin{align*}
			\int f^2 \log(f^2) \d \mm&\le 2W_2(f^2\mm,\mm) \left( \int |\nabla f|^2\d \mm \right)^\frac12 + \frac{K^{-}}2 W_2(f^2\mm,\mm)^2
			\\&\le 2 D \left( \int |\nabla f|^2\d \mm \right)^\frac12 + \frac{K^{-}}2 D^2,
		\end{align*}
		where $W_2(\cdot,\cdot)$ denotes the $2$-Wasserstein distance in $(\X,\sfd).$ Combining this with the elementary inequality 
		$$t^2\le \delta t^2\log(t^2)+e^{\frac1 \delta} t, \quad \text{for all $t\ge 0$ and $\delta\in(0,1)$ }$$
		we reach
		\[
		1=\int f^2 \d \mm \le 2 \delta D \left( \int |\nabla f|^2\d \mm \right)^\frac12 + \delta \frac{K^{-}}2 D^2+ e^{\frac1 \delta} \int |f|\d \mm.
		\]
		Assuming $\delta < (K^{-}D^2)^{-1} $ and squaring on both sides gives \eqref{eq:sobolev trick inf}.
	\end{proof}

	\section{Approximation results for the \texorpdfstring{$p$}{p}-Laplacian}\label{sec:approximationresults}
	
	The main goal of this section is to obtain some approximation results for the $p$-Laplacian. To do so we introduce the $\varepsilon$-regularised operator used, for instance, in  \cite{dibenedetto_alphalocalregularityweak_1983} by DiBenedetto to prove $C^{1,\beta}$-regularity of solutions to the $p$-Poisson equation in the flat Euclidean case.
	\begin{definition}[$(p,\eps)$-Laplacian]
		Fix $p \in (1,\infty)$, $\eps> 0$ and let $\Xdm$ be an $\rcd(K,\infty)$ space. A function $u \in W^{1,p}(\X)$ belongs to $\dom(\Delta_{p,\eps})$ if and only if there exists (unique) $\Delta_{p,\eps}u\in L^{1}_\loc(\X)$ such that
		\begin{equation}\label{eq:def eps plapl}
			\int_{\X}\la (|\nabla u|^2+\eps)^\frac{p-2}2\nabla u,\nabla \phi\ra \d \mm=-\int_\X \phi \Delta_{p,\eps}u  \d \mm, \quad \forall \phi \in \LIP_{bs}(\X).
		\end{equation}
	\end{definition}
	As in the definition of $p$-Laplacian, the scalar product appearing in \eqref{eq:def eps plapl} is the one of $L^0(T\X).$
	The integral on the left-hand side of \eqref{eq:def eps plapl} is well defined, indeed if $p\ge2$ there exists a constant $c_{p,\eps}>0$ such that
	\begin{equation}\label{eq:eps well def >2}
		(|\nabla u|^2+\eps)^\frac{p-2}2|\nabla u|\le c_{p,\eps} (|\nabla u|^{p-1}+1),
	\end{equation}
	while for $p<2$ 
	\begin{equation}\label{eq:eps well def <2}
		(|\nabla u|^2+\eps)^\frac{p-2}2|\nabla u|\le |\nabla u|^{p-1},
	\end{equation}
	and the right-hand sides of both \eqref{eq:eps well def <2} and \eqref{eq:eps well def >2} are in $L^1_\loc(\X)$, because $|\nabla u|\in L^p(\mm)$. In fact, we are not yet claiming any extra regularity property of $\Delta_{p,\eps}$ with respect to $\Delta_p$. The existence of regular solutions to $\Delta_{p,\eps} u=f$ will be instead discussed in \textsection \ref{sec:eps regularity}.
	\begin{remark}\label{rmk:test w1p eps plapl}
		By the density of $\LIP_{bs}(\X)$ in $W^{1,p}(\X)$ and  thanks to \eqref{eq:eps well def <2} and \eqref{eq:eps well def >2}  we have that \eqref{eq:def eps plapl} holds also for every $\phi \in W^{1,p}\cap L^\infty(\X)$ having bounded support. Moreover if $\Delta_{p,\eps} u \in L^{p'}(\X)$, where $p'\coloneqq \frac p{p-1}$, the validity of \eqref{eq:def eps plapl} extends also to all $\phi \in W^{1,p}(\X)$  (cf.\ with Remark \ref{rmk:test w1p plapl}). \fr
	\end{remark}
	We will prove two approximation results: the first one  (Proposition \ref{prop:existence of weak solutions})  says that solutions of $\Delta_{p,\eps} u=f$ converge to a solution of $\Delta_p u=f$, as $\eps \to 0^+$; the second (Proposition \ref{prop:L1 theory}) says that if $f_n \to f \in L^1(\mm)$, then solutions of $\Delta_p u=f_n$ converge to a solution of $\Delta_{p} u=f$.
	
	\subsection{Approximation via  regularized \texorpdfstring{$p$}{p}-Laplacian operators}
	Here, we prove that solutions to $\Delta_{p,\eps} u=f$ converge to a solution of $\Delta_p u=f$, as $\eps \to 0^+$. This statement is made precise by the following proposition.
	\begin{prop} [$\eps$-approximation of the $p$-Laplacian]\label{prop:existence of weak solutions}
		Fix $p\in(1,\infty)$ and let $p'\coloneqq \frac{p}{p-1}$. Let $\Xdm$ be a bounded $\rcd(K,\infty)$ space and  let $f \in L^{p'}(\mm)$.
		Suppose that the sequence  $u_n \in W^{1,p}(\X)$,  satisfies $\Delta_{p,\eps_n}(u_n)=f$ with $\eps_n \to 0^+$ and $\int u_n \d \mm=0$. Suppose also that $u \in W^{1,p}(\X)$ satisfies $\Delta_p(u)=f$ and $\int u \d \mm=0$.
		Then $u_n \to u$ in $W^{1,p}(\X)$.
	\end{prop}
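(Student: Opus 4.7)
\medskip
\noindent\emph{Proof plan.} The plan is a classical three-step strategy for monotone operators: uniform bounds, passage to a weak limit, and upgrade to strong convergence through a monotonicity identity. Since by Proposition \ref{prop:p-existence} the zero-mean solution of $\Delta_p u = f$ is unique, identifying any weak cluster point as $u$ will automatically give convergence of the whole sequence. For the uniform bound, I would test the equation $\Delta_{p,\eps_n} u_n = f$ against $u_n$ itself (valid by Remark \ref{rmk:test w1p eps plapl} since $f \in L^{p'}(\mm)$), obtaining
\[
\int (|\nabla u_n|^2+\eps_n)^{\frac{p-2}{2}}|\nabla u_n|^2\,\d\mm = -\int u_n f\,\d\mm.
\]
When $p\ge 2$ the integrand already dominates $|\nabla u_n|^p$ pointwise. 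When $1<p<2$ I would rewrite it as $(|\nabla u_n|^2+\eps_n)^{p/2}-\eps_n(|\nabla u_n|^2+\eps_n)^{(p-2)/2}$ and bound the error by $\eps_n^{p/2}\mm(\X)$, using that the negative power is controlled by $\eps_n^{(p-2)/2}$. Young's inequality together with the Poincar\'e inequality of Theorem \ref{thm:pPoincare} (applicable since $\int u_n\,\d\mm=0$) then yields $\sup_n\|u_n\|_{W^{1,p}(\X)}<\infty$, so, by reflexivity of $W^{1,p}(\X)$, a non-relabeled subsequence satisfies $u_n\rightharpoonup\tilde u$ weakly in $W^{1,p}(\X)$, with $\int \tilde u\,\d\mm=0$.

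The core step is the monotonicity argument. I would first establish, by the same reduction to the Euclidean case used in Lemma \ref{lem:monotonicity}, the regularized inequality
\[
\la (|v|^2+\eps)^{\frac{p-2}{2}}v-(|w|^2+\eps)^{\frac{p-2}{2}}w,\,v-w\ra\ge c_p(|v|^2+|w|^2+\eps)^{\frac{p-2}{2}}|v-w|^2,\quad \alme,
\]
for all $v,w\in L^0(T\X)$. Applying it with $v=\nabla u_n$, $w=\nabla u$, $\eps=\eps_n$ and integrating produces a nonnegative quantity, which I expand and evaluate term by term. Two of the scalar products are recognized as $-\int u_n f$ and $-\int u f$ by testing the $u_n$-equation against $\phi=u_n$ and $\phi=u$ (Remark \ref{rmk:test w1p eps plapl}); the remaining cross term $\int\la (|\nabla u|^2+\eps_n)^{(p-2)/2}\nabla u,\nabla u_n-\nabla u\ra\,\d\mm$ passes to the limit because $(|\nabla u|^2+\eps_n)^{(p-2)/2}\nabla u\to|\nabla u|^{p-2}\nabla u$ in $L^{p'}(T\X)$ by dominated convergence (dominated by $|\nabla u|^{p-1}$ if $p<2$, by $C(|\nabla u|^{p-1}+1)$ if $p\ge2$), while $\nabla u_n-\nabla u\rightharpoonup\nabla(\tilde u-u)$ weakly in $L^p(T\X)$; testing the $u$-equation against $\tilde u - u$ via Remark \ref{rmk:test w1p plapl} rewrites this limit as $-\int(\tilde u-u)f\,\d\mm$. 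All three contributions cancel, forcing
\[
\int(|\nabla u_n|^2+|\nabla u|^2+\eps_n)^{\frac{p-2}{2}}|\nabla u_n-\nabla u|^2\,\d\mm\to 0.
\]

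Strong convergence $\nabla u_n\to\nabla u$ in $L^p(T\X)$ is then immediate when $p\ge 2$, since the weight is bounded below by $c_p|\nabla u_n-\nabla u|^{p-2}$ via the triangle inequality $|\nabla u_n-\nabla u|^2\le 2(|\nabla u_n|^2+|\nabla u|^2+\eps_n)$. When $1<p<2$ I would apply H\"older with exponents $2/p$ and $2/(2-p)$, writing
\[
|\nabla u_n-\nabla u|^p=\bigl[(|\nabla u_n|^2+|\nabla u|^2+\eps_n)^{\frac{p-2}{2}}|\nabla u_n-\nabla u|^2\bigr]^{p/2}(|\nabla u_n|^2+|\nabla u|^2+\eps_n)^{\frac{p(2-p)}{4}},
\]
and using the uniform $L^p$-bound from the first step to control the second factor. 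Combined with Poincar\'e (zero-mean), this gives $u_n\to u$ in $W^{1,p}(\X)$; in particular $\tilde u=u$, so the full sequence converges. The main obstacle I anticipate is precisely the sub-quadratic regime $1<p<2$: the regularization parameter $\eps_n$ obstructs the pointwise comparisons that render the $p\ge2$ case essentially immediate, and forces the H\"older interpolation both in the a priori estimate and in the extraction of strong convergence from the weighted bound.
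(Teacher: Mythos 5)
Your proof is correct, but it takes a genuinely different route from the paper's. The paper does not extract a weak cluster point at all: it tests $\phi=u_n-u$ directly in \emph{both} equations $\Delta_{p,\eps_n}u_n=f$ and $\Delta_p u=f$, subtracts, and rewrites the result as the identity
\[
\int \la |\nabla u|^{p-2}\nabla u-|\nabla u_n|^{p-2}\nabla u_n,\nabla (u-u_n)\ra\,\d\mm
=\int \left((|\nabla u_n|^2+\eps_n)^{\frac{p-2}{2}}-|\nabla u_n|^{p-2}\right) \la \nabla u_n,\nabla (u-u_n)\ra\,\d \mm,
\]
so that the left-hand side is the unregularized monotone quantity already controlled by Lemma \ref{lem:monotonicity}, while the right-hand side is an explicit ``defect'' that is bounded by $C_p\,\eps_n^{(p-1)/2}\,(\||\nabla u_n|\|_{L^p}+1)^{p-1}(\||\nabla u_n|\|_{L^p}+\||\nabla u|\|_{L^p})$ via the scalar estimate $t^{\frac{p-1}{2}}\big|(1+\frac1t)^{\frac{p-2}{2}}-1\big|\le C_p(t^{\frac{p-1}{2}}+1)$. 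This avoids reflexivity, weak subsequence extraction, and the strong-times-weak passage to the limit altogether, and produces an explicit decay rate in $\eps_n$. Your argument instead follows the classical monotone-operator scheme: uniform bound, weak cluster point $\tilde u$, a regularized quadratic monotonicity inequality for $a_\eps(z)=(|z|^2+\eps)^{\frac{p-2}{2}}z$ (which is true and proved by the same Euclidean reduction, but does not appear in the paper and needs to be established separately), strong $L^{p'}$ convergence of $(|\nabla u|^2+\eps_n)^{\frac{p-2}{2}}\nabla u$ paired with weak $L^p$ convergence of $\nabla(u_n-u)$, and a final H\"older interpolation to go from the weighted $L^2$ estimate to $L^p$. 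Both strategies are sound; the paper's is shorter, self-contained with respect to the lemmas already available, and gives a quantitative rate, while yours is softer and more robust (it would survive if the clean algebraic defect identity were unavailable) at the cost of the extra monotonicity estimate and a compactness step. One small remark: the uniqueness from Proposition \ref{prop:p-existence} that you invoke in the plan is never actually used — the identification $\tilde u=u$ falls out of the strong convergence you derive, not from uniqueness of the limit problem.
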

	\begin{proof}
		Without loss of generality, we assume that $\mea(\X)=1.$ Moreover, up to ignoring finitely many elements of the sequence $u_n$, we can assume that $\eps_n\le 1.$ We start by deriving uniform bounds on $u_n$. If $p\ge 2,$ by choosing $u_n$ itself as test function in \eqref{eq:def eps plapl} (recall Remark \ref{rmk:test w1p eps plapl}) and  by the Young's inequality we get
		\[
		\int_\X |\nabla u_n|^p\d \mm\le -\int_\X f u_n\d \mm \le \int_\X c_{p,\delta}|f|^{p'}  \d \mm + \delta \int_\X |u_n|^p\d \mm,
		\]
		for every $\delta>0$ and for some constant $c_{p,\delta}>0$ depending only on $p$ and $\delta$. Similarly for $p<2$ 
		\[
		\frac12\int_\X |\nabla u_n|^p\d \mm-c_p\le \int_\X f u_n\d \mm \le \int_\X c_{p,\delta}|f|^{p'}  \d \mm + \delta \int_\X |u_n|^p\d \mm,
		\]
		where in the first inequality we used that $\eps\le 1$ and that $\mu(t)\coloneqq\frac{t^2}{(t^2+1)^{\frac{2-p}{2}}}-\frac12t^{p}\ge -c_p$ for every $t\ge 0$  and for some constant $c_p>0$ (indeed $\mu(t)\to 0$ as $t \to 0^+$ and $\mu(t)\to +\infty$ as $t\to +\infty$).  Therefore in both cases choosing $\delta>0$ small enough and by the $(p,p)$-Poincaré inequality of Theorem \ref{thm:pPoincare} and we obtain that $u_n$ is bounded in $W^{1,p}(\X).$
		Taking $u_n-u$ as test function both in $\Delta_{p,\eps_n}(u_n)=f$ and $\Delta_{p}(u)=f$ and subtracting the two identities gives
		\[
		\int \la |\nabla u|^{p-2}\nabla u,\nabla (u-u_n)\ra-\la (|\nabla u_n|^2+\eps_n)^{\frac{p-2}{2}}\nabla u_n,\nabla (u-u_n)\ra\d \mm  =0
		\]
		that we rewrite as
		\begin{equation}\label{eq:difference un u}
			\begin{multlined}[c][.7\textwidth]
				\int \la |\nabla u|^{p-2}\nabla u-|\nabla u_n|^{p-2}\nabla u_n,\nabla (u-u_n)\ra\\=\int \left((|\nabla u_n|^2+\eps_n)^{\frac{p-2}{2}}-|\nabla u_n|^{p-2}\right) \la \nabla u_n,\nabla (u-u_n)\ra\d \mm.
			\end{multlined}
		\end{equation}
		The right-hand side is dominated by
		\begin{align*}
			\begin{multlined}[t][.7\textwidth]
				\int \left|(|\nabla u_n|^2+\eps_n)^{\frac{p-2}{2}}-|\nabla u_n|^{p-2}\right| |\nabla u_n||\nabla (u-u_n)|\d \mm\\
				\begin{aligned}
					&\le C_p\eps_n^{\alpha_p} \int  (|\nabla u_n|^{p-1}+1)  |\nabla (u-u_n)|\d \mm,
				\end{aligned}
			\end{multlined}
		\end{align*}
		where  $\alpha_p\coloneqq \min(\frac12,\frac{p-1}4)>0$ and where we used that for every $p\in(1,\infty)$ there exists a constant $C_p>0$  depending only on $p$ such that
		\[
		|(t^2+\eps_n)^\frac{p-2}{2}-t^{p-2}|t\le C_p \eps_n^{\alpha_p} (t^{p-1}+1), \quad \forall\,\,t\ge0.
		\] 
		Indeed if $t^2\ge \sqrt{\eps_n}$ we have
		\[
		|(t^2+\eps_n)^\frac{p-2}{2}-t^{p-2}|t=t^{p-1}\left|\left(1+\frac{\eps_n}{t^2}\right)^\frac{p-2}{2}-1\right|\le c_p t^{p-1}\frac{\eps_n}{t^2}\le c_p\sqrt \eps_nt^{p-1},
		\]
		while if $t^2\le\sqrt \eps_n $
		\[
		|(t^2+\eps_n)^\frac{p-2}{2}-t^{p-2}|t\le \begin{cases}
			2t^{p-1}\le 2\eps_n^\frac{p-1}{4}(t^{p-1}+1), & \text{if $p<2$},\\
			2(t^2+\eps_n)^{\frac{p-2}{2}}t\le 2^\frac p2\eps_n^\frac{p-1}{4}(t^{p-1}+1),& \text{if $p\ge2$}.
		\end{cases}
		\]
		We proceed applying the H\"older inequality 
		\begin{equation}\label{eq:nabla un eps}
			\begin{multlined}[c][.8\textwidth]
				\int \left|(|\nabla u|^2+\eps_n)^{\frac{p-2}{2}}-|\nabla u|^{p-2}\right| |\nabla u_n||\nabla (u-u_n)|\d \mm\\\le  C_p\eps_n^{\alpha_p} (\||\nabla u_n|\|_{L^p(\mm)}+1)^{p-1}(\|\nabla u_n\|_{L^p(\mm)}+\|\nabla u\|_{L^p(\mm)}),
			\end{multlined}
		\end{equation}
		up to increasing the constant $C_p.$
		For $p\ge 2$, combining \eqref{eq:nabla un eps} with \eqref{eq:difference un u} and Lemma \ref{lem:monotonicity} we get (up to further increasing the constant $C_p$)
		\[
		\limsup_n \int |\nabla u-\nabla u_n|^p \d \mm\le \limsup_n   C_p\eps_n^{\alpha_p} (\||\nabla u_n|\|_{L^p(\mm)}+1)^{p-1}(\|\nabla u_n\|_{L^p(\mm)}+\|\nabla u\|_{L^p(\mm)})=0,
		\]
		by the the boundedness of $|\nabla u_n|$ in $L^p(\mm).$
		Instead for $p<2$ we first use the H\"older inequality to write, 
		\begin{align*}
			\int |\nabla u-\nabla u_n|^p \d \mm\le \left ( \int \frac{|\nabla u-\nabla u_n|^2}{(|\nabla u|+|\nabla u_n|)^{2-p}} \d \mm \right)^{\frac p2} \left( \int (|\nabla u|+|\nabla u_n|)^p\d \mm \right)^\frac{2-p}{2}
		\end{align*}
		that  combined with \eqref{eq:nabla un eps}, \eqref{eq:difference un u} and Lemma \ref{lem:monotonicity} yields
		\[
		\limsup_n \left(\int |\nabla u-\nabla u_n|^p \d \mm\right)^{\frac{2}{p}}\le \limsup_nC_p\eps_n^{\alpha_p} (\||\nabla u_n|\|_{L^p(\mm)}+1)^{p-1}(\|\nabla u_n\|_{L^p(\mm)}+\|\nabla u\|_{L^p(\mm)})^{3-p}=0.
		\]
		Summing up we obtained that $|\nabla u_n-\nabla u|\to 0$ in $L^p(\mm)$. From this, the claimed convergence in $W^{1,p}(\X)$ follows using again the  $(p,p)$-Poincaré inequality.
	\end{proof}

	\subsection{Approximation via  regularized source term}
	Here we prove the following convergence result. 
	\begin{prop}\label{prop:L1 theory}
		Let $\Xdm$ be a bounded $\rcd(K,N)$ space, $N<\infty$, and let $u \in W^{1,p}(\X)$ be such that $\Delta_p u=f \in L^1(\mm)$. Suppose there exists a sequence $(u_n)\subset W^{1,p}(\X)$ such that $\Delta_p (u_n)=f_n$ with $f_n \to f$ in $L^1(\mm)$. Then there exists a subsequence $u_{n_k}$ such that
		\begin{equation}\label{eq:gradient convergence}
			|\nabla u_{n_k}-\nabla u|^{p-1}\to 0\quad \text{in $L^{1}(\mm).$}
		\end{equation}
	\end{prop}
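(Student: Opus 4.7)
The argument follows the Boccardo-Murat-Puel philosophy for $p$-Laplace equations with $L^1$ data: Lipschitz truncations serve as admissible test functions and the monotonicity from Lemma \ref{lem:monotonicity} controls gradients. Since the thesis depends only on gradients, after adding constants I assume $\int u_n\,\d\mm = \int u\,\d\mm = 0$ for all $n$. Write $T_k(s)\coloneqq\max(-k,\min(k,s))$ for the usual truncation at level $k$.

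\emph{Step 1 (uniform bounds).} Testing $\Delta_p u_n = f_n$ with $T_k(u_n) \in W^{1,p}\cap L^\infty(\X)$, which is admissible by Remark \ref{rmk:test w1p plapl}, gives
\begin{equation*}
    \int_{\{|u_n|\leq k\}} |\nabla u_n|^p \,\d\mm = -\int T_k(u_n)f_n\,\d\mm \leq k\sup_n \|f_n\|_{L^1(\mm)}.
\end{equation*}
Combining this with the Sobolev-Poincar\'e inequality with median \eqref{eq:sobolev-poincare median} applied to $T_k(u_n)$ (whose median is zero by normalization) yields $\mm(\{|u_n|>k\}) \to 0$ as $k \to\infty$, uniformly in $n$. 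A standard Boccardo-Gallou\"et distribution-function computation then produces a uniform estimate $\||\nabla u_n|\|_{L^q(\mm)} \leq C$ for some exponent $q>p-1$. By Poincar\'e, $\|u_n\|_{L^1(\mm)}$ is also uniformly bounded, so that $\mm(\{|u_n-u|>k\}) \to 0$ as $k\to\infty$ uniformly in $n$.

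\emph{Step 2 (truncated comparison).} Since $T_k(u_n-u) \in W^{1,p}\cap L^\infty(\X)$, using it as a test function in both $\Delta_p u_n = f_n$ and $\Delta_p u = f$ and subtracting yields
\begin{equation*}
    \int_{\{|u_n-u|\leq k\}} \la |\nabla u_n|^{p-2}\nabla u_n - |\nabla u|^{p-2}\nabla u,\,\nabla u_n-\nabla u\ra \,\d\mm = -\int T_k(u_n-u)(f_n-f)\,\d\mm.
\end{equation*}
The right-hand side is bounded by $k\|f_n-f\|_{L^1(\mm)} \to 0$ as $n \to \infty$ for each fixed $k$. Lemma \ref{lem:monotonicity} then gives $\int_{\{|u_n-u|\leq k\}} |\nabla(u_n-u)|^p\,\d\mm \to 0$ when $p\geq2$, and the analogous convergence of $\int_{\{|u_n-u|\leq k\}} |\nabla(u_n-u)|^2/(|\nabla u_n|+|\nabla u|)^{2-p}\,\d\mm \to 0$ when $p<2$.

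\emph{Step 3 (splitting and conclusion).} Decompose
\begin{equation*}
    \int |\nabla(u_n-u)|^{p-1}\,\d\mm = \int_{\{|u_n-u|\leq k\}} |\nabla(u_n-u)|^{p-1}\,\d\mm + \int_{\{|u_n-u|>k\}} |\nabla(u_n-u)|^{p-1}\,\d\mm.
\end{equation*}
The first summand tends to zero as $n\to \infty$ for each fixed $k$, by Step 2 and H\"older's inequality (in the case $p<2$ using the pointwise identity $|\nabla v|^{p-1} = (|\nabla v|^2/(|\nabla u_n|+|\nabla u|)^{2-p})^{(p-1)/2}(|\nabla u_n|+|\nabla u|)^{(2-p)(p-1)/2}$ together with the $L^q$ bound of Step 1). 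For the second summand, H\"older with conjugate exponents $q/(p-1)>1$ and $q/(q-p+1)$ gives
\begin{equation*}
    \int_{\{|u_n-u|>k\}} |\nabla(u_n-u)|^{p-1}\,\d\mm \leq C\big(\||\nabla u_n|\|_{L^q(\mm)}+\||\nabla u|\|_{L^q(\mm)}\big)^{p-1}\,\mm(\{|u_n-u|>k\})^{1-(p-1)/q},
\end{equation*}
which vanishes as $k\to\infty$ uniformly in $n$ by Step 1. Choosing first $k$ large and then $n$ large yields $|\nabla u_n - \nabla u|^{p-1} \to 0$ in $L^1(\mm)$, hence a fortiori along any subsequence. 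The main technical hurdle is the distribution-function estimate of Step 1, which in the metric setting leans crucially on the Sobolev-Poincar\'e inequalities of Section \ref{sec:sobolev}, carefully adapted to the relevant range of $p$.
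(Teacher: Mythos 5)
Your argument is correct and follows essentially the same template as the paper's: truncation tests $T_k(u_n)$, $T_k(u_n-u)$, the median Sobolev--Poincar\'e inequality \eqref{eq:sobolev-poincare median}, the monotonicity Lemma \ref{lem:monotonicity}, and a Boccardo--Gallou\"et distribution-function argument to extract a uniform $L^q$ bound on gradients with $q>p-1$. The one genuinely different choice is the conclusion: the paper stops at two intermediate facts --- $|\nabla u_n-\nabla u|\to 0$ in measure and the uniform $L^q$ bound --- and then invokes a standard reference (essentially Vitali's theorem) to get $L^1$ convergence along a subsequence, whereas you perform the decomposition $\int|\nabla(u_n-u)|^{p-1} = \int_{\{|u_n-u|\le k\}}+\int_{\{|u_n-u|>k\}}$ directly, with H\"older on both pieces. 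This actually gives the full-sequence convergence, which is stronger than the statement requires, and keeps the final bookkeeping self-contained rather than delegating it to a citation. One small imprecision: the sentence ``By Poincar\'e, $\|u_n\|_{L^1(\mm)}$ is also uniformly bounded'' is not justified --- you have no uniform $\W^{1,p}(\X)$ bound (only truncated gradient bounds), and the gradient $L^q$ exponent $q>p-1$ you do have may be $<1$ when $p$ is close to $1$, so the Poincar\'e inequality doesn't straightforwardly give an $L^1$ bound on $u_n$. Fortunately you don't need it: the uniform decay $\mm(\{|u_n-u|>k\})\to 0$ follows immediately from the inclusion $\{|u_n-u|>2k\}\subset\{|u_n|>k\}\cup\{|u|>k\}$ together with the level-set bounds you already derived from \eqref{eq:sobolev-poincare median}. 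Replace the Poincar\'e sentence with this observation and the argument is clean.
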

	
	\begin{proof}
		The argument is inspired by the techniques from \cite{BBGGPV} in the Euclidean case, adapted to our setting.
		Since $\Xdm$ is also an $\rcd(K,N')$ space for every $N'\ge N$ we can assume that $p<N.$
		Up to subtracting a constant from $u$ and $u_n$, $n \in \nn$, by the locality of the gradient, we can assume that zero is a median for $u$ and for all $u_n$, $n \in \nn$.  Additionally, up to passing to a non-relabelled subsequence, we can assume that $\|f_n\|_{L^1(\mm)}\le 2\|f\|_{L^1(\mm)}.$
		To prove \eqref{eq:gradient convergence} it is sufficient to show that there exists $q>1$ such that
		\begin{equation}\label{eq:gradient to zero in meas}
			\begin{split}
				&|\nabla u_n-\nabla u|\to 0 \quad \text{ in $\mea$-measure},\\
				&\sup_n \||\nabla u|^{p-1}\|_{L^q(\mm)}+\||\nabla u_n|^{p-1}\|_{L^q(\mm)}  <+\infty,
			\end{split}
		\end{equation}
		Indeed \eqref{eq:gradient convergence} can be deduced from \eqref{eq:gradient to zero in meas} by standard arguments (see e.g.\ \cite[Lemma 8.2] {HK00}).
		
		We start by deducing several uniform bounds.
		For every $k>0$ define $F_k(t)\coloneqq (-k)\vee t \wedge k,$ $t \in \rr$. Taking as test functions $\phi=F_k\circ u \in W^{1,p}\cap L^\infty(\X)$ and $\phi=F_k\circ u_n\in W^{1,p}\cap L^\infty(\X)$	respectively in the weak formulation of $\Delta_p u=f$ and $\Delta_p (u_n)=f_n$ we obtain that
		\begin{equation}\label{eq:integral gradient bounds}
			\int_{\{|u|\le k\}} |\nabla u|^p\d \mm\le k \|f\|_{L^1(\mm)}, \quad \int_{\{|u_n|\le k\}} |\nabla u_n|^p\d \mm\le k \|f_n\|_{L^1(\mm)},\,\,\, \forall \, n \in \nn.
		\end{equation}
		Moreover, zero is a median for both $F_k\circ u$ and $F_k\circ u_n$, therefore applying \eqref{eq:sobolev-poincare median} we get that for every $k>0$
		\[
		\int |F_k\circ u|^{p^*}\d \mm \le C_p(k \|f\|_{L^1(\mm)})^{p^*/p}, \quad 	\int |F_k\circ u_n|^{p^*}\d \mm \le C_p (k \|f_n\|_{L^1(\mm)})^{p^*/p},\,\,\, \forall \, n \in \nn
		\]
		and by the Markov inequality
		\begin{equation}\label{eq:function measure bounds}
			\begin{split}
				&\mea(\{|u|> k\})=	\mea(\{|F_{2k}\circ u|^{p^*}> k^{p^*}\}) \le 2C_p k^{p^*/p-p^*} \|f\|_{L^1(\mm)},\\
				&\mea(\{|u_n|> k\})=\mea(\{|F_{2k}\circ u_n|^{p^*}> k^{p^*}\}) \le 4C_p k^{p^*/p-p^*} \|f\|_{L^1(\mm)},
			\end{split}
		\end{equation}
		where $C_p$ is a constant independent of $n$ and $k$. Combining \eqref{eq:function measure bounds} and \eqref{eq:integral gradient bounds} we get for every $k>0$ and $t>0$
		\[
		\mea(\{|\nabla u|>k \})\le 	\mea(\{|\nabla u|>k \}\cap\{|u|\le t\}) +\mea(\{|u|> t\}) \le
		\frac{t}{k^p}\|f\|_{L^1(\mm)}+2C_p t^{p^*/p-p^*} \|f\|_{L^1(\mm)}.
		\]
		Taking $t=k^\frac{N-p}{N-1}$ and repeating the same argument for $u_n$ gives
		\begin{equation}\label{eq:gradient measure bounds}
			\begin{split}
				&\mea(\{|\nabla u|>k \})\le  k^{-h}(2C_p+1) \|f\|_{L^1(\mm)},\\
				&\mea(\{|\nabla u_n|>k \})\le k^{-h}(4C_p+2) \|f\|_{L^1(\mm)},
			\end{split}
		\end{equation}
		where $h\coloneqq \frac{N(p-1)}{N-1}>p-1.$ This already shows the second in \eqref{eq:gradient to zero in meas}.
		
		It remains to prove the first in \eqref{eq:gradient to zero in meas}. 
		Fix $\eps>0$ arbitrary. By \eqref{eq:function measure bounds} and \eqref{eq:gradient measure bounds} there exists  a constant $k_\eps>0$ such that for every $n\in\nn$
		\begin{equation}\label{eq:eps measure bounds}
			\mea(\{|u|>k_\eps\})+\mea(\{|u_n|>k_\eps\})+\mea(\{|\nabla u|>k_\eps\})+\mea(\{|\nabla u_n|>k_\eps\})<\eps/2.
		\end{equation}
		Taking as test function $\phi=F_{2k_\eps}\circ (u_n- u) \in W^{1,p}\cap L^\infty(\X)$ in both $\Delta_p u_n=f_n$ and $\Delta_p (u)=f$ and subtracting the two resulting identities, we obtain
		\[
		\begin{split}
			\int_{\{|u_n-u|<2k_\eps\}} \la|\nabla u|^{p-2}\nabla u-|\nabla u_n|^{p-2}\nabla u_n, \nabla u-\nabla u_n\ra \d \mm\le 2k_\eps \|f-f_n\|_{L^1(\mm)}.
		\end{split}
		\] 
		Applying Lemma \ref{lem:monotonicity} and noting that $\{|u_n|<k_\eps,\, |u|<k_\eps\}\subset \{|u_n-u|<2k_\eps\}$ we get that for some constant $c_p>0$ depending only on $p$
		\begin{equation}\label{eq:monotonicity estimates}
			\begin{aligned}[c]
				&\int_{\{|u_n|<k_\eps,\, |u|<k_\eps\}} c_p|\nabla u-\nabla u_n|^p \le  2k_\eps \|f-f_n\|_{L^1(\mm)},&\text{ if $p\ge 2$,}\\
				&\int_{\{|u_n|<k_\eps,\, |u|<k_\eps,\, |\nabla u|<k_\eps, \,  |\nabla u_n|<k_\eps \}} c_p (2k_\eps)^{2-p} |\nabla u-\nabla u_n|^2 \le 2k_\eps \|f-f_n\|_{L^1(\mm)}, &\text{ if $p< 2$.}
			\end{aligned}
		\end{equation}
		Therefore in both cases, applying the Markov inequality, for every $\delta>0$ it holds
		\begin{equation*}
			\begin{multlined}[c][.8\textwidth]
				\mea\big(\{|\nabla u-\nabla u_n|>\delta, \, |u_n|<k_\eps,\, |u|<k_\eps, |\nabla u|<k_\eps, \,  |\nabla u_n|<k_\eps\}\big)\\\le 2 c_p^{-1} k_\eps\delta^{-(p\vee 2)} \max((2k_\eps)^{2-p},1)  \|f-f_n\|_{L^1(\mm)}.
			\end{multlined}
		\end{equation*}
		Combining this with \eqref{eq:eps measure bounds} yields
		\[
		\mea(\{|\nabla u-\nabla u_n|>\delta\})\le \delta^{-p} 2k_\eps \|f-f_n\|_{L^1(\mm)} + \eps/2.
		\]
		Since by assumption $\|f_n-f\|_{L^1(\mm)}\to 0$ as $n\to +\infty$ and by the arbitrariness of $\eps>0$ we obtain the first in \eqref{eq:gradient to zero in meas} and conclude the proof.
	\end{proof}

	\section{Uniform a priori  estimates for \texorpdfstring{$\Delta_{p,\eps}$}{\textDelta\_(p,\textepsilon)}}\label{sec:eps regularity}
	The goal of this section is to obtain second-order regularity and gradient estimates for solutions of 
	\begin{equation}\label{eq:eps poisson}
		\Delta_{p,\eps} u =f \in L^2(\mm),
	\end{equation}
	\emph{assuming} a priori that $u$ has second-order regularity, that is $u \in \dom(\Delta)\subset W^{2,2}(\X).$ Technically speaking we will actually not consider exactly solution of \eqref{eq:eps poisson}, since we do not want to assume extra integrability of $|\nabla u|$ other than $L^2$. Instead we consider the operator $D_{\eps,p}(u)$, defined below, given by formally expanding $\div((|\nabla u|^2+\eps)^{\frac{p-2}{2}}\nabla u).$ 
	
	\begin{definition}[Developed $(p,\eps)$-Laplacian]
		Let $\eps>0 $ and $p\in(1,\infty)$. We define the operator $D_{p,\eps}: \dom(\Delta)\to L^2(\mm)$, that we call \textit{developed $(p,\eps)$-Laplacian}, as
		\begin{equation}\label{eq:deps}
			D_{p,\eps}(u)\coloneqq\Delta u+(p-2)\frac{\H u(\nabla u, \nabla u)}{|\nabla u|^2+\eps}=\Delta u+(p-2)\frac{|\nabla u|\la\nabla |\nabla u|,\nabla u\ra}{|\nabla u|^2+\eps}.
		\end{equation}
	\end{definition}
	Note that $D_{p,\eps}$ is defined only for functions in $\dom(\Delta)$, while $\Delta_{p,\eps}$ makes sense  for all functions in $W^{1,p}(\X).$ Nevertheless, at least at a formal level, we have $(|\nabla u|^2+\eps)^\frac{p-2}{2}D_{p,\eps} u=\Delta_{p,\eps} u$. 
	
	We will start in  \textsection \ref{sec:developed pre} below to deduce some initial properties and estimates concerning the developed ($p,\eps$)-Laplacian. Then we will establish $L^2$-uniform estimates on the  Hessian (in \textsection \ref{sec:developed second est}) and $L^\infty$-uniform estimates on the gradient (in \textsection  \ref{sec:developed grad est}), both involving $D_{p,\eps}$ but independent of the parameter $\eps$. These estimates will play a crucial role in the proof of our main results, in particular in obtaining regularity estimates for $\Delta_p$ by sending $\eps\to 0^+$.
	
	\subsection{Preliminary properties and estimates for the developed \texorpdfstring{($p,\eps$)}{(p,\textepsilon)}-Laplacian} \label{sec:developed pre}
	The following result gives a rigorous version of the fact that  $(|\nabla u|^2+\eps)^\frac{p-2}{2}D_{p,\eps} u=\Delta_{p,\eps} u$.
	\begin{lemma}\label{lem:develop}
		Fix $p \in(1,\infty)$ and $\eps>0.$
		Let  $\Xdm$ be an $\rcd(K,\infty)$ space and let $u \in\ \dom(\Delta)$. If $p\ge 2$ suppose in addition that  $|\nabla u|^{p-1}\in L^{1}(\mm),\,|\nabla u|^{2(p-2)}\in L^{1}(\mm)$.  Then 
		\begin{equation}\label{eq:not yet eps plapl}
			\int_{\X}\la (|\nabla u|^2+\eps)^\frac{p-2}2\nabla u,\nabla \phi\ra \d \mm=-\int_\X \phi (|\nabla u|^2+\eps)^{\frac{p-2}{2}} D_{p,\eps}(u)   \d \mm, \quad \forall \phi \in \LIP_{bs}(\X).
		\end{equation}
		In particular if also $u \in W^{1,p}(\X)$ then $u \in \dom(\Delta_{p,\eps})$ and $\Delta_{p,\eps}u=(|\nabla u|^2+\eps)^{\frac{p-2}{2}} D_{p,\eps}(u).$
	\end{lemma}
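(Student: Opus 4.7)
The plan is to use $\Phi \coloneqq h\phi$, with $h \coloneqq (|\nabla u|^2+\eps)^{(p-2)/2}$, as a test function in the defining identity
\[
\int \la \nabla u, \nabla \Phi\ra \d \mm = -\int \Phi \Delta u \d \mm
\]
of the Laplacian, and then to expand $\nabla \Phi$ via Leibniz and chain rule. Formally $\nabla \Phi = h \nabla \phi + \phi \nabla h$, and the chain rule applied to the smooth map $t \mapsto (t^2 + \eps)^{(p-2)/2}$ composed with $|\nabla u|\in W^{1,2}(\X)$ (which holds by Proposition \ref{prop:gradgrad}) gives
\[
\nabla h = (p-2)(|\nabla u|^2+\eps)^{(p-4)/2}|\nabla u| \nabla |\nabla u|.
\]
Pairing with $\nabla u$ and using $|\nabla u| \la \nabla |\nabla u|, \nabla u\ra = \H u(\nabla u, \nabla u)$ from \eqref{eq:musical}, the right-hand side turns into exactly $-\int \phi h D_{p,\eps}(u)\, \d \mm$, and \eqref{eq:not yet eps plapl} follows.

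The main obstacle is that for $p\ge 2$ the function $h$ is unbounded, which prevents a direct application of the chain rule and of $h\phi$ as an admissible test function. To bypass this I would truncate: for $M>0$ pick a smooth cut-off $\chi_M$ with $\chi_M \equiv 1$ on $[0,M]$ and $\chi_M \equiv 0$ on $[M+1, \infty)$, and set $h_M \coloneqq F_M(|\nabla u|)$ with $F_M(t) \coloneqq \chi_M(t)(t^2+\eps)^{(p-2)/2}$. Since $F_M$ is smooth and compactly supported, hence Lipschitz, the standard chain rule for Sobolev functions yields $h_M \in W^{1,2}\cap L^\infty(\X)$ with $\nabla h_M = F_M'(|\nabla u|) \nabla |\nabla u|$. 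Because $\phi \in \LIP_{bs}(\X)$, the product $h_M\phi \in W^{1,2}\cap L^\infty(\X)$ has bounded support and is admissible in the identity above, so the formal computation becomes rigorous with $h_M$ in place of $h$; the extra contribution coming from $\chi_M'$ is supported on $\{|\nabla u| \in [M, M+1]\}$, a set of vanishing measure as $M \to \infty$.

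It then remains to pass to the limit $M \to \infty$, which is a dominated convergence exercise in each of the resulting integrals, and this is where the hypotheses enter. For $p<2$ both $h$ and $(|\nabla u|^2+\eps)^{(p-4)/2}|\nabla u|^2 \le h$ are uniformly bounded by $\eps^{(p-2)/2}$, and all integrands are majorized by locally $L^1$ functions built from $|\nabla u|, |\nabla |\nabla u||, |\Delta u| \in L^2_\loc(\mm)$ and from the bounded $\phi, \nabla \phi$ with bounded support; no additional hypothesis is needed. For $p \ge 2$ the assumptions $|\nabla u|^{p-1}, |\nabla u|^{2(p-2)} \in L^1(\mm)$ are precisely what is required: the integrand on the left is dominated by $C_p (|\nabla u|^{p-1}+1)|\nabla \phi|$ (cf.\ \eqref{eq:eps well def >2}), while both the $\Delta u$-piece and the $\H u$-piece on the right are dominated by $C_p |\phi| (|\nabla u|^{p-2}+1)(|\Delta u| + |\nabla|\nabla u||)$, integrable by Cauchy--Schwarz thanks to $|\nabla u|^{2(p-2)}\in L^1(\mm)$ and $|\Delta u|, |\nabla |\nabla u|| \in L^2(\mm)$.

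The ``in particular'' claim is then immediate: once $u \in W^{1,p}(\X)$ is further assumed, bounds \eqref{eq:eps well def <2}--\eqref{eq:eps well def >2} make the left-hand side of \eqref{eq:not yet eps plapl} coincide with that of the defining identity \eqref{eq:def eps plapl} of $\Delta_{p,\eps}$, while the integrability analysis above gives $h\, D_{p,\eps}(u) \in L^1_\loc(\X)$; the uniqueness clause in the definition of $\dom(\Delta_{p,\eps})$ then yields $u \in \dom(\Delta_{p,\eps})$ with $\Delta_{p,\eps} u = h\, D_{p,\eps}(u)$.
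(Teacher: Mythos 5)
The overall strategy — test the Laplacian identity with $h_M\phi$, expand $\nabla(h_M\phi)$ via Leibniz and chain rule, then let $M\to\infty$ by dominated convergence — is the same as the paper's, and the treatment of the case $p<2$ and of the final ``in particular'' clause are correct. The gap is in the truncation for $p\ge 2$.

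You truncate by multiplying $h$ with a smooth cut-off $\chi_M$ equal to $1$ on $[0,M]$ and $0$ on $[M+1,\infty)$, so that $\nabla h_M=F_M'(|\nabla u|)\nabla|\nabla u|$ with $F_M'=\chi_M F'+\chi_M' F$. This produces an extra term that, after pairing with $\phi\nabla u$, reads
\[
\int_\X \phi\,\chi_M'(|\nabla u|)\,(|\nabla u|^2+\eps)^{\frac{p-2}{2}}\frac{\H u(\nabla u,\nabla u)}{|\nabla u|}\,\d\mm ,
\]
and you dismiss it on the grounds that its support $\{M\le|\nabla u|\le M+1\}$ shrinks. But shrinking support does not by itself kill an integral; you need an $L^1$-dominant, and none is available here. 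Since your cut-off has transition width $O(1)$, $|\chi_M'|$ is bounded below on its support, so the integrand is genuinely of size $|\phi|\,|\nabla u|^{p-1}|\nabla|\nabla u||$ (up to constants). Controlling this by Cauchy--Schwarz would require $|\nabla u|^{2(p-1)}\in L^1_\loc$, which is strictly stronger than the assumed $|\nabla u|^{p-1},|\nabla u|^{2(p-2)}\in L^1$ (the exponents $2$ and $2(p-2)$ do not bracket $2(p-1)$). Bounding $|\nabla u|\le M+1$ on the transition shell and using $|\phi|\,|\nabla u|^{p-2}|\nabla|\nabla u||\in L^1$ leaves a factor $M+1$ that is not cancelled, so the extra term cannot be shown to vanish with your cut-off.

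The fix is to truncate the \emph{argument} rather than multiply by a bump: set $w_k:=\big((|\nabla u|\wedge k)^2+\eps\big)^{(p-2)/2}$ as in the paper. Then the Lipschitz chain rule gives $\nabla w_k=(p-2)\nchi_{\{|\nabla u|\le k\}}|\nabla u|(|\nabla u|^2+\eps)^{(p-4)/2}\nabla|\nabla u|$, i.e.\ exactly the truncated gradient of $h$ with no spurious $\chi_M'$-piece, and your dominated convergence analysis (which is otherwise correct: the left-hand integrand is $\lesssim(|\nabla u|^{p-1}+1)|\nabla\phi|$ and the right-hand integrand is $\lesssim|\phi|(|\nabla u|^{p-2}+1)(|\Delta u|+|\H u|)$, all in $L^1$ under the stated hypotheses) goes through verbatim. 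Alternatively, if you insist on a smooth bump, you would need one with transition on $[M,2M]$ so that $|\chi_M'|\lesssim 1/M$, which precisely compensates the growth of $|\nabla u|$ on the shell.
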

	\begin{proof} \textsc{Case $p\ge 2$.}	
		Fix $k\in \nn$. Since $|\nabla u|\in \W(\X)$ (see Proposition \ref{prop:gradgrad}) we have $w_k\coloneqq ((|\nabla u|\wedge k)^2+\eps)^{\frac{p-2}{2}} \in W^{1,2}\cap L^\infty(\X)$ and
		\begin{equation}\label{eq:gradient lq}
			\nabla w_k=(p-2)\nchi_{\{|\nabla u|\le k\}}|\nabla u|(|\nabla u|^2+\eps )^{\frac{p-2}{2}-1}\nabla |\nabla u|.
		\end{equation}
		Moreover, for every $\phi \in \LIP(\X)$ we have $w_k\phi \in \W(\X)$. Therefore by the Leibniz rule for the gradient and integrating by parts we obtain
		\[
		\int w_k\la \nabla u, \nabla \phi \ra\d \mm =\int \la \nabla u, \nabla (\phi w_k) \ra-\phi\la \nabla u,  w_k \ra\d \mm
		= \int -\phi (w_k\Delta u+\la \nabla u,  w_k \ra)\d \mm. \]
		Substituting the expression for $w_k$ and $\nabla w_k$ (as given in \eqref{eq:gradient lq}) 
		\begin{equation*}
			\begin{multlined}[c][.8\textwidth]
				\int ((|\nabla u|\wedge k)^2+\eps)^{\frac{p-2}{2}}\la \nabla u, \nabla \phi \ra\d \mm\\ =\int
				-\phi ((|\nabla u|\wedge k)^2+\eps)^{\frac{p-2}{2}}\bigg(\Delta u+\nchi_{\{|\nabla u|\le k\}}(p-2)|\nabla u| \frac{\la\nabla u,  \nabla |\nabla u| \ra}{|\nabla u|^2+\eps}\bigg)\d \mm.
			\end{multlined}
		\end{equation*}
		Thanks to the assumptions $|\nabla u|^{p-1}\in L^{1}(\mm)$ and $|\nabla u|^{2(p-2)}\in L^{1}(\mm)$ and since $\Delta u \in L^2(\mm)$ and $ |\nabla |\nabla u||\in L^2(\mm)$, we deduce  that both the integrands  are dominated by an $L^1(\mm)$-function independet of $k$. Hence letting $k \to +\infty$ and using the dominated convergence theorem we deduce that \eqref{eq:def eps plapl} holds with $\Delta_{p,\eps} u= (|\nabla u|^2+\eps)^{\frac{p-2}{2}} D_{p,\eps}(u)$ and so the conclusion follows.

		\noindent\textsc{Case $p< 2$.}
		In this case, we have directly that $(|\nabla u|^2+\eps)^{\frac{p-2}{2}} \in \W\cap L^\infty(\X)$ with
		$$\nabla(|\nabla u|^2+\eps)^{\frac{p-2}{2}}=(p-2)|\nabla u|(|\nabla u|^2+\eps )^{\frac{p-2}{2}-1}\nabla |\nabla u|,$$
		without the need of a cut-off.
		Indeed, $|\nabla u|\in \W(\X)$ and the function $(t^2+\eps)^{(p-2)/2} \in C^1(\rr)$ is bounded with bounded derivative.
		Therefore $(|\nabla u|^2+\eps)^{\frac{p-2}{2}}\nabla \phi\in \W(\X)$ for every $\phi \in \LIP_{bs}(\X)$. Arguing as above integrating by parts we reach
		\[
		\int (|\nabla u|^2+\eps)^{\frac{p-2}{2}}\la \nabla u, \nabla \phi \ra\d \mm =\int
		-\phi (|\nabla u|^2+\eps)^{\frac{p-2}{2}}\bigg(\Delta u+(p-2)|\nabla u| \frac{\la\nabla u,  \nabla |\nabla u| \ra}{|\nabla u|^2+\eps}\bigg)\d \mm,
		\]
		which is the sought conclusion.
	\end{proof}
	For convenience, given $u \in \dom(\Delta)$ we write recalling \eqref{eq:musical}
	\[
	\Delta_\infty(u)\coloneqq |\nabla u|\la \nabla |\nabla u|,\nabla u \ra=\H u (\nabla u,\nabla u) \in L^0(\mm).
	\]
	Hence, we can rewrite the operator $D_{\eps,p}$ as
	\begin{equation}\label{eq:deps inf}
		D_{\eps,p}(u)=\Delta u+(p-2) \frac{\Delta_\infty(u)}{|\nabla u|^2+\eps}, \quad \forall p \in(1,\infty),\, \eps>0.
	\end{equation}
	Next, we prove a technical inequality inspired by some computations done in the smooth setting in \cite[Lemma 3.4]{LZZ21}, that will be used in the next section to obtain uniform regularity estimates for $D_{\eps,p}.$
	\begin{prop}\label{prop:nightmare}
		For every $N\in[2,\infty]$, $p\in(1,\infty)$ and  $\alpha > \frac12 \big(p-3-\frac{p-1}{N-1}\big)$
		there exists constants $\lambda=\lambda(p,N,\alpha)\in(0,1)$ and $C_1=C_1(p,N,\alpha)\ge 0$ such that the following holds. Let $\Xdm$ be an $\rcd(K,N)$ space, with $n\coloneqq\dim(\X)\ge 2$ (if $N<\infty$) and $u \in \dom(\Delta)$. Then for every $\eps>0$ 
		\begin{equation}\label{eq:infinity laplacian estimate}
			\begin{split}
				[(p-2)^2-2\alpha(p-2)]&\frac{(\Delta_\infty(u))^2}{(|\nabla u|^2+\eps)^2}-2\alpha\frac{|\nabla |\nabla u||^2 |\nabla u|^2}{|\nabla u|^2+\eps}\\
				&\le \lambda \left[	|\H u|^2+\frac{(\Delta u-\tr \H u)^2}{N-n}\right]+C_1\,(D_{p,\eps}(u))^2,
			\end{split}
		\end{equation} 
		where the term containing $N-n$ is not present if either $N=+\infty$ or $N=n.$
		Moreover, if $\alpha\ge p-2$ the constants $\lambda$ and $C_1$ can be taken independent of both $\alpha$ and $N$.
	\end{prop}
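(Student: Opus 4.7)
\emph{Reduction to a pointwise algebraic inequality.} The estimate is pointwise, and on $\{|\nabla u|=0\}$ both sides vanish $\mea$-a.e.\ by the locality of the Hessian, so I restrict attention to $\{|\nabla u|>0\}$. There I fix a local orthonormal frame $\{e_1,\dots,e_n\}$ of $L^0(T\X)$ with $e_1=\nabla u/|\nabla u|$ and introduce $H_{ij}:=\H u(e_i,e_j)$, $h:=H_{11}$, $\beta:=\sum_{i\ge 2}H_{1i}^2$, $t:=|\nabla u|^2/(|\nabla u|^2+\eps)\in(0,1]$, $\tau:=\Delta u-\tr\H u$ and $S:=\sum_{i\ge 2}H_{ii}+\tau$. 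Using \eqref{eq:coord}--\eqref{eq:musical} one directly computes $\Delta_\infty(u)/(|\nabla u|^2+\eps)=th$, $|\nabla|\nabla u||^2|\nabla u|^2/(|\nabla u|^2+\eps)=t(h^2+\beta)$ and $D_{p,\eps}(u)=(1+(p-2)t)h+S$. The off-diagonal block $(H_{ij})_{2\le i<j}$ enters only through the nonnegative $|\H u|^2$-term on the right and can be set to zero; a weighted Cauchy--Schwarz inequality (with weights $1,\dots,1,N-n$) then gives $\sum_{i\ge 2}H_{ii}^2+\tau^2/(N-n)\ge S^2/(N-1)$ (the $\tau$-term is dropped if $N\in\{n,\infty\}$). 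The proposition reduces to the scalar claim: there exist $\lambda\in(0,1)$ and $C_1\ge 0$ such that
\[
A(t)\,h^2-2\alpha t\,\beta\le \lambda\Big[h^2+2\beta+\tfrac{S^2}{N-1}\Big]+C_1\bigl[(1+(p-2)t)h+S\bigr]^2
\]
holds for every $h,S\in\R$, $\beta\ge 0$ and $t\in[0,1]$, where $A(t):=[(p-2)^2-2\alpha(p-2)]t^2-2\alpha t$.

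\emph{Verifying the scalar inequality.} The coefficients of $\beta$ give the constraint $\lambda\ge -\alpha t$, which is compatible with $\lambda<1$ because the hypothesis $\alpha>\tfrac12(p-3-\tfrac{p-1}{N-1})$ forces $\alpha>-1$ for $p\in(1,\infty)$. For the $(h,S)$-part I minimise the right-hand side in $S$: letting $C_1\to+\infty$, the minimum tends to $\lambda h^2\bigl[1+(1+(p-2)t)^2/(N-1)\bigr]+2\lambda\beta$, so it suffices to find $\lambda<1$ with $F(t):=A(t)\bigl/\bigl[1+(1+(p-2)t)^2/(N-1)\bigr]\le\lambda$ on $[0,1]$. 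Clearing denominators, $F(t)\le 1$ is equivalent to the non-positivity of a certain quadratic polynomial $P(t)$ with $P(0)=-N<0$; the algebraic identity
\[
(p-2)^2(N-1)-(N-1)-(p-1)^2=(p-1)\bigl[(p-3)(N-1)-(p-1)\bigr]
\]
shows that $P(1)<0$ is equivalent to the stated hypothesis on $\alpha$. A case split on the sign of the coefficient of $t^2$ in $P$ (convex subcase: max on $[0,1]$ at endpoints, both negative; concave subcase: the interior vertex value $P(0)-a_1^2/(4a_2)$ is bounded via $a_1^2\le 4N|a_2|$, which follows from a direct expansion using the same identity) upgrades endpoint strict negativity to uniform strict negativity on $[0,1]$. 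One then fixes $\lambda$ slightly above $\sup_{[0,1]}F<1$ and chooses $C_1$ large enough to close the finite-$C_1$ correction.

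\emph{Uniformity for $\alpha\ge p-2$ and main obstacle.} The final assertion — that $\lambda,C_1$ can be chosen independent of both $\alpha$ and $N$ when $\alpha\ge p-2$ — is the most delicate step, and constitutes the main technical obstacle. My plan is to rewrite the left-hand side of \eqref{eq:infinity laplacian estimate} as $-2\alpha(T-a^2)+[(p-2)^2-2\alpha(p-1)]a^2$, with $a:=\Delta_\infty(u)/(|\nabla u|^2+\eps)$ and $T:=|\nabla|\nabla u||^2|\nabla u|^2/(|\nabla u|^2+\eps)$; Cauchy--Schwarz on $\Delta_\infty(u)=\H u(\nabla u,\nabla u)$ together with $|\nabla u|\nabla|\nabla u|=\H u(\nabla u)$ yields $T\ge a^2$. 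In the regime $\alpha\ge p-2\ge 0$ (i.e.\ $p\ge 2$) the first summand is nonpositive, while for $1<p<2$ it can be absorbed into $\lambda|\H u|^2$ via $T-a^2\le|\H u|^2$. The residual estimate for $[(p-2)^2-2\alpha(p-1)]a^2$ then reduces to a Cordes-type inequality whose constants depend only on $p$, and the challenge is in carrying this uniformity analysis cleanly through the boundary cases $N\to\infty$ and $\alpha\to\bigl(\tfrac12(p-3-\tfrac{p-1}{N-1})\bigr)^+$.
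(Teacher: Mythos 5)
Your reduction to a scalar polynomial inequality in $t=|\nabla u|^2/(|\nabla u|^2+\eps)$ is the same strategy as the paper's (the paper packages the frame computation into the key inequality \eqref{eq:key}, and your polynomial $P(t)$ is exactly $-(N-1)Q(t)$, where $Q$ is the polynomial from the paper's proof). The verification that $P(1)<0$ is equivalent to the constraint on $\alpha$ is correct.

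However, the concave subcase of the ``upgrade'' step contains a genuine gap. You claim $a_1^2\le 4N|a_2|$ and that this follows ``from a direct expansion using the same identity''; but a direct expansion gives
\[
a_1^2+4Na_2 \;=\; 4\big[\alpha(N-1)+(p-2)\big]^2+4N\big[(p-2)^2(N-2)-2\alpha(p-2)(N-1)\big]\;=\;4(N-1)^2\big[\alpha-(p-2)\big]^2\ \ge 0,
\]
so in fact the \emph{reverse} inequality $a_1^2\ge 4N|a_2|$ holds (with equality only for $\alpha=p-2$). Consequently, if the vertex $\bar t$ did lie in $[0,1]$, the vertex value would be $P(\bar t)=-N+a_1^2/(4|a_2|)\ge 0$ — the wrong sign. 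What actually saves the argument is that under the hypothesis on $\alpha$ the vertex always lies \emph{outside} $[0,1]$; this must be proved separately (for instance by checking the sign of $P'(1)$, or — as the paper does — by combining the identity $Q(\bar t)=(\tfrac{p-2}{N-1}+\alpha)\bar t+A_0$ with $\bar t\le 1$ and the lower bound on $\alpha$). As written, your argument reaches the right conclusion by way of a false intermediate claim.

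The final uniformity assertion ($\alpha\ge p-2$) is acknowledged as incomplete, and the sketched plan is too coarse. The bound $T-a^2\le|\H u|^2$ applied to $-2\alpha(T-a^2)$ would require $\lambda\ge 2(2-p)$, which exceeds $1$ as soon as $p\le 3/2$; even the sharper $T-a^2\le\tfrac12|\H u|^2$ leaves a constant of order $2$ for $p$ near $1$. The correct constant is obtained by keeping track of the exact $t$-dependence: with $a^2=t^2h^2$, $T-a^2=t(1-t)h^2+t\beta$, and $\alpha\ge p-2$ one finds the coefficient of $h^2$ is $(2-p)t[2+(p-2)t]$ whose supremum on $[0,1]$ is $p(2-p)=1-(p-1)^2<1$. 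This mirrors the paper's computation $Q(1)\ge (p-1)^2$ for $\alpha\ge p-2$, and it is this sharp constant, not the crude absorption, that gives uniformity in $\alpha$ and $N$. Finally, the frame argument should be complemented in the $N=\infty$ case (where a finite essential dimension is not available) by the simpler estimates $|\nabla|\nabla u|||\nabla u|\le|\H u||\nabla u|$ and $|\Delta_\infty u|\le|\nabla|\nabla u|||\nabla u|^2$ as in the paper.
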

	In the above proposition, it might look strange to have both the dependence on $N$ of the constants and the possibility of taking $N=\infty$. However note that the range of the admissible $\alpha$'s increases as $N$ decreases, hence we cannot take $C_1$ and $\lambda$ to depend only on $\alpha$ and $p.$
	The proof of Proposition \ref{prop:nightmare} relies on the following fundamental inequality, well known in the smooth case (see e.g.\ \cite{Sa22} and also \cite{Colding12,WZ11,LZZ21,FMP19}).
	\begin{prop}[Key inequality]
		Let $\Xdm$ be an $\rcd(K,N)$ space with $N\in[1,\infty]$. Then for every $u \in W^{2,2}(\X)$  it holds
		\begin{equation}\label{eq:key}
			|\nabla u|^4|\H u|^2 \ge  2|\nabla u|^4|\nabla |\nabla u||^2+\frac{\left(|\nabla u|^2\tr \H u - \Delta_\infty u\right)^2}{\dim(\X)-1}-(\Delta_\infty u)^2, \quad \mea\text{-a.e.,}
		\end{equation}
		where term containing $\dim(\X)$ is not present if  $N=\infty$ or $\dim(\X)=1$. 
	\end{prop}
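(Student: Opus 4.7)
The plan is to reduce \eqref{eq:key} to a pointwise algebraic inequality for symmetric matrices, which is then handled by two applications of Cauchy--Schwarz. Using the global orthonormal frame $\{e_1,\ldots,e_n\}$ of $L^0(T\X)$ with $n=\dim(\X)$ from \eqref{eq:coord}, I would represent at each point $\H u$ by the symmetric matrix $H=(H_{ij})$ with $H_{ij}=\H u(e_i,e_j)$, and $\nabla u$ by the vector $U=(u_1,\ldots,u_n)$ with $u_i=\la\nabla u,e_i\ra$. Then $|\H u|^2=\sum_{i,j}H_{ij}^2$, $\tr\H u=\sum_i H_{ii}$, $\Delta_\infty u=U^\top H U$, and using the identity $|\nabla u|\nabla|\nabla u|=\H u(\nabla u)$ of \eqref{eq:musical} (extended from $\dom(\Delta)$ to $W^{2,2}(\X)$ by density, e.g.\ via Lemma \ref{lem:test dense}) one also has $|\nabla u|^4|\nabla|\nabla u||^2=|\nabla u|^2|H U|^2$. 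The crucial observation is that every one of these quantities is invariant under pointwise simultaneous orthogonal change of frame.

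The set $\{|\nabla u|=0\}$ is trivial: the $|\nabla u|^4$ prefactors kill the two Hessian-type terms, while $\Delta_\infty u$ and $|\nabla u|^2\tr\H u-\Delta_\infty u$ vanish by $L^0$-bilinearity of $\H u$ applied to $\nabla u=0$. On the complementary set $\{|\nabla u|>0\}$ I would exploit the orthogonal invariance above to rotate the frame pointwise so that $U=(|\nabla u|,0,\ldots,0)$; in these adapted coordinates $\Delta_\infty u=|\nabla u|^2 H_{11}$ and $|HU|^2=|\nabla u|^2\sum_i H_{i1}^2$, and after dividing \eqref{eq:key} through by $|\nabla u|^4$ the problem becomes the purely algebraic inequality
\[
\sum_{i,j=1}^n H_{ij}^2 + H_{11}^2 \;\ge\; 2\sum_{i=1}^n H_{i1}^2 + \frac{(\tr H - H_{11})^2}{n-1}
\]
for every symmetric $n\times n$ matrix $H$, with the dimension term absent when $N=\infty$.

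Splitting off the first row and column in $|H|^2=H_{11}^2+2\sum_{i\ge 2}H_{1i}^2+\sum_{i,j\ge 2}H_{ij}^2$ and in $\sum_i H_{i1}^2=H_{11}^2+\sum_{i\ge 2}H_{i1}^2$, all terms involving the index $1$ cancel and the inequality reduces to $\sum_{i,j\ge 2}H_{ij}^2 \ge (n-1)^{-1}\bigl(\sum_{i\ge 2}H_{ii}\bigr)^2$. This follows from two Cauchy--Schwarz steps: $\sum_{i,j\ge 2}H_{ij}^2\ge\sum_{i\ge 2}H_{ii}^2$, and then $\sum_{i\ge 2}H_{ii}^2\ge(n-1)^{-1}\bigl(\sum_{i\ge 2}H_{ii}\bigr)^2$. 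For $N=\infty$ the dimension term is absent and the inequality collapses to the trivial $\sum_{i,j\ge 2}H_{ij}^2\ge 0$. I expect the only real technicality to lie in checking that the coordinate formulas \eqref{eq:coord} and the identity \eqref{eq:musical} apply to a general $u\in W^{2,2}(\X)$ rather than only to $u\in\dom(\Delta)$; this should follow by a standard density argument through $\test(\X)$ and does not represent a conceptual obstacle.
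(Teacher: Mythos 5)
Your argument for $N<\infty$ is correct and takes essentially the same route as the paper: reduce the inequality to a pointwise algebraic statement about a symmetric matrix $H$ and a vector $U$, then verify that statement. The paper outsources this last step to a citation (\cite[Lemma 2.1]{Sa22}), whereas you reprove it from scratch by rotating the frame so that $U$ points along $e_1$, splitting off the first row and column of $H$, and applying Cauchy--Schwarz twice; your cancellation step and the final estimate $\sum_{i,j\ge 2}H_{ij}^2 \ge (n-1)^{-1}\bigl(\sum_{i\ge 2}H_{ii}\bigr)^2$ are both correct. This is a genuinely self-contained alternative and a nice thing to have spelled out. Your handling of the set $\{|\nabla u|=0\}$ and the appeal to the coordinate representation \eqref{eq:coord} and to \eqref{eq:musical} are also in line with what the paper does.

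The gap is the case $N=\infty$, which the statement explicitly covers. Your entire argument is built on the global orthonormal frame $\{e_1,\ldots,e_n\}$ with $n=\dim(\X)$ from \eqref{eq:coord}, but that frame is only available when $N<\infty$: for a general $\rcd(K,\infty)$ space the tangent module $L^0(T\X)$ need not be finitely generated, so there is no finite $n$, no matrix $H$, and no ``rotate so $U=(|\nabla u|,0,\ldots,0)$'' step. Saying that ``for $N=\infty$ the dimension term is absent and the inequality collapses to $\sum_{i,j\ge 2}H_{ij}^2\ge 0$'' therefore doesn't apply, because the reduction producing that expression is not available. The paper treats $N=\infty$ separately and frame-free, using $|\nabla|\nabla u||\le |\H u|$ together with $|\Delta_\infty u|\le |\nabla|\nabla u||\,|\nabla u|^2$. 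A clean way to fill your gap in the same spirit is to set $\hat v\coloneqq \nabla u/|\nabla u|$ on $\{|\nabla u|>0\}$, decompose $\H u(\hat v)=\H u(\hat v,\hat v)\hat v + w^\perp$ with $w^\perp\perp\hat v$, and observe that the (at most) two-element orthonormal set $\{\hat v, w^\perp/|w^\perp|\}$ already yields, by the same ``split off one direction'' computation you carry out in the finite case, the bound $|\H u|^2 \ge \H u(\hat v,\hat v)^2 + 2|w^\perp|^2$; this is equivalent to the $N=\infty$ inequality after dividing by $|\nabla u|^4$ and requires only a two-dimensional local frame, which does exist even in the infinite-dimensional module. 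Without such a fix, the $N=\infty$ part of your proof is unsupported.
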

	\begin{proof}
		First we consider the case $N<\infty.$
		The statement is a direct consequence of an elementary inequality for quadratic operators on finite dimensional vector space (see \cite[Lemma 2.1]{Sa22}). Indeed, for every symmetric real matrix $A \in \rr^{n\times n}$, $n \ge 2$ it holds
		\[
		|v|^4|A|^2\ge 2|v|^2|A v|^2+\frac{(|v|^2\tr A-\la Av,v\ra )^2}{n-1}-\la v,Av \ra^2, \quad \forall v \in \rr^n,
		\]
		where $|A|$ denotes the Hilbert-Schmidt norm of $A$. If  $n=1$ the above is still trivially true (with equality)   without the term containing $n-1$.  Inequality
		\eqref{eq:key} now follows by computing in coordinates recalling the second in \eqref{eq:musical}.

		If $N=\infty$ we recall that since $\H u$ is a symmetric tensor and by the construction of the tensor products of Hilbert modules there exist tensors $A_k$, $k\in \nn$ of the form $A_k=\sum_{i,j=1}^{N_k} a^k_{i,j} e_i\otimes e_j$, such that $\{a^k_{i,j}\}_{i,j}$ is a symmetric  real matrix, $\la e_i,e_j \ra=\delta_{i,j}$ $\mm$-a.e.\  and $|A_k-\H u|_{HS}\to 0$ in $L^2(\mm)$ (see \cite[Section 1.5]{Gigli14}). Computing in coordinates, thanks to the above inequality with $n=N_k$ we obtain that
		\[
		|w|^4|A_k|_{HS}^2\ge 2|w|^2|A_k w|^2-\la w,A_k w \ra^2, \quad \text{$\mm$-a.e.\ for every $w\in L^0(T\X)$}.
		\]
		Passing to the limit, since $|A_k|_{HS}\to |\H u|_{HS}$, $|A_k w|\to |\H u(w)|$ and $\la w,A_k w \ra\to \la w,\H u w \ra$ all hold $\mm$-a.e., we obtain \eqref{eq:key}.
%
	\end{proof}
	
	We can now prove the main technical estimate.
	\begin{proof}[Proof of Proposition \ref{prop:nightmare}]
		Note that  \eqref{eq:infinity laplacian estimate} is trivially satisfied whenever $|\nabla u|=0,$ since the left-hand side is zero by the locality of the gradient. Hence all the estimates from now on will be done on the set $\{|\nabla u|>0\}$ (well defined up to measure zero sets).
		
		From \eqref{eq:key} 
		\begin{equation}\label{eq:passo 1}
			|\H u|^2\ge 2|\nabla |\nabla u||^2+ \frac{(\tr \H u-\frac{\Delta_\infty(u)}{|\nabla u|^2} )^2}{n-1}-\frac{(\Delta_\infty(u))^2}{|\nabla u|^4}, \quad \alme,
		\end{equation} 
		without the term containing $n$ in the case $N=\infty.$ 
		We proceed assuming that $N<\infty$ and by giving a lower bound on the term containing $n$.
		
		First, by \eqref{eq:deps inf}, we have the following identity
		$$
		\tr \H u-\frac{\Delta_\infty(u)}{|\nabla u|^2}=\frac{\Delta_\infty(u)}{|\nabla u|^2}\left((2-p) \frac{|\nabla u|^2}{|\nabla u|^2+\eps}-1 \right)+D_{\eps,p}(u)+\tr \H u-\Delta u.
		$$
		Therefore, using twice the Young's inequality, we obtain
		\begin{equation}\label{eq:delta1delta2}
			\begin{split}
				\bigg(\tr \H u-\frac{\Delta_\infty(u)}{|\nabla u|^2}\bigg )^2 &\ge\begin{multlined}[t] (1-\delta_1) \left[\frac{\Delta_\infty(u)}{|\nabla u|^2}\left((2-p) \frac{|\nabla u|^2}{|\nabla u|^2+\eps}-1 \right)+ D_{\eps,p}(u)\right]^2\\
					+\left(1-\frac1\delta_1\right)(\Delta u-\tr \H u)^2\end{multlined}\\
				&\ge \begin{multlined}[t](1-\delta_1-\delta_2)\frac{(\Delta_\infty(u))^2}{|\nabla u|^4}\left((2-p) \frac{|\nabla u|^2}{|\nabla u|^2+\eps}-1 \right)^2\\+\left(1-\delta_1-\frac1{\delta_2}\right)D_{\eps,p}(u)^2
					+\left(1-\frac1\delta_1\right)(\Delta u-\tr \H u)^2,\end{multlined}
			\end{split}
		\end{equation}
		for every $\delta_1,\delta_2>0$. This yields
		\begin{equation}\label{eq:delta2}
			\begin{split}
				\frac{\big(\tr \H u-\frac{\Delta_\infty(u)}{|\nabla u|^2}\big)^2}{n-1} &\ge \bigg(\frac{1}{N-1}-\frac{\delta_2}{n-1}\bigg)\frac{(\Delta_\infty(u))^2}{|\nabla u|^4}\left((2-p) \frac{|\nabla u|^2}{|\nabla u|^2+\eps}-1 \right)^2\\
				&\qquad+\left(\frac{1}{N-1}-\frac1{(n-1)\delta_2}\right)D_{\eps,p}(u)^2-\frac{(\Delta u-\tr \H u)^2}{N-n},
			\end{split}
		\end{equation}
		where the last term is taken to be zero if $N=n.$ 
		To see this in the case $n<N$ we simply choose $\delta_1=\frac{N-n}{N-1}$ in \eqref{eq:delta1delta2} and divide both sides by $n-1$. If instead $N=n$, since $\Delta u=\tr \H u$ (see \eqref{eq:tr=lapl}), we have that \eqref{eq:delta1delta2} actually holds also with $\delta_1=0$ taking $\left(1-\frac1\delta_1\right)(\Delta u-\tr \H u)^2$ to be identically zero. Hence to get \eqref{eq:delta2}  we only need to divide both sides by $N-1=n-1$.

		Resuming the argument for all $N\in[1,\infty]$, we add on both sides of \eqref{eq:passo 1}  the quantity 
		$$-(1+\delta_2)	 \left[(p-2)(p-2-2\alpha)\frac{(\Delta_\infty(u))^2}{(|\nabla u|^2+\eps)^2}-2\alpha\frac{|\nabla |\nabla u||^2 |\nabla u|^2}{|\nabla u|^2+\eps}\right]$$
		and, in the case $N<\infty$, we also plug in inequality \eqref{eq:delta2}. Doing so we reach
		\begin{equation}\label{eq:monster}
			\begin{split}
				\begin{multlined}[t][.9\textwidth]    
					|\H u|^2-(1+\delta_2)\left[(p-2)(p-2-2\alpha)\frac{(\Delta_\infty(u))^2}{(|\nabla u|^2+\eps)^2}-2\alpha \frac{|\nabla |\nabla u||^2 |\nabla u|^2}{|\nabla u|^2+\eps}\right]\ge\\
					\begin{aligned}[t]&\begin{multlined}[t][.4\textwidth]\frac{(\Delta_\infty(u))^2}{|\nabla u|^4} 
							\left[\bigg(\frac{1}{N-1}-\frac{\delta_2}{n-1}\bigg)\left((2-p) \frac{|\nabla u|^2}{|\nabla u|^2+\eps}-1 \right)^2\right.\\\left.\vphantom{\left((2-p) \frac{|\nabla u|^2}{|\nabla u|^2+\eps}-1 \right)^2}-(1+\delta_2)(p-2)(p-2-2\alpha) \frac{|\nabla u|^4}{(|\nabla u|^2+\eps)^2}-1\right]\end{multlined}\\
						&+|\nabla |\nabla u||^2 \left(2+2\alpha(1+\delta_2) \frac{|\nabla u|^2}{|\nabla u|^2+\eps}\right) \\& + \left(\frac{1}{N-1}-\frac1{(n-1)\delta_2}\right)D_{\eps,p}(u)^2-\frac{(\Delta u-\tr \H u)^2}{N-n},
					\end{aligned}
				\end{multlined}
			\end{split}
		\end{equation}
		where if $N=\infty$ all the terms containing $n$ or $N$ are taken to be zero and the last term is taken to be zero also if $N=n.$
		By assumption $$2\alpha>p-3-\frac{p-1}{N-1}=-2+(p-1)\left(1-\frac{1}{N-1}\right)\ge -2.$$
		This means that, even when $\alpha<0$, we have $2\alpha(1+\delta_2)>-2$ if $\delta_2>0$ is chosen small enough depending on $\alpha,$ which we assume from now on. In particular
		\[
		2+2\alpha(1+\delta_2) \frac{|\nabla u|^2}{|\nabla u|^2+\eps}\ge 0.
		\]
		This means that the factor multiplying $|\nabla |\nabla u||^2$  in \eqref{eq:monster} is non-negative and thus we can plug in the inequality $|\nabla |\nabla u||^2\ge \frac{(\Delta_\infty(u))^2}{|\nabla u|^4} $ to obtain
		\begin{align*}
			|\H u&|^2-(1+\delta_2)	 \left[(p-2)(p-2-2\alpha)\frac{(\Delta_\infty(u))^2}{(|\nabla u|^2+\eps)^2}-2\alpha \frac{|\nabla |\nabla u||^2 |\nabla u|^2}{|\nabla u|^2+\eps}\right]\\
			&\ge\begin{multlined}[t][.8\textwidth]\frac{(\Delta_\infty(u))^2}{|\nabla u|^4} \underbracea{\left[\bigg(\frac{1}{N-1}-\frac{\delta_2}{n-1}\bigg)\left((2-p) \frac{|\nabla u|^2}{|\nabla u|^2+\eps}-1 \right)^2\right.}\\\underbracebd{\left.\vphantom{\left((2-p) \frac{|\nabla u|^2}{|\nabla u|^2+\eps}-1 \right)^2}-(1+\delta_2)(p-2)(p-2-2\alpha) \frac{|\nabla u|^4}{(|\nabla u|^2+\eps)^2}+ 
					2(1+\delta_2)\alpha\frac{|\nabla u|^2}{|\nabla u|^2+\eps} + 1\right]}_{\eqqcolon A(\delta_2)\kern11cm}\end{multlined}\\
			&+\left(\frac{1}{N-1}-\frac1{(n-1)\delta_2}\right)D_{\eps,p}(u)^2-\frac{(\Delta u-\tr \H u)^2}{N-n},
		\end{align*}
		where again if $N=\infty$ all the terms  containing $n$ or $N$, are taken to be zero and if $N=n$, the last term is not present.
		To conclude it is sufficient to show that $A(\delta_2)\ge0$ for some $\delta_2>0$ depending only on $N,p$ and $\alpha$. Indeed, \eqref{eq:infinity laplacian estimate} would then follow dividing both sides of the inequality above by $(1+\delta_2).$ 
		To show this we denote $t\coloneqq \frac{|\nabla u|^2}{|\nabla u|^2+\eps}\in [0,1)$ and rewrite
		\begin{align*}
			A(\delta_2)&=\begin{multlined}[t][.6\textwidth]\underbrace{\left[\frac{1}{N-1}((2-p)t-1 )^2-(p-2)(p-2-2\alpha) t^2+2\alpha t+1\right]}_{
					Q(t)\coloneqq}\\
				-\frac{\delta_2}{n-1}((2-p)t-1 )^2-\delta_2(p-2)(p-2-2\alpha)t^2+2\alpha \delta_2 t\end{multlined}\\
			&\ge Q(t)-\delta_2 c(p),
		\end{align*}
		where $c(p)>0$ is a constant depending only on $p$, having used that  $\alpha>\frac12 \left(p-3-\frac{p-1}{N-1}\right)$, $n \ge 2$ and $|t|\le 1$.   Hence, it is enough to show that
		$$\min_{t \in[0,1]} Q(t)\ge \delta(N,p,\alpha)>0,$$
		where $ \delta(N,p,\alpha)>0$ is a constant depending only on $N,p$ and $\alpha$ (independent of $\alpha$ and $N$ if $\alpha\ge p/2$).
		We rewrite the polynomial $Q(t)$, $t \in \R$, as
		\begin{equation}\label{eq:espando A}
			Q(t)= \begin{cases}\left(\frac{(p-2)^2}{N-1}-(p-2)(p-2-2\alpha) \right)t^2+\left(2\frac{(p-2)}{N-1}+2\alpha\right)t+\frac1{N-1}+1,   &\text{if $N<\infty$,}\\
				1-(p-2)(p-2-2\alpha) t^2+2\alpha t,& \text{if $N=\infty$.}
			\end{cases}
		\end{equation}
		$Q(t)$ attains the minimum in $[0,1]$ at $t=0$ or $t=1$, or at some $\bar t \in (0,1)$, where $\frac{\d}{\d t} Q(t)\restr{t=\bar t}=0$. In the latter case we have that 
		\begin{align*}
			Q(\bar t)=\left(\frac{(p-2)}{N-1}+\alpha\right)\bar t+\frac{1}{N-1}+1,
		\end{align*}
		which holds also for $N=\infty.$
		If $\frac{(p-2)}{N-1}+\alpha\ge 0$, then $Q(\bar t)\ge 1.$ Otherwise, since $\bar t \le 1,$
		\begin{align*}
			Q(\bar t)&\ge \frac{(p-2)}{N-1}+\alpha+\frac{1}{N-1}+1\ge\frac{(p-2)}{N-1} +\frac{p-3}{2}-\frac{p-1}{2(N-1)}+\frac{1}{N-1}+1\\
			&=\frac{p-1}{2(N-1)} +\frac{p-3}{2}+1> \frac{p-1}{2(N-1)}\ge0,
		\end{align*}
		where we used the assumption on $\alpha.$ On the other hand $Q(0)=1+\frac1{N-1}.$ 
		Moreover
		\begin{align*}
			Q(1)&=\frac{(p-2)^2}{N-1}-(p-2)(p-2-2\alpha)  +\frac{2(p-2)}{N-1}+2\alpha+\frac1{N-1}+1\\
			&=1-(p-2)^2+\frac{(p-2)^2+2(p-2)+1}{N-1}+2\alpha(p-1)\\
			&=(3-p)(p-1)+\frac{(p-1)^2}{N-1}+2\alpha(p-1)=(p-1)\left( \frac{p-1}{N-1}+3-p+2\alpha\right).
		\end{align*}
		Since by assumption $\alpha>\frac12 \left(p-3-\frac{p-1}{N-1}\right)$ we obtain $Q(1)\ge \delta(p,N,\alpha)>0$ as desired. Moreover, if $\alpha\ge p-2$ then $Q(1)\ge (p-1)^2$, in which case the constant $ \delta(N,p,\alpha)$ can be taken independent of both $N$ and $\alpha$. 
	\end{proof}
	
	\subsection{Second-order regularity estimates}\label{sec:developed second est}
	In this part, we show some $L^2$-weighted bounds on the Hessian, in terms of $L^2$-bounds on $D_{\eps,p}$. The key feature is that the constants appearing in the estimate do not depend on $\eps.$ More precisely, the main goal is to prove the following.
	
	\begin{theorem}\label{thm:p-calderon}
		For every $N\in[2,\infty]$, $p\in(1,3+\frac{2}{N-2})$ and  $\alpha > \frac12 \big(p-3-\frac{p-1}{N-1}\big)$
		there exists a constant $C=C(p,N,\alpha)\ge0$ (independent of $\alpha$ if $\alpha\ge p-2$) such that the following holds. Let $\Xdm$ be an $\rcd(K,N)$ space, with $\dim(\X)=n\ge2$ (if $N<\infty$) and fix $u \in \dom(\Delta)$.
		Then, for every $M>0$, $\eps>0$ and $\eta \in \LIP_{bs}(\X)$ it holds
		\begin{equation}\label{eq:hessian estimate}
			\begin{split}
				\int |\H u|^2& ((|\nabla u|\wedge M)^2+\eps )^{\alpha}\eta^2 \d \mm \\
				&\le C \int \big[(1+|\alpha|^2) (D_{\eps,p} u)^2\eta^2+ |\nabla u|^2(|\nabla \eta|^2+K^-\eta^2)\big]((|\nabla u|\wedge M)^2+\eps)^{\alpha}\d \mm,
			\end{split}
		\end{equation}
		where $M$ is taken $+\infty$ when $\alpha<0$. Moreover, if $|\H u||\nabla u|^{\alpha },|\nabla u|^{\alpha +1}\in L^2(\supp(\eta);\mm)$ the above holds for all $p\in(1,\infty)$ and  $M=+\infty$.
	\end{theorem}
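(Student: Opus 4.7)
The strategy will be to start from the improved Bochner inequality of Theorem \ref{thm:improved bochner}, test it against the weighted cutoff $\phi\coloneqq w^\alpha\eta^2$ with $w\coloneqq(|\nabla u|\wedge M)^2+\eps$, integrate by parts the $\langle\nabla u,\nabla\Delta u\rangle$--term, and finally reduce everything to the pointwise algebraic bound of Proposition \ref{prop:nightmare}. A preliminary density step via Lemma \ref{lem:test dense} will let me assume $u\in\test(\X)$: the convergences $\Delta u_n\to\Delta u$ in $L^2$, $u_n\to u$ in $H^{2,2}(\X)$ and $|\nabla u_n|\to|\nabla u|$ in $\W(\X)$, together with the pointwise bound $|\Delta_\infty u_n/(|\nabla u_n|^2+\eps)|\le|\H{u_n}|$, will make both sides of \eqref{eq:hessian estimate} pass to the limit along a suitable subsequence.

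\textbf{Main computation.} For $u\in\test(\X)$ the test function $\phi$ is admissible in the improved Bochner inequality (it is bounded, has bounded support and belongs to $\W(\X)$, because $w\ge\eps$ is bounded above by $M^2+\eps$ on $\supp(\eta)$). Since $\Delta u\in\W(\X)$, I can integrate by parts the $\langle\nabla u,\nabla\Delta u\rangle\phi$--term obtaining $-\int(\Delta u)^2\phi\,\d\mm-\int\Delta u\langle\nabla\phi,\nabla u\rangle\,\d\mm$. Writing $\nabla\phi=\alpha w^{\alpha-1}\eta^2\nabla w+2w^\alpha\eta\nabla\eta$ and noting that $\nabla w=2|\nabla u|\nabla|\nabla u|$ on $\{|\nabla u|\le M\}$ (zero elsewhere), together with the identities $\langle\nabla w,\nabla u\rangle=2\Delta_\infty u$ and $\langle\nabla w,\nabla|\nabla u|^2/2\rangle=2|\nabla u|^2|\nabla|\nabla u||^2$ on that set, I arrive (after dropping the nonnegative $(\Delta u-\tr\H{u})^2/(N-n)$--term on the left, relevant when $N<\infty$) at
\begin{equation*}
\begin{split}
\int|\H{u}|^2\phi\,\d\mm\le\,& \int(\Delta u)^2\phi\,\d\mm+2\alpha\int_{\{|\nabla u|\le M\}}w^{\alpha-1}\eta^2\bigl(\Delta u\cdot\Delta_\infty u-|\nabla u|^2|\nabla|\nabla u||^2\bigr)\d\mm \\
& +2\int w^\alpha\eta\bigl(\Delta u\,\langle\nabla\eta,\nabla u\rangle-|\nabla u|\,\langle\nabla\eta,\nabla|\nabla u|\rangle\bigr)\d\mm+K^{-}\int|\nabla u|^2\phi\,\d\mm.
\end{split}
\end{equation*}

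\textbf{Closing via Proposition \ref{prop:nightmare}.} Set $\tau\coloneqq\Delta_\infty u/(|\nabla u|^2+\eps)$ and use $\Delta u=D_{p,\eps}u-(p-2)\tau$. On $\{|\nabla u|\le M\}$, where $w=|\nabla u|^2+\eps$, a short algebraic manipulation collapses the sum of the first two integrands into
\begin{equation*}
\Bigl[(D_{p,\eps}u)^2+2(\alpha-p+2)D_{p,\eps}u\cdot\tau+\bigl((p-2)^2-2\alpha(p-2)\bigr)\tau^2-2\alpha\tfrac{|\nabla|\nabla u||^2|\nabla u|^2}{|\nabla u|^2+\eps}\Bigr]\phi,
\end{equation*}
whose last two summands are bounded pointwise by $\lambda|\H{u}|^2+C_1(D_{p,\eps}u)^2$ thanks to Proposition \ref{prop:nightmare}. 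The cross term $2(\alpha-p+2)D_{p,\eps}u\cdot\tau$ is disposed of by Young's inequality combined with the pointwise bound $\tau^2\le|\H{u}|^2$; choosing the Young parameter small enough (and exploiting $\lambda<1$) I absorb the resulting $|\H{u}|^2$--part into the left-hand side, at the cost of producing a $(1+|\alpha|^2)$--factor in front of $(D_{p,\eps}u)^2$. On the complement $\{|\nabla u|>M\}$ only the $(\Delta u)^2\phi$--integrand survives; expanding once more $\Delta u=D_{p,\eps}u-(p-2)\tau$ and applying Proposition \ref{prop:nightmare} with $\alpha=0$ bounds $(p-2)^2\tau^2$ by $\lambda|\H{u}|^2+C_1(D_{p,\eps}u)^2$, which is legitimate precisely when $p<3+2/(N-2)$. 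Finally, the boundary term involving $\nabla\eta$ is handled through Cauchy--Schwarz and Young (using $|\nabla|\nabla u||\le|\H{u}|$), and yields a contribution of the form on the right-hand side of \eqref{eq:hessian estimate} plus a small $|\H{u}|^2$--part which is again absorbed on the left.

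\textbf{Main obstacle.} Because the pointwise algebraic heart of the argument is already encoded in Proposition \ref{prop:nightmare}, the work left for this theorem is essentially careful book-keeping of constants and sign conventions. The delicate point is the region $\{|\nabla u|>M\}$ when $\alpha\ge0$ and $M<\infty$: there the $\alpha$--weighted ``good'' terms produced by the integration by parts vanish, and one must fall back on the weaker $\alpha=0$ version of Proposition \ref{prop:nightmare}---and it is exactly this that imposes the restriction $p<3+2/(N-2)$. Under the extra integrability assumption $|\H{u}||\nabla u|^\alpha,|\nabla u|^{\alpha+1}\in L^2(\supp(\eta);\mm)$ of the last clause, monotone convergence lets me dispense with the cutoff $M$, and the troublesome region---together with the restriction on $p$---disappears.
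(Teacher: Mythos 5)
Your overall route mirrors the paper's: test the improved Bochner inequality against the weighted cut-off $w^\alpha\eta^2$ with $w=(|\nabla u|\wedge M)^2+\eps$, integrate by parts, substitute $\Delta u=D_{p,\eps}u-(p-2)\Delta_\infty u/(|\nabla u|^2+\eps)$, and then absorb via Proposition~\ref{prop:nightmare}. The paper splits this into two preliminary lemmas (a Bochner-type inequality for general $\psi(|\nabla u|)$, then a ``key estimate''), while you do it in one sweep, but the substance and the role of Proposition~\ref{prop:nightmare} are identical.

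There is, however, one step that would fail as written. You discard the term $\int\frac{(\Delta u-\tr\H{u})^2}{N-n}\,\phi\,\d\mm$ from the left-hand side immediately after the integration by parts, ``because it is nonnegative.'' But Proposition~\ref{prop:nightmare} bounds the offending $\Delta_\infty$-terms by $\lambda\left[|\H u|^2+\frac{(\Delta u-\tr\H u)^2}{N-n}\right]+C_1(D_{p,\eps}u)^2$ with $\lambda\in(0,1)$ --- the $\lambda<1$ applies to the \emph{combined} quantity, not to $|\H u|^2$ alone. Once you have thrown the $(\Delta u-\tr\H u)^2/(N-n)$ piece away on the left, there is nothing to absorb the matching $\lambda\,(\Delta u-\tr\H u)^2/(N-n)$ that returns on the right when you invoke the proposition; replacing it by $\tfrac{2n}{N-n}|\H u|^2$ etc.\ would produce a factor with no uniform control when $n$ is close to $N$. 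The fix is trivial and is exactly what the paper does: keep that term on the left through the absorption, and only then pass to $\int|\H u|^2\phi\,\d\mm$ using nonnegativity. The issue only affects the case $2\le n<N<\infty$; when $N=\infty$ or $n=N$ the term is absent and your argument goes through unchanged.
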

	The core idea to obtain inequality \eqref{eq:hessian estimate} is to plug in suitable test functions in the Bochner inequality and to use the identity
	\[D_{\eps,p}(u)=\Delta u+(p-2) \frac{\Delta_\infty(u)}{|\nabla u|^2+\eps}\]
	(recall \eqref{eq:deps inf}) to get rid of the Laplacian terms. In the computations the technical estimate in Proposition \ref{prop:nightmare}, proved in the previous section, will play a key role.
	Similar arguments appeared recently in the smooth setting for the derivation of second-order estimate for $p$-harmonic functions (see \cite{LZZ21,Sa22,DPZZ20}).
	
	Before proving Theorem \ref{thm:p-calderon}, we state two inequalities following from it.

	\begin{cor}\label{cor:p-calderon covariant}
		For every $N\in[2,\infty)$, $R_0>0$, $K\in \rr$ and $p\in(1,3+\frac{2}{N-2})$ there exists a constant $C=C(p,N,K,R_0)>0$ such that the following holds. Let $\Xdm$ be an $\rcd(K,N)$ space and  let $u \in \dom(\Delta)$. Then for all $M>0,\eps>0$, $x \in \X$, $R\le R_0$  and $r<R$ it holds
		\begin{equation}\label{eq:covariant estimate}
			\begin{split}
				\int_{B_{r}(x)} R^{-2}|v_\eps|^2+|\nabla v_\eps |^2 \d \mm\le\begin{multlined}[t][.6\textwidth]
					C \int_{B_{R}(x)}  (D_{\eps,p} u)^2 ((|\nabla u|\wedge M)^2+\eps)^{p-2} \d \mm\\+ \frac{CR^N\mea(B_{R}(x))^{-1}}{(R-r)^{N+2}}  \left(\int_{B_R(x)} |v_\eps|\d \mm\right)^2,
				\end{multlined} 
			\end{split}
		\end{equation}
		where $v_\eps \coloneqq ((|\nabla u|\wedge M)^2+\eps)^\frac{p-2}{2}\nabla u \in H^{1,2}_C(\X)$ and  $M$ is taken to be $+\infty$ when $p<2.$  
	\end{cor}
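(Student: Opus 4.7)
The plan is to combine Theorem \ref{thm:p-calderon} (applied with $\alpha = p-2$) with the Sobolev trick \eqref{eq:sobolev trick} and a standard iteration lemma. Throughout, write $w := (|\nabla u| \wedge M)^2 + \eps$, so that $v_\eps = g(|\nabla u|)\,\nabla u$ with $g(t) := ((t\wedge M)^2+\eps)^{(p-2)/2}$, and note the key identity $|v_\eps|^2 = w^{p-2} |\nabla u|^2$.

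First, I would verify that $v_\eps \in H^{1,2}_C(T\X)$ and derive a pointwise bound on $|\nabla v_\eps|$. Proposition \ref{prop:gradgrad} applied to $u\in \dom(\Delta)$ yields $\nabla u \in H^{1,2}_C(T\X)$ and $|\nabla u| \in W^{1,2}(\X)$. The function $g$ is bounded and Lipschitz (thanks to the truncation at $M$ when $p \geq 2$ and to $\eps > 0$ when $p < 2$), hence $g(|\nabla u|) \in L^\infty \cap W^{1,2}(\X)$. Lemma \ref{lem:H12sobolev} then gives $v_\eps \in H^{1,2}_C(T\X)$ with
\[
\nabla v_\eps = g(|\nabla u|)\,\H u + g'(|\nabla u|)\,\nabla|\nabla u| \otimes \nabla u.
\]
The elementary estimate $|g'(t)|\,t \leq |p-2|\,g(t)$, combined with $|\nabla|\nabla u|| \leq |\H u|$, yields the pointwise bound $|\nabla v_\eps|^2 \leq C_p\, w^{p-2} |\H u|^2$.

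Second, I would plug this bound into Theorem \ref{thm:p-calderon} with $\alpha = p-2$ (the condition $\alpha > \frac{1}{2}(p-3-\frac{p-1}{N-1})$ reduces to $p > 1$). For $r \leq \rho_1 < \rho_2 \leq R$, choosing a cut-off $\eta \in \LIP_{bs}(B_{\rho_2})$ with $\eta \equiv 1$ on $B_{\rho_1}$ and $|\nabla \eta| \leq 2/(\rho_2-\rho_1)$, and using $|\nabla u|^2 w^{p-2} = |v_\eps|^2$, this produces the Caccioppoli-type estimate
\[
\int_{B_{\rho_1}} |\nabla v_\eps|^2 \,\d\mm \leq C \int_{B_{\rho_2}} (D_{\eps,p} u)^2 w^{p-2} \,\d\mm + C \Big(\tfrac{1}{(\rho_2-\rho_1)^2} + K^-\Big) \int_{B_{\rho_2}} |v_\eps|^2 \,\d\mm,
\]
with constant $C = C(p,N,K,R_0)$.

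Third, I would convert the $L^2$-norm of $v_\eps$ on the right-hand side into the desired $L^1$-norm via iteration. Applying the Sobolev trick \eqref{eq:sobolev trick} to $|v_\eps|\in W^{1,2}(\X)$ on the ball $B_{\rho_2}$, with $\delta$ chosen so that $C\delta\rho_2^2\big(\frac{1}{(\rho_2-\rho_1)^2}+K^-\big) = \frac{1}{2}$, and using $|\nabla|v_\eps|| \leq |\nabla v_\eps|$ together with the Bishop--Gromov bound $\rho_2^N/\mm(B_{\rho_2}) \leq CR^N/\mm(B_R)$ (coming from \eqref{eq:RCD doulbing}), I would arrive at
\[
h(\rho_1) \leq \tfrac{1}{2} h(\rho_2) + CA + \frac{C R^N}{\mm(B_R)(\rho_2-\rho_1)^{N+2}} L^2,
\]
where $h(\rho) := \int_{B_\rho} |\nabla v_\eps|^2$, $A := \int_{B_R}(D_{\eps,p}u)^2 w^{p-2}$, $L := \int_{B_R}|v_\eps|$, absorbing $K^-$-contributions into $C$ via $K^-(\rho_2-\rho_1)^2 \leq K^- R_0^2$ and $(R-r)^{N+2} \leq R_0^{N+2}$. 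Giaquinta's iteration lemma (with $\theta = 1/2$ and exponent $N+2$) then yields the desired bound on $\int_{B_r}|\nabla v_\eps|^2$. For the $R^{-2}\int_{B_r}|v_\eps|^2$ contribution I would apply the Sobolev trick once more, with constant $\delta$, in the intermediate ball $B_{(R+r)/2}$: this yields $R^{-2}\int_{B_r}|v_\eps|^2 \leq C\int_{B_{(R+r)/2}}|\nabla v_\eps|^2 + \frac{C}{R^2\mm(B_{(R+r)/2})} L^2$, where the gradient term is controlled by the estimate just proved (applied with radii $(R+r)/2$ and $R$, noting $R-(R+r)/2=(R-r)/2$) and the second term is bounded by $\frac{CR^N}{\mm(B_R)(R-r)^{N+2}}L^2$ via Bishop--Gromov (using $(R+r)/2 \geq R/2$) together with $(R-r)^{N+2} \leq R^{N+2}$. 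The main technical difficulty is the careful bookkeeping of scales through the iteration to produce precisely the factor $R^N \mm(B_R)^{-1} (R-r)^{-(N+2)}$ and to ensure that all $K^-$-dependences get absorbed into $C=C(p,N,K,R_0)$.
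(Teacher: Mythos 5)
Your proposal is correct and takes essentially the same route as the paper: both derive the pointwise bound $|\nabla v_\eps|\lesssim ((|\nabla u|\wedge M)^2+\eps)^{(p-2)/2}|\H u|$ via Proposition~\ref{prop:gradgrad} and Lemma~\ref{lem:H12sobolev}, apply Theorem~\ref{thm:p-calderon} with $\alpha=p-2$ to get a Caccioppoli-type inequality, convert the resulting $L^2$-norm of $v_\eps$ into an $L^1$-norm via \eqref{eq:sobolev trick}, and then run the standard iteration lemma (the paper cites \cite[Lemma 4.3]{HanLinbook}, which is the Giaquinta iteration you invoke) before applying the Sobolev trick once more for the $R^{-2}|v_\eps|^2$ term. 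The only cosmetic difference is that the paper first reduces to $r\geq R/2$ before iterating, whereas you work directly with arbitrary radii; both bookkeepings are sound.
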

	\begin{proof}It sufficient to prove the statement for $r\ge R/2.$
		Since $((|\nabla u|\wedge M)^2+\eps)^\frac{p-2}{2}\in \W\cap L^\infty(\X)$ and $\nabla u \in H^{1,2}_C(T\X)$ (recall Proposition  \ref{prop:gradgrad}) by Lemma \ref{lem:H12sobolev}  we have $v_\eps\in W^{1,2}_C(T\X)$ and
		\begin{equation*}
			\nabla v_\eps = ((|\nabla u|\wedge M)^2+\eps)^\frac{p-2}{2}\nabla \nabla u + \nchi_{\{|\nabla u|\le M\}} |\nabla u|((|\nabla u|\wedge M)^2+\eps)^\frac{p-4}{2} (p-2)\big(\nabla |\nabla u|\otimes \nabla u\big).
		\end{equation*}
		Therefore,
		\begin{equation}\label{eq:convariant bound hessian}
			|\nabla v_\eps |\le |p-1| ((|\nabla u|\wedge M)^2+\eps)^\frac{p-2}{2} |\H u|, \quad \mea\text{-a.e..}
		\end{equation}
		Fix a ball $B_{R}(x)\subset \X$, with $R\le R_0$ and numbers $\frac12\le s<t\le 1$. Let $\eta \in \LIP_c(B_{tR}(x))$ be  such that $\eta=1$ in $B_{sR}(x)$ and $\Lip(\eta)\le R^{-1}(t-s)^{-1}.$
		Combining \eqref{eq:convariant bound hessian} and \eqref{eq:hessian estimate} with $\alpha=p-2$ gives
		\[
		\int_{B_{sR}(x)} |\nabla v_\eps |^2 \d \mm \le C \int_{B_{R}(x)}  (D_{\eps,p} u)^2 ((|\nabla u|\wedge M)^2+k)^{p-2} \d \mm+ \frac{C}{R^2(t-s)^2}\int_{B_{tR}(x)}  |v_\eps|^2 \d \mm,
		\]
		where $C\ge1$ is a constant depending only on $K,N,p$ and $R_0.$ Using inequality \eqref{eq:sobolev trick} with $\delta=\frac12C^{-1}(t-s)^2<1$, recalling that $|\nabla |v_\eps||\le |\nabla v_\eps|$ (see \eqref{eq:gradient covariant}   and by the doubling property of the space, we can bound the last term as follows
		\[
		\frac{C}{R^2(t-s)^2}\int_{B_{tR}(x)} |v_\eps|^2\le \frac12\int_{B_{tR}(x)} |\nabla v_\eps|^2 \d \mm  +  \frac{\tilde C}{R^2(t-s)^{N+2}\mea(B_{R}(x))} \left(\int_{B_R(x)} |v_\eps|\d \mm\right)^2,
		\]
		where $\tilde C$ is a constant depending only on $K,N,p$ and $R_0.$ Therefore we have
		\[
		\int_{B_{sR}(x)} |\nabla v_\eps |^2 \d \mm \le \frac12\int_{B_{tR}(x)} |\nabla v_\eps|^2 \d \mm+ A(R) +  \frac{B(R)}{(tR-sR)^{N+2}},
		\]
		where $A(R)\coloneqq C \int_{B_{R}(x)}  (D_{\eps,p} u)^2 ((|\nabla u|\wedge M)^2+k)^{p-2} \d \mm$ and $B(R)\coloneqq \frac{\tilde CR^N}{\mea(B_{R}(x))} \left(\int_{B_R(x)} |v_\eps|\d \mm\right)^2.$ Applying a standard iteration argument (see e.g.\ \cite[Lemma 4.3]{HanLinbook}) we obtain
		\[
		\int_{B_{r}(x)} |\nabla v_\eps |^2 \d \mm\le c A(R)+ \frac{c}{(R-r)^{N+2}} B(R), \quad \forall r \in \left[\frac R2,R\right),
		\]
		with $c>0$ a constant depending  only on $N.$ This proves \eqref{eq:covariant estimate} without the $|v_\eps|^2$ term on the left-hand side. From this, another application of \eqref{eq:sobolev trick} concludes the proof.
	\end{proof}

	Without an upper bound on the dimension, we have the following.
	\begin{cor}
		For every  $K\in \rr$, $D>0$ and $p\in(1,3)$ there exists a constant $C=C(p,K^-,D)>0$ such that the following holds. Let $\Xdm$ be an $\rcd(K,\infty)$ space with $\diam(\X)\le D$ and  let $u \in \dom(\Delta)$. Then for all $M>0,\eps>0$ it holds
		\begin{equation}\label{eq:covariant estimate N=inf}
			\int_{\X}|\nabla v_\eps |^2 \d \mm\le C \int_{\X}  (D_{\eps,p} u)^2 ((|\nabla u|\wedge M)^2+\eps)^{p-2} \d \mm + CK^-\left(\int_\X |v_\eps|\d \mm\right)^2,
		\end{equation}
		where $v_\eps \coloneqq ((|\nabla u|\wedge M)^2+\eps)^\frac{p-2}{2}\nabla u \in H^{1,2}_C(\X)$  and  $M$ is taken to be $+\infty$ when $p<2.$ 
	\end{cor}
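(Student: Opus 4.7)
The proof would follow the same outline as Corollary \ref{cor:p-calderon covariant}, but now we exploit the boundedness of $\X$ to work globally with $\eta\equiv 1$ and use the Log-Sobolev-type estimate \eqref{eq:sobolev trick inf} in place of \eqref{eq:sobolev trick}. First, by Lemma \ref{lem:H12sobolev} applied exactly as in the proof of Corollary \ref{cor:p-calderon covariant}, we have $v_\eps\in H^{1,2}_C(T\X)$ with
\[
|\nabla v_\eps|\le |p-1|\,((|\nabla u|\wedge M)^2+\eps)^{\frac{p-2}{2}}|\H u|\qquad \alme.
\]

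Next, the plan is to apply Theorem \ref{thm:p-calderon} with $\alpha=p-2$ and the constant cutoff $\eta\equiv 1$. The constant $\eta\equiv 1$ is an admissible test function, since $\diam(\X)\le D$ implies $\supp(1)=\X$ is bounded, so $1\in \LIP_{bs}(\X)$. The admissibility condition $\alpha>\tfrac12(p-3-\tfrac{p-1}{N-1})$ reduces to $p-2>\tfrac{p-3}{2}$ for $N=\infty$, which is equivalent to $p>1$, and the range $p<3+\tfrac{2}{N-2}$ becomes $p<3$, exactly matching our assumption $p\in(1,3)$. Since $(p-2)^2<1$, and using the identity $|\nabla u|^{2}((|\nabla u|\wedge M)^2+\eps)^{p-2}=|v_\eps|^2$, combining with the pointwise bound for $|\nabla v_\eps|$ yields
\[
\int |\nabla v_\eps|^2\,\d\mm\le C_1\int (D_{\eps,p}u)^2\,((|\nabla u|\wedge M)^2+\eps)^{p-2}\,\d\mm+C_1K^-\int|v_\eps|^2\,\d\mm,
\]
for some constant $C_1=C_1(p)>0$.

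Finally, to eliminate the $L^2$-norm of $|v_\eps|$ on the right-hand side, the plan is to invoke \eqref{eq:sobolev trick inf} applied to the scalar function $|v_\eps|\in W^{1,2}(\X)$, together with the pointwise inequality $|\nabla |v_\eps||\le |\nabla v_\eps|$ from \eqref{eq:gradient covariant}: for any $\delta<\min(1,(K^-D^2)^{-1})$,
\[
\int |v_\eps|^2\,\d\mm\le 16\delta^2 D^2\int |\nabla v_\eps|^2\,\d\mm+\frac{2e^{2/\delta}}{\mm(\X)}\Big(\int |v_\eps|\,\d\mm\Big)^2.
\]
Choosing $\delta$ small enough (depending only on $p$, $K^-$, $D$) so that $16 C_1 K^- \delta^2 D^2\le \tfrac12$, we can absorb the Hessian-type term back to the left and obtain the desired inequality with a constant $C$ depending only on $p$, $K^-$, $D$ (and $\mm(\X)$, which is fine by scale invariance). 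When $K^-=0$ this last step is unnecessary, and the $K^-(\int |v_\eps|\,\d\mm)^2$ term on the right vanishes trivially.

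The only mild obstacle is arranging the constants correctly in the absorption step: we need $\delta$ simultaneously small enough for \eqref{eq:sobolev trick inf} to apply and for the absorption, and we rely crucially on $\X$ being bounded for two reasons --- to legitimize the global test $\eta\equiv 1$ in Theorem \ref{thm:p-calderon}, and to make \eqref{eq:sobolev trick inf} available as a replacement for the Sobolev inequality that is used in the finite-dimensional analogue.
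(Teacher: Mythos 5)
Your proof is correct and takes essentially the same approach as the paper: the paper's own proof of this corollary is a one-line combination of exactly the ingredients you identify, namely inequality \eqref{eq:hessian estimate} with $\alpha=p-2$ and $\eta\equiv1$, the pointwise bound \eqref{eq:convariant bound hessian}, the inequality $|\nabla|v_\eps||\le|\nabla v_\eps|$ from \eqref{eq:gradient covariant}, and the log-Sobolev-type estimate \eqref{eq:sobolev trick inf} to absorb the $L^2$-norm of $|v_\eps|$.
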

	\begin{proof}
		Simply combine  the inequalities \eqref{eq:hessian estimate} (with $\alpha=p-2$, $\eta \equiv 1$)
		and \eqref{eq:sobolev trick inf}, recalling both 
		$|\nabla |v_\eps||\le |\nabla v_\eps|$ (by \eqref{eq:gradient covariant}) and \eqref{eq:convariant bound hessian}.
	\end{proof}

	We pass to the proof of Theorem \ref{thm:p-calderon}. The first step is the following 	Bochner-type inequality.
	\begin{lemma}[Bochner-type inequality]\label{lem:bochner}
		Let $\Xdm$ be an $\rcd(K,N)$ space, $N\in (1,\infty]$ and let $u \in \dom(\Delta)$. Fix $\psi \in \LIP\cap L^\infty(\rr)$ with $\psi'$ continuous up to a negligible set and satisfying  
		\begin{equation}\label{eq:psi' bound}
			|\psi '(t)|\le c (1+|t|)^{-1}, \quad \quad \text{ for a.e.\ $t \in\rr$,}
		\end{equation} for some constant $c>0$. Then, for every $\eta \in \LIP_{bs}(\X)$, $\eta \ge 0,$ it holds
		\begin{equation}\label{eq:starting point}
			\begin{split}
				\int_\X &\left(|\H u|^2 + \frac{(\tr \H u-\Delta u)^2}{N-\dim(\X)}\right)
				\psi(|\nabla u|) \eta \d \mm \le 	\int_\X \la\nabla u\Delta u -|\nabla u|\nabla |\nabla u|, \nabla \eta\ra \psi(|\nabla u|)\d\mm\\
				&+ \int_\X \la\nabla u\Delta u -|\nabla u|\nabla |\nabla u|, \nabla |\nabla u|\ra \psi'(|\nabla u|)\eta \d\mm
				+\int_\X  ((\Delta u)^2+K^{-}|\nabla u|^2)\psi(|\nabla u|) \eta\d\mm, 
			\end{split}
		\end{equation}
		where the term containing $\dim(\X)$ is not present if $\dim(\X)=N$ or $N=\infty$.
	\end{lemma}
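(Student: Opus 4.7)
The strategy is to apply the improved Bochner inequality \eqref{eq:improvedboch} to the non-negative test function $\phi = \eta\,\psi(|\nabla u|)$ (the lemma is effectively used only with $\psi\ge 0$, which is the regime needed in the sequel) and to rewrite the term $\int\la\nabla u,\nabla\Delta u\ra\phi\,\d\mm$ appearing on the right-hand side of Bochner via a further integration by parts in the definition of $\Delta u$.

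First I would reduce to $u\in\test(\X)$ by means of Lemma \ref{lem:test dense}, obtaining a sequence $u_n\in\test(\X)$ with $\Delta u_n\to \Delta u$ in $L^2(\mm)$, $u_n\to u$ in $H^{2,2}(\X)$ and $|\nabla u_n|\to |\nabla u|$ in $\W(\X)$. The additional regularity $\Delta u_n\in\W(\X)$ available for test functions is exactly what legitimises the integration by parts below. Once \eqref{eq:starting point} is established for each $u_n$, every term passes to the limit by dominated convergence; the dominating envelopes are constructed from the above $L^2$-convergences together with the bounds $\psi\in L^\infty(\R)$ and $|\psi'(t)|\le c(1+|t|)^{-1}$, the latter precisely ensuring that $\psi'(|\nabla u_n|)\nabla|\nabla u_n|$ remains in $L^2(T\X)$ uniformly.

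For $u\in\test(\X)$ the chain rule and Proposition \ref{prop:gradgrad} give $\psi(|\nabla u|)\in \W\cap L^\infty(\X)$ with $\nabla\psi(|\nabla u|)=\psi'(|\nabla u|)\nabla|\nabla u|$, so that $\phi=\eta\,\psi(|\nabla u|)$ is a valid non-negative test function in \eqref{eq:improvedboch}. Expanding its gradient by the Leibniz rule and using $|\nabla u|\nabla|\nabla u|=\frac{1}{2}\nabla|\nabla u|^2$ turns the left-hand side of Bochner into precisely the two groups of terms containing $|\nabla u|\nabla|\nabla u|$ on the right of \eqref{eq:starting point}. For the remaining term $\int\la\nabla u,\nabla\Delta u\ra\phi\,\d\mm$ on the right of Bochner, I would use the integration by parts $\int\la\nabla u,\nabla g\ra\,\d\mm = -\int g\,\Delta u\,\d\mm$, valid for any $g\in \W(\X)$ with bounded support since $\Delta u\in L^2(\mm)$, applied to $g\coloneqq \Delta u\cdot \eta\,\psi(|\nabla u|)$. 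This $g$ lies in $\W(\X)$ with bounded support because $\Delta u\in\W(\X)$ (by $u\in\test(\X)$) and $\eta\,\psi(|\nabla u|)\in \W\cap L^\infty(\X)$ has bounded support. The outcome is
\[
\int\la\nabla u,\nabla\Delta u\ra\,\eta\,\psi(|\nabla u|)\,\d\mm = -\int(\Delta u)^2\,\eta\,\psi(|\nabla u|)\,\d\mm - \int\Delta u\,\la\nabla u,\nabla(\eta\,\psi(|\nabla u|))\ra\,\d\mm,
\]
and expanding $\nabla(\eta\,\psi(|\nabla u|))$ via Leibniz a final time reproduces exactly the terms involving $\nabla u\,\Delta u$ on the right of \eqref{eq:starting point}. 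The elementary inequality $K|\nabla u|^2\ge -K^-|\nabla u|^2$, combined with $\phi\ge 0$, then weakens the $K$ on the right of Bochner to the $-K^-$ appearing in \eqref{eq:starting point}.

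The main difficulty is the bookkeeping in the approximation step: for each of the several integrands produced above one has to provide an $L^1$ envelope independent of $n$, and here the hypotheses on $\psi$ are sharp — the worst contributions, such as $\psi'(|\nabla u_n|)|\nabla u_n||\nabla|\nabla u_n||$, are bounded by $c|\nabla|\nabla u_n||$, which is $L^2$-bounded by Proposition \ref{prop:gradgrad} and the $\W$-convergence of $|\nabla u_n|$, while the support of $\eta\in\LIP_{bs}(\X)$ keeps everything localised.
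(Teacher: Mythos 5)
Your argument follows the same route as the paper: apply the improved Bochner inequality \eqref{eq:improvedboch} to the test function $\phi = \eta\,\psi(|\nabla u|)$, trade the $\int\la\nabla u,\nabla\Delta u\ra\phi\,\d\mm$ term for $(\Delta u)^2$ and $\Delta u\,\la\nabla u,\nabla\phi\ra$ via integration by parts, expand by Leibniz, and then remove the restriction $u\in\test(\X)$ using Lemma \ref{lem:test dense} and dominated convergence. The paper merely performs the integration by parts first, to obtain \eqref{eq:modified bochner} for all $\phi\in\W\cap L^\infty_+(\X)$ with bounded support, and substitutes $\phi=\psi(|\nabla u|)\eta$ afterwards; the two orderings produce identical algebra. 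Your parenthetical remark that $\psi\ge 0$ is tacitly required (so that $\phi\ge 0$ is an admissible test function in Bochner) is accurate, and applies equally to the paper's proof.

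One justification is imprecise: the claim that $g\coloneqq\Delta u\cdot\eta\,\psi(|\nabla u|)$ lies in $\W(\X)$ does not follow from what you have. Indeed $\Delta u\in\W(\X)\cap L^2(\mm)$ is in general unbounded even for $u\in\test(\X)$, so the cross term $\Delta u\cdot\nabla(\eta\,\psi(|\nabla u|))$ in the would-be Leibniz expansion of $\nabla g$ is only controlled in $L^1(\mm)$, not $L^2(\mm)$. The clean way to legitimise the integration by parts is to note that, for $u\in\test(\X)$, the gradient $\nabla u$ lies in $L^\infty(T\X)$, so the vector field $V\coloneqq\eta\,\psi(|\nabla u|)\,\nabla u$ is bounded with bounded support and satisfies $\div V=\eta\,\psi(|\nabla u|)\,\Delta u+\la\nabla(\eta\,\psi(|\nabla u|)),\nabla u\ra\in L^2(\mm)$; since $\Delta u\in\W(\X)$, one then has $\int\la V,\nabla\Delta u\ra\,\d\mm=-\int\Delta u\,\div V\,\d\mm$, which is exactly the identity you wanted. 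This is also the mechanism implicitly used in the paper in passing from \eqref{eq:improvedboch} to \eqref{eq:modified bochner}.
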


	\begin{proof}
		From the improved  Bochner inequality in \eqref{eq:improvedboch} and integration by parts we have that for every $u \in \test(\X)$ 
		\begin{equation}\label{eq:modified bochner}
			\begin{split}
				\int &\left(|\H u|^2 + \frac{(\tr \H u-\Delta u)^2}{(N-\dim(\X))}\right) \phi \d \mm \\
				&\le \int \la\nabla u\Delta u -\frac12\nabla |\nabla u|^2, \nabla \phi\ra + ((\Delta u)^2+K^{-}|\nabla u|^2) \phi \d\mm, \quad \forall \phi \in \W\cap L^\infty_+(X),
			\end{split}
		\end{equation}
		where the term with $\dim(\X)$ is not present if $\dim(\X)=N$ or $N=\infty$. 
		Fix $\psi$ as in the statement. 
		We choose $\phi\coloneqq\psi(|\nabla u|)\eta$, with $\eta \in \LIP_c(\X)^+$. Since $|\nabla u|\in \W(\X)$ (recall Proposition \ref{prop:gradgrad}), we have that 
		$\phi \in \W\cap L^\infty (\X)$ and 
		$$\nabla \phi =\psi'(|\nabla u|)\nabla |\nabla u| \eta+\nabla \eta \psi(|\nabla u|).$$ Plugging our choice of $\phi$ in \eqref{eq:modified bochner} and using that $\nabla |\nabla u|^2=2|\nabla u|\nabla |\nabla u|$ (which is a consequence of $|\nabla u|\in \W(\X)\cap L^\infty(\mm)$ and  the chain rule) gives
		\begin{align*}
			\int_\X\bigg(|\H u|^2 + \frac{(\tr \H u-\Delta u)^2}{(N-\dim(\X))}\bigg)\psi(|\nabla u|) &\eta \d \mm \\ 	 \le\int_\X \la\nabla u\Delta u -|\nabla u|\nabla |&\nabla u|, \nabla \eta\ra \psi(|\nabla u|)  \d\mm\\
			+\int_\X \la\nabla u&\Delta u -\nabla |\nabla u||\nabla u|, \nabla |\nabla u|\ra \psi'(|\nabla u|)\eta  \d\mm
			\\+ \int_\X  ((\Delta u)^2+K^{-}|&\nabla u|^2) \psi(|\nabla u|)\eta \d\mm,
		\end{align*}
		This proves \eqref{eq:starting point} when $u \in \test(\X)$.
		For a general $u \in \dom(\Delta)$, we consider a sequence $u_n\in\test(\X)$  such that $\Delta u_n \to \Delta u$ in $L^2(\mm)$, $u_n \to u$ in $W^{2,2}(\X)$ and $|\nabla u_n|\to |\nabla u|$ in $\W(\X)$, which existence is given by Lemma \ref{lem:test dense}. We want to pass to the limit on each term using dominated convergence. To this aim, up to passing to a subsequence, we can assume that there exists a function $G \in L^2(\mm)$ such that
		\begin{equation}\label{eq:total domination}
			|\H {u_n}|,\, |\Delta u_n|,\,|\nabla u_n|\le G, \quad \alme, \quad \forall n \in\nn 
		\end{equation}
		and also that $|\nabla u-\nabla u_n|\to 0$ $\mea$-a.e., $|\Delta u-\Delta u_n| \to 0$ $\mea$-a.e., $|\H {u_n-u}|\to 0$ $\mea$-a.e.\ and $|\nabla |\nabla u|-\nabla |\nabla u_n||\to 0$ $\mea$-a.e..
		This permits to pass to the limit in the left-hand side, indeed $\psi \in L^\infty(\rr)$ and $|\tr \H {u_n}|^2\lesssim_{N}|\H {u_n}|^2. $  For the first term on the right-hand side we have
		\[
		|\la\nabla u_n\Delta u_n -|\nabla u_n|\nabla |\nabla u_n|, \nabla \eta\ra|\le 2\Lip(\eta)\|\psi\|_\infty G^2\in L^1(\mm).
		\]
		Moreover
		\begin{equation}\label{eq:mae technical conv}
			\begin{multlined}[c][.8\textwidth]
				|\la\nabla u_n\Delta u_n -|\nabla u_n|\nabla |\nabla u_n|, \nabla \eta\ra -\la\nabla u\Delta u -|\nabla u|\nabla |\nabla u|, \nabla \eta\ra|\\
				\begin{aligned}
					&\le \Lip(\eta)\left(|\nabla u_n\Delta u_n-\nabla u \Delta u|+ |\nabla |\nabla u_n||\nabla u_n|-\nabla |\nabla u||\nabla u|| \right)\\
					&\le \Lip(\eta)G\left(2|\nabla u-\nabla u_n|+|\Delta u-\Delta u_n|+|\nabla |\nabla u|-\nabla |\nabla u_n||\right)\to 0 ,\quad \alme,
				\end{aligned}
			\end{multlined}
		\end{equation}
		where we used \eqref{eq:total domination} and that $|\nabla |\nabla u_n||\le |\H {u_n}|$ (see \eqref{eq:gradient covariant}). 
		This justifies the passage to the limit in the first term of the right-hand side.  For the second term  observe that by \eqref{eq:psi' bound} and \eqref{eq:total domination}
		\[
		\big|\la \nabla u_n  \Delta u_n-|\nabla u_n||\nabla |\nabla u_n||,\nabla |\nabla u_n| \ra \psi'(|\nabla u_n|) \big|\le (1+c) \cdot G^2, \quad \mea\text{-a.e..}
		\] 
		Moreover, arguing as in \eqref{eq:mae technical conv} we can show 
		\[
		\la \nabla u_n  \Delta u_n -|\nabla u_n|\nabla |\nabla u_n|,|\nabla u_n|\nabla |\nabla u_n| \ra \psi'(|\nabla u_n|)  \overset{\mea\text{-a.e.}}{\rightarrow}  \la \nabla u  \Delta u -|\nabla u|\nabla |\nabla u|,|\nabla u|\nabla |\nabla u| \ra \psi'(|\nabla u|).
		\]
        The above convergence clearly holds for $\mm$-a.e.\ point $x$ such that $\psi'$ is continuous at $|\nabla u|(x)$. On the other hand setting $A\coloneqq \{x \ : \ |\nabla u|(x)\in N\}$ where $N$ is the negligible set of points where $\psi'$ is not defined or continuous,  by locality $|\nabla |\nabla u||=0$ $\mm$-a.e.\ in $A.$ However $|\nabla |\nabla u_n||\to |\nabla |\nabla u||$  $\mm$-a.e.\ and so $|\nabla |\nabla u_n||(x)\to 0$ for $\mm$-a.e.\ $x\in A$. Therefore the convergence above holds pointwise $\mm$-a.e.\ also in $A.$
		Therefore, we can apply the dominated convergence theorem to pass to the limit in the second term of the right-hand side. Finally, the last term can be dealt with by applying dominated convergence one last time, recalling \eqref{eq:total domination}.
	\end{proof}
	From the previous result, we deduce the following technical inequality that combined with Proposition \ref{prop:nightmare} of the previous section will give Theorem \ref{thm:p-calderon}.

	\begin{lemma}[Key estimate]\label{lem:bochner modified}
		For every $\delta>0$ and $p \in(1,\infty)$ there exists a constant $C_2=C_2(p,\delta)>0$ such that the following holds.
		Let $\Xdm$ be an $\rcd(K,N)$ space, $N\in (1,\infty]$ and let $u \in \dom(\Delta)$. For every $\alpha\in \rr$, $\eps>0$ and $\eta \in \LIP_{bs}(\X)$ it holds
		\begin{equation}\label{eq:bochner modified}
			\begin{multlined}[c][.9\textwidth]
				\int \left((1-\delta)|\H u|^2 + \frac{(\tr \H u-\Delta u)^2}{(N-\dim(\X))}\right)|\nabla u|_{M}^{\alpha} \eta^2\d \mm\\
				\le \int_{\{|\nabla u|\le M\}} -2\alpha(p-2) \frac{(\Delta_\infty u)^2 |\nabla u|_{M}^{\alpha}}{(|\nabla u|^2+\eps)^2}\eta^2-2\alpha  |\nabla |\nabla u||^2 \frac{|\nabla u|^2}{|\nabla u|^2+\eps}|\nabla u|_{M}^{\alpha}\eta^2\d \mm\\
				+\int (p-2)^2\frac{(\Delta_\infty u)^2 (|\nabla u|_{M})^{\alpha}}{(|\nabla u|^2+\eps)^2}\eta^2\d \mm\\+ C\int (1+|\alpha|^2)(D_{\eps,p}(u))^2|\nabla u|_{M}^{\alpha}\eta^2+(|\nabla \eta|^2+K^-\eta^2)|\nabla u|^2|\nabla u|_{M}^{\alpha}\d \mm,
			\end{multlined}
		\end{equation}
		where $|\nabla u|_{M}\coloneqq(|\nabla u|\wedge M)^2+\eps$ for $\alpha\ge 0$, and $|\nabla u|_{M}\coloneqq|\nabla u|^2+\eps$ for $\alpha< 0$. Moreover,  if $\alpha<0$ all integrals are over the whole $\X$. Finally, the term containing $\dim(\X)$ is not present if $\dim(\X)=N$ or $N=\infty$.
	\end{lemma}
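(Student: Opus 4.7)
The plan is to invoke the Bochner-type inequality Lemma \ref{lem:bochner} with a suitable weight producing $|\nabla u|_M^{\alpha}$ and tested against $\eta^2 \in \LIP_{bs}(\X)^{+}$ (note $\eta^{2}$ is admissible since $\LIP_{bs}(\X)$ is closed under products), and then to rewrite the resulting right-hand side via the defining identity $\Delta u = D_{p,\eps}(u) - (p-2)\tfrac{\Delta_\infty u}{|\nabla u|^2+\eps}$ from \eqref{eq:deps inf}, followed by Young's inequality applications and a final absorption of all residual $|\H u|^2$-contributions into the left-hand side.

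Concretely, I would first choose $\psi(t)=((t\wedge M)^{2}+\eps)^{\alpha}$ if $\alpha\ge 0$, and $\psi(t)=(t^{2}+\eps)^{\alpha}$ if $\alpha<0$ (with $M=+\infty$), so that $\psi(|\nabla u|)=|\nabla u|_M^{\alpha}$. In both cases $\psi$ is bounded and Lipschitz with $|\psi'(t)|\le c(1+|t|)^{-1}$ (the constant depends on $\alpha,\eps,M$, but this dependence only enters Lemma \ref{lem:bochner} as an admissibility check and will not propagate to the final estimate). Substituting $\eta\mapsto\eta^{2}$ into \eqref{eq:starting point} produces, on the left, exactly $\int\bigl(|\H u|^{2}+\tfrac{(\tr\H u-\Delta u)^{2}}{N-\dim(\X)}\bigr)|\nabla u|_M^{\alpha}\eta^{2}\d\mm$, and on the right three contributions: a boundary term $I_{\partial}$, a derivative term $I_{\psi'}$, and a zero-order term $I_{0}$.

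For the derivative term, $\psi'(|\nabla u|)=2\alpha|\nabla u|\nchi_{\{|\nabla u|\le M\}}|\nabla u|_M^{\alpha-1}$ together with $|\nabla u|\la\nabla u,\nabla|\nabla u|\ra=\Delta_{\infty}u$ gives
\[
I_{\psi'}=\int_{\{|\nabla u|\le M\}}2\alpha\bigl(\Delta u\cdot\Delta_{\infty}u-|\nabla u|^{2}|\nabla|\nabla u||^{2}\bigr)|\nabla u|_M^{\alpha-1}\eta^{2}\d\mm,
\]
and substituting \eqref{eq:deps inf} for $\Delta u$ (using that $|\nabla u|^{2}+\eps=|\nabla u|_{M}$ on $\{|\nabla u|\le M\}$) extracts the first two target summands $-2\alpha(p-2)\tfrac{(\Delta_{\infty}u)^{2}}{(|\nabla u|^{2}+\eps)^{2}}|\nabla u|_M^{\alpha}\eta^{2}$ and $-2\alpha\tfrac{|\nabla u|^{2}|\nabla|\nabla u||^{2}}{|\nabla u|^{2}+\eps}|\nabla u|_M^{\alpha}\eta^{2}$ verbatim, leaving a cross-residual $2\alpha D_{p,\eps}(u)\Delta_{\infty}u\,|\nabla u|_M^{\alpha-1}\eta^{2}$ that Young's inequality controls by $\delta'\tfrac{(\Delta_{\infty}u)^{2}}{(|\nabla u|^{2}+\eps)^{2}}|\nabla u|_M^{\alpha}+\tfrac{\alpha^{2}}{\delta'}(D_{p,\eps}(u))^{2}|\nabla u|_M^{\alpha}$. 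For the $(\Delta u)^{2}\psi\eta^{2}$ piece of $I_{0}$ I would again invoke \eqref{eq:deps inf} and square: the $(p-2)^{2}\tfrac{(\Delta_{\infty}u)^{2}}{(|\nabla u|^{2}+\eps)^{2}}|\nabla u|_M^{\alpha}$ target summand appears directly, together with a $(D_{p,\eps}(u))^{2}|\nabla u|_M^{\alpha}$ piece and a cross term bounded analogously by Young. The $K^{-}|\nabla u|^{2}\psi\eta^{2}$ part of $I_{0}$ is already in target form. For $I_{\partial}$, using $\nabla\eta^{2}=2\eta\nabla\eta$, pointwise Cauchy--Schwarz, the bound $|\nabla|\nabla u||\le|\H u|$ from \eqref{eq:gradient covariant}, and Young's inequality yields $|I_{\partial}|\le\tfrac{C}{\delta'''}\int|\nabla\eta|^{2}|\nabla u|^{2}|\nabla u|_M^{\alpha}+C\delta'''\int\eta^{2}(|\H u|^{2}+(\Delta u)^{2})|\nabla u|_M^{\alpha}$; one more expansion of $(\Delta u)^{2}$ via \eqref{eq:deps inf} distributes the last term between $(D_{p,\eps}(u))^{2}$ and $\tfrac{(\Delta_{\infty}u)^{2}}{(|\nabla u|^{2}+\eps)^{2}}$ summands.

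The key absorption is the pointwise bound $(\Delta_{\infty}u)^{2}\le|\H u|^{2}(|\nabla u|^{2}+\eps)^{2}$, following from $|\Delta_{\infty}u|=|\H u(\nabla u,\nabla u)|\le|\H u||\nabla u|^{2}\le|\H u|(|\nabla u|^{2}+\eps)$: every residual of the form $(\text{small})\cdot\tfrac{(\Delta_{\infty}u)^{2}}{(|\nabla u|^{2}+\eps)^{2}}|\nabla u|_M^{\alpha}\eta^{2}$ becomes $(\text{small})\cdot|\H u|^{2}|\nabla u|_M^{\alpha}\eta^{2}$, and choosing $\delta',\delta'',\delta'''$ small in terms of $p$ and $\delta$ forces the total $|\H u|^{2}$ residual on the right to be at most $\delta$, upgrading the coefficient of $|\H u|^{2}$ on the left to $(1-\delta)$. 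Collecting constants and tracking the $\alpha^{2}/\delta'$ term gives the $C_{2}(p,\delta)(1+|\alpha|^{2})$ factor in front of the $(D_{p,\eps}(u))^{2}$-integral. The main obstacle is the bookkeeping required to preserve the \emph{exact} coefficient $(p-2)^{2}$ in front of $\tfrac{(\Delta_{\infty}u)^{2}}{(|\nabla u|^{2}+\eps)^{2}}|\nabla u|_M^{\alpha}\eta^{2}$ (with no extra $\delta$-slack), since this sharpness is precisely what allows Proposition \ref{prop:nightmare} to close the loop and yield Theorem \ref{thm:p-calderon}.
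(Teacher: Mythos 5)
Your proposal is correct and follows essentially the same route as the paper's own proof: apply the Bochner-type inequality (Lemma \ref{lem:bochner}) with the weight $\psi(|\nabla u|)=|\nabla u|_M^\alpha$ and test function $\eta^2$, rewrite $\Delta u$ via the identity \eqref{eq:deps inf}, read off the two target $\alpha$-summands and the exact $(p-2)^2$-summand, control the cross terms by Young, and absorb the residual Hessian contributions using $(\Delta_\infty u)^2 \le |\H u|^2(|\nabla u|^2+\eps)^2$. The only cosmetic difference is in the boundary term $I_\partial$: the paper substitutes \eqref{eq:deps inf} into $\Delta u$ \emph{before} applying Young so that no $(\Delta u)^2$-term arises there, whereas you apply Young first and then expand the resulting $(\Delta u)^2$; both lead to the same bound up to the harmless constant $C_2(p,\delta)$.
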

	\begin{proof}
		Set $f\coloneqq D_{\eps,p}(u)$. Recall that by \eqref{eq:deps inf} it holds
		\begin{equation}\label{eq:pde sviluppata}
			\Delta u=-(p-2)\frac{\Delta_\infty u}{|\nabla u|^2+\eps}+f.
		\end{equation}
		Fix $M>0$, $\alpha>0$ and $\eta \in \LIP_c(\X)$ arbitrary. Define 
		$$\psi(t)\coloneqq \begin{cases}
			((|t| \wedge M)^2+\eps)^\alpha, \quad t\in \rr, & \text{ if $\alpha \ge 0$,}\\
			(t^2+\eps)^\alpha, \quad t\in \rr, & \text{ if $\alpha< 0$.}\\
		\end{cases}  $$
		Since
		\begin{equation}\label{eq:psi' precise}
			\psi'(t)=\begin{cases}\nchi_{\{|t|\le M\}}\frac{2\alpha t\cdot \psi(t)}{t^2+\eps},&\text{ if $\alpha \ge 0$,}\\ \frac{2\alpha t\cdot \psi(t)}{t^2+\eps},&\text{ if $\alpha < 0$,} 
			\end{cases}\qquad\text{ a.e.\ $t\in \rr$,}
		\end{equation}
		we have that $\psi$ satisfies \eqref{eq:psi' bound} for some $c$ depending on $\eps$ and $\alpha$. 
		The strategy is to apply Lemma \ref{lem:bochner}  with $\psi$ and $\eta^2$ and to estimate the right-hand side of \eqref{eq:starting point} by plugging in  \eqref{eq:pde sviluppata}.   
		
		The first term on the right-hand of \eqref{eq:starting point} can be bounded as follows
		\begin{equation*}
			\begin{aligned}[c]\int\left \la\nabla u\Delta u -|\nabla u|\nabla |\nabla u|, \nabla \eta^2\right\ra &\psi(|\nabla u|) \d\mm\\&\le  \begin{multlined}[t][.3\textwidth]
					\int |p-2|\frac{|\Delta_\infty u|}{|\nabla u|^2+\eps} |\nabla u||\nabla \eta^2 |\psi(|\nabla u|)\,\d\mm\\+\int |f| |\nabla u||\nabla \eta^2 |\psi(|\nabla u|)+|\nabla |\nabla u|||\nabla \eta^2||\nabla u|\psi(|\nabla u|)\,\d\mm\end{multlined}\\
				&\le \begin{multlined}[t]\int (f^2 +\delta_1 |\H u|^2)\eta^2\psi(|\nabla u|)\,\d\mm\\+\int4(((p-2)^2+1)\delta_1^{-1}+1)|\nabla \eta|^2|\nabla u|^2\big)\psi(|\nabla u|)\,\d\mm,
				\end{multlined}
			\end{aligned}
		\end{equation*}
		for all $\delta_1>0,$ where in the last step we used the inequalities $|\Delta_\infty(u)|\le |\H u||\nabla u|^2$, $|\nabla |\nabla u||\le |\H u|$ and the Young's inequality on each term.

		As the second term in the right-hand side of \eqref{eq:starting point} is concerned, we estimate only the first half
		\bgroup\allowdisplaybreaks
		\begin{align*}
			\int \la\nabla u\Delta u, \nabla |\nabla u|\ra \psi'(|\nabla u|) \eta^2\d\mm &=\int\Delta u \Delta_\infty u  \frac{\psi'(|\nabla u|)}{|\nabla u|}\eta^2\d\mm \\
			\overset{\eqref{eq:pde sviluppata}}&{=} \int \Delta_\infty u f \frac{\psi'(|\nabla u|)}{|\nabla u|}\eta^2-(p-2)\frac{(\Delta_\infty u)^2\psi'(|\nabla u|)}{|\nabla u|(|\nabla u|^2+\eps)}\eta^2\,\d\mm \\
			&\le \begin{multlined}[t]\int
				\left(\delta_2\frac{(\Delta_\infty u)^2}{|\nabla u|^2+\eps}
				+\frac{f^2(|\nabla u|^2+\eps)}{\delta_2}\right)\frac{|\psi'(|\nabla u|)|}{|\nabla u|}\eta^2\,\d\mm\\-(p-2)\int\frac{(\Delta_\infty u)^2\psi'(|\nabla u|)}{|\nabla u|(|\nabla u|^2+\eps)}\eta^2 \,\d\mm\end{multlined}\\
			&\le \begin{multlined}[t]\int - (p-2)\frac{(\Delta_\infty u)^2\psi'(|\nabla u|)}{|\nabla u|(|\nabla u|^2+\eps)}\eta^2\d \mm\\+\int\left(\delta_2|\H u|^2+ \frac{f^2}{\delta_2}\right)2 |\alpha|\psi(|\nabla u|)|\eta^2 \,\d\mm,\end{multlined}
		\end{align*}
		\egroup
		where in the last step we used \eqref{eq:psi' precise}. 
		Finally, we can estimate the last term containing the Laplacian in \eqref{eq:starting point} by using \eqref{eq:pde sviluppata} as follows
		\begin{align*}
			\int  (\Delta u)^2\psi(|\nabla u)|\eta^2 \d\mm&\le  \int ((p-2)^2+\delta_2)\frac{(\Delta_\infty u)^2 \psi(|\nabla u)|}{(|\nabla u|^2+\eps)^2}\eta^2+\left(1+\delta_2^{-1}\right) f^2\psi|\nabla u|\eta^2\,\d\mm\\
			&\le \int \left((p-2)^2\frac{(\Delta_\infty u)^2}{(|\nabla u|^2+\eps)^2}+\delta_2 |\H u|^2+\left(1+\delta_2^{-1}\right) f^2\right)\psi|\nabla u|\eta^2\,\d\mm
		\end{align*}
		for every $\delta_2>0.$ The conclusion follows putting all together, using \eqref{eq:psi' precise}, choosing $\delta_1=\delta/2$, $\delta_2=\delta/8(1+|\alpha|)^{-1}$ and absorbing the terms containing $|\H u|^2$ into the left-hand side. 
	\end{proof}
	We can finally prove the main result of this section.
	\begin{proof}[Proof of Theorem \ref{thm:p-calderon}]
		\noindent{\textsc{Case 1:}}  $\alpha \ge 0$.  Consider the first two terms in the right-hand side of \eqref{eq:bochner modified}. We have
		\begin{equation}\label{eq:negative term}
			-2\alpha(p-2) \frac{(\Delta_\infty u)^2 |\nabla u|_{M}^{\alpha}}{(|\nabla u|^2+\eps)^2}\eta^2-2\alpha  |\nabla |\nabla u||^2 \frac{|\nabla u|^2}{|\nabla u|^2+\eps}|\nabla u|_{M}^{\alpha}\le 0
		\end{equation}
		which is immediate for $p\ge 2$, while for $1<p<2$  follows recalling that $|\Delta_\infty u |\le |\nabla |\nabla u|||\nabla u|^2.$  Plugging \eqref{eq:negative term} in \eqref{eq:bochner modified} 
		\begin{align*}
			\int &\left((1-\delta)|\H u|^2 + \frac{(\tr \H u-\Delta u)^2}{(N-\dim(\X))}\right)|\nabla u|_{M}^{\alpha} \eta^2\d \mm\\
			&\qquad \qquad\le \begin{multlined}[t][.4\textwidth]\int (p-2)^2\frac{(\Delta_\infty u)^2 (|\nabla u|_{M})^{\alpha}}{(|\nabla u|^2+\eps)^2}\eta^2\d \mm\\\begin{aligned}&+ C\int (1+|\alpha|^2)(D_{\eps,p}(u))^2|\nabla u|_{M}^{\alpha}\eta^2\d \mm\\&+\int(|\nabla \eta|^2+K^-\eta^2)|\nabla u|^2|\nabla u|_{M}^{\alpha}\d \mm,
				\end{aligned}
			\end{multlined}
		\end{align*}
		where $C$ depends only on $p,\delta$ and the term containing $\dim(\X)$ is not present if $\dim(\X)=N$ or $N=\infty$. Using the estimate \eqref{eq:infinity laplacian estimate} (with the choice $\alpha=0$), choosing $\delta>0$  small enough depending on $p$ and $N$ and up to increasing the constant $C$ (depending now also on $N$) we can absorb the first term in the right-hand side into the left-hand side and obtain \eqref{eq:hessian estimate}. Note that \eqref{eq:infinity laplacian estimate} is available with $\alpha=0$, provided $0>p-3-\frac{p-1}{N-1}$, that is $p<3+\frac{2}{N-2}$, which matches the assumption on $p.$

		\noindent{\textsc{Case 2:}} $\alpha < 0$. In this case all the integrals in \eqref{eq:bochner modified} are over the whole $\X$. Hence we can collect  the terms containing $\Delta_\infty u$ and directly plug the estimate  \eqref{eq:infinity laplacian estimate} to obtain
		\begin{equation*}
			\begin{multlined}[c][.9\textwidth]
				(1-\delta-\lambda)\int \left(|\H u|^2 + \frac{(\tr \H u-\Delta u)^2}{(N-\dim(\X))}\right)(|\nabla u|^2+\eps)^{\alpha} \eta^2\d \mm\\
				\le \tilde C\int (1+|\alpha|^2)(D_{\eps,p}(u))^2(|\nabla u|^2+\eps)^{\alpha}\eta^2+(|\nabla \eta|^2+K^-\eta^2)|\nabla u|^2(|\nabla u|^2+\eps)^{\alpha}\d \mm,
			\end{multlined}
		\end{equation*}
		for every $\delta>0$, where  $\tilde C>0$ is a constant depending on $N,p,\alpha$ and $\delta>0$, while $\lambda\in(0,1)$ is  a constant depending on $N,p,\alpha$ (but not on $\delta$). Choosing any $\delta<1-\lambda$ concludes the proof.
		
		\noindent{\textsc{Case 3:}}    $|\H u||\nabla u|^{\alpha }, |\nabla u|^{\alpha +1}\in L^2(\mm)$. In this case, we can take the limit as $M\to +\infty$ in \eqref{eq:bochner modified} using dominated convergence so that all the integrals in \eqref{eq:bochner modified} become over the whole $\X$ and then plug in directly \eqref{eq:infinity laplacian estimate} as in the previous case.
	\end{proof}

	\subsection{Gradient estimates}\label{sec:developed grad est}
	In the next proposition, we prove $L^\infty$-gradient bounds in terms of integral bounds on $D_{\eps,p}$. As mentioned in the introduction, the argument is based on the use of suitable test functions in the Bochner inequality and Moser iteration.  Similar computations in the $\rcd$ setting are present in \cite[Section 4]{Hua-Kell-Xia13} in the case of harmonic functions, but assuming the boundedness of the gradient.
	Since the needed estimates using the Bochner inequality have already been carried out in \textsection \ref{sec:eps regularity}, we are now able to start the iteration directly from the integral inequality given by Theorem \ref{thm:p-calderon}. 
	\begin{prop}\label{prop:lip a priori}
		Let $N\in (2,\infty)$, $q\in(N,\infty]$, $p \in(1,2+\frac{3}{N-2})$  and fix constants $R_0>0,$ $C_0>0$.  Let $\Xdm$ be an $\rcd(K,N)$ space.
		Suppose that $u\in \dom(\Delta)$ satisfies
		\begin{equation}\label{eq:lip a priori ass}
			\big | (|\nabla u|^2+\eps)^{\frac{p-2}{2}} D_{p,\eps} u\big |\le f,\quad \mea\text{-a.e.\ in $B_{R}(x)$},
		\end{equation}
		for some  $\eps \in(0,1)$, $B_{R}(x)\subset \X$, with $R\le R_0$ and $f \in L^{q}(B_{R}(x);\mea)$ with $\fint_{B_{R}(x)} |f|^{q} \d \mm \le C_0$. Then for all $m\ge 1$  it holds
		\begin{equation}\label{eq:apriori lip}
			\| |\nabla u|\|_{L^\infty(B_{r}(x))}\le \frac{CR^{\frac Nm}}{(R-r)^{\frac Nm}} \left(\frac{}{}\bigg(\fint_{B_{R}(x)}  |\nabla u|^{m}\, \d \mm  \bigg)^\frac{1}{m}+1\right), \quad \forall r<R,
		\end{equation}
		where the constant $C\ge 1$ depends only in $p,N,K,q,C_0,m$ and $R_0.$  Moreover, if $|\nabla u|\in L^\infty(B_{R}(x))$ then all the above holds for all $p\in(1,\infty)$.
	\end{prop}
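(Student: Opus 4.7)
The plan is to apply De Giorgi-Nash-Moser iteration to the truncated gradient modulus $v_M := ((|\nabla u|\wedge M)^2+\eps)^{1/2}$, using the second-order estimate of Theorem~\ref{thm:p-calderon} as a Caccioppoli inequality and the Sobolev-Poincar\'e inequality of Proposition~\ref{prop:sobolev rcd}. All constants must be kept uniform in $\eps\in(0,1)$ and $M$; the final bound on $|\nabla u|$ is then obtained by monotone convergence as $M\to\infty$.

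The first task is to derive a Caccioppoli-type bound. From the chain rule, the locality of the gradient (giving $\nabla v_M=0$ $\mea$-a.e.\ on $\{|\nabla u|>M\}$), and the Kato-type inequality $|\nabla|\nabla u||\le|\H u|$ provided by Proposition~\ref{prop:gradgrad}, one has $|\nabla v_M|\le|\H u|$ $\mea$-a.e. Applying Theorem~\ref{thm:p-calderon} with $\alpha=\beta\ge 0$ and a cutoff $\eta\in\LIP_{bs}(B_R(x))$ and multiplying by $(\beta+1)^2$ yields
\[
\int|\nabla v_M^{\beta+1}|^2\eta^2\,\d\mea\le C(1+\beta)^4\int(D_{p,\eps}u)^2 v_M^{2\beta}\eta^2\,\d\mea+C(1+\beta)^2\int|\nabla u|^2 v_M^{2\beta}(|\nabla\eta|^2+\eta^2)\,\d\mea,
\]
where the $K^-\eta^2$ contribution has been absorbed into the $\eta^2$ term with a constant depending on $K,R_0$. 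The hypothesis $|(|\nabla u|^2+\eps)^{(p-2)/2}D_{p,\eps}u|\le f$ gives $(D_{p,\eps}u)^2\le f^2(|\nabla u|^2+\eps)^{-(p-2)}$; together with $|\nabla u|^2+\eps\ge v_M^2$, the elementary bound $(|\nabla u|^2+\eps)^{2-p}\le c_p(|\nabla u|+1)^{2(2-p)}$ for $p<2$ (using $\eps\le 1$), and the fact that $|\nabla u|^2 v_M^{2\beta}\le W_M^{2(\beta+1)}$ on $\{|\nabla u|\le M\}$ while the left-hand side vanishes on $\{|\nabla u|>M\}$, one arrives at
\[
\int|\nabla W_M^{\beta+1}|^2\eta^2\,\d\mea\le C(1+\beta)^4\int f^2 W_M^{2(\beta+2-p)}\eta^2\,\d\mea+C(1+\beta)^2\int W_M^{2(\beta+1)}(|\nabla\eta|^2+\eta^2)\,\d\mea,
\]
where $W_M:=v_M+1\ge 1$.

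From here a standard Moser iteration closes the estimate. Combining the inequality above with $|\nabla(\eta W_M^{\beta+1})|^2\lesssim |\nabla W_M^{\beta+1}|^2\eta^2+W_M^{2(\beta+1)}|\nabla\eta|^2$, the Sobolev-Poincar\'e inequality \eqref{eq:local sobolev} with $2^*=2N/(N-2)$, and H\"older's inequality with exponents $q/2$ and $q/(q-2)$ on the source term (noting $q>N$ yields $q/(q-2)<2^*/2$, which provides the needed slack), one obtains a reverse-H\"older inequality of the form
\[
\Big(\fint_{B_\rho}W_M^{(\beta+1)2^*}\,\d\mea\Big)^{\frac{1}{2^*}}\le \Big(\frac{C(1+\beta)^A}{(\sigma-\rho)^2}\Big)^{\frac{1}{2(\beta+1)}}\Big(\fint_{B_\sigma}W_M^{2(\beta+1)}\,\d\mea\Big)^{\frac{1}{2}}+\textrm{lower order},
\]
for concentric balls $B_\rho\subset B_\sigma\subset B_R(x)$ and some explicit $A>0$. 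The restriction $p<2+3/(N-2)$ is precisely what keeps the source exponent $2(\beta+2-p)$ compatible with this Sobolev-H\"older balancing, starting from a suitable initial exponent. Iterating geometrically with $\rho_k\searrow r$ and exponents $\beta_k$ such that $2(\beta_k+1)=(2^*)^k m_0$ for $m_0$ large enough (depending only on $p,N,q$) so that the first step is valid, standard arguments give an $L^\infty$ bound on $W_M$ in terms of its $L^{m_0}$ norm, uniform in $\eps$ and $M$. Extending this to arbitrary $m\ge 1$ is done by interpolation $\|W_M\|_{L^{m_0}}\le\|W_M\|_{L^m}^{m/m_0}\|W_M\|_{L^\infty}^{1-m/m_0}$ combined with Young's inequality to absorb $\|W_M\|_{L^\infty}$; sending $M\to\infty$ via monotone convergence yields the statement. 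The main obstacle is keeping the Caccioppoli bound uniform in $\eps$ and $M$ simultaneously: one must avoid $\eps^{-\gamma}$ blowups (delicate for $p>2$) and dominate every untruncated $|\nabla u|$-dependent factor by $W_M$, which is precisely why the truncation has to be introduced in this form.
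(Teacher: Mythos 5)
Your overall strategy --- Caccioppoli via Theorem~\ref{thm:p-calderon}, local Sobolev, Moser iteration, interpolation to small $m$, and finally $M\to\infty$ --- matches the paper's. However, your fully truncated ansatz $v_M:=((|\nabla u|\wedge M)^2+\eps)^{1/2}$, iterating on $W_M^{\beta+1}$ with $W_M:=v_M+1$, does not close the Caccioppoli step. After plugging $\alpha=\beta$ into Theorem~\ref{thm:p-calderon} and substituting $(D_{p,\eps}u)^2\le f^2(|\nabla u|^2+\eps)^{2-p}$, the right-hand side carries the integrands $f^2(|\nabla u|^2+\eps)^{2-p}v_M^{2\beta}$ and $|\nabla u|^2(|\nabla\eta|^2+K^-\eta^2)v_M^{2\beta}$, each of which contains an \emph{untruncated} factor of $|\nabla u|$. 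On $\{|\nabla u|>M\}$ these grow without bound (the first one when $p<2$, the second for every $p$), whereas $W_M$ is constant there; so the pointwise bounds $f^2(|\nabla u|^2+\eps)^{2-p}v_M^{2\beta}\le f^2 W_M^{2(\beta+2-p)}$ and $|\nabla u|^2 v_M^{2\beta}\le W_M^{2(\beta+1)}$ that your reverse-H\"older step requires fail on $\{|\nabla u|>M\}$. Observing that ``the left-hand side vanishes on $\{|\nabla u|>M\}$'' does not repair this: the right-hand side of Theorem~\ref{thm:p-calderon} is a global integral which you must still bound over $\{|\nabla u|>M\}$. Your auxiliary estimate $(|\nabla u|^2+\eps)^{2-p}\le c_p(|\nabla u|+1)^{2(2-p)}$ is true but leaves the untruncated $|\nabla u|+1$, not $W_M$, on the right.

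The paper avoids both problems by keeping exactly one untruncated gradient factor in the test function, namely $v:=(|\nabla u|^2+\eps)^{1/2}((|\nabla u|\wedge M)^2+\eps)^{\beta-\frac12}$, and choosing $\alpha=2\beta-1\ge 0$. Then $(|\nabla u|^2+\eps)\big((|\nabla u|\wedge M)^2+\eps\big)^{2\beta-1}=v^2$ dominates the lower-order term exactly, and the pointwise algebraic inequality (valid for $2\beta\ge 1$, since $(|\nabla u|\wedge M)^2+\eps\le |\nabla u|^2+\eps$)
\[
(|\nabla u|^2+\eps)^{2-p}\big((|\nabla u|\wedge M)^2+\eps\big)^{2\beta-1}\le v^{\frac{2\beta-p+1}{\beta}}
\]
absorbs the source for every $p$. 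The Moser quantity ${\bf a}_k$ likewise carries a single untruncated factor $(|\nabla u|^2+\eps)$, so that ${\bf a}_k^{\gamma_k}\le \|\eta v\|_{L^{2^*}}^{2^*}$ follows by one application of $(|\nabla u|\wedge M)^2+\eps\le |\nabla u|^2+\eps$ on the left-hand side, and the truncation is removed only after the iteration. Introducing this partial truncation into your ansatz is necessary for the argument to go through.
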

	\begin{proof}
		Fix $B\coloneqq B_R(x)\subset \X$, $R\le R_0$ and $u$ as in the hypotheses. Thanks to the scaling of \eqref{eq:apriori lip} we can assume that $\mea(B_{R}(x))=1.$   We will denote by $C\ge 1$ a constant whose value may change from line to line, but which will ultimately depend only on $N,K,p,q,C_0,m$ and $R_0.$
		
		Let $\eta \in \LIP_c(B)$ be such that $0\le \eta\le 1$. Let also $M>0$ be an arbitrary constant, if $|\nabla u|\in L^\infty(B_{R}(x))$ we take $M=+\infty.$
		For every $\beta\ge 1/2$  define the function 
		$$v\coloneqq (|\nabla u|^2+ \eps)^\frac12((|\nabla u|\wedge M)^2+\eps)^{\beta-\frac12}\in \W(\X).$$ 
		Then, we have
		\begin{align*}
			|\nabla v|&\le|\nabla |\nabla u||((|\nabla u|\wedge M)^2+\eps)^{\beta-\frac12}+\nchi_{\{|\nabla u|\le M\}}(2\beta-1) |\nabla |\nabla u||((|\nabla u|\wedge M)^2+\eps)^{\beta-\frac12}\\
			&\le 2\beta|\H u| ((|\nabla u|\wedge M)^2+\eps)^{\beta-\frac12}, \quad \mea\text{-a.e..} 
		\end{align*}
		By Theorem \ref{thm:p-calderon} applied  with $\alpha=2\beta-1\ge 0$ and using \eqref{eq:lip a priori ass} we have 
		\begin{equation}\label{eq:start moser}
			\begin{split}
				\int |\nabla v|^2\eta^2 \d \mm &\le C_{p,N}  (1+\beta^4)\int f^2 \eta^2 (|\nabla u|^2+\eps)^{2-p}((|\nabla u|\wedge M)^2+\eps)^{2\beta-1}+ (|\nabla \eta|^2+K^-\eta^2) v^{2}\d \mm\\
				&\le   C_{p,N} (1+\beta^4)\int  f^2 \eta^2 v^{\frac{2\beta-p+1}\beta}+ (|\nabla \eta|^2+K^-\eta^2) v^{2}\d \mm,
			\end{split}
		\end{equation}
		where $C_{p,N}>0$ is some constant depending only on $p $ and $N$ (since $\alpha \ge p-2$). In the second inequality above we used  that 
		\begin{equation}
			\begin{split}
				(|\nabla u|^2+\eps)^{2-p}\overset{2\beta \ge 1}&{\le} (|\nabla u|^2+\eps)^{1-\frac{p-1}{2\beta}}((|\nabla u|\wedge M)^2+\eps)^{\frac{p-1}{2\beta}-(p-1)}\\
				&=v^{\frac{2\beta-p+1}\beta}((|\nabla u|\wedge M)^2+\eps)^{1-2\beta}.
			\end{split}
		\end{equation}
		In the case $|\nabla u|\in L^\infty(B_{R}(x))$ we have $|\H u||\nabla u|^{\alpha },|\nabla u|^{\alpha +1}\in L^2(\supp(\eta);\mm)$ and so the above (and the rest of the proof) holds for all $p\in(1,\infty).$
		We estimate the first term as follows
		\begin{equation}\label{eq:moser holder}
			\begin{split}
				\int f^2\eta^2 v^{\frac{2\beta-p+1}\beta}\d \mm &\le 	 \| f\|_{L^{q}(B_R)}^2  \|\eta^2  v^{\frac{2\beta-p+1}\beta}\|_{L^{\frac{q}{q-2}}(\mm)}\le  \| f\|_{L^{q}(B_R)}^2\|\eta^{2\frac{2\beta-p+1}{2\beta}} v^{\frac{2\beta-p+1}\beta}\|_{L^{\frac{q}{q-2}}(\mm)} \\
				&= \| f\|_{L^{q}(B_R)}^2\|\eta  v \|^{2\lambda_\beta}_{L^{\lambda_\beta\frac{2q}{q-2}}(\mm)}
				\le \| f\|_{L^{q}(B_R)}^2\|\eta  v \|^{2\lambda_\beta}_{L^{\frac{2q}{q-2}}(\mm)},
			\end{split}
		\end{equation}
		where $\lambda_{\beta}\coloneqq \frac{2\beta-p+1}{2\beta}\in[0,1).$ In the first line we used H\"older inequality, $\eta\le 1$ and $\lambda_\beta\in[0,1)$, while in the second line, we employed the Jensen inequality (since $\mm(B)=1$). 
		Plugging \eqref{eq:moser holder} in \eqref{eq:start moser}, by the local Sobolev inequality applied in $B_R(x)$ (see Proposition \ref{prop:sobolev rcd}) we get
		\begin{equation}\label{eq:pre interpolation}
            \begin{split}
			\|\eta v\|_{L^{2^*}(\mm)}^{2}&\le C (1+\beta^4) \left(\| f\|_{L^{q}(B_R)}^2\|\eta  v \|^{2\lambda_\beta}_{L^{\frac{2q}{q-2}}(\mm)} + (R^2\Lip(\eta)^2+1)\|v\|_{L^2(\supp(\eta))}^2\right)\\
            &\le C (1+\beta^4) \left(\max\left(\|\eta v\|^{2}_{L^{\frac{2q}{q-2}}(\mm)},1\right) + (R^2\Lip(\eta)^2+1)\|v\|_{L^2(\supp(\eta))}^2\right),
            \end{split}
		\end{equation}		
        where we used the inequality $t^{\lambda}\le \max(t,1)$ for all $t\ge 0$ and $\lambda \in [0,1]$.
		Since by assumption $q>N>2$, we can check that $2<\frac{2q}{q-2}<2^*=\frac{2N}{N-2}$. Therefore, taking $\lambda\coloneqq \frac N{q}\in (0,1)$ we have $\lambda/2^*+(1-\lambda)/2=1/(2q/(q-2))$ and by interpolation
		\[
		\| \eta v\|_{L^{\frac{ 2q}{q-2}}(\mm)}^2\le \|\eta v\|_{L^{2^*}(\mm)}^{2\lambda}\|\eta v\|_{L^{2}(\mm)}^{2(1-\lambda)}\le   \delta^{1/\lambda}\|\eta v\|_{L^{2^*}(\mm)}^2+\delta^{1/(\lambda-1)}\|\eta v\|_{L^{2}(\mm)}^2,
		\]
		for every $\delta>0$. Plugging the above in \eqref{eq:pre interpolation}, we get
		\begin{multline}
		\|\eta v \|_{L^{2^*}(\mm)}^{2}\le C (1+\beta^4) \bigg(\max\left( \delta^{1/\lambda}\|\eta v\|_{L^{2^*}(\mm)}^2+\delta^{1/(\lambda-1)}\|\eta v\|_{L^{2}(\mm)}^2,1\right) \\+ (R^2\Lip(\eta)^2+1)\|v\|_{L^2(\supp(\eta))}^2\bigg).
		\end{multline}
		Set now $\delta\coloneqq \big( 2C (1+\beta^4))^{-\lambda} $. We distinguish the two cases: $\delta^{1/\lambda}\|\eta v\|_{L^{2^*}(\mm)}^2> 1$ and $\delta^{1/\lambda}\|\eta v\|_{L^{2^*}(\mm)}^2\le 1.$
		In the first case, we can absorb the $L^{2^*}$-norm into the left-hand side and obtain
		\[
		\| \eta v \|_{L^{2^*}(\mm)}^2  
		\le C  (1+\beta^4)^{\alpha}  \left( R^2\Lip(\eta)^2+1\right)\| v\|_{L^2(\supp(\eta))}^2,
		\]
		where $\alpha\coloneqq 1+\frac{N}{q-N}$ and possibly increasing the constant $C$. In the second case, we directly have
		\[
		\| \eta v \|_{L^{2^*}(\mm)}^2  \le C  (1+\beta^4).
		\]
		In both cases, we thus have
		\begin{equation}\label{eq:final moser}
			\| \eta v\|_{L^{2^*}(\mm)}^2  
			\le C  (1+\beta^4)^{\alpha}\max \left( \left( R^2\Lip(\eta)^2+1\right)\|v\|_{L^2(\supp(\eta))}^2,1\right).
		\end{equation}
		We now proceed with the Moser iteration. For all $k \in \nn$ and $r,s \in[R/2,R]$ with $r<s$, we choose a function $\eta$ as before and satisfying
		\[
		\eta=1 \text{ in $B_{  r+\frac{(s-r)}{2^{k+1}}}(x)$}, \quad \eta \in \supp(\eta)\subset \bar B_{ r+\frac{(s-r)}{2^{k}}}(x) , \quad \Lip(\eta)\le  \frac{2^{k+1}}{s-r}.
		\]
		We suppose for now $m\ge 2$ and set $\gamma_0\coloneqq m/2$  and  $\gamma_k\coloneqq N \gamma_{k-1}/(N-2)>\gamma_{k-1}$ for all $k\in \nn$.  Finally, we set $B_k\coloneqq B_{ r+\frac{(s-r)}{2^{k+1}}}(x)$ for $k \in \nn$ and $B_0\coloneqq B_s(x)$. Choosing $\beta=\gamma_{k-1}/2$ (note that $2\beta\ge 1$) in the inequality \eqref{eq:final moser} and recalling the expression of $v$ gives
		\begin{align*}
			{\bf a}_k^{\gamma_{k-1}}
			&\le 2C  (1+\gamma_{k-1}^4)^{\alpha} \max\left(\frac{2^{2(k+1)}R^2}{(s-r)^2}{\bf a}_{k-1}^{\gamma_{k-1}},1\right)\\
			&\le \frac{8CR^2}{(s-r)^2} \left[4\left(\frac{N}{N-2}\right)^{4\alpha}\right]^{k}  (1+\gamma_0^4)^{\alpha} \max\left({\bf a}_{k-1}^{\gamma_{k-1}},1\right), \quad \forall k \in \nn,
		\end{align*}
		where 
		\[
		{\bf a}_k\coloneqq \left(\int_{B_k}  (|\nabla u|^2+ \eps)((|\nabla u|\wedge M)^2+\eps)^{\gamma_{k}-1} \, \d \mm   \right)^\frac 1{\gamma_k}, \quad k\in \nn\cup \{0\}.
		\]
		In the second line we used that $\frac{2^{2(k+1)}R^2}{{(s-r)^2}}\ge 1$ and that $ N/(N-2)>1$.
		Note also that we used that $|\nabla u|^2+\eps\ge (|\nabla u|\wedge M)^2+ \eps$ $\alme$ on the left-hand side.
		
		Raising to the $1/\gamma_{k-1}$,
		we reach
		\begin{equation}\label{eq:end moser}
			{\bf a}_{k}\le \frac{A_k R^\frac{2}{\gamma_{k-1}}}{(s-r)^\frac{2}{\gamma_{k-1}}}\max\left({\bf a}_{k-1},1\right),\quad \forall k \in \nn,
		\end{equation}
		where
		\[
		A_k\coloneqq \left\lbrace 8C   \left[4\left(\frac{N}{N-2}\right)^{4\alpha}\right]^{k} (1+\gamma_0^4)^{\alpha} \right\rbrace^{\frac{1}{\gamma_{k-1}}}\ge 1, \quad \forall k\in \nn.
		\]
		Observe that
		$\sum_{k\in \nn} \frac{1}{\gamma_{k-1}}=\gamma_0^{-1}\frac N2$ and that $\sum_{k\in \nn} \frac{k}{\gamma_{k-1}}<+\infty.$ Therefore,
		\[
		\prod_{k=1}^\infty A_k \le C,
		\]
		up to increasing the constant $C.$
		Iterating \eqref{eq:end moser}, we obtain 
		\[
		\|(|\nabla u|\wedge M)^2+\eps\|_{L^{\gamma_k}(B_k)} \le {\bf a}_{k}\le ( {{\bf a}_{0}}+1) \prod_{k=1}^k \frac{A_k R^\frac{2}{\gamma_{k-1}}}{(s-r)^\frac{2}{\gamma_{k-1}}} ,
		\]
		where we used again that $|\nabla u|\ge |\nabla u|\wedge M$ $\alme.$ Letting $k\to + \infty$, we reach
		\begin{equation}\label{eq:almost the end}
			\begin{split}
				\| (|\nabla u|\wedge M)^2 \|_{L^{\infty}(B_{r}(x))} &\le \frac{CR^{\frac{2N}{m}}}{(s-r)^{\frac{2N}{m}}} ( {\bf a}_{0}+1) \le \frac{CR^{\frac{2N}{m}}}{(s-r)^{\frac{2N}{m}}}  \left(\||\nabla u|^2+\eps\|_{L^{\frac m2 }(B_s(x))}+1\right).
			\end{split}
		\end{equation}
		Sending $M\to +\infty$, recalling that $\mm(B_r(x))\le\mm(B_R(x)) \le 1$ and taking the square root we obtain
		\begin{equation}\label{eq:final m=2}
			\| |\nabla u| \|_{L^{\infty}(B_{r}(x))} \le 
			\frac{C R^\frac Nm}{(s-r)^\frac Nm}  \left( \||\nabla u|\|_{L^m(B_s(x)}+1\right),
		\end{equation}
		up to further increasing $C.$
		Taking  $s=R$ this concludes the proof of \eqref{eq:apriori lip} for $m\ge 2$. The case $m<2$ can be obtained from the case $m=2$ with the following standard argument (cf.\ with \cite[p.\ 75]{HanLinbook}):
		\begin{align*}
			\| |\nabla u| \|_{L^{\infty}(B_{r}(x))}  \overset{\eqref{eq:final m=2}}&{\le} 
			\frac{C R^\frac N2}{(s-r)^\frac N2}  \left(\| |\nabla u| \|_{L^{\infty}(B_{s}(x))}^{1-\frac{m}{2}} \||\nabla u|\|_{L^m(B_s(x)}^\frac{m}{2}+1\right)\\
			&\le \frac12\| |\nabla u| \|_{L^{\infty}(B_{s}(x))} +  \frac{C R^\frac Nm}{(s-r)^\frac Nm} \||\nabla u|\|_{L^m(B_s(x)}+ \frac{C R^\frac N2}{(s-r)^\frac N2}\\
			&\le \frac12\| |\nabla u| \|_{L^{\infty}(B_{s}(x))} +  \frac{C R^\frac Nm}{(s-r)^\frac Nm} \left(\||\nabla u|\|_{L^m(B_R(x)}+1\right),
		\end{align*}
		where in the second line we used Young's inequality and in the third that $\frac{s-r}{R}\le 1$.
		Hence, the conclusion follows using e.g.\ \cite[Lemma 4.3]{HanLinbook}.
	\end{proof}
 
		\begin{remark}
			In the case $q=\infty$, the inequality \eqref{eq:apriori lip} can be improved to
			\begin{equation}\label{eq:apriori lip improved}
				\| |\nabla u|\|_{L^\infty(B_{r}(x))}\le \frac{CR^{\frac Nm}}{(s-r)^{\frac Nm}} \left(\frac{}{}\bigg(\fint_{B_{R}(x)}  |\nabla u|^{m}\, \d \mm  \bigg)^\frac{1}{m}+\bigg(\fint_{B_{R}(x)}  |\nabla u|^{m}\, \d \mm  \bigg)^\frac{\alpha}{m}+\eps\right),
			\end{equation}
			for some $\alpha\in(0,1)$ depending only on $p,N,m$, provided $m>(p-1)(N+2).$
			
			Indeed plugging \eqref{eq:moser holder} in \eqref{eq:start moser} directly gives the following version of \eqref{eq:final moser}
    \begin{align*}
    \| \eta v\|_{L^{2^*}(\mm)}^2  &\le C (1+\beta^4) \left(\| f\|_{L^{q}(B_R)}^2\|\eta  v \|^{2\lambda_\beta}_{L^{\frac{2q}{q-2}}(\mm)} + (R^2\Lip(\eta)^2+1)\|v\|_{L^2(\supp(\eta))}^2\right)\\
			&\le C  (1+\beta^4) (R^2\Lip(\eta)^2+1)\max  \left(\|  v \|^{2\lambda_\beta}_{L^{\frac{2q}{q-2}}(\supp(\eta))} ,\|v\|_{L^2(\supp(\eta))}^2\right).
    \end{align*}
   Recalling that $\lambda_{\beta}= \frac{2\beta-p+1}{2\beta}$ and choosing $\beta = \gamma_{k-1}/2$, we get a new version of \eqref{eq:end moser} which reads as
			\begin{equation}\label{eq:end moser improved}
				{\bf a}_{k}\le \frac{A_k R^\frac{2}{\gamma_{k-1}}}{(s-r)^\frac{2}{\gamma_{k-1}}}\max\left({\bf a}_{k-1},({\bf a}_{k-1})^{1-\frac{p-1}{\gamma_{k-1}}}\right),\quad \forall k \in \nn.
			\end{equation}
			Now, if ${\bf a}_{k}\le {\bf a}_{0}$ for infinitely many $k$'s, arguing as in \eqref{eq:almost the end} and \eqref{eq:final m=2} we would obtain \eqref{eq:apriori lip improved}. So we can assume that ${\bf a}_{k_0}\le {\bf a}_{0}$ for some $k_0$ and ${\bf a}_{k}> {\bf a}_{0}$ for all $k>k_0.$ We can also assume ${\bf a}_{0}\le 1,$ otherwise \eqref{eq:apriori lip improved} follows directly from \eqref{eq:apriori lip}. By \eqref{eq:end moser improved}, we have 
			\begin{equation}\label{eq:end moser improved k big}
				{\bf a}_{k}\le \frac{A_k R^\frac{2}{\gamma_{k-1}}}{(s-r)^\frac{2}{\gamma_{k-1}}}\frac{{\bf a}_{k-1}}{{\bf a}_0^{\frac{p-1}{\gamma_{k-1}}}},\quad \forall k\ge k_0+2.
			\end{equation}
			Iterating \eqref{eq:end moser improved k big} for all $k\ge k_0+2$, \eqref{eq:end moser improved} for $k=k_0+1$, and finally that ${\bf a}_{k_0}\le {\bf a}_0$, we reach
			\[
			{\bf a}_{k}\le \frac{CR^{\frac{2N}{m}}}{(s-r)^{\frac{2N}{m}}}  \frac{{\bf a}_{0}^{1-\frac{p-1}{\gamma_{k-1}}}}{{\bf a}_{0}^{\sum_{k\in \nn} \frac{p-1}{\gamma_{k-1}}}}
			\le \frac{CR^{\frac{2N}{m}}}{(s-r)^{\frac{2N}{m}}} {\bf a}_{0}^{1-\frac{p-1}{m} (N+2)},
			\quad \forall k\ge k_0+2,
			\]
			having used that  $\sum_{k\in \nn} \frac{1}{\gamma_{k-1}}=\gamma_0^{-1}\frac N2$ and that $\gamma_0=m/2.$ Assuming that $m>(p-1)(N+2)$ and proceeding again as in \eqref{eq:almost the end} and \eqref{eq:final m=2}, we reach \eqref{eq:apriori lip improved}. \fr
	\end{remark}
	\section{Existence of regular solutions of \texorpdfstring{$\Delta_{p,\eps}(u)=f$}{\textDelta\_(p,\textepsilon)=f}}
	\label{sec:existence_regularised}
	To show the existence of regular solutions for $\Delta_{p,\eps}$, we will first show the existence for an auxiliary \textit{linear operator} $\LL_{v,\eps}$ (defined below). This operator is obtained by freezing the nonlinear part of the developed $(p,\eps)$-Laplacian $D_{\eps,p}$ introduced at the beginning of \textsection \ref{sec:eps regularity}.
	
	\begin{definition}\label{def:auxiliary}
		Let $\Xdm$ be an $\RCD(K,\infty)$ space, $v \in L^0(T\X)$ and $\eps> 0$. The map $\LL_{v,\eps}: \dom (\Delta) \to L^2(\mm)$ is defined by
		\[
		\LL_{v,\eps}(u)\coloneqq \Delta u+(p-2) \frac{\H u( v, v)}{| v|^2+\eps} \in L^2(\mm),
		\]
		which makes sense because $\dom(\Delta)\subset W^{2,2}(\X).$
	\end{definition}
	The operators $\LL_{v,\eps}$ and  $D_{\eps,p}(u)$ are connected by the following immediate identity
	\begin{equation}\label{eq:L=D}
		\LL_{\nabla u,\eps}(u)=D_{\eps,p}(u), \quad \forall u\in \dom (\Delta).
	\end{equation}
	In \textsection \ref{sec:existence pt1} we will apply a fixed point argument to obtain an existence result for $\LL_{v,\eps}$ together with  $L^2$-Laplacian estimates on the solution. Building on top of this and using a second fixed point argument we will establish in \textsection \ref{sec:existence pt2} the main existence result for $\Delta_{p,\eps}(u)=f$, together with uniform estimates independent of $\eps$  based on the technical results in \textsection \ref{sec:eps regularity}.

	\subsection{Existence for an auxiliary operator}\label{sec:existence pt1}
	
	Before stating the existence result, we introduce an important space of functions that will be frequently used here and also in \textsection \ref{sec:existence pt2}.
	Given  $\Xdm$ an $\rcd(K,\infty)$  space with $\mea(\X)<+\infty$, we define
	\begin{equation}\label{eq:def dom0}
		\dom_0(\Delta)\coloneqq \left \{u\in \dom(\Delta)  \ \ : \ \int_\X u\, \d \mm=0 \right \}
	\end{equation}
	and we endow it with the norm $\|\Delta u\|_{L^2(\mm)}$. In the following proposition, we collect a few basic but important properties of this space.
	\begin{prop}\label{lem:norms}
		Let $\Xdm$ be a bounded $\rcd(K,\infty)$ space. Then $\left(\dom_0(\Delta),\|\Delta(.)\|_{L^2(\mm)}\right)$ is a Hilbert space. Moreover, the inclusion $\left(\dom_0(\Delta),\|\Delta(.)\|_{L^2(\mm)}\right)\hookrightarrow \left(\dom_0(\Delta),\|\cdot\|_{\W(\X)}\right)$ is compact.
	\end{prop}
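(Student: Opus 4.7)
The plan is to verify in turn that $\|\Delta(\cdot)\|_{L^2(\mm)}$ is a norm arising from an inner product, that $\dom_0(\Delta)$ is complete with respect to it, and finally that the embedding into $W^{1,2}(\X)$ is compact.

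For the Hilbert-space part, the only delicate point is positive definiteness. If $u\in\dom_0(\Delta)$ satisfies $\Delta u=0$, testing the weak formulation \eqref{eq:def plapl} (with $p=2$) against $u$ itself, which is admissible by the density of $\LIP_{bs}(\X)$ in $W^{1,2}(\X)$ (Remark \ref{rmk:lip dense}), gives $\int|\nabla u|^2\d\mm=0$; combined with $\int u\,\d\mm=0$ and the $(2,2)$-Poincar\'e inequality of Theorem \ref{thm:pPoincare}, this forces $u=0$. The norm is induced by the bilinear form $(u,v)\mapsto \int \Delta u\,\Delta v\,\d\mm$, so the space is pre-Hilbert. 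Completeness then follows by the following routine argument: given $(u_n)$ Cauchy in this norm, $\Delta u_n\to G$ in $L^2(\mm)$ with $\int G\,\d\mm=0$ by \eqref{eq:zero mean laplacian}; the spectral-gap inequality \eqref{eq:Laplacian poincare} yields
\[
\|\nabla u_n-\nabla u_m\|_{L^2}^2\le \lambda_1(\X)^{-1}\|\Delta u_n-\Delta u_m\|_{L^2}^2\to 0,
\]
and the $(2,2)$-Poincar\'e inequality applied to the mean-zero differences $u_n-u_m$ gives the Cauchy property in $L^2(\mm)$. Thus $u_n\to u$ in $W^{1,2}(\X)$ with $u$ of zero mean, and passing to the limit in \eqref{eq:def plapl} (i.e.\ using the closure property \eqref{eq:closure laplacian}) identifies $u\in\dom_0(\Delta)$ with $\Delta u=G$.

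For the compact embedding, I would take a bounded sequence $(u_n)\subset\dom_0(\Delta)$. The previous arguments give $W^{1,2}$-boundedness, while the Bochner-type inequality \eqref{eq:talenti} upgrades this to $W^{2,2}$-boundedness. Up to a subsequence, $u_n\rightharpoonup u$ weakly in $W^{1,2}(\X)$ and $\Delta u_n\rightharpoonup G$ weakly in $L^2(\mm)$, and \eqref{eq:closure laplacian} identifies $u\in\dom_0(\Delta)$ with $\Delta u=G$. The crucial step is then to upgrade to strong $W^{1,2}$-convergence, and for this I would exploit the elementary identity
\[
\|\nabla(u_n-u)\|_{L^2}^2=-\int(u_n-u)\,\Delta(u_n-u)\,\d\mm\le \|u_n-u\|_{L^2}\cdot\|\Delta u_n-\Delta u\|_{L^2},
\]
which reduces everything to showing $u_n\to u$ strongly in $L^2(\mm)$.

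The hard part is thus the strong $L^2$-compactness of $W^{1,2}$-bounded mean-zero sequences in a bounded $\rcd(K,\infty)$ space, where no Rellich-type theorem is directly available (the space may fail to be doubling). To handle this I would use the heat-semigroup splitting $u_n=P_t u_n+(u_n-P_t u_n)$. The tail is controlled by the standard Bochner-type bound $\|u_n-P_tu_n\|_{L^2}\le\sqrt{2t}\,\|\nabla u_n\|_{L^2}$, so it is $O(\sqrt t)$ uniformly in $n$. For the head $P_tu_n$, one uses that on bounded $\rcd(K,\infty)$ spaces the log-Sobolev inequality (underlying \eqref{eq:sobolev trick inf}) forces the semigroup $P_t$ to be a compact operator on $L^2(\mm)$ for each $t>0$, so that $(P_tu_n)_n$ has an $L^2$-convergent subsequence. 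A standard $\epsilon/2$-diagonal argument, letting $t\to 0^+$, then yields a subsequence of $(u_n)$ convergent in $L^2(\mm)$; combined with the identity above, this gives the desired compact embedding.
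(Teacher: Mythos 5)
Your treatment of the Hilbert-space structure and of completeness follows essentially the same lines as the paper: the spectral-gap inequality \eqref{eq:Laplacian poincare} controls the $W^{1,2}$-norm by $\|\Delta\cdot\|_{L^2(\mm)}$, and the closure property \eqref{eq:closure laplacian} identifies the Laplacian of the limit. The integration-by-parts identity you use to upgrade $L^2$-convergence to $W^{1,2}$-convergence is also exactly the step used in the paper.

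Where you and the paper genuinely diverge is in how you obtain the $L^2$-precompactness of $W^{1,2}$-bounded sequences, which is the engine behind the compact embedding. The paper simply invokes the known compactness of $W^{1,2}(\X)\hookrightarrow L^2(\mm)$ for $\rcd(K,\infty)$ spaces with finite measure (\cite[Proposition 6.7]{GMS15}), after which everything is elementary. You instead propose to re-derive this compactness via the heat-flow splitting $u_n=P_tu_n+(u_n-P_tu_n)$, which is indeed the mechanism underlying the cited result; so you are effectively reproving the citation rather than taking a shortcut. Your tail estimate $\|u_n-P_tu_n\|_{L^2}\le\sqrt{2t}\,\|\nabla u_n\|_{L^2}$ is correct. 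However, the head of the argument has a genuine gap: you assert that the logarithmic Sobolev inequality forces $P_t$ to be a compact operator on $L^2(\mm)$, and this cannot be read off from log-Sobolev alone. Log-Sobolev is dimension-free and holds, for instance, for the Ornstein–Uhlenbeck semigroup on an infinite-dimensional Gaussian space, whose heat semigroup is \emph{not} compact (the Wiener chaoses are infinite-dimensional eigenspaces). Compactness of $P_t$ is in fact equivalent to the compact embedding you are trying to establish, so as phrased the argument is circular; what the cited reference actually exploits, beyond log-Sobolev, is the $L^2$-to-Lipschitz regularization of the heat flow coming from the Bakry–Émery gradient estimate in $\rcd(K,\infty)$, combined with a total-boundedness/tightness argument using the boundedness of $\X$. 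The heat-flow splitting is the right idea; the compactness of the head term is the substance of the cited proposition, not a corollary of log-Sobolev.

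Two smaller remarks. You invoke \eqref{eq:closure laplacian} with only weak $W^{1,2}$-convergence; as stated that property requires strong $L^2$-convergence of gradients. The conclusion $\Delta u=G$ is still correct (passing to the limit in the weak formulation needs only weak $L^2$-convergence of $\nabla u_n$ and $\Delta u_n$), but you are not quite quoting the stated fact, and the paper's ordering, establishing strong gradient convergence first, is cleaner. Also, the $W^{2,2}$-boundedness you derive from \eqref{eq:talenti} is mentioned but never used; it is a red herring for this proposition.
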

	\begin{proof}
		$\dom_0(\Delta)$ is a vector space thanks to the linearity of the Laplacian and one can readily check that $\|\Delta(.)\|_{L^2(\mm)}$ is a norm which satisfies the parallelogram identity. Since $\lambda_1(\X)>0$ (recall Theorem \ref{thm:pPoincare}) and using \eqref{eq:Laplacian poincare} we have $\lambda_1\|u\|_{W^{1,2}(\X)}^2\le 2\|\Delta u\|_{L^2(\mm)}^2$ for all $u \in \dom_0(\Delta)$, from which the completeness easily follows recalling \eqref{eq:closure laplacian}.
		It remains to check the compactness of the inclusion in the statement. Consider a sequence $u_n\in \dom_0(\Delta)$ with $\sup_n \|\Delta(u_n)\|_{L^2(\mm)}\le C<+\infty$.  From $\lambda_1\|u\|_{W^{1,2}(\X)}^2\le \|\Delta u\|_{L^2(\mm)}^2$, it follows that the sequence is also bounded in $\W(\X)$. From the compactness of the embedding $W^{1,2}(\X)\hookrightarrow L^2(\mm)$ (see \cite[Proposition 6.7]{GMS15}), up to a subsequence, $u_n\to u \in L^2(\mm)$. Moreover, since $\sup_n\|u_n\|_{W^{1,2}(\X)}<+\infty$, by lower-semicontinuity we have  $u \in \W(\X)$  (see e.g.\ \cite[Proposition 2.1.19]{GP20}). Finally integrating by parts
		\begin{align*}
			\int |\nabla(u_n-u_m)|^2 \d \mm &= -\int \Delta(u_n-u_m)(u_n-u_m)\d \mm \le\|\Delta(u_n-u_m)\|_{L^2(\mm)}\|u_n-u_m\|_{L^2(\mm)}\\&\le 2C\|u_n-u_m\|_{L^2(\mm)},
		\end{align*}
		from which follows that $|\nabla u-\nabla u_n|\to 0$ in $L^2(\mm)$. Finally $\Delta u_n \rightharpoonup g$  in $L^2(\mm)$ for some $g \in L^2(\mm)$ and since $u_n \to u$ in $\W(\X)$, this shows that $u \in \dom_0(\Delta)$ (recall \eqref{eq:closure laplacian}). The proof is now concluded.
	\end{proof}
	
	The goal of this part is to prove the following preliminary existence result for  $\LL_{v,\eps}$. 
	\begin{prop}[Existence of solutions for $\LL_{v,\eps}$]\label{prop:step 1}
		Let $\Xdm$ be a bounded $\rcd(K,N)$ space, with  $N\in[2,\infty]$ and fix $p \in \mathcal{RI}_{\X}$ where $ \mathcal{RI}_{\X}\subset(1,\infty)$ is given by \eqref{eq:regularity interval}. Then for all 
		$f \in L^2(\mm)$, $v \in L^0(T\X)$ and $\eps>0$ there exists (unique) $u \in \dom_0(\Delta)$ that solves
		\begin{equation}\label{eq:step 1}
			\LL_{v,\eps}(u)=f-\theta_v^{-1}\fint_\X\theta_v(f-\LL_{v,\eps}(u))\d \mm, 
		\end{equation}
		where $\theta_v\in L^\infty(\mm)$ is defined as
		\begin{equation}\label{eq:theta}
			\theta_v\coloneqq	\begin{cases}
				1,& \text{ if  $p\in(1,2)$},\\
				\frac{N+g}{N+g^2+2g},& \text{otherwise},
			\end{cases}
		\end{equation}
		where we denote $g\coloneqq(p-2)\frac{|v|^2}{|v|^2+\eps}$. 
		Moreover, there exists a constant $C$ depending only on $p$, $N$ and  $\lambda_1(\X)K^-$ such that
		\begin{equation}\label{eq:estimates solution}
			\|\Delta u\|_{L^2(\mm)}\le C \|f\|_{L^2(\mm)}.
		\end{equation}
	\end{prop}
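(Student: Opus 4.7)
The plan is to realize $u$ as the unique fixed point of a Banach contraction on the Hilbert space $\left(\dom_0(\Delta),\|\Delta(\cdot)\|_{L^2(\mm)}\right)$ (Proposition~\ref{lem:norms}), following the Cordes-Talenti strategy. Since $\lambda_1(\X)>0$ (Theorem~\ref{thm:pPoincare}), Lax-Milgram applied to the form $(u,v)\mapsto\int\la\nabla u,\nabla v\ra\,\d\mm$ on $\{v\in\W(\X):\int v\,\d\mm=0\}$ makes the Laplacian a bijective isometry $\Delta:\dom_0(\Delta)\to L^2_0(\mm)$, where $L^2_0(\mm)\coloneqq\{g\in L^2(\mm):\int g\,\d\mm=0\}$; call its inverse $\mathcal{G}$ and let $P:L^2(\mm)\to L^2_0(\mm)$, $Pg\coloneqq g-\fint g\,\d\mm$, be the orthogonal projection. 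Since $\theta_v\in L^\infty(\mm)$ and $\LL_{v,\eps}(u)-f\in L^2(\mm)$, the affine map
\[
T(u)\coloneqq u-\mathcal{G}\bigl(P(\theta_v(\LL_{v,\eps}(u)-f))\bigr),\qquad u\in\dom_0(\Delta),
\]
takes values in $\dom_0(\Delta)$, and $T(u)=u$ forces $\theta_v(\LL_{v,\eps}(u)-f)$ to be $\mm$-a.e.\ constant, which rearranges to \eqref{eq:step 1}.

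For the contraction estimate, fix $u_1,u_2\in\dom_0(\Delta)$ and set $w\coloneqq u_1-u_2$. Linearity of $\LL_{v,\eps}$ in its first entry, the identity $P\Delta w=\Delta w$ (by \eqref{eq:zero mean laplacian}) and the non-expansivity of $P$ give
\[
\|\Delta T(u_1)-\Delta T(u_2)\|_{L^2(\mm)}\le\bigl\|\Delta w-\theta_v\LL_{v,\eps}(w)\bigr\|_{L^2(\mm)}.
\]
The heart of the argument is the Cordes-type pointwise bound
\begin{equation}\label{eq:planptw}
\bigl(\Delta w-\theta_v\LL_{v,\eps}(w)\bigr)^2\le\mu_v\left(|\H w|^2+\frac{(\Delta w-\tr\H w)^2}{N-\dim(\X)}\right)\qquad\mm\text{-a.e.,}
\end{equation}
with the correction term absent when $\dim(\X)=N$ or $N=\infty$. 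On $\{|v|>0\}$, working in an orthonormal frame $\{e_i\}$ of $L^0(T\X)$ with $e_1=v/|v|$ yields $\LL_{v,\eps}(w)=\Delta w+g(\H w)_{11}$ with $g\coloneqq(p-2)|v|^2/(|v|^2+\eps)$; splitting $\Delta w=\tr\H w+(\Delta w-\tr\H w)$ and $\tr\H w=\sum_{i=1}^{\dim(\X)}(\H w)_{ii}$ gives
\[
\Delta w-\theta_v\LL_{v,\eps}(w)=(1-\theta_v(1+g))(\H w)_{11}+(1-\theta_v)\sum_{i\ge 2}(\H w)_{ii}+(1-\theta_v)(\Delta w-\tr\H w),
\]
and a weighted Cauchy-Schwarz (putting weight $1/(N-\dim(\X))$ on the last summand and using $\sum_i(\H w)_{ii}^2\le|\H w|^2$) yields \eqref{eq:planptw} with $\mu_v=(1-\theta_v(1+g))^2+(N-1)(1-\theta_v)^2$. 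For $p\ge 2$ with $N<\infty$, the choice $\theta_v=(N+g)/(N+g^2+2g)$ in the statement is precisely the minimiser, giving $\mu_v=(N-1)g^2/(N+g^2+2g)$; for $p\in(1,2)$ (or $N=\infty$), the simpler choice $\theta_v=1$ gives \eqref{eq:planptw} with $\mu_v=g^2\le(p-2)^2$.

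Integrating \eqref{eq:planptw} and invoking the improved Bochner inequality \eqref{eq:talenti} yields
\[
\|\Delta w-\theta_v\LL_{v,\eps}(w)\|_{L^2(\mm)}^2\le\lambda^2\|\Delta w\|_{L^2(\mm)}^2,\qquad \lambda^2\coloneqq\|\mu_v\|_{L^\infty(\mm)}\bigl(1+K^-\lambda_1(\X)^{-1}\bigr).
\]
Using $|g|\le|p-2|$ to bound $\|\mu_v\|_{L^\infty}$ explicitly in each of the three regimes of Definition~\ref{def:regularity interval}, a direct algebraic check shows that $p\in\mathcal{RI}_\X$ is precisely what forces $\lambda<1$. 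Banach's fixed point theorem then supplies a unique $u\in\dom_0(\Delta)$ with $T(u)=u$, solving \eqref{eq:step 1}. For \eqref{eq:estimates solution}, since $T$ is affine and $T(0)=\mathcal{G}(P(\theta_v f))$ satisfies $\|\Delta T(0)\|_{L^2(\mm)}\le\|\theta_v\|_{L^\infty}\|f\|_{L^2(\mm)}$ with $\|\theta_v\|_{L^\infty}$ controlled by $p$ and $N$, writing $u=(T(u)-T(0))+T(0)$ and using the contraction bound gives $\|\Delta u\|_{L^2(\mm)}\le(1-\lambda)^{-1}\|\theta_v\|_{L^\infty}\|f\|_{L^2(\mm)}$. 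The main technical hurdle is \eqref{eq:planptw}: one must marry the Hilbert-Schmidt inequality for $\H w$ with the $(\Delta w-\tr\H w)$ deficit in Bochner's inequality through precisely the right weighted Cauchy-Schwarz, which is what makes the specific form of $\theta_v$ in the statement emerge as the unique minimiser.
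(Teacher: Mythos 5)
Your proposal is correct and follows the same overall architecture as the paper — a Banach fixed point on $(\dom_0(\Delta),\|\Delta(\cdot)\|_{L^2(\mm)})$, with the contraction coming from a Cordes-type pointwise bound combined with \eqref{eq:talenti} — but the way you establish the key pointwise estimate \eqref{eq:planptw} is genuinely different and arguably more illuminating. The paper proves Lemma~\ref{lem:near laplacian} by reducing matters to the single-variable optimisation Lemma~\ref{lem:elementary estimate} (an explicit critical-point computation for a rational function in $B$) followed by Young's inequality. You instead work in an orthonormal frame $\{e_i\}$ aligned with $v$, decompose $\Delta w-\theta_v\LL_{v,\eps}(w)$ explicitly as $a_1(\H w)_{11}+a_2\sum_{i\ge2}(\H w)_{ii}+a_2(\Delta w-\tr\H w)$ with $a_1=1-\theta_v(1+g)$, $a_2=1-\theta_v$, and apply a weighted Cauchy--Schwarz to obtain $\mu_v=a_1^2+(N-1)a_2^2$; then you notice that the $\theta_v$ given in the statement is exactly the value minimising this quadratic, which yields $\mu_v=(N-1)g^2/(N+g^2+2g)$. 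This is the same constant the paper obtains, but your derivation explains \emph{why} $\theta_v$ has that specific form — it is the optimal damping factor — whereas the paper presents it without motivation. A small point worth spelling out for the record: when $N=\infty$ (or $p<2$), your frame argument is not directly available (the tangent module need not be finite-dimensional), and you should note explicitly, as the paper does, that there one takes $\theta_v=1$ and uses the direct bound $|\Delta w-\LL_{v,\eps}(w)|=|g|\,|\H w(v,v)|/|v|^2\le|g|\,|\H w|$, consistent with your formula $\mu_v=g^2$ in the limit $N\to\infty$. Lastly, you write the Bochner constant as $1+K^-\lambda_1(\X)^{-1}$, which is the dimensionally consistent (scale-invariant) form; the paper's display \eqref{eq:talenti} writes $1+K^-\lambda_1(\X)$, which is evidently a typographical slip, so your version is the one to trust.
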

	By \eqref{eq:theta} we have that $\theta_v\ge c>0$ for some constant $c$ depending only on $p$ and $N$, hence \eqref{eq:step 1} makes sense. 
	The key feature of the estimate \eqref{eq:estimates solution} is that the constant $C$ \emph{does not} depend on $v$, which will be essential for the arguments in the next subsection.

	Before going into the proof of Proposition \ref{prop:step 1}, we outline the core idea in the case $K=0$ and $N=+\infty$. We borrow some of the techniques from \cite[Section 1.2]{cordesbook}, which date back to the theory of Cordes conditions in $\rr^n$ for equations with measurable coefficients in non-divergence form (see  also \cite{Cordes56,TalentiSopra,Campanato89,Campanatonear}). 
	To solve the equation $\LL_{v,\eps}(u)=f$ it is sufficient to find a fixed point of the map $T: \dom(\Delta)\to \dom(\Delta)$, where $T(w)\coloneqq u$ is the solution to
	\[
	\Delta u=\Delta w-\LL_{v,\eps}(w)+f.
	\]
	The key observation is that 
	\begin{equation}\label{eq:near laplacian example}
		|\Delta w-\LL_{v,\eps}(w))|\le  |p-2| |\H w|,\quad \alme.
	\end{equation}
	Indeed combining \eqref{eq:near laplacian example} with inequality \eqref{eq:talenti} we have
	\begin{align*}
		\int (\Delta T(w_1-w_2))^2&=\int ((\Delta -\LL_{v,\eps})(w_1-w_2))^2 \le (p-2)^2 \int |\H {w_1-w_2}|^2\\\overset{\eqref{eq:talenti}}&{\le }(p-2)^2 \int (\Delta (w_1-w_2))^2,
	\end{align*}
	which shows that $T$ is a contraction  with respect to $\|\Delta (\cdot) \|_{L^2}$  provided $p \in(1,3)$. In fact, we will see that if $N<\infty$ this estimate can be further refined to include a slightly larger range of the parameter $p$ (at least if $K=0$ or small enough). The auxiliary function $\theta_v$ defined in \eqref{eq:theta} will be needed exactly for this purpose.
	
	We start by proving a refined version of \eqref{eq:near laplacian example}.	
	\begin{lemma}[$\LL_{v,\eps}$ is close to the Laplacian]\label{lem:near laplacian}
		Let $\Xdm$ be an $\rcd(K,N)$ space with $N\in[2,\infty)$ and let $p \in[2,\infty]$. Then
		\begin{equation}\label{eq:near laplacian}
			\begin{multlined}[c][.9\textwidth]
				\left | \Delta u -\frac{N+g}{N+g^2+2g}\LL_{v,\eps}(u) \right |^2 \\\le \frac{(p-2)^2(N-1)}{N+2(p-2)+(p-2)^2}\left[|\H u|^2  +\frac{\left(\tr \H u - \Delta u\right)^2 }{N-\dim(\X)}\right], \quad \mea\text{-a.e.,}
			\end{multlined}
		\end{equation}
		for every $u \in D(\Delta)$, $v \in L^0(T\X),$ where $g\coloneqq(p-2)\frac{|v|^2}{|v|^2+\eps}$ and the last term is not present if $N=\infty$ or $\dim(\X)=N.$
	\end{lemma}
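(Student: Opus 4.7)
The plan is to reduce \eqref{eq:near laplacian} to a pointwise finite-dimensional matrix inequality, and then close it with elementary algebra. The degenerate cases are dispatched first: if $v=0$ or $p=2$, both sides vanish, so I may assume $v\neq 0$ and $p>2$. Writing $\tilde v\coloneqq v/|v|$, $s\coloneqq|v|^2/(|v|^2+\eps)\in[0,1)$, $g\coloneqq(p-2)s\ge 0$ and $\nu\coloneqq\H u(\tilde v,\tilde v)$, the operator takes the form $\LL_{v,\eps}(u)=\Delta u + g\nu$, and a direct computation with $\mu\coloneqq(N+g)/(N+g^{2}+2g)$ (noting $1-\mu=g(1+g)/(N+g^{2}+2g)$) gives
\begin{equation*}
\Delta u -\mu\,\LL_{v,\eps}(u)=\frac{g}{N+g^{2}+2g}\bigl[(1+g)\Delta u-(N+g)\nu\bigr].
\end{equation*}

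The heart of the proof is to estimate the bracket. I would split $(1+g)\Delta u-(N+g)\nu=a+b$ with $a\coloneqq(1+g)\tr\H u-(N+g)\nu$ and $b\coloneqq(1+g)(\Delta u-\tr\H u)$, and apply the weighted Cauchy--Schwarz inequality $(a+b)^{2}\le(\alpha+\beta)(a^{2}/\alpha+b^{2}/\beta)$ for cleverly chosen weights. Working in a local orthonormal frame $\{e_i\}_{i\le n}$ with $n\coloneqq\dim(\X)$, the Hessian is encoded in a symmetric matrix $A$ and $a=\la A,M\ra_{HS}$ with $M\coloneqq(1+g)I_n-(N+g)\tilde v\otimes\tilde v$. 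The eigenvalues of $M$ are $1-N$ (along $\tilde v$) and $1+g$ (multiplicity $n-1$), hence $|M|_{HS}^{2}=(n-1)(1+g)^{2}+(N-1)^{2}$. By Cauchy--Schwarz in the Hilbert--Schmidt inner product, $a^{2}\le\alpha|\H u|^{2}$ with $\alpha\coloneqq(n-1)(1+g)^{2}+(N-1)^{2}$. Choosing $\beta\coloneqq(N-n)(1+g)^{2}$ makes $b^{2}/\beta=(\tr\H u-\Delta u)^{2}/(N-n)$, and the decisive identity
\begin{equation*}
\alpha+\beta=(N-1)(1+g)^{2}+(N-1)^{2}=(N-1)(N+g^{2}+2g)
\end{equation*}
produces
\begin{equation*}
\bigl((1+g)\Delta u-(N+g)\nu\bigr)^{2}\le(N-1)(N+g^{2}+2g)\Bigl(|\H u|^{2}+\frac{(\tr\H u-\Delta u)^{2}}{N-\dim(\X)}\Bigr).
\end{equation*}

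Combining with the formula above for $\Delta u-\mu\LL_{v,\eps}(u)$ reduces \eqref{eq:near laplacian} to the elementary bound $g^{2}/(N+g^{2}+2g)\le(p-2)^{2}/(N+2(p-2)+(p-2)^{2})$. Substituting $g=(p-2)s$ and clearing positive denominators, it simplifies to $(s-1)\bigl[N(s+1)+2(p-2)s\bigr]\le 0$, which is obvious since $s\in[0,1]$ and $p\ge 2$, $N\ge 2$. The boundary cases $\dim(\X)=N$ (where $\Delta u=\tr\H u$ by \eqref{eq:tr=lapl}, so $b=0$ and the $\beta$ term drops harmlessly) and $N=\infty$ (where $\mu\to 1$, the correction term disappears, and the inequality reduces to the trivial $|\H u(v,v)|\le|\H u|(|v|^{2}+\eps)$) fit into the scheme without trouble. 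The only real ingenuity lies in the splitting $a+b$ together with the weights $\alpha,\beta$: without the miraculous cancellation $\alpha+\beta=(N-1)(N+g^{2}+2g)$ the Bochner-type combination on the right would not appear with the correct constant, and spotting that pairing is the main obstacle.
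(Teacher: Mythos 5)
Your proof is correct, and it is a genuinely different (and arguably tidier) organization of the argument. The paper rewrites $\Delta u - \tfrac{N+g}{N+g^2+2g}\LL_{v,\eps}(u)$ in the form treated by a separate auxiliary scalar inequality (Lemma \ref{lem:elementary estimate}), whose proof requires a one-variable optimization, and then bounds the resulting quantity $\tfrac{(\Delta u-B)^2}{N-1}+B^2$ via a Young step ($\Delta u\mapsto\tr\H u$) followed by the matrix inequality $|A|_{HS}^2\ge\frac{(\tr A-\la A\tilde v,\tilde v\ra)^2}{n-1}+\la A\tilde v,\tilde v\ra^2$ implicit in the key inequality \eqref{eq:key}. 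You instead encode $a=(1+g)\tr\H u-(N+g)\nu$ as a single Hilbert--Schmidt pairing $\la \H u, M\ra_{HS}$ with the fixed matrix $M=(1+g)I_n-(N+g)\tilde v\otimes\tilde v$, and exploit the weighted Cauchy--Schwarz inequality with weights $\alpha=|M|_{HS}^2=(n-1)(1+g)^2+(N-1)^2$ and $\beta=(N-n)(1+g)^2$. The identity $\alpha+\beta=(N-1)(N+g^2+2g)$ then produces the same intermediate constant $\tfrac{g^2(N-1)}{N+g^2+2g}$ that the paper obtains, after which both arguments invoke the same monotonicity in $g$ (your version being a clean denominator-clearing verification rather than an appeal to monotonicity of $t^2/(N+2t+t^2)$). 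What your route buys is a single self-contained chain of inner-product inequalities with no auxiliary lemma; what the paper's route buys is the reusability of Lemma \ref{lem:elementary estimate} as a stand-alone scalar fact and a slightly more transparent connection to the Cordes-condition literature it cites. Your handling of the degenerate cases ($|v|=0$, $p=2$, $\dim(\X)=N$) is also correct.
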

		To prove Lemma \ref{lem:near laplacian} we will need the following elementary estimate, which proof is omitted for brevity.
	\begin{lemma}\label{lem:elementary estimate}
		Fix $N\ge2.$ Then for every $t\ge 0$ it holds
		\begin{equation}\label{eq:elementary estimate}
			\left [\frac{t^2+t}{N+t^2+2t}A-\frac{tN+t^2}{N+t^2+2t}B \right]^2\le \frac{t^2(N-1)}{N+2t+t^2}\left[\frac{(A-B)^2}{N-1}+B^2\right], \quad \forall A,B \in \rr.
		\end{equation}
	\end{lemma}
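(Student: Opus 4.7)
The plan is to reduce \eqref{eq:elementary estimate} to a weighted Cauchy--Schwarz inequality after a clever rewriting. First I would clear denominators: setting $D \coloneqq N + t^2 + 2t$ and pulling the common factor $t^2$ out of the left-hand side, the claim becomes
\[
\bigl[(t+1)A - (N+t)B\bigr]^2 \;\le\; D\cdot\bigl[(A-B)^2 + (N-1)B^2\bigr].
\]
The case $t = 0$ is trivial and handles the sign issue when $t = 0$, so from now on the inequality is a purely algebraic two-variable quadratic estimate in $A,B$.

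The crucial observation is the algebraic identity
\[
(t+1)A - (N+t)B \;=\; (t+1)(A-B) - (N-1)B,
\]
which splits the linear form on the left in a way that is compatible with the quadratic form $(A-B)^2 + (N-1)B^2$ appearing on the right. Then I would apply the weighted Cauchy--Schwarz inequality $(\alpha x - \beta y)^2 \le (\alpha^2/w_1 + \beta^2/w_2)(w_1 x^2 + w_2 y^2)$ with the choices
\[
\alpha = t+1,\quad \beta = N-1,\quad x = A-B,\quad y = B,\quad w_1 = 1,\quad w_2 = N-1,
\]
so that $w_1 x^2 + w_2 y^2$ matches the right-hand side bracket exactly. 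The resulting coefficient is
\[
\frac{(t+1)^2}{1} + \frac{(N-1)^2}{N-1} = (t+1)^2 + (N-1) = t^2 + 2t + N = D,
\]
which closes the estimate.

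There is essentially no obstacle here beyond spotting the right decomposition; once the identity above is in hand the inequality is a one-line consequence of Cauchy--Schwarz, and the fact that the constants on both sides match exactly (rather than only up to a multiplicative factor) is what makes the estimate sharp and thereby usable in Lemma~\ref{lem:near laplacian}. A short sanity check in the extremal cases ($B = 0$, where both sides equal $t^2(t+1)^2 A^2 / D$ up to the $D$ on the right; and $A = B$, where both sides equal $t^2(N-1)^2 B^2$ and $D(N-1)B^2$ respectively) confirms that the choice of weights is the natural one.
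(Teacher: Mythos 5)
Your proof is correct, and it takes a genuinely different route from the paper. The paper normalizes to $A=1$ (handling $A=0$ separately), forms the ratio $f_t(B)$ of the two sides as a function of $B$, computes its behaviour at infinity, and locates its two critical points $B=\frac{1+t}{N+t}$ and $B=-1/t$ by differentiation, checking that the maximum value is exactly $\frac{t^2(N-1)}{N+2t+t^2}$. You instead clear the common factor $t^2/D$, rewrite $(t+1)A-(N+t)B$ as $(t+1)(A-B)-(N-1)B$, and apply the weighted Cauchy--Schwarz inequality with weights $1$ and $N-1$ matched to the right-hand side, whose coefficient $(t+1)^2+(N-1)=D$ then cancels one power of $D$ exactly. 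Your argument is shorter and avoids calculus; it also makes the sharpness transparent, since equality is the equality case of Cauchy--Schwarz, whereas the paper's proof makes the sharpness visible through the value at the interior critical point. One small imprecision: in your sanity check the two cases $B=0$ and $A=B$ do not give \emph{equality} in \eqref{eq:elementary estimate} (they give strict inequality when $N>1$ or $t>0$); the equality locus of your Cauchy--Schwarz is the line $\sqrt{w_2}(A-B)\cdot\alpha = -\sqrt{w_1}\,\beta\sqrt{w_2}\,B\cdot(\cdot)$, i.e.\ $(A-B) = -(N-1)B/(t+1)$, which is the paper's $B=-1/t$ after restoring $A=1$. This does not affect the validity of the proof.
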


	\begin{proof}[Proof of Lemma \ref{lem:near laplacian}]
		Set $n\coloneqq \dim(\X)$ (if $N<\infty$).
		Note that \eqref{eq:near laplacian} is trivially satisfied when $|v|=0$, since the left-hand side vanishes and the right-hand side is non-negative. Hence, we can work in the set $\{|v|>0\}$.
		
		Set $g\coloneqq(p-2)\frac{|v|^2}{|v|^2+\eps} \in L^\infty(\mm)$. Rewriting $\LL_{v,\eps}(u)$ as $\LL_{v,\eps}(u)=\Delta u-g\frac{\H u(v,v)}{|v|^2}$, we find
		\begin{equation*}
			\left | \Delta u -\frac{N+g}{N+g^2+2g}\LL_{v,\eps}(u) \right |^2=\left |\frac{g^2+g}{N+g^2+2g}\Delta u -\frac{gN+g^2}{N+g^2+2g}\frac{\H u(v,v)}{|v|^2} \right |^2, \quad \mea\text{-a.e..}
		\end{equation*}
		Lemma \ref{lem:elementary estimate} yields
		\begin{align*}
			\left | \Delta u -\frac{N+g}{N+g^2+2g}\LL_{v,\eps}(u) \right |^2\overset{\eqref{eq:elementary estimate}}&{\le} \alpha \left[\frac{1}{N-1}\left(\Delta u - \frac{\H u(v,v)}{|v|^2}\right)^2+\frac{\H u(v,v)^2}{|v|^4}\right]\\
			&\le \begin{multlined}[t]
				\alpha \left[\frac{1}{n-1}\left(\tr \H u - \frac{\H u(v,v)}{|v|^2}\right)^2\right.\\\left.\vphantom{\left(\tr \H u - \frac{\H u(v,v)}{|v|^2}\right)^2}+\frac{1}{N-n}\left(\tr \H u - \Delta u\right)^2+\frac{\H u(v,v)^2}{|v|^4}\right]\end{multlined}\\
			&\le \alpha \left[|\H u|^2  +\frac{1}{N-n}\left(\tr \H u - \Delta u\right)^2 \right],
		\end{align*}
		where  $\alpha\coloneqq \frac{(p-2)^2(N-1)}{N+2(p-2)+(p-2)^2} $ and in the second step we have used Young's inequality. In the first line we also used that $g\le (p-2)$ and the function $\frac{t^2}{N+2t+t^2}$ is increasing on $[0,\infty)$.
	\end{proof}

	We are now ready to prove the first existence result.

	\begin{proof}[Proof of Proposition \ref{prop:step 1}]
		The proof consists in using a fixed-point argument.
		
		Recall the space $\dom_0(\Delta)=\dom(\Delta)\cap \{u \ : \int u \d \mm=0\}$ defined in \eqref{eq:def dom0}. Fix also $v\in L^0(T\X)$, a constant $\eps>0$ and a function $f \in L^2(\mm)$. Define the map $$T_{f,v}: \dom_0(\Delta) \to \dom_0(\Delta)$$ given by $T_{f,v}(w)\coloneqq u,$ where $u$ is the solution to the following equation
		\begin{equation}\label{eq:regularised eps integro-diff}
			\begin{cases}
				&\Delta u=\Delta w+\theta_v f-\theta_v \LL_{v,\eps}(w)- \fint \theta_v (f-\LL_{v,\eps}(w)\d\mm\, \in L^2(\mm),\\
				&\int u \, \d \mm=0,
			\end{cases}
		\end{equation}
		where $\theta_v $ is as in \eqref{eq:theta}. Note that $T_{f,v}$ depends also on $\eps$, even if not expressed in our notation.
		It can be easily verified that $0<c\le \theta_v  \le C<+\infty$, for some constants $c,C$ depending only on $N$.
		$T_{f,v}$ is well defined. Indeed for every $h \in L^2(\mm)$ with zero mean there exists a unique function $u \in\dom(\Delta)$ such that $\Delta u=h$ and $\int u \, \d \mm=0$ (see Proposition \ref{prop:p-existence}).
		
		\smallskip
		
		\noindent\textsc{Step 1: Fixed point.} We claim $T_{f,v}$  is a contraction with respect to the norm $\|\Delta (\,\cdot\,)\|_{L^2(\mm)}$. Set
		$$\alpha_p\coloneqq \begin{cases} \frac{(p-2)^2(N-1)}{N+2(p-2)+(p-2)^2},& \text{if $p\ge 2$, $N<\infty$,}\\(p-2)^2, & \text{if $p\in(1,2)$ or $N=\infty$.}\end{cases}$$
		For every $w_1,w_2\in \dom_0(\Delta)$, since $\int (h-\fint h)^2\d \mm\le \int h^2\d \mm$ for all $h \in L^2(\mm)$ and $\int \Delta w \,\d \mm=0$ (recall \eqref{eq:zero mean laplacian}) for all $w \in \dom(\Delta)$, we have
		\begin{align*}
			\|\Delta T_{f,v}(w_1-w_2)\|_{L^2(\mm)}^2 &\le  \|\Delta(w_1-w_2)-\theta_v  \LL_{v,\eps}(w_1-w_2)\|_{L^2(\mm)}^2\\
			&\le \alpha_p \int |\H {w_1-w_2}|^2+\frac{(\tr \H{w_1-w_2}-\Delta(w_1-w_2))^2}{N-\dim(\X)}  \d \mm\\
			\overset{\eqref{eq:talenti}}&{\le} \alpha_p  (1+K^-\lambda_1)\|\Delta (w_1-w_2)\|_{L^2(\mm)}^2,
		\end{align*}
		where in the second inequality we used \eqref{eq:near laplacian} whenever $p\ge 2$ and $N<\infty$, ignoring the term with $\dim(\X)$ if $\dim(\X)=N$ or $N=\infty$. In particular, $T_{f,v}$ is a contraction if $\alpha_p  (1+K^-\lambda_1)<1$. This is where we use the fact that $p \in \mathcal{RI}_{\X}$. For convenience of notation, we set  $\delta_{\X}\coloneqq \frac{\lambda_1K^-}{1+\lambda_1K^-}\in[0,1)$. 
		Consider first the case $p\in(1,2)$. We have a contraction provided
		\[
		2-\sqrt{1-\delta_{\X}}<p\le 2,
		\]
		which is ensured by $p\in \mathcal{RI}_{\X}$.
		We now turn to the case $p\ge 2$. If $N=\infty$ then $\alpha_p  (1+K^-\lambda_1)=(p-2)^2  (1+K^-\lambda_1)<1$ is equivalent to $p< 2+\sqrt{1-\delta_{\X}}$. Conversely, observe that the function $ f(t)\coloneqq \frac{t^2(N-1)}{N+2t+t^2}$ is strictly increasing for $t\ge 0$, and, thus, invertible. Hence, we have that $\alpha_p  (1+K^-\lambda_1) = f(p-2)(1+K^-\lambda_1)<1$ if and only if $0\le p\le \overline{p}$, where $f(\overline{p}-2)=(1+K^{-}\lambda_1)^{-1}$. A direct computation gives
		\[
		\overline{p}=2+\sqrt{1-\delta_{\X}} \frac{\sqrt{1-\delta_{\X}}+\sqrt{(N-1)^2+\delta_{\X}(N-1)}}{(N-2+\delta_{\X})}\ge 2+\sqrt{1-\delta_{\X}} \frac{N-\delta_{\X}}{N-2+\delta_{\X}},
		\]
		where $\overline{p}\coloneqq +\infty$ if $N=2$ and $\delta_{\X}=0$. Also in this case we have that $T_{f,v}$ is a contraction for every $p \in \mathcal{RI}_\X$.
		
		\smallskip
		
		\noindent\textsc{Step 2: equation for the fixed point.} From \eqref{eq:regularised eps integro-diff} the  unique fixed point $u \in \dom_0(\Delta)$ of the map $T_{f,v}$ satisfies 
		$$\theta_v  \LL_{v,\eps}(u)=\theta_v  f- \fint_\X \theta_v (f-\LL_{v,\eps}(w))\d\mm $$
		and, since as observed above $\theta_v \ge c>0$ $\mea$-a.e., \eqref{eq:step 1} follows.
		
		\smallskip
		
		\noindent\textsc{Step 3: Estimate.} Let $u \in \dom_0(\Delta)$ be again the fixed point  of the map $T_{f,v}$. In particular
		\[
		\Delta u=\Delta u-\theta_v\LL_{v,\eps}(u)+\theta_v\LL_{v,\eps}(u)=\Delta u+\theta_v  f-\theta_v\LL_{v,\eps}(u)- \fint \theta_v (f-\LL_{v,\eps}(w))\d\mm
		\]
		Hence arguing as above using $\int (h-\fint h)^2\d \mm\le \int h^2\d \mm$ and $\int \Delta u\d \mm=0$ we obtain
		\begin{align*}
			\|\Delta u\|_{L^2(\mm)}&\le \left\|\Delta u-\theta_v  \LL_{v,\eps}(u)+ \theta_v  f\right \|_{L^2(\mm)}\\
			\overset{\eqref{eq:near laplacian}, \eqref{eq:talenti}}&{\le } \sqrt{\alpha_p}\sqrt{1+K^-\lambda_1} \|\Delta u\|_{L^2(\mm)}+ C \left\|f\right\|_{L^2(\mm)},
		\end{align*}
		were we used that $\theta_v\le C$. Since as observed in Step 1 we have  $\alpha_p(1+K^-\lambda_1)<1$ if $p\in \mathcal{RI}_\X$, this proves \eqref{eq:estimates solution}.
	\end{proof}
	
	\subsection{Main existence result}	\label{sec:existence pt2}
	The goal of this part is to prove the following.
	\begin{theorem}[Existence of regular solutions to the regularised equation]\label{thm:regularity epsilon}
		Let $\Xdm$ be an $\rcd(K,N)$ space, with  $N\in[2,\infty]$, $\diam(\X)\le D<\infty $. Let $p \in \mathcal{RI}_\X$, where $ \mathcal{RI}_{\X}\subset(1,\infty)$ is given by \eqref{eq:regularity interval}. Then, for every $f \in L^q(\mm)$, $q\coloneqq \max(2,\frac{2}{p-1})$, with zero mean there exists a function $u \in  \dom(\Delta)\cap \dom(\Delta_{p,\eps})$ such that
		\begin{equation}\label{eq:ep laplacian}
			D_{\eps,p}u(|\nabla u|^2+\eps)^\frac{p-2}{2}=\Delta_{p,\eps}(u)=f.
		\end{equation}
		Moreover, $v\coloneqq(|\nabla u|^2+\eps )^{\frac{p-2}{2}}\nabla u \in H^{1,2}_C(T\X)$ and for every $R_0>0$ and $B_R(x)\subset \X$ with $R\le R_0$  the following hold:	
		\begin{enumerate}[label=\roman*)]
			\item\label{item:covariant regularized} \hfill$\displaystyle\int_{B_{R/2}(x)} |v|^2+|\nabla v |^2 \d \mm\le C_1 \int_{B_{R}(x)} f^2 \d \mm+ C_2\mea(B_{R}(x))^{-1}  \left(\int_{B_R(x)} |\nabla u|^{p-1}\d \mm\right)^2,$\hfill\refstepcounter{equation}{\normalfont(\theequation)}\label{eq:covariant regularized}\\
			where $C_1,C_2$ are constants depending only on  $K,N,p$,$R_0$ if $N<\infty$,  while in the case $N=\infty$ all integrals are on the whole $\X$ and  $C_2=K^-C_1$ with $C_1$ a constant depending on $D,p$ and $K.$
			\item\label{item:apriori lip regularized} if $N<\infty$ and $f \in L^{q}(B_{R}(x);\mea)$ for some $q>N$, with $\fint_{B_{R}(x)} f^{q}\d \mm\le C_0$, then
			\begin{equation}\label{eq:apriori lip regularized}
				\| |\nabla u|\|_{L^\infty(B_{r}(x))}\le \frac{CR^{\frac Nm}}{(R-r)^{\frac Nm}} \left(\frac{}{}\bigg(\fint_{B_{R}(x)}  |\nabla u|^{m}\, \d \mm  \bigg)^\frac{1}{m}+1\right), \quad \forall r<R,
			\end{equation}
			where the constant $C\ge 1$ depends only in $p,N,K,q,C_0,m$ and $R_0.$  
		\end{enumerate}
	\end{theorem}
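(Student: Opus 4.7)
The plan is to produce $u$ via a Schauder fixed-point argument in the Hilbert space $(\dom_0(\Delta),\|\Delta(\cdot)\|_{L^2(\mm)})$ of Proposition \ref{lem:norms}. Given $w\in\dom_0(\Delta)$, set $\tilde f_w\coloneqq f(|\nabla w|^2+\eps)^{(2-p)/2}$. One checks $\tilde f_w\in L^2(\mm)$: for $p\ge 2$ the factor is bounded by $\eps^{(2-p)/2}$; for $1<p<2$, H\"older's inequality with conjugate exponents $(p-1)^{-1}$ and $(2-p)^{-1}$, combined with \eqref{eq:Laplacian poincare}, yields $\|\tilde f_w\|_{L^2}^2\lesssim\|f\|_{L^{2/(p-1)}}^2\|\Delta w\|_{L^2}^{2(2-p)}+\eps^{2-p}\|f\|_{L^2}^2$. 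Applying Proposition \ref{prop:step 1} with $v=\nabla w$ and source $\tilde f_w$ produces $T(w)\coloneqq u_w\in\dom_0(\Delta)$ solving \eqref{eq:step 1}, with $\|\Delta u_w\|_{L^2}\le C(1+\|\Delta w\|_{L^2}^{(2-p)\vee 0})$. Since the exponent is strictly less than one, for $M$ large enough $T$ preserves the ball $B_M\coloneqq\{w\in\dom_0(\Delta):\|\Delta w\|_{L^2}\le M\}$.

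In the $W^{1,2}$-topology, $B_M$ is closed, convex and compact by Proposition \ref{lem:norms}. To prove continuity of $T|_{B_M}$, take $w_n\to w$ in $W^{1,2}$ with $w_n\in B_M$; up to a subsequence $|\nabla w_n-\nabla w|\to 0$ $\mm$-a.e., hence dominated convergence gives $\theta_{\nabla w_n}\to\theta_{\nabla w}$ in $L^\infty$ and $\tilde f_{w_n}\to\tilde f_w$ in $L^2$. The uniform bound $\|\Delta u_n\|_{L^2}\le M$ for $u_n\coloneqq T(w_n)$, combined with Proposition \ref{lem:norms}, extracts a further subsequence along which $u_n\to u^\ast$ strongly in $W^{1,2}$ and $\H{u_n}\rightharpoonup\H{u^\ast}$ weakly in $L^2$ (via the Calder\'on--Zygmund bound \eqref{eq:talenti}). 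The non-linear coefficient $A_n\coloneqq \nabla w_n\otimes\nabla w_n/(|\nabla w_n|^2+\eps)$ satisfies $|A_n|\le 1$ and $A_n\to A$ $\mm$-a.e., hence strongly in every $L^q$; pairing it with the weak limit of $\H{u_n}$ lets one pass to the limit in the non-linear term $\H{u_n}(\nabla w_n,\nabla w_n)/(|\nabla w_n|^2+\eps)$ inside \eqref{eq:step 1}, and uniqueness in Proposition \ref{prop:step 1} identifies $u^\ast=T(w)$. Schauder's theorem then yields a fixed point $u\in B_M$.

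By \eqref{eq:L=D} the fixed point satisfies $D_{\eps,p}(u)=\tilde f_u-c\,\theta_{\nabla u}^{-1}$ for some constant $c\in\rr$. We now bootstrap $u\in W^{1,p}(\X)$: when $p\le 2$ this is automatic from $u\in W^{1,2}(\X)$ and the boundedness of $\X$; when $p>2$ one applies Corollary \ref{cor:p-calderon covariant} with truncation parameter $M$, relying on the $M$-independent identity
\begin{equation*}
(D_{\eps,p}(u))^2\bigl((|\nabla u|\wedge M)^2+\eps\bigr)^{p-2}\le (\tilde f_u)^2(|\nabla u|^2+\eps)^{p-2}=f^2,
\end{equation*}
and then sends $M\to\infty$ via Lemma \ref{lem:lsc local energy} to obtain $v\coloneqq(|\nabla u|^2+\eps)^{(p-2)/2}\nabla u\in H^{1,2}_C(T\X)$, whence $|v|\in W^{1,2}(\X)$ by \eqref{eq:grad grad local} and the Sobolev embedding \eqref{eq:local sobolev} upgrades $|\nabla u|$ to $L^p$. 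Lemma \ref{lem:develop} then identifies $(|\nabla u|^2+\eps)^{(p-2)/2}D_{\eps,p}(u)$ with $\Delta_{p,\eps}(u)$, so multiplying the fixed-point equation by $(|\nabla u|^2+\eps)^{(p-2)/2}$ and testing against $\phi\equiv 1\in\LIP_{bs}(\X)$ (allowed because $\diam(\X)<\infty$) gives $0=\int f\,\d\mm-c\int\theta_{\nabla u}^{-1}(|\nabla u|^2+\eps)^{(p-2)/2}\,\d\mm$; the last integral is strictly positive, forcing $c=0$ and hence $\Delta_{p,\eps}(u)=f$.

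Estimate \eqref{eq:covariant regularized} then follows by applying Corollary \ref{cor:p-calderon covariant} (respectively \eqref{eq:covariant estimate N=inf} when $N=\infty$) and passing $M\to\infty$ using the $M$-independent bound above, together with the pointwise comparison $|v|\le c_p(|\nabla u|^{p-1}+\eps^{(p-1)/2})$ to recast $\int|v_\eps|$ in terms of $\int|\nabla u|^{p-1}$. Estimate \eqref{eq:apriori lip regularized} is a direct application of Proposition \ref{prop:lip a priori} since $|\Delta_{p,\eps}(u)|=|f|$. The main technical obstacle is the $W^{1,2}$-continuity of $T$ in Step 2, for which one must identify the weak limit of the product $\H{u_n}(\nabla w_n,\nabla w_n)/(|\nabla w_n|^2+\eps)$; this relies crucially on the $L^2$-Calder\'on--Zygmund bound for $\H{u_n}$ inherited from Proposition \ref{prop:step 1}, together with the $L^\infty$-boundedness and $\mm$-a.e.\ convergence of the coefficient $A_n$.
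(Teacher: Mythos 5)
Your overall framework (Schauder fixed point built on the auxiliary operator $\LL_{v,\eps}$ via Proposition \ref{prop:step 1}) matches the paper's, but there is a genuine circularity in your treatment of the case $p>2$. At the fixed point you only know $D_{\eps,p}(u)=\tilde f_u-c\,\theta_{\nabla u}^{-1}$, yet the ``$M$-independent identity'' $(D_{\eps,p}(u))^2\bigl((|\nabla u|\wedge M)^2+\eps\bigr)^{p-2}\le f^2$ on which you base the bootstrap of $u\in W^{1,p}$ is only valid when $c=0$: if $c\neq0$ the term $c^2\theta_{\nabla u}^{-2}\bigl((|\nabla u|\wedge M)^2+\eps\bigr)^{p-2}$ is not dominated by $f^2$ and a priori blows up as $M\to\infty$. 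But you establish $c=0$ only \emph{after} the bootstrap (via Lemma \ref{lem:develop} and $\phi\equiv1$), so the argument is circular. The paper avoids this by first noting that $u\in\dom(\Delta)\subset W^{1,2}(\X)\hookrightarrow L^{2^*}(\mm)$, and that the constraints $p-1\le 2^*$ and $2(p-2)\le 2^*$ hold because $p\in\mathcal{RI}_{\X}$; this already gives $|\nabla u|^{p-1},|\nabla u|^{2(p-2)}\in L^1(\mm)$ without knowing $u\in W^{1,p}$, so Lemma \ref{lem:develop} applies, $\phi\equiv 1$ forces $\lambda_u=0$, and only then is the covariant estimate invoked with a legitimately $M$-independent right-hand side. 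Alternatively you could keep $c\neq 0$ and observe that $\int\bigl((|\nabla u|\wedge M)^2+\eps\bigr)^{p-2}\d\mm$ is bounded uniformly in $M$ because $|\nabla u|\in L^{2^*}$ and $2(p-2)\le 2^*$, but either way the Sobolev step for $W^{1,2}$ must precede the covariant estimate, not follow it; the same integrability is also needed to apply Lemma \ref{lem:develop} at all, since $2(p-2)>p$ is possible inside $\mathcal{RI}_{\X}$ for small $N$, so $u\in W^{1,p}$ by itself does not suffice.

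Two smaller remarks. For $p<2$ you deviate from the paper by dropping the truncated source $h(\eps,w)=f\bigl((|\nabla w|\wedge M)^2+\eps\bigr)^{(2-p)/2}$ and working directly with $\tilde f_w=f(|\nabla w|^2+\eps)^{(2-p)/2}$, using an invariant ball $B_M$ rather than a globally bounded image. This is a legitimate and arguably cleaner alternative (it removes the paper's second limiting stage $M\to\infty$ for $p<2$), but the continuity step needs more care than ``dominated convergence'': the factor $(|\nabla w_n|^2+\eps)^{(2-p)/2}$ has no fixed pointwise dominant along $w_n\to w$ in $W^{1,2}$, so $\tilde f_{w_n}\to\tilde f_w$ in $L^2$ should be justified by $\mm$-a.e.\ convergence together with convergence of the $L^2$ norms (via H\"older and the strong $L^2$-convergence of $|\nabla w_n|$), or by a Pratt/Vitali argument. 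Likewise $\theta_{\nabla w_n}\to\theta_{\nabla w}$ holds $\mm$-a.e.\ and is uniformly bounded, but this is not convergence in $L^\infty(\mm)$; what you actually need, and what the paper uses, is convergence in $L^q$ for $q<\infty$ obtained from boundedness plus a.e.\ convergence on a finite-measure space.
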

	The existence part of Theorem \ref{thm:regularity epsilon} is based on the existence result given in the previous section (Proposition \ref{prop:step 1}) and on an application of the Schauder fixed point theorem.
	\begin{theorem}[{Schauder fixed point theorem, see e.g.\ \cite[Corollary 11.2]{GilbargTrudinger}}]
		Let $M$ be a Banach space and $T:M\to M$ be a continuous map such that $T(M)$ is relatively compact in $M$. Then, $T$ has a fixed point.
	\end{theorem}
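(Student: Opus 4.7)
The plan is to apply the Schauder fixed point theorem on the Banach space $(\dom_0(\Delta), \|\Delta(\cdot)\|_{L^2(\mm)})$ (Proposition \ref{lem:norms}) to the map $T\colon\dom_0(\Delta)\to\dom_0(\Delta)$ defined by $T(w)\coloneqq u_w$, where $u_w$ is the unique solution provided by Proposition \ref{prop:step 1} to the integro-differential equation \eqref{eq:step 1} with $v=\nabla w$ and source $f$. A fixed point $u=T(u)$ satisfies, via identity \eqref{eq:L=D}, the equation $D_{\eps,p}(u)=f-\theta_{\nabla u}^{-1}\fint_\X \theta_{\nabla u}(f-D_{\eps,p}(u))\,\d\mea$; once $u$ is shown to lie in $\dom(\Delta_{p,\eps})$ via Lemma \ref{lem:develop}, integrating the resulting identity for $\Delta_{p,\eps}(u)$ against the constant $1$ and using $\int\Delta_{p,\eps}(u)\,\d\mea=0$ together with $\int f\,\d\mea=0$ will force the correction term to vanish, yielding \eqref{eq:ep laplacian}.

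To invoke Schauder, the uniform estimate \eqref{eq:estimates solution} of Proposition \ref{prop:step 1} shows that $T$ sends the whole space into the closed ball $M$ of radius $C\|f\|_{L^2(\mm)}$ in the graph norm, which by Proposition \ref{lem:norms} is relatively compact in the $W^{1,2}$-topology. To verify continuity of $T\restr M$ in this topology, take $w_n\to w$ in $W^{1,2}(\X)$ with $w_n\in M$ and set $u_n\coloneqq T(w_n)$. Since $(u_n)$ is bounded in $\dom_0(\Delta)$, a subsequence converges strongly in $W^{1,2}$ to some $\tilde u$, with $\Delta u_n\weakto \Delta \tilde u$ weakly in $L^2(\mm)$; thanks to the bound \eqref{eq:talenti} the Hessians $\H{u_n}$ converge weakly in $L^2$ as well. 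Combining this with the $\mea$-a.e., bounded convergence of the coefficient $(p-2)\nabla w_n\otimes \nabla w_n/(|\nabla w_n|^2+\eps)$ (ensured by $\eps>0$ and $\nabla w_n\to\nabla w$ a.e.\ up to a further subsequence), one passes to the limit both in $\LL_{\nabla w_n,\eps}(u_n)$ and in the mean-correction integral of \eqref{eq:step 1}. The uniqueness part of Proposition \ref{prop:step 1} then identifies $\tilde u=T(w)$, establishing continuity, and Schauder produces the desired fixed point $u\in\dom_0(\Delta)$.

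To verify the hypotheses of Lemma \ref{lem:develop}, I would argue that $u\in\dom(\Delta)$ yields $|\nabla u|\in\W(\X)$ (Proposition \ref{prop:gradgrad}), so by the Sobolev embeddings of \textsection\ref{sec:sobolev} applied on the bounded space $\X$ one has $|\nabla u|\in L^{2^*}$; a direct computation shows that the range $p\in\mathcal{RI}_\X$ forces $|\nabla u|^{p-1},|\nabla u|^{2(p-2)}\in L^1(\mea)$ when $p\ge 2$, while the case $p<2$ is immediate from \eqref{eq:eps well def <2}. This places $u\in\dom(\Delta_{p,\eps})$ and completes the derivation of \eqref{eq:ep laplacian}. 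The covariant estimate \ref{item:covariant regularized} then follows from Corollary \ref{cor:p-calderon covariant} (resp.\ \eqref{eq:covariant estimate N=inf} for $N=\infty$) applied to $u$ with $M=+\infty$, observing that $(D_{\eps,p}u)^2(|\nabla u|^2+\eps)^{p-2}=(\Delta_{p,\eps}u)^2=f^2$; estimate \ref{item:apriori lip regularized} is a direct application of Proposition \ref{prop:lip a priori}. The main anticipated obstacle is the limit passage in the nonlinear term $\H{u_n}(\nabla w_n,\nabla w_n)/(|\nabla w_n|^2+\eps)$ when proving continuity of $T$: although $\eps>0$ bounds the denominator away from zero and \eqref{eq:talenti} controls $\H{u_n}$ in $L^2$, combining weak $L^2$-convergence of $\H{u_n}$ with the $\mea$-a.e., bounded convergence of a bilinear coefficient within the $L^0(T\X)$-module framework is delicate and is most cleanly handled by testing against the dense class of Lipschitz functions with bounded support (Remark \ref{rmk:lip dense}).
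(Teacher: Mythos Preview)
You are not proving the statement in question. The statement is the classical Schauder fixed point theorem, which the paper merely quotes from \cite{GilbargTrudinger} and does not prove; what you have sketched is instead (an attempt at) the proof of Theorem~\ref{thm:regularity epsilon}, which \emph{uses} Schauder's theorem as a tool.

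Setting that mismatch aside and reading your proposal as a strategy for Theorem~\ref{thm:regularity epsilon}, there is a genuine gap. You define $T(w)$ as the solution of \eqref{eq:step 1} with $v=\nabla w$ and with the \emph{original} source $f$. A fixed point $u=T(u)$ then satisfies
\[
D_{\eps,p}(u)=f-\theta_{\nabla u}^{-1}\lambda,\qquad \lambda\coloneqq\fint_\X \theta_{\nabla u}\bigl(f-D_{\eps,p}(u)\bigr)\,\d\mea,
\]
so via Lemma~\ref{lem:develop} one gets $\Delta_{p,\eps}(u)=(|\nabla u|^2+\eps)^{\frac{p-2}2}\bigl(f-\theta_{\nabla u}^{-1}\lambda\bigr)$. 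Integrating and using $\int\Delta_{p,\eps}(u)\,\d\mea=0$ yields
\[
\lambda\int (|\nabla u|^2+\eps)^{\frac{p-2}2}\theta_{\nabla u}^{-1}\,\d\mea=\int (|\nabla u|^2+\eps)^{\frac{p-2}2} f\,\d\mea,
\]
and the right-hand side has no reason to vanish: the hypothesis $\int f\,\d\mea=0$ does \emph{not} imply $\int (|\nabla u|^2+\eps)^{\frac{p-2}2} f\,\d\mea=0$. Hence your fixed point does not solve $\Delta_{p,\eps}(u)=f$. The paper avoids this by building the weight into the source from the start: its map $S_{f,M,\eps}$ uses $h(\eps,w)=f/((|\nabla w|\wedge M)^2+\eps)^{\frac{p-2}2}$ in place of $f$ (see \eqref{eq:definition of h}), so that at the fixed point one obtains $\Delta_{p,\eps}(u)=f-\lambda_u(|\nabla u|^2+\eps)^{\frac{p-2}2}\theta_{\nabla u}^{-1}$ and \emph{then} integration against $1$ forces $\lambda_u=0$. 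This is not a cosmetic difference; it is the reason the argument closes. A secondary point: for $p<2$ the paper needs the truncation parameter $M$ (sent to $+\infty$ only after the fixed point is found) to keep $h(\eps,w)$ in $L^2$, a step your sketch omits entirely.
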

	The regularity estimates stated in $\ref{item:covariant regularized}$ and $\ref{item:apriori lip regularized}$ will be instead deduced by the uniform a priori estimates previously obtained in Section \ref{sec:eps regularity}.

	\begin{proof}[Proof of Theorem \ref{thm:regularity epsilon}]
		By the scaling property of the statement we can assume that $\mm(\X)=1$.
		Fix $p \in \mathcal{RI}_\X$ and $f\in L^q(\mm)$ as in the statement.
		We aim to apply Schauder fixed point theorem. Fix $\eps,M>0$. We define the map $S_{f,M,\eps}: W^{1,2}_0(\X)\to \dom_0(\Delta)\subset  W^{1,2}_0(\X)$ given by $S_{f,M,\eps}(w)\coloneqq u$, where $u\in \dom_0(\Delta)$ is the (unique) solution to
		\begin{equation}\label{eq:pre Schauder fixed}
			\begin{cases}
				&	\LL_{\nabla w ,\eps}u=h(\eps, w )-(\theta_{\nabla w })^{-1}\int \theta_{\nabla w }(h(\eps, w )-	\LL_{\nabla w ,\eps}u)\d \mm, \\
				&\int u \, \d \mm=0,
			\end{cases}
		\end{equation}
		where $\theta_{\nabla w}$ is as in \eqref{eq:theta} and $h(\eps,w)\in L^2(\mm)$ is defined as
		\begin{equation}\label{eq:definition of h}
			h(\eps,w)\coloneqq
			\begin{cases}
				\frac{f}{\left (|\nabla w|^2+\eps \right)^{\frac{p-2}{2}}},& \text{if $p\ge 2$}\\
				\frac{f}{\left ((|\nabla w|\wedge M)^2+\eps \right)^{\frac{p-2}{2}}}, & \text{if $p<2$}.
			\end{cases}	
		\end{equation}
		The map $S_{f,\eps,M}$ is well defined, since \eqref{eq:pre Schauder fixed} admits a unique solution $u$ by Proposition \ref{prop:step 1}. We also stress that $S_f$ is non-linear. 
		
		For brevity, we almost always omit the dependence on $\eps$ and $M$  and simply write $S_f$. The parameter $\eps$ will remain fixed throughout the proof, while $M$ will appear only in the case $p<2$ and we will take $M\to +\infty$ towards the end.
		
		To apply the Schauder fixed point theorem, it is sufficient to show that $S_f$ is continuous with relatively compact image.
		
		\smallskip
		
		\noindent \textsc{Compactness.} The image $S_f(W^{1,2}_0(\X))$ is relatively compact in $W_0^{1,2}(\X).$  Indeed, from \eqref{eq:estimates solution} we have
		\begin{equation}\label{eq:a priori bounds}
			\|\Delta S_f(w)\|_{L^2(\mm)}\le C  \|h(\eps,w)\|_{L^2(\mm)}\le  \begin{cases}
				C \eps^{\frac{2-p}{2}}\|f\|_{L^2(\mm)},& \text{if $p>2$}\\
				C (M^2+\eps)^{\frac{2-p}{2}}\|f\|_{L^2(\mm)}, & \text{ if $p<2$}, 
			\end{cases}
		\end{equation}
		for every $w \in W_0^{1,2}(\X)$, where $C$ is a constant depending only on $p$, $N$ and  $\lambda_1(\X)K^-$. Moreover, the inclusion $\dom_0(\Delta)\hookrightarrow \W_0(\X)$ is compact by Proposition \ref{lem:norms}. Observe that the above estimates depend  on $\eps$ and also on $M$ (if $p<2$), but this is sufficient at this stage of the argument. 
		
		\noindent \textsc{Continuity.} The map $S_f$ is continuous in $W^{1,2}(\X)$. It is enough to prove that  if $w_n \to w$ in $\W(\X)$ then $\|\Delta(S_f(w_n)-S_f(w))\|_{L^2(\mm)}\to 0.$ Indeed the inclusion $(\dom_0(\Delta),\|\Delta(\cdot)\|_{L^2(\mm)})$ in $W^{1,2}(\X)$ is compact by Proposition \ref{lem:norms} and thus continuous. It is sufficient to show that this holds for every subsequence, up to a further subsequence. Fix then a non-relabelled subsequence.
		Up to extracting a further non-relabelled subsequence, we can then assume that $|\nabla w_n-\nabla w|\to 0$ and $|\nabla w_n|\to |\nabla w|$ in $\alme.$ For ease of notation, we write $\theta_n\coloneqq \theta_{\nabla w_n}$, $\theta=\theta_{\nabla w}$, $h_n\coloneqq h(\eps,w_n),h\coloneqq h(\eps,w)$,  $\LL_n\coloneqq\LL_{\nabla w_n,\eps}$,  $\LL\coloneqq\LL_{\nabla w ,\eps}$, $u_n\coloneqq S_f(w_n)$ and $u\coloneqq S_f(w).$  Recall that $ 0<c\le\theta_n \le C_N$, for some constants $c,C_N$ depending only on $N$. From the current assumptions and definitions we have
		\begin{equation}\label{eq:conv theta}
			\theta_n \to \theta, \quad \alme.
		\end{equation}
		Recall also that for every $p \in \mathcal{RI}_\X$, there exists a constant $c<1$ depending only on $p,N$ and the value of $\lambda_1(\X)K^-,$ such that
		\begin{equation*}
			\|\Delta(u_n-u)- \theta_n \LL_n(u_n-u)\|_{L^2(\mm)}\le c \|\Delta(u_n-u)\|_{L^2(\mm)}, \quad \forall n \in \nn.
		\end{equation*}
		(see the proof of Proposition \ref{prop:step 1}). Therefore using the equation \eqref{eq:pre Schauder fixed} and the inequality above we can compute
		\begin{align*}
			\|\Delta& (u_n-u)\|_{L^2(\mm)}\\
			&= \|\Delta(u_n-u)+ \theta_n ( h_n-\LL_n(u_n))+\theta (\LL(u)- h)-
			\int_\X \theta_n ( h_n-\LL_n(u_n))+\theta (\LL(u)- h) \|_{L^2(\mm)}\\
			&\le \|\Delta(u_n-u)- \theta_n \LL_n(u_n)+\theta_n h_n+\theta \LL(u)-\theta h\|_{L^2(\mm)}\\
			&\le \|\Delta(u_n-u)-\theta_n\LL_n(u_n)+\theta \LL(u)\|_{L^2(\mm)}+\|\theta_n h_n-\theta h\|_{L^2(\mm)}\\
			&\le \begin{multlined}[t]\|\Delta(u_n-u)- \theta_n \LL_n(u_n-u)\|_{L^2(\mm)}+\|\LL(u)(\theta-\theta_n)\|_{L^2(\mm)}\\
				+\|\theta_n (\LL(u)-\LL_n(u))\|    +\|\theta_n(h_n-h)\|_{L^2(\mm)}+\|h(\theta_n-\theta)\|_{L^2(\mm)}\end{multlined}\\
			& \le\begin{multlined}[t] c \|\Delta(u_n-u)\|_{L^2(\mm)}+\|\LL(u)(\theta-\theta_n)\|_{L^2(\mm)}\\
				+C_N\|\LL(u)-\LL_n(u)\|    +C_N\|h_n-h\|_{L^2(\mm)}+\|h(\theta_n-\theta)\|_{L^2(\mm)},\end{multlined}
		\end{align*}
		where we used that $|\theta_n|\le C_N.$ Since $c<1$ (independent of $n$), we can absorb the first term of the right-hand side into the left-hand side. Hence, it is sufficient to show that all the other terms on the right-hand side go to zero as $n \to +\infty$.
		By dominated convergence and \eqref{eq:conv theta} (recall that by definition $\LL(u)\in L^2(\mm)$), it follows that
		\[
		\|\LL(u)(\theta-\theta_n)\|_{L^2(\mm)}\to 0,\quad \|h(\theta_n-\theta)\|_{L^2(\mm)}\to 0, \quad \text{ as } n\to +\infty.
		\]
		From the definition of the operators $\LL_n,\LL$, we have
		\begin{align*}
			\left |{\LL_n(u)-\LL(u)}\right|&=|p-2|\left  |  \frac{\H {u}(\nabla w_n,\nabla w_n)}{|\nabla w_n|^2+\eps}-  \frac{\H {u}(\nabla w,\nabla w)}{|\nabla w|^2+\eps}\right|\\
			&=|p-2|\left  |  \H{u} \left(\frac{\nabla w_n}{\sqrt{|\nabla w_n|^2+\eps}}-\frac{\nabla w}{\sqrt{|\nabla w|^2+\eps}},\frac{\nabla w_n}{\sqrt{|\nabla w_n|^2+\eps}}+\frac{\nabla w}{\sqrt{|\nabla w|^2+\eps}}\right)\right|\\
			&\le 2|p-2||\H {u}|\left |\frac{\nabla w_n}{\sqrt{|\nabla w_n|^2+\eps}}-\frac{\nabla w}{\sqrt{|\nabla w|^2+\eps}}\right|\\
			&\le2|p-2||\H {u}|\left (\frac{|\nabla w_n-\nabla w|}{\sqrt{|\nabla w_n|^2+\eps}}+\frac{|\nabla w|\, \sqrt{||\nabla w|^2-|\nabla w_n|^2|}}{\sqrt{|\nabla w|^2+\eps}\sqrt{|\nabla w_n|^2+\eps}}\right),
		\end{align*}
		where we used that $|\sqrt{t}-\sqrt s|\le \sqrt{|t-s|}$, for every $t\ge 0,s\ge 0$.
		Therefore, $\left |{\LL_{n}(u)-\LL(u)}\right|\to 0$ $\mea$-a.e.. Moreover, it holds that $\left |{\LL_n(u)-\LL(u)}\right|\le 2|p-2||\H u|\in L^2(\mm)$. Hence from the dominated convergence theorem we deduce that $\|\LL_{n}(u)-\LL(u)\|_{L^2(\mm)}\to 0$. It remains to estimate $\|h_n-h\|_{L^2(\mm)}$. For $p>2$ we have
		\[
		|h_n-h|= |f| \left|\frac{1}{\left (|\nabla w_n|^2+\eps \right)^{\frac{p-2}{2}}}-\frac{1}{\left (|\nabla w|^2+\eps \right)^{\frac{p-2}{2}}}\right|\le \frac{2|f|}{\eps^{\frac{p-2}{2}}},\quad \mea\text{-a.e..}
		\]
		While clearly $h_n\to h$ $\mea$-a.e., from which we conclude  that $\|h_n-h\|_{L^2(\mm)}\to 0$ by dominated convergence theorem. Analogously, for $p<2$
		\[
		|h_n-h|\le |f| \left|\frac{1}{\left ((|\nabla w_n|\wedge M)^2+\eps \right)^{\frac{p-2}{2}}}-\frac{1}{\left ((|\nabla w|\wedge M)^2+\eps \right)^{\frac{p-2}{2}}}\right|\le 2|f|(M+\eps)^{\frac{2-p}{2}}, \quad \mea\text{-a.e..}
		\]
		This yields $\|h_n-h\|_{L^2(\mm)}\to 0$, again by dominated convergence.
		
		\smallskip
		
		\noindent \textsc{Fixed point.} We deduce that $S_f$ has a fixed point $u\in \dom_0(\Delta)$ and recalling \eqref{eq:L=D} we have 
		\begin{equation}\label{eq:almost pde}
			D_{\eps,p}(u)=h(\eps,u)-(\theta_{\nabla u})^{-1}\lambda_u,
		\end{equation}
		where $\lambda_u \in \rr$ is defined as 
		\begin{equation}\label{eq:lambada u}\lambda_u\coloneqq\int \theta_{\nabla u}(h(\eps,u)-	D_{\eps,p}(u))\d \mm,
		\end{equation}
		$h$ is defined in \eqref{eq:definition of h} and $\theta_{\nabla u}\in L^\infty(\mm)$ is as in \eqref{eq:theta}.
		
		\smallskip
		
		\noindent \textsc{Existence of solution to  \eqref{eq:ep laplacian}.} We now have to distinguish the two cases $p<2$ and $p> 2$. Indeed, for $p>2$ we show directly that a fixed point of $S_f$ solves \eqref{eq:ep laplacian}, while the case $p<2$ requires an additional limiting procedure sending $M \to +\infty$.

		\smallskip

		\noindent\textbf{{Case $p>2$}.} If $N>2$, since $|\nabla u|\in \W(\X)$,  by the Sobolev inequality (recall Proposition \ref{prop:sobolev rcd}) we have that $\,|\nabla u|\in L^{2^*}(\mm)$, where $2^*\coloneqq\frac{2N}{N-2}$ ($2^*\coloneqq 2$ if $N=\infty$). Moreover, since $p\in \mathcal{RI}_\X\subset (1,2+\frac N{N-2})$ (recall Definition \ref{def:regularity interval}) we can check that $p-1\le 2^*$ and $2(p-2)\le 2^*$. On the other hand if $N=2$, since $\Xdm$ is also an $\rcd(K,2+\delta)$ space for all $\delta\ge 0$, we have $|\nabla u|\in L^q(\mm)$ for all $q\ge 1$.  In both cases, it holds $|\nabla u|^{p-1}\in L^1(\mm)$ and $|\nabla u|^{2(p-2)}\in L^1(\mm)$. Therefore we can apply Lemma \ref{lem:develop} and deduce from \eqref{eq:almost pde} that
		\[
		\int -(|\nabla u|^2+\eps )^{\frac{p-2}{2}}  \la \nabla \phi,\nabla u\ra=  \int (f-(|\nabla u|^2+\eps )^{\frac{p-2}{2}}(\theta_{\nabla u})^{-1}\lambda_u)\phi\d \mm, \quad \forall \phi \in \LIP(\X).
		\]
		Taking $\phi\equiv 1$ and recalling that $f$ has zero mean we get
		\[
		\lambda_u\int (|\nabla v|^2+\eps )^{\frac{p-2}{2}}(\theta_{\nabla u})^{-1}\d \mm=0.
		\]
		Since $\theta_{\nabla u}^{-1}\ge c>0$ $\mea$-a.e.\ for some constant $c>0$ we obtain that $\lambda_u=0$. Therefore,
		\begin{equation}\label{eq:eps p equation}
			\int -(|\nabla u|^2+\eps )^{\frac{p-2}{2}}  \la \nabla \phi,\nabla u\ra=  \int f\phi\d \mm, \quad \forall \phi \in \LIP(\X).
		\end{equation}
		This would be the desired conclusion once we prove that $u \in W^{1,p}(\X)$. Observe that $u \in W^{1,p}(\X)$ would be a consequence of \eqref{eq:covariant regularized}. Indeed that would imply  $|\nabla u|^{2(p-1)}\in L^{1}(\mm)$ and since $2(p-1)\ge p$ for all $p\ge 2$, one concludes that $u \in W^{1,p}(\X)$. We then turn to prove \eqref{eq:covariant regularized}.	Since $\lambda_u=0$, by \eqref{eq:almost pde} we have that $D_{\eps,p} (u)=f(|\nabla u|^2+\eps )^{\frac{2-p}{2}}$. Therefore, setting  $v_M \coloneqq ((|\nabla u|\wedge M)^2+\eps)^\frac{p-2}{2}\nabla u \in H^{1,2}_C(\X)$,  applying \eqref{eq:covariant estimate} in the case $N<\infty$ 	 we have that for every $R_0>0$, every  $x \in \X$ and $0<R<R_0$,
		\[
		\int_{B_{R/2}(x)} R^{-2}|v_M|^2+|\nabla v_M |^2 \d \mm\le C_1 \int_{B_{R}(x)} f^2 \d \mm+ C_2\mea(B_{R}(x))^{-1}  \left(\int_{B_R(x)} |\nabla u|^{p-1}\d \mm\right)^2,
		\]
		where $C_1,C_2$ are constants depending only on $p,R_0,N,K$. In the case $N=\infty$ we apply instead  \eqref{eq:covariant estimate N=inf} and obtain the same inequality with $R=2\diam(\X)$ and $C_2=K^-\tilde C_1$ with $C_1$ depending on $K,p$ and $D$.
		Recalling that $f \in L^2(\mm)$ and that as observed above $|\nabla u|^{p-1}\in L^1(\mm)$, the right-hand side is uniformly bounded in $M$. Therefore by Lemma \ref{lem:lsc local energy} we obtain that $(|\nabla u|^2+\eps )^{\frac{p-2}{2}}\nabla u \in H^{1,2}_C(T\X)$ together with \eqref{eq:covariant regularized}. Finally, \ref{item:apriori lip regularized} follows directly applying Proposition \ref{prop:lip a priori}.
		\medskip

		\noindent \textbf{Case $p<2$.} Let $u_M\in \dom_0(\Delta )$ be a fixed point of $S_{f,\eps,M}$. By \eqref{eq:almost pde} and Lemma \ref{lem:develop} we have that 
		\begin{equation}\label{eq:eq p small}
			\int -(|\nabla u_M|^2+\eps )^{\frac{p-2}{2}}  \la \nabla \phi,\nabla u_M\ra\d\mm= \int (g-\lambda_{u_M}(|\nabla u_M|^2+\eps)^{\frac{p-2}{2}})\phi\d \mm,
		\end{equation}
		where $g\coloneqq\frac{f(|\nabla u|^2+\eps )^{\frac{p-2}{2}}}{\left ((|\nabla u|\wedge M)^2+\eps \right)^{\frac{p-2}{2}}}  \in L^\infty(\mm)$ and $\lambda_u$ is defined in \eqref{eq:lambada u}.
		The goal now is to send $M\to +\infty$. To do so we need first some uniform estimates.
		
		\noindent\textit{\underline{Estimates independent of $M$}.}
		From the first inequality in  \eqref{eq:a priori bounds} we have that, because $p\in(1,2),$
		\begin{equation}\label{eq:swag}
			\begin{split}
				\int (\Delta u_M)^2\d\mm&\le \tilde C \|h(\eps,\nabla u_M)\|_{L^2(\mm)}^2\le  \tilde C \int f^2 (|\nabla u_M|^2+\eps)^{2-p}\d \mm \\&\le \tilde C \int \eps^{2-p}f^2+ f^2|\nabla u_M|^{2(2-p)}\d \mm \\
				&\le  \tilde C \int \eps^{2-p}f^2 +  \delta^{\frac{1}{(2-p)}}\||\nabla u_M|\|_{L^{2}(\mm)}^{2}+\delta^{-\frac{2}{p-1}}\|f\|_{L^\frac{2}{p-1}(\mm)}^{\frac{2}{p-1}},	
			\end{split}
		\end{equation}
		where $\tilde C$ is a constant, possibly changing from line to line, depending only on $p$ and $N$.
		By  \eqref{eq:Laplacian poincare}  we have
		\begin{equation}\label{eq:unif gradient 2*bound}
			\||\nabla u_M|\|_{L^{2}(\mm)}^{2}\le \lambda_1(\X)^{-1}\int (\Delta u_M)^2\d\mm.
		\end{equation}
		Plugging \eqref{eq:unif gradient 2*bound} into \eqref{eq:swag} and taking $\delta$ small enough we reach
		\begin{equation}\label{eq:stima p small proof}
			\int (\Delta u_M)^2\d \mm \le D \left( \int \eps^{2-p}f^2 \d \mm +\|f\|_{L^{\frac2{p-1}}(\mm)}^{\frac{2}{p-1}}\right),
		\end{equation}
		where $D$ is a constant depending only on $\X$, $N$ and $p$.
		
		\noindent\textit{\underline{Letting $M\to +\infty$.}}  By the uniform estimate \eqref{eq:stima p small proof} we have that 
		\begin{equation}\label{eq:laplacian unif l2 bound}
			\sup_{M>0}\|\Delta u_M\|_{L^2(\mm)}<+\infty
		\end{equation}
		Therefore, Proposition \ref{lem:norms} ensure that it exists a sequence $M_n\to +\infty$, $n \in \nn$ such that $u_n\coloneqq u_{M_n}\to u$ in $W^{1,2}(\X),$ for some $u \in \dom_0(\Delta)$. Recall that by \eqref{eq:eq p small} the function $u_n$ satisfies
		\begin{equation}\label{eq:eq p small n}
			\int -(|\nabla u_n|^2+\eps )^{\frac{p-2}{2}}  \la \nabla \phi,\nabla u_n\ra\d \mm= \int \frac{f(|\nabla u_n|^2+\eps )^{\frac{p-2}{2}}}{\left ((|\nabla u_n|\wedge M_n)^2+\eps \right)^{\frac{p-2}{2}}}\phi-\lambda_{u_n}(|\nabla u_n|^2+\eps)^{\frac{p-2}{2}}\phi\d \mm,
		\end{equation}
		for every $\varphi \in \LIP(\X)$, where
		$$\lambda_{u_n}=\int  \frac{f}{\left ((|\nabla u_n|\wedge M_n)^2+\eps \right)^{\frac{p-2}{2}}} -	D_{\eps,p}(u_n)\d \mm$$
		(recall that  $\theta_{\nabla u_n}\equiv 1$ because $p<2$).
		As the left-hand side of \eqref{eq:eq p small n} is concerned, by strong convergence in $W^{1,2}(\X)$, we have 
		$$\la\nabla u_n,\nabla \phi\ra\overset{L^2(\mm)}{\rightarrow} \la \nabla u,\nabla \phi\ra, \quad \forall \phi \in\LIP(\X)$$
		and clearly $||\nabla u_n|^2+\eps|^{\frac{p-2}{2}} \le \eps^{\frac{p-2}{2}}$. Therefore, for all $\phi \in\LIP(\X)$
		\begin{equation}\label{eq:p<2 first limit}
			\int (|\nabla u_n|^2+\eps )^{\frac{p-2}{2}}  \la \nabla \phi,\nabla u_n\ra\d \mm \rightarrow 	\int (|\nabla u|^2+\eps )^{\frac{p-2}{2}}  \la \nabla \phi,\nabla u\ra\d \mm, \quad \text{ as $n\to +\infty$.}
		\end{equation}
		We now turn to deal with the right-hand side. Observe that the functions $D_{\eps,p}(u_n)$ are uniformly bounded in $L^2(\mm)$. Indeed, by the definition of $D_{\eps,p}(u_n)$ (see \eqref{eq:deps}),
		\[
		|D_{\eps,p}(u_n)|\le |\Delta u_n|+|\H {u_n}|
		\]
		holds $\mea\text{-a.e.}$. Since $\|u_n\|_{\dom_0(\Delta)}$ is uniformly bounded, we have that $|\H {u_n}|$ and $\Delta u_n$ are uniformly bounded in $L^2(\mm)$ (recall Proposition \ref{lem:norms}),   hence so is $|D_{\eps,p}(u_n)|$. 
		Additionally, since $p\in(1,2)$, we have
		\begin{align*}
			\int \left | \frac{f}{\left ((|\nabla u_n|\wedge M_n)^2+\eps \right)^{\frac{p-2}{2}}} \right| \d \mm &\le \int |f| (|\nabla u_n|+\sqrt{\eps})^{2-p} \d \mm\le \|f\|_{L^2(\mm)} \|\nabla u_n|+\sqrt{\eps}\|_{L^{2(2-p)}(\mm)}\\
			&\le \|f\|_{L^2(\mm)} \|\nabla u_n|+\sqrt{\eps}\|_{L^{2}(\mm)},
		\end{align*}
		where we used twice the H\"older inequality and also that $\mm(\X)=1$ (as assumed at the beginning). Note that $\|\cdot \|_{L^{2(2-p)}(\mm)}$ might  not be a norm as it possible that $2(2-p)<1$, however the H\"older inequality still applies. Next we observe that $\sup_n \|\nabla u_n|\|_{L^{2}(\mm)}<+\infty$. This follows from the uniform Laplacian bound in \eqref{eq:laplacian unif l2 bound} together with \eqref{eq:unif gradient 2*bound}.
		Combining this with the $L^2$-uniform bound on $|D_{\eps,p}(u_n)|$, we conclude  that $\sup_n|\lambda_{u_n}|<+\infty.$ Hence, up to a subsequence we can assume that $\lambda_{u_n}\to \lambda\in \rr.$ Observing that, because $p<2$, 
		\begin{equation}\label{eq:ez1}
			\left|\frac{(|\nabla u_n|^2+\eps )^{\frac{p-2}{2}}}{\left ((|\nabla u_n|\wedge M_n)^2+\eps \right)^{\frac{p-2}{2}}} \right|\le 1 \quad \mea\text{-a.e.,}
		\end{equation}
		by the dominated convergence theorem
		\begin{equation}\label{eq:p<2 second limit}
			\lim_{n\to +\infty}\int \frac{f(|\nabla u_n|^2+\eps )^{\frac{p-2}{2}}}{\left ((|\nabla u_n|\wedge M_n)^2+\eps \right)^{\frac{p-2}{2}}}\phi-\lambda_{u_n}(|\nabla u_n|^2+\eps)^{\frac{p-2}{2}}\phi\d \mm
			= \int f\phi-\lambda(|\nabla u|^2+\eps)^{\frac{p-2}{2}}\phi\d \mm.
		\end{equation}
		Combining \eqref{eq:eq p small n} with  \eqref{eq:p<2 first limit} and \eqref{eq:p<2 second limit} we obtain
		\[
		\int -(|\nabla u|^2+\eps )^{\frac{p-2}{2}}  \la \nabla \phi,\nabla u\ra\d \mm=\int f\phi-\lambda(|\nabla u|^2+\eps)^{\frac{p-2}{2}}\phi\d \mm, \quad \forall \phi \in\LIP(\X).
		\]
		Taking $\phi\equiv 1$, since $f$ has zero mean, we must have $\lambda=0.$ This shows that $u \in \dom(\Delta_{p,\eps} u)$ (recall that $u \in W^{1,p}(\X)$ because $p<2$) and the second equality  in \eqref{eq:ep laplacian}. The first identity in \eqref{eq:ep laplacian} then follows from Lemma \ref{lem:develop} and the uniqueness of $\Delta_{p,\eps}u.$
		
		\noindent\textit{Regularity estimates for $p<2$.}
		The fact that  $v\coloneqq(|\nabla u|^2+\eps )^{\frac{p-2}{2}}\nabla u \in H^{1,2}_C(T\X)$ together with \eqref{eq:covariant regularized} is shown exactly as in the case $p\ge 2$ using $D_{\eps,p}u= (|\nabla u|^2+\eps )^{\frac{2-p}{2}}f.$ 
		The gradient estimate in $ii)$ follows again from Proposition \ref{prop:lip a priori}.
	\end{proof}

	\section{Proof of the main results}
	\label{sec: main results}
	We are now ready to deduce our main regularity estimates for the $p$-Laplacian, which will follow by the results of the previous section and by sending $\eps \to 0^+.$
	In \textsection \ref{sec:proof main} we will prove all our main results, except for the one for $p$-harmonic functions (Corollary \ref{cor:pharm}) which will be left to \textsection \ref{sec:pharm}.
	
	\subsection{Proof of Theorem  \ref{thm:main rcd inf}, Theorem \ref{thm:main intro} and of Corollary \ref{cor:p eigen}} \label{sec:proof main}
	The following result essentially contains all our main statements.
	\begin{theorem}\label{thm:main detailed}
		Let $\Xdm$ be an $\rcd(K,N)$ space with  $N\in[2,\infty]$, $\diam(\X)\le D$. Let $p \in \mathcal{RI}_{\X}$ where $ \mathcal{RI}_{\X}\subset(1,\infty)$ is given by \eqref{eq:regularity interval}. Fix also $B_R(x)\subset \X$ with $R\le R_0$ (with $R\coloneqq 8\diam(\X)$ if $N=\infty)$, $u \in \dom(\Delta_p)$ and set $f\coloneqq \Delta_p u \in L^1(\mm)$. Then the following hold:
		\begin{enumerate}[label=\roman*)]
			\item If $f \in L^2(B_R(x))$ then $v\coloneqq |\nabla u|^{p-2}\nabla u\in H^{1,2}_{C,\loc}(T\X;B_{R/2}(x))$ and 
			\begin{equation}\label{eq:covariant final}
				\int_{B_{R/8}(x)} |v|^2+|\nabla v |^2 \d \mm\le C_1 \int_{B_{R}(x)} f^2 \d \mm+ C_2\mea(B_{R}(x))^{-1}  \left(\int_{B_R(x)} |\nabla u|^{p-1}\d \mm\right)^2, 
			\end{equation}
			where $C_1,C_2$ are constants depending only on  $K,N,p$,$R_0$ if $N<\infty$,  while in the case $N=\infty$ all integrals are on the whole $\X$ and  $C_2=K^-C_1$ with $C_1$ a constant depending on $D,p$ and $K.$
			\item If $N<\infty$ and $f \in L^{q}(B_{R}(x);\mea)$ for some $q>N$, with $\fint_{B_{R}(x)} |f|^{q}\d \mm\le C_0$, then for all $m\ge 1$ it holds
			\begin{equation}\label{eq:apriori lip final}
				| |\nabla u|\|_{L^\infty(B_{r}(x))}\le \frac{CR^{\frac Nm}}{(R-r)^{\frac Nm}} \left(\frac{}{}\bigg(\fint_{B_{R}(x)}  |\nabla u|^{m}\, \d \mm  \bigg)^\frac{1}{m}+1\right), \quad \forall r<R,
			\end{equation}
			where $ C\ge 1$ is constant depending only in $p,N,K,q,C_0,m$ and $R_0,$
			\item\label{item:3theoremmain} if $f=0$ in $B_R(x)$ then $u \in \dom(\Delta,B_{R/2}(x))$ and
			\begin{equation}\label{eq:pharmonic w22}
				\int_{B_{R/2}(x)} (\Delta u)^2\d \mm\le \tilde C \int_{B_{R/4}(x))} (1+R^{-2}) |\nabla u|^2 \d \mm,
			\end{equation}
			where $\tilde C>0$ is a constant depending only on $K,N$ and $p$.
		\end{enumerate}
	\end{theorem}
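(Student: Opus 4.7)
My plan is a double approximation combined with the uniform estimates of Theorem \ref{thm:regularity epsilon}. Set $f:=\Delta_p u\in L^1(\mm)$ (which has zero mean, since the constant function belongs to $\LIP_{bs}(\X)$ because $\X$ is bounded). I would first truncate to obtain $f_n\in L^\infty(\mm)$ with zero mean and $f_n\to f$ in $L^1(\mm)$, taking $f_n:=(F_n\circ f)-c_n$ with $F_n(t):=(-n)\vee t\wedge n$ and $c_n$ a constant correction; under the hypothesis of (i) this automatically gives $f_n\to f$ in $L^2(B_R(x))$, and under the hypothesis of (iii) I would additionally multiply the truncation by $\nchi_{\X\setminus B_R(x)}$ before correcting so that $f_n\equiv 0$ on $B_R(x)$. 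Proposition \ref{prop:p-existence} produces zero-mean solutions $u_n\in W^{1,p}(\X)$ of $\Delta_p u_n=f_n$, and Theorem \ref{thm:regularity epsilon} produces, for each $n$ and each $\eps>0$, regular approximations $u_{n,\eps}\in\dom(\Delta)$ of $\Delta_{p,\eps}u_{n,\eps}=f_n$ obeying the $\eps$-uniform estimates \eqref{eq:covariant regularized} and \eqref{eq:apriori lip regularized}. Proposition \ref{prop:existence of weak solutions} yields $u_{n,\eps}\to u_n$ in $W^{1,p}(\X)$ as $\eps\to 0$, and Proposition \ref{prop:L1 theory} applied to $u$ and $(u_n)$ yields $|\nabla u_n-\nabla u|^{p-1}\to 0$ in $L^1(\mm)$ up to a subsequence; a diagonal extraction produces indices $(n_k,\eps_k)$ with $\nabla u_{n_k,\eps_k}\to\nabla u$ $\mm$-a.e.

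For part (i), I would apply \eqref{eq:covariant regularized} to $v_{n,\eps}:=(|\nabla u_{n,\eps}|^2+\eps)^{(p-2)/2}\nabla u_{n,\eps}$. Along the diagonal, $v_{n,\eps}\to v:=|\nabla u|^{p-2}\nabla u$ $\mm$-a.e.\ (using locality of $\nabla$ on $\{|\nabla u|=0\}$ when $p<2$), while the right-hand side converges to $C_1\int_{B_R}f^2+C_2\mm(B_R)^{-1}(\int_{B_R}|\nabla u|^{p-1})^2$ by $L^2$-convergence of $f_n$ on $B_R$ and $L^1$-convergence of $|\nabla u_{n,\eps}|^{p-1}$. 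Lower semicontinuity of the $H^{1,2}_C$-energy (Lemma \ref{lem:lsc local energy}) then gives $v\in H^{1,2}_{C,\loc}(T\X;B_{R/2}(x))$ together with \eqref{eq:covariant final}. For part (ii), since the $L^\infty$-norm is lower-semicontinuous under $\mm$-a.e.\ convergence, passing to the diagonal limit in \eqref{eq:apriori lip regularized} and invoking dominated convergence on the right (first with $m=p$, which is controlled by the $W^{1,p}$-convergence, then bootstrapping in $m$ once an $L^\infty$ bound is in hand) yields \eqref{eq:apriori lip final}; the local Lipschitzianity of $u$ on $\Omega$ then follows from the standard equivalence between essential boundedness of the minimal weak upper gradient and Lipschitz regularity in PI spaces.

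For part (iii), having arranged $f_n\equiv 0$ on $B_R(x)$, we have $D_{\eps,p}(u_{n,\eps})\equiv 0$ on $B_R(x)$. Applying Theorem \ref{thm:p-calderon} with $\alpha=0$ and a cut-off $\eta\in\LIP_{bs}(B_R(x))$ with $\eta\equiv 1$ on $B_{R/2}(x)$ and $|\nabla\eta|\lesssim R^{-1}$ makes the $(D_{\eps,p}u_{n,\eps})^2$ term vanish on $\supp\eta$ and yields a uniform $L^2(B_{R/2}(x))$ bound on $\H{u_{n,\eps}}$ in terms of $\int_{B_R(x)}(1+R^{-2})|\nabla u_{n,\eps}|^2$. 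The identity $\Delta u_{n,\eps}=-(p-2)\H{u_{n,\eps}}(\nabla u_{n,\eps},\nabla u_{n,\eps})/(|\nabla u_{n,\eps}|^2+\eps)$ on $B_R(x)$ implies $|\Delta u_{n,\eps}|\le|p-2|\,|\H{u_{n,\eps}}|$ there, so $\Delta u_{n,\eps}$ is uniformly bounded in $L^2(B_{R/2}(x))$; the closure property \eqref{eq:closure laplacian} combined with weak $L^2$-compactness then delivers $u\in\dom(\Delta,B_{R/2}(x))$ with \eqref{eq:pharmonic w22}. The main technical difficulty throughout is the bookkeeping in the double limit: one must simultaneously produce uniform energy bounds, verify $\mm$-a.e.\ convergence to identify the limit vector field as $|\nabla u|^{p-2}\nabla u$ (delicate for $p<2$ near $\{|\nabla u|=0\}$), and ensure convergence of the right-hand sides of the various estimates; for $p<2$ it is the $L^1$-convergence of $|\nabla u_n|^{p-1}$ (rather than $|\nabla u_n|^p$) that is the naturally controlled quantity and must be shepherded carefully through each limit.
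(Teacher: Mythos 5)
Your proposal captures the same overall strategy as the paper: double approximation ($\eps$-regularization of $\Delta_p$ combined with $L^\infty$-truncation of $f$), uniform $\eps$-independent estimates, and passage to the limit using Proposition \ref{prop:existence of weak solutions}, Proposition \ref{prop:L1 theory} and Lemma \ref{lem:lsc local energy}. Your variants have merit: the observation that for item (iii) the truncation should be corrected only outside $B_R(x)$ (so that $f_n\equiv 0$ on $B_R(x)$, allowing the interior estimates to apply to each $u_n$) is a genuine subtlety, since the paper's $f_n=F_n\circ f-c_n$ is $-c_n\ne 0$ there; and for item (iii), bypassing Lemma \ref{lem:pre w22} by taking $\alpha=0$ in Theorem \ref{thm:p-calderon} combined with $|\Delta u_{n,\eps}|\le|p-2|\,|\H{u_{n,\eps}}|$ on $\{D_{\eps,p}u_{n,\eps}=0\}$ is a valid alternative route to the Laplacian bound (the paper's Lemma \ref{lem:pre w22} gives it more directly from the improved Bochner inequality plus the Cordes-type estimate of Lemma \ref{lem:near laplacian}, without passing through the full Hessian estimate). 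The paper separates the argument into a clean intermediate step for $f\in L^\infty(\mm)$ and then treats general $f\in L^1(\mm)$ by a second approximation, whereas you collapse this into a single diagonal; both are workable, but the two-stage structure makes the necessary convergence statements clearer.

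However, there is a genuine gap in your handling of the right-hand side of \eqref{eq:apriori lip regularized} along the diagonal. You write that the case $m=p$ ``is controlled by the $W^{1,p}$-convergence''; but the $W^{1,p}$-convergence of $u_{n,\eps}\to u_n$ as $\eps\to 0$ (Proposition \ref{prop:existence of weak solutions}) is only available for \emph{fixed} $n$. In the $n$-truncation step, Proposition \ref{prop:L1 theory} only yields $|\nabla u_n-\nabla u|^{p-1}\to 0$ in $L^1(\mm)$ (along a subsequence), which for $p<2$ controls neither $\fint_{B_R}|\nabla u_n|^p$ nor $\fint_{B_R}|\nabla u_n|$. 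Along your diagonal $(n_k,\eps_k)$, there is therefore no a priori reason that $\fint_{B_R}|\nabla u_{n_k,\eps_k}|^m$ stays bounded for $m\ge 1$. The correct way to control this term is through the quantity $|\nabla u_n|^{p-1}$ (which is the naturally controlled one, as you note at the end), and one must first establish \eqref{eq:apriori lip final} for a small exponent related to $p-1$ (possibly $<1$, requiring that Proposition \ref{prop:lip a priori}'s self-improvement is extended below $m=1$), then bootstrap; alternatively, one can use part (i), which gives uniform $W^{1,2}_\loc$ control of $|\nabla u_n|^{p-1}$ and hence, via Sobolev embedding, a uniform $L^{2^*(p-1)}$ bound on $|\nabla u_n|$ on an interior ball, and then iterate. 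Your proposal does not carry out either route, and the phrase ``bootstrapping in $m$ once an $L^\infty$ bound is in hand'' presupposes precisely the $L^\infty$ bound one is trying to establish; the starting point for the bootstrap needs a uniformly controlled integral norm. A related issue appears in (iii): invoking \eqref{eq:closure laplacian} requires $|\nabla u_{n_k,\eps_k}-\nabla u|\to 0$ in $L^2(B_{R/2}(x))$, and for $p<2$ this follows only after the uniform $L^\infty$ bound of part (ii) has been secured — the logical dependence of (iii) on (ii) should be made explicit.

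A minor point: the final sentence about ``local Lipschitzianity of $u$ on $\Omega$'' belongs to Theorem \ref{thm:main intro}, not to the present statement, which only asserts the quantitative $L^\infty$ gradient bound.
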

	
	For the proof of item \ref{item:3theoremmain}, we will need one last technical estimate.
	\begin{lemma}\label{lem:pre w22}
		Let $\Xdm$ be an $\RCD(K,N)$ space, $N\in [2,\infty]$ and let $p \in (1,3+\frac{2}{N-2})$ and $\eps>0$. Suppose that $u \in \dom(\Delta)$ satisfies
		\begin{equation}\label{eq:zero developed plapl}
			D_{\eps,p}u=0, \quad \alme \text{ in $B_R(x)\subset \X$.}
		\end{equation}
		Then there exists a constant $C>0$ depending only on $K,N$ and $p$ such that 
		\begin{equation}\label{eq:pre w22 estimate}
			\int_{B_{R/2}(x)} (\Delta u)^2\d \mm\le C \int_{B_R(x)} (1+R^{-2}) |\nabla u|^2 \d \mm.
		\end{equation}
	\end{lemma}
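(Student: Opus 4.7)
The plan is to combine the Hessian bound in Theorem~\ref{thm:p-calderon} at the choice $\alpha=0$ with the elementary pointwise estimate $|\Delta u|\le|p-2|\,|\H u|$ which follows directly from the vanishing of $D_{\eps,p}u$. Note that the assumption $p<3+\tfrac{2}{N-2}$ is precisely what makes $\alpha=0$ admissible in Theorem~\ref{thm:p-calderon}: the condition $\alpha>\tfrac12\bigl(p-3-\tfrac{p-1}{N-1}\bigr)$ with $\alpha=0$ rearranges to exactly $p<3+\tfrac{2}{N-2}$.

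From the definition \eqref{eq:deps} of $D_{\eps,p}u$ together with $|\H u(\nabla u,\nabla u)|\le|\H u|\,|\nabla u|^2$, the hypothesis yields
\[
|\Delta u|\;=\;|p-2|\cdot\frac{|\H u(\nabla u,\nabla u)|}{|\nabla u|^2+\eps}\;\le\;|p-2|\,|\H u|\qquad \alme \text{ in }B_R(x).
\]
Next, fix a cut-off $\eta\in\LIP_{bs}(B_R(x))$ with $\eta\equiv 1$ on $B_{R/2}(x)$, $0\le\eta\le 1$ and $\Lip(\eta)\le 2/R$ (e.g.\ $\eta(y)=\min(1,(2-2\sfd(y,x)/R)^+)$). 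Applying Theorem~\ref{thm:p-calderon} with $\alpha=0$, the integrand $(D_{\eps,p}u)^2\eta^2$ is supported in $\supp(\eta)\subset B_R(x)$, where $D_{\eps,p}u$ vanishes, and therefore disappears from the right-hand side. What remains is
\[
\int_{B_{R/2}(x)}|\H u|^2\,\d\mm\;\le\;\int|\H u|^2\eta^2\,\d\mm\;\le\;C\int_{B_R(x)}|\nabla u|^2\bigl(4R^{-2}+K^-\bigr)\,\d\mm,
\]
with $C$ depending only on $p$ and $N$. Combining with the pointwise estimate on $\Delta u$ and using $4R^{-2}+K^-\le(4+K^-)(1+R^{-2})$ produces the desired inequality, with the final constant depending only on $K$, $N$, and $p$.

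No real obstacle is expected in this argument: both ingredients (the Hessian estimate and the pointwise bound on $\Delta u$) are already in hand, and the only nontrivial observation is verifying that $\alpha=0$ falls into the admissibility range, which is built into the upper bound on $p$.
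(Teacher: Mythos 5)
Your proposal is correct, and it follows a genuinely different route from the paper's. You combine the pointwise bound $|\Delta u|\le|p-2|\,|\H u|$ (immediate from $D_{\eps,p}u=0$ and $|\H u(\nabla u,\nabla u)|\le|\H u|\,|\nabla u|^2$) with a single invocation of Theorem~\ref{thm:p-calderon} at $\alpha=0$: the weight $((|\nabla u|\wedge M)^2+\eps)^\alpha$ becomes $1$ so $M$ is irrelevant, the source term $(D_{\eps,p}u)^2\eta^2$ drops out since $\supp(\eta)\subset B_R(x)$ where $D_{\eps,p}u$ vanishes, and the admissibility condition $\alpha>\tfrac12\bigl(p-3-\tfrac{p-1}{N-1}\bigr)$ at $\alpha=0$ rearranges, as you note, to exactly $p<3+\tfrac{2}{N-2}$. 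The paper instead goes back to the improved Bochner inequality \eqref{eq:improvedboch} tested against $\eta^2$, yielding \eqref{eq:1-delta w22}, and then controls $(\Delta u)^2$ via the Cordes-type estimate of Lemma~\ref{lem:near laplacian}: writing $\Delta u=\Delta u-\theta D_{\eps,p}u$ with $\theta$ from \eqref{eq:theta2}, it bounds $\|\eta\Delta u\|_{L^2}^2$ by $\alpha_p$ times the Bochner quantity with $\alpha_p<1$, then closes by choosing $\delta$ so that $\tfrac{1+\delta}{1-\delta}\alpha_p<1$. Your route trades the sharp Cordes constant $\alpha_p$ for the cruder pointwise factor $(p-2)^2$, but this costs nothing here because Theorem~\ref{thm:p-calderon} at $\alpha=0$ already encodes the constraint $p<3+\tfrac{2}{N-2}$ through Proposition~\ref{prop:nightmare}. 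The upshot: your argument is shorter given that Theorem~\ref{thm:p-calderon} is already in hand, while the paper's version re-derives the Bochner estimate locally and exposes the Cordes mechanism explicitly rather than through the packaged Hessian bound.
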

	\begin{proof}
		Take $\eta \in \LIP_{bs}(B_{R}(x))$ such that $\eta=1$ in $B_{R/2}(x)$ and $\Lip(\eta)\le 3/R$.  Applying the improved Bochner inequality \eqref{eq:improvedboch} with $\phi=\eta^2$, integrating by parts and applying the Young's inequality, we obtain
		\begin{align*}
			\int &\left(|\H f|^2+\frac{(\Delta f-{\rm tr}\H f)^2}{N-\dim(\X)}\right)\eta^2\d \mm\\
			&\le \int (\Delta u)^2 \phi + |\nabla \eta^2||\nabla |\nabla u|||\nabla u|+|\Delta u||\nabla u||\nabla \eta^2|+K^-|\nabla u|^2 \eta^2\d \mm\\
			&\le \int (1+\delta)(\Delta u)^2 \eta^2 + \delta |\H u|^2 \eta^2 +  (K^-\eta^2+8\delta \Lip(\eta)^2 )|\nabla u|^2\d \mm.
		\end{align*}
		Therefore 
		\begin{equation}\label{eq:1-delta w22}
			\begin{multlined}[c][.8\textwidth]
				(1-\delta)\int \left(|\H f|^2+\frac{(\Delta f-{\rm tr}\H f)^2}{N-\dim(\X)}\right)^2\eta^2\d \mm\\\le 
				\int (1+\delta)(\Delta u)^2 \eta^2  +  (K^-\eta^2+8\delta \Lip(\eta)^2 )|\nabla u|^2\d \mm.
			\end{multlined}
		\end{equation}
		We now argue similarly to the proof of Proposition \ref{prop:step 1} and define
		$\theta\in L^\infty(\mm)$ as
		\begin{equation}\label{eq:theta2}
			\theta\coloneqq	\begin{cases}
				1,& \text{ if  $p\in(1,2)$},\\
				\frac{N+g}{N+g^2+2g},& \text{otherwise },
			\end{cases}
		\end{equation}
		where  $g\coloneqq(p-2)\frac{|\nabla u|^2}{|\nabla u|^2+\eps}.$ Then, applying  Lemma \ref{lem:near laplacian} when $p\ge 2$, we have
		\begin{align*}
			\| \eta \Delta u  \|_{L^2(\mm)}^2\overset{\eqref{eq:zero developed plapl}}{=} \| \eta (\Delta u - \theta D_{\eps,p} u) \|_{L^2(\mm)}^2 \le \alpha_p \int  \left(|\H f|^2+\frac{(\Delta f-{\rm tr}\H f)^2}{N-\dim(\X)}\right)\eta^2\d \mm,
		\end{align*}
		where $\alpha_p=(p-2)^2$ if $p<2$ and $\alpha_p=\frac{(p-2)^2(N-1)}{N+2(p-2)+(p-2)^2}<1$ for $p\ge 2$. Combining this with \eqref{eq:1-delta w22} and choosing $\delta$ so that $\frac{1+\delta}{1-\delta}\alpha_p<1$ gives \eqref{eq:pre w22 estimate}.
	\end{proof}

	We are now ready to prove the above theorem.
	\begin{proof}[Proof of Theorem \ref{thm:main detailed}]
		We first assume that $f \in L^{\infty}(\mm)$.  Fix a sequence $\eps_n>0$ such that  $\eps_n\to 0$.  Note that $f$ must have zero mean.  Hence, from Theorem \ref{thm:regularity epsilon}  for every $n \in\nn$ there exists a function $u_n \in  \dom_0(\Delta)\cap \dom (\Delta_{p,\eps_n})$ such that $\Delta_{p,\eps_n}(u_n)=f$ and \eqref{eq:covariant regularized}, \eqref{eq:apriori lip regularized} hold. Moreover, by Proposition \ref{prop:existence of weak solutions} it holds $u_n\to u$ in $W^{1,p}(\X)$. In particular, setting $v_n\coloneqq (|\nabla u_n|^2+\eps_n)^\frac{p-2}2\nabla u_n$, we have $|v_n-v|\to 0$ $\mea$-a.e.. From \eqref{eq:covariant regularized}, we obtain
		\[
		\sup_n \int_{B_{r}(y)} |v_n|^2+|\nabla v_n |^2 \d \mm<+\infty, \quad \forall B_{2r}(y)\subset B_R(x).
		\]
		Lemma \ref{lem:lsc local energy} combined with \eqref{eq:covariant regularized} ensures that $v\in H^{1,2}_{C,\loc}(T\X;B_{R}(x))$ together with \eqref{eq:covariant final} (with $B_{R/4}(x)$ instead of $B_{R/8}(x)$). Similarly, if $f \in L^{q}(B_{R}(x);\mea)$ for some $q>N$, since $|\nabla u_n|\to |\nabla u|$ $\mea$-a.e.\ we also get \eqref{eq:apriori lip final} passing to the limit in \eqref{eq:apriori lip regularized}. 
		If $f=0$ in $B_R(x)$ by Lemma \ref{lem:pre w22} we have that  $\Delta u_n$ is bounded in $L^2(B_{R/2}(x))$. Hence, up to a subsequence $\Delta{u_n}\restr{B_{R/2}(x)}\rightharpoonup G \in L^2(B_{R/2}(x))$. By \eqref{eq:apriori lip final} and \eqref{eq:apriori lip regularized} we have both that $u_n,u \in W^{1,2}_\loc(B_{R/2}(x))$  and that  $|\nabla u_n-\nabla u|\to 0$ in $L^2(B_{R/2}(x))$. Recalling \eqref{eq:closure laplacian}, we conclude that $u \in \dom(\Delta, B_{R/2}(x))$ with $\Delta u=G$. Moreover \eqref{eq:pharmonic w22} follows from \eqref{eq:pre w22 estimate} and lower-semicontinuity.

		For a general $f \in L^1(\mm)$, we consider $f_n\coloneqq (-n)\vee f\wedge n-c_n \in L^\infty(\mm)$, where $c_n\in \rr$ is chosen so that $\int f_n \d \mm=0$. For every $n \in \nn$ by Proposition \ref{prop:p-existence} there exists functions $u_n \in \dom(\Delta_p)$ with zero mean and $\Delta_p (u_n)=f_n$. Since $f_n \to f$ in $L^1(\mm)$, applying Proposition \ref{prop:L1 theory} and up to passing to a subsequence we have $|\nabla u_n-\nabla u|\to 0$, $\mea$-a.e..  Since \eqref{eq:covariant final} holds for $u_n$ with $B_{R/4}(x)$ in place of $B_{R/8}(x)$ (as observed above), Lemma \ref{lem:lsc local energy} gives $|\nabla u|^{p-2}\nabla u\in H^{1,2}_{C,\loc}(T\X;B_{R/2}(x))$ and \eqref{eq:covariant final} for $u$. Similarly, \eqref{eq:apriori lip final} follows passing to the limit in \eqref{eq:apriori lip final} for $u_n$. Finally \eqref{eq:pharmonic w22} follows observing that \eqref{eq:apriori lip final} (for $u_n$ and $u$) implies that  $u_n,u \in W^{1,2}_\loc(B_{R/4}(x))$ and  $|\nabla u_n-\nabla u|\to 0$ in $L^2(B_{R/4}(x))$ and arguing as in the previous part of the proof, using \eqref{eq:pharmonic w22} for $u_n.$
	\end{proof}

	\begin{remark}[About weaker notions of solutions]\label{rmk:sola}
		Inspecting the proof of Theorem \ref{thm:main detailed}, we only used that there exists a sequence of functions $u_n \in \dom(\Delta_p)$ and $f_n \in L^\infty(\mm)$  satisfying $\Delta_p u_n=f_n$, $f_n \to f$ in $L^1(\mm)$ and $|\nabla u_n-\nabla u|\to 0$ $\mea$-a.e.. Also (except for item $iii)$) we never used that $\nabla u$ is actually the gradient of $u$. The conclusion can be read as a statement only about $\nabla u$  as an element of $L^0(T\X)$. This shows that we could weaken the assumption of Theorem \ref{thm:main detailed} assuming only the existence of such approximating sequence, without even assuming that $u$ belongs to a Sobolev space $W^{1,r}(\X)$, but rather that it has a gradient $\nabla u\in L^0(T\X)$ in some generalized sense. This is what was done in Euclidean setting in \cite{CiaMaz18}, where second-order estimates (as in our main theorem) are deduced for a suitable notion of generalized solutions defined via approximation in the same spirit as above (see \cite[Section 4]{CiaMaz18}). The reason for doing it is that solutions to $\Delta_p u=f \in L^2$ sometimes exist only in this generalized sense (recall Remark \ref{rmk:existence}). We also observe that is not possible to weaken the assumption of Theorem \ref{thm:main detailed} to ask only that $u \in W^{1,r}(\X)$ with $r<p$ (note that for the definition of $p$-Laplacian in \eqref{eq:def plapl} to make sense $r\ge p-1$ suffices). Indeed recently in \cite{ColTi22} for all $p\in(1,\infty)$ it has been constructed  a $p$-harmonic function  $u \in W^{1,r}_\loc$ with $r>p-1$ for which  $|\nabla u|\notin L^p_\loc$ and in particular we cannot have that $|\nabla u|^{p-1}\in \W_\loc $ if $p\ge 2^*/(2^*-1).$\fr 
	\end{remark}
	
	We can prove our main statement for $\RCD(K,\infty)$ spaces.
	\begin{proof}[Proof of Theorem \ref{thm:main rcd inf}]
		Applying Theorem \ref{thm:main detailed}  we have directly that $|\nabla u|^{p-2}\nabla u \in H^{1,2}_{C}(T\X)$ and by Proposition \ref{prop:gradgrad} also that $|\nabla u|^{p-1}\in W^{1,2}(\X)$.  Finally, inequality \eqref{eq:p-calderon intro} is just \eqref{eq:covariant final}.
	\end{proof}
	
	Next, we show the main result in finite dimension. 
	\begin{proof}[Proof of Theorem \ref{thm:main intro}]
		Fix $\Xdm$, $p$, $\Omega$ and $u$ as in the statement. Assume that $\Delta_p(u)\in L^2(\Omega).$ From \eqref{eq:covariant final} of Theorem \ref{thm:main detailed} we have that $|\nabla u|^{p-2}\nabla u \in H^{1,2}_{C,\loc}(T\X;B_{R}(x))$ for every $B_R(x)\subset \Omega$. From this it follows that $|\nabla u|^{p-2}\nabla u \in H^{1,2}_{C,\loc}(T\X,\Omega)$ (recall Remark \ref{rmk:local to global}) and by \eqref{eq:grad grad local} also that $|\nabla u|^{p-1}\in W^{1,2}_\loc(\Omega)$, which proves \ref{item:main intro item1}. Assume now that $\Delta_p(u)\in L^q(\Omega)$ for some $q >N$. Then again by \eqref{eq:apriori lip final} of Theorem \ref{thm:main detailed} we get that $|\nabla u|\in L^{\infty}(B_{R/2}(x))$ for every $B_{R}(x)\subset \Omega.$ By the Sobolev-to-Lipschitz property (see \cite{Gigli13,AGS14}) property this implies that $u \in \LIP_\loc(B_{R/2}(x))$. By the arbitrariness of $B_{R}(x)$ we obtain that $u \in \LIP_\loc(\Omega)$, that proves \ref{item:main intro item2} and concludes the proof.
	\end{proof}

	Before continuing we comment on our main statement.
	\begin{remark}\label{rmk:counterexampleW22}
		Under the assumptions of Theorem  \ref{thm:main rcd inf} we do not necessarily have that $u \in W^{2,2}(\X)$, not even in the smooth setting. Indeed, as explained in \cite[Remark 2.7]{CiaMaz18}, the function $u: \R^n \to \R$ defined as $u(x_1,\dots,x_n)\coloneqq |x_1|^\frac{3-\delta}{2}$ with $\delta\in(0,1)$ satisfies  $\Delta_p(u)\in L^2_\loc(\rr^n)$ for any $p>1+\frac{1}{1-\delta}$, while  $u \notin W^{2,2}_\loc$ in any neighbourhood of the origin. Then, by a cut-off argument, a function with the same property can be easily built on the $n$-dimensional flat torus $\mathbb T^n$. However note that  $p\in \mathcal {RI}_{\mathbb T^n}$ if $\delta$ is small, which shows that we can not improve the conclusion of Theorem \ref{thm:main rcd inf} to $u \in W^{2,2}_\loc(\X)$, not even when $p$ is close to 2.\fr
	\end{remark}

	\begin{remark}[Previous results]\label{rmk:references}
		In the Euclidean setting the second-order regularity result in Theorem \ref{thm:main rcd inf} was obtained in \cite{CiaMaz18} for all $1<p<+\infty$ (see \eqref{eq:p reg intro}).
		We mention also an earlier  result in \cite{Lou08} showing  that $|\nabla u|^{p-1} \in \W_\loc(\Omega)$ whenever $\Delta_p(u)=f\in L^{q}_\loc$ with $q> \max(2,n/p)$.   Second-order regularity estimates of the type in \eqref{eq:p reg intro} are classical for (non-degenerate) quasilinear elliptic equations (see \cite[Chap. 4]{UralLadybook}). 
		For $p$-harmonic functions in $\rr^n$ it is also well known that  $|\nabla u|^{\frac{p-2}{2}}\nabla u\in \W_\loc$ for all $p\in(1,\infty)$ (see e.g.\ \cite{Uh77,Ur68,BojIwa84}). Recently in  \cite{Sa22,DPZZ20}  this was improved to $|\nabla u|^{\frac{p-s}{2}}\nabla u\in \W_\loc$ for any $s< 3+\frac{p-1}{n-1} $ (see also \cite{DD16} and \cite{LZZ21} for the same result in weighted smooth Riemannian manifolds). In particular when $1<p<3+\frac{2}{n-2}$, taking $s=p$, this implies that $u \in W^{2,2}_\loc$ (see also  \cite{ManWeit88} for an earlier prof of this fact). Note that this last property is  coherent with $iii)$ in Theorem \ref{thm:main detailed}. It is instead not known whether $u \in W^{2,2}_\loc$ in the case $p\ge 3+\frac{2}{n-2}$ for $n\ge 3.$  In $\RCD(K,N)$ spaces was also shown in \cite{gigli_monotonicityformulasharmonicfunctions_2023} that $|\nabla u|^{ 2-s/2} \in \W_\loc$ for all $s< 3+\frac{1}{N-1}$, for any $u$  harmonic (note that this corresponds to the results that we just mentioned, in the case $p=2$). In weighted smooth Riemannian manifolds the Lipschitzianity of $p$-harmonic functions was shown in \cite{DD16} (see also \cite{WZ11}).  It is also worth mentioning that in the Heisenberg group of any dimension (which is a PI space  \cite[Appendix A.6]{BB13}), $p$-harmonic functions are known to be Lipschitz for any $p\in(1,\infty)$ and actually $C^{1,\alpha}$ \cite{MukZhong21,zhong17} (see also \cite{DomMan05,MinZhong09,DomMan05bis} for similar results and references). \fr
	\end{remark}

	We can now deduce the regularity of $p$-eigenfunctions.
	\begin{proof}[Proof of Corollary \ref{cor:p eigen}]
		As in the statement, we take $\Xdm$  a bounded $\rcd(K,N)$, with  $N\in[2,\infty)$ and suppose $u\in W^{1,p}(\X)$ satisfies $\Delta_p u=-\lambda u |u|^{p-2}$ for some constant $\lambda \ge 0$ and  $p \in \mathcal{RI}_{\X}$. Then, by Theorem \ref{thm:holder} we have that $u \in L^\infty(\mm)$ (in fact $u$ has a H\"older continuous representative). Hence, $\Delta_p u \in L^\infty(\mm)$ and the conclusion follows applying Theorem \ref{thm:main intro}.
	\end{proof}

	\subsection{Regularity of \texorpdfstring{$p$}{p}-harmonic functions} \label{sec:pharm}
	We pass to the proof of Corollary \ref{cor:pharm} about the regularity of $p$-harmonic functions. The main idea is, given a $p$-harmonic function $u$ defined only on some open subset $\Omega$, to find a suitable cut-off $\phi$ such that $\phi(u)$ has global $p$-Laplacian in $L^1(\mm)$  and coincides with $u$ in a subset of $\Omega$. Then the required regularity will follow automatically from Theorem \ref{thm:main intro}. We will need the following chain rule for the $p$-Laplacian.
	\begin{prop}[Chain-rule for $\Delta_p$]\label{prop:chain rule}
		Fix $p \in (1,\infty)$. Let $\Xdm$ be an $\rcd(K,\infty)$ space and $\Omega\subset \X$ be open. Then for every $u \in \dom(\Delta_p,\Omega)$ and $\phi \in C^2\cap \LIP(\rr)$ such that $\psi(t)\coloneqq \phi'(t)|\phi'(t)|^{p-2}\in \LIP\cap L^\infty(\rr)$ it holds $\phi(u)\in \dom(\Delta_p,\Omega)$ and 
		\begin{equation}\label{eq:chain rule}
			\Delta_p(\phi(u))= \psi(u)\Delta_p(u)+\psi'(u)|\nabla u|^p.
		\end{equation}
	\end{prop}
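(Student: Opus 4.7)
The plan is to derive \eqref{eq:chain rule} by testing the weak identity \eqref{eq:def plapl} for $u$ against the specific function $\psi(u)\varphi$, where $\varphi \in \LIP_{bs}(\Omega)$ is arbitrary. The motivation is the pointwise identity
\begin{equation*}
  |\nabla \phi(u)|^{p-2}\nabla \phi(u) = |\phi'(u)|^{p-2}\phi'(u)\,|\nabla u|^{p-2}\nabla u = \psi(u)\,|\nabla u|^{p-2}\nabla u, \quad \alme,
\end{equation*}
valid by chain rule and the convention that $|\nabla u|^{p-2}\nabla u$ vanishes on $\{|\nabla u|=0\}$, so the goal reduces to moving the factor $\psi(u)$ from the vector field onto the test function.

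First I would verify the necessary preliminary regularity. Since $\phi,\psi\in \LIP(\rr)$ and $u \in W^{1,p}_{\loc}(\Omega)$, the chain rule for the gradient yields $\phi(u) \in W^{1,p}_{\loc}(\Omega)$ and $\psi(u)\in W^{1,p}_{\loc}\cap L^\infty(\Omega)$ with
\begin{equation*}
  \nabla\phi(u)=\phi'(u)\nabla u,\qquad \nabla\psi(u)=\psi'(u)\nabla u,\quad \alme
\end{equation*}
(the derivatives being defined a.e.\ and the formulas being unambiguous on $\{|\nabla u|=0\}$ by locality). The Leibniz rule then gives $\psi(u)\varphi\in W^{1,p}\cap L^\infty(\X)$ with support bounded and contained in $\Omega$, and
\begin{equation*}
  \nabla(\psi(u)\varphi)=\psi(u)\nabla\varphi+\psi'(u)\varphi\nabla u.
\end{equation*}

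Next, I would extend the class of admissible test functions in \eqref{eq:def plapl} from $\LIP_{bs}(\Omega)$ to $W^{1,p}\cap L^\infty$ functions with support bounded and contained in $\Omega$. This is the localized analogue of Remark \ref{rmk:test w1p plapl}: fixing a cut-off $\eta\in\LIP_{bs}(\Omega)$ equal to $1$ on $\supp(\varphi)$ and using Remark \ref{rmk:lip dense}, one approximates $\psi(u)\varphi$ in $W^{1,p}$ by Lipschitz functions supported in $\{\eta>0\}\subset\subset\Omega$; the limit passes in \eqref{eq:def plapl} thanks to the pointwise bound $|\la|\nabla u|^{p-2}\nabla u,\nabla g\ra|\le|\nabla u|^{p-1}|\nabla g|$ and the uniform $L^\infty$-bound on the approximants (needed to pair against $\Delta_p u \in L^1_{\loc}$). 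With $\psi(u)\varphi$ as test function this yields
\begin{equation*}
  \int_\Omega \la|\nabla u|^{p-2}\nabla u,\psi(u)\nabla\varphi+\psi'(u)\varphi\nabla u\ra\d\mm=-\int_\Omega \psi(u)\varphi\,\Delta_p u\d\mm.
\end{equation*}
Using $\la\nabla u,\nabla u\ra=|\nabla u|^2$ in the second term and the pointwise identity above, this rewrites as
\begin{equation*}
  \int_\Omega \la|\nabla\phi(u)|^{p-2}\nabla\phi(u),\nabla\varphi\ra\d\mm=-\int_\Omega \varphi\bigl[\psi(u)\Delta_p u+\psi'(u)|\nabla u|^p\bigr]\d\mm.
\end{equation*}
Since $\psi,\psi'\in L^\infty$, $\Delta_p u\in L^1_{\loc}(\Omega)$ and $|\nabla u|^p\in L^1_{\loc}(\Omega)$, the bracket on the right belongs to $L^1_{\loc}(\Omega)$, and by arbitrariness of $\varphi\in\LIP_{bs}(\Omega)$ the conclusion follows.

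The only genuine technical point is the extension of \eqref{eq:def plapl} to $W^{1,p}\cap L^\infty$ test functions with bounded support in $\Omega$: this is routine via density and cut-off, but worth spelling out since the statement in the paper only records the global version ($\Omega=\X$) in Remark \ref{rmk:test w1p plapl}. Everything else is a direct calculation.
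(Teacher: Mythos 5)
Your proof is correct and takes essentially the same route as the paper: test the weak formulation of $\Delta_p u$ against $\psi(u)\eta$ with $\eta\in\LIP_{bs}(\Omega)$, apply the chain and Leibniz rules, and recognize $|\nabla\phi(u)|^{p-2}\nabla\phi(u)=\psi(u)|\nabla u|^{p-2}\nabla u$ $\alme$. The one step you flag as unrecorded in the paper — the extension of admissible test functions to $W^{1,p}\cap L^\infty$ with support bounded and contained in $\Omega$ — is in fact already stated for arbitrary open $\Omega$ in the first sentence of Remark \ref{rmk:test w1p plapl}, which the paper's proof cites directly.
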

	\begin{proof}
		By the chain rule for the gradient we have $\phi(u),\psi(u)\in W^{1,p}_\loc(\Omega)$ and $\nabla \phi(u)=\phi'(u)\nabla u,$ $\nabla \psi(u)=\psi'(u)\nabla u.$ Since $\psi(u)\in L^\infty(\Omega)$, by the Leibniz rule, for every $\eta \in \LIP_{bs}(\Omega)$ it holds that $\eta \psi(u)\in W^{1,p}\cap L^\infty(\X)$ with bounded support in $\Omega$. In particular, we can use $\eta \psi(u)$ as a test function in the definition of $\Delta_p(u)$ (recall Remark \ref{rmk:test w1p plapl}).
		Hence
		\begin{align*}
			\int |\nabla \phi(u)|^{p-2}\la \nabla \phi(u),\nabla \eta\ra \d \mm&=  \int |\nabla u|^{p-2}\la \nabla u,(\phi'(u)|\phi'(u)|^{p-2})\nabla \eta\ra \d \mm\\
			&=\int |\nabla u|^{p-2}\la \nabla u,\psi(u)\nabla \eta\ra \d \mm=\int -\Delta_p(u) \psi(u)\eta - |\nabla u|^p \psi'(u)\eta,
		\end{align*}
		which is what we wanted.
	\end{proof}
	
		In the following we state the existence of cut-off functions $\phi$ such that $\phi'(t)|\phi'(t)|^{p-2}$ is Lipschitz. The proof is elementary and left to the reader (note that for $p\ge 2$ any  $\phi \in C_c^2([0,1])$  would work).
	\begin{lemma}\label{lem:p-cutoff}
		For every $\eps\in(0,1)$ and every $p\in(1,\infty)$ there exists  $\phi \in C_c^2([0,1])$ such that $\phi(t)=t$ in $[\eps,1-\eps]$ and $\psi\coloneqq \phi' |\phi '|^{p-2} \in \LIP([0,1])$.
	\end{lemma}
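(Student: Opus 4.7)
The plan is to split by the range of $p$. For $p\ge 2$ the map $s\mapsto s|s|^{p-2}$ is itself $C^1$ on $\R$, so any standard $C^2_c$-cut-off $\phi$ equal to $t$ on $[\eps,1-\eps]$ automatically gives $\psi=\phi'|\phi'|^{p-2}\in C^1([0,1])\subset\LIP([0,1])$ and there is nothing to do. The substantive case is $p\in(1,2)$, where $s\mapsto s|s|^{p-2}$ has infinite derivative at the origin. Lipschitz control of $\psi$ away from the zeros of $\phi'$ is automatic from $\phi\in C^2$; near a zero $t_0$, however, $\phi'$ must vanish fast enough. If $\phi'$ is smooth near $t_0$ with $\phi'(t)\sim c(t-t_0)^N$ as $t\to t_0$, a direct computation gives, for $t$ close to $t_0$,
\[
|\psi'(t)|\;=\;(p-1)|\phi''(t)||\phi'(t)|^{p-2}\;\lesssim\;|t-t_0|^{(N-1)+N(p-2)}\;=\;|t-t_0|^{N(p-1)-1},
\]
which is bounded as soon as $N(p-1)\ge 1$. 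The plan for $p\in(1,2)$ is therefore to fix an odd integer $N\ge\max\!\left(3,\lceil 1/(p-1)\rceil\right)$ and to build $\phi\in C^2([0,1])$ piecewise so that $\phi'$ is smooth and vanishes to order exactly $N$ at each of its zeros, with any sign-changing zero of odd order (so that the sign of $\psi$ matches $\operatorname{sgn}(t-t_0)$ across the crossing).

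Concretely, I would pick $0<a<\eps<1-\eps<b<1$, declare $\phi\equiv 0$ on $[0,a]\cup[b,1]$ and $\phi(t)=t$ on $[\eps,1-\eps]$, and on each of the two transition intervals $[a,\eps]$ and $[1-\eps,b]$ prescribe $\phi'$ as a smooth profile matching the natural boundary data ($\phi'=\phi''=0$ at $a$ and $b$; $\phi'=1$ and $\phi''=0$ at $\eps$ and $1-\eps$) and vanishing to order $\ge N$ at its zero endpoints. The scalar constraint $\int_a^\eps\phi'\,ds=\eps$ on the left can be met by including a tunable amplitude in the profile. On the right, by contrast, the analogous constraint $\int_{1-\eps}^b\phi'\,ds=-(1-\eps)$ is \emph{negative}, forcing $\phi'$ to change sign; I would arrange for this to happen at a single interior zero $t_*\in(1-\eps,b)$ with $\phi'(t)\sim c(t-t_*)^N$ near $t_*$ (for instance a smoothly truncated $(t-t_*)^N$-bump whose amplitude is the free parameter enforcing the integral constraint).

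With these choices $\phi\in C^2_c([0,1])$ equals $t$ on $[\eps,1-\eps]$, and $\phi'$ vanishes only on $[0,a]\cup[b,1]$ and at $t_*$. The function $\psi$ is continuous on $[0,1]$, of class $C^1$ away from the zeros of $\phi'$, and by the estimate above $\psi'$ is bounded in a neighbourhood of each of those zeros; hence $\psi$ is absolutely continuous with bounded a.e.\ derivative, i.e.\ Lipschitz on $[0,1]$. The only mildly delicate point is engineering the interior sign-changing zero of $\phi'$ in the right transition to have the prescribed odd order $\ge N$; once that profile is written down explicitly, everything else reduces to routine $C^2$-gluing subject to two scalar integral constraints.
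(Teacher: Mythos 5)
Your proof is correct and follows the same overall blueprint as the paper's: dispatch $p\ge 2$ trivially, and for $p\in(1,2)$ observe that the only obstruction to Lipschitz regularity of $\psi=\phi'|\phi'|^{p-2}$ is at the zeros of $\phi'$, which one neutralizes by making $\phi'$ vanish fast enough there, then glue. The genuine difference is the choice of vanishing exponent. The paper prescribes $\chi''(t)=\eta(t)\sim t^{\alpha}$ with the \emph{exact} fractional power $\alpha=(p-1)^{-1}-1$, so that $\chi'(t)\sim t^{1/(p-1)}$ and hence $\psi(t)=ct$ is exactly linear near each zero; this is a sharp, fully explicit formula. You instead take an integer order $N\ge 1/(p-1)$ (odd, $\ge 3$), so that near a zero $\phi'(t)\sim c(t-t_0)^N$ and $|\psi'(t)|\lesssim|t-t_0|^{N(p-1)-1}$ is merely bounded rather than constant. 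Both estimates deliver the Lipschitz bound; yours has the small advantage that the local profile is polynomial (hence $C^\infty$) and one need not hit the exact exponent, while the paper's is tighter and written down in closed form. Your computation $|\psi'|=(p-1)|\phi''||\phi'|^{p-2}\lesssim|t-t_0|^{N(p-1)-1}$ is right provided the lower bound $|\phi'(t)|\gtrsim|t-t_0|^N$ holds (which your "vanishes to order exactly $N$" guarantees), and your sign analysis at the forced interior zero $t_*\in(1-\eps,b)$, where $\int_{1-\eps}^{b}\phi'=-(1-\eps)<0$ forces a sign change, is also right and mirrors the role of the point $t=2/3$ in the paper's construction. The remaining step you leave implicit—writing an explicit $C^2$ profile meeting the two scalar integral constraints with a free amplitude—is routine and is exactly what the paper's $\eta_{a,b}$ and its constants $A,B$ accomplish.
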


	We can now prove the regularity of $p$-harmonic functions with relatively compact level sets. 
	\begin{proof}[Proof of Corollary \ref{cor:pharm}]
		Let $\Xdm$, $p $ and $u$ as in the hypotheses and suppose that $u^{-1}((a,b))$ is relatively compact in $\Omega.$ Up to translating and rescaling $u$ we can assume that $a=0$ and $b=1.$   Fix $\eps>0$. By Lemma \ref{lem:p-cutoff} there exists a function $\phi \in C_c^2([0,1])$ such that $\phi(t)=t$ in $[\eps,1-\eps]$ and $\psi\coloneqq \phi' |\phi '|^{p-2} \in \LIP(\rr)$. Then by the chain rule of Proposition \ref{prop:chain rule} we have $\phi(u)\in \dom(\Delta_p,\Omega)$ with
		\[
		\Delta_p (\phi(u))= \psi'(u)|\nabla u|^p \in L^1(\Omega).
		\]
		Moreover, since $\psi\equiv 0$ in $\rr \smallsetminus (0,1)$, we have that $\Delta_p (\phi(u))\equiv 0$ in $\Omega\smallsetminus \{0<u<1\}$. Observe that $\{0\le u\le 1\}$ is relatively compact in $\Omega$. Using the definition of $\Delta_p$, we can thus check that $u\in \dom(\Delta_p,\X)$ with $\Delta_p (\phi(u))= \psi'(u)|\nabla u|^p\in L^1(\mm)$. Since $\Delta_p (\phi(u))=0$ in $\{\eps< u<1-\eps\}$ and $\phi(u)=u$ in $\{\eps< u<1-\eps\}$ (recall that this set is open because $u$ is continuous), Theorem \ref{thm:main detailed} yields $u \in \LIP_{\loc}\cap H^{2,2}_\loc(\{\eps< u<1-\eps\})$. By the arbitrariness of $\eps$ this concludes the proof.
	\end{proof}
	
	\medskip
	The statement of Corollary \ref{cor:pharm} pertains to a class of functions referred to as $p$-capacitary potentials, which represent solutions denoted as $u$ to the following boundary value problem:
	\begin{equation}\label{eq:potential_equation}
		\begin{cases}
			\Delta_p u =0 & \text{in $ \Omega \setminus K$,} \\
			u =1  & \text{on $\partial K$,} \\
			u=0 & \text{on $\partial \Omega$.}
		\end{cases}
	\end{equation}
	Here, $\Omega$ signifies an open, bounded subset within a compact Riemannian manifold $(M, g)$, while $K$ is precompact within $\Omega$. The pair $(K, \Omega)$ is commonly referred to in the literature as a condenser. 
	Notably, when $p = 2$, the solution $u$ takes on the interpretation of the electrostatic potential of the condenser. In this case, the $p$-Dirichlet energy quantifies the total electrostatic energy contained within the domain of the condenser, often referred to as the capacity.
	
	The function $u$ complies with the assumptions detailed in Corollary \ref{cor:pharm}. It is a $p$-harmonic function, and the maximum principle ensures that the sets $u^{-1}([a, b])$ are relatively compact within $\Omega \setminus K$ for all $0 < a \leq b < 1$. As a result, the second-order Sobolev regularity and local Lipschitz continuity properties can be applied to this function.
	
	A promising area for future research involves extending the above result to its application in unbounded $\RCD(0,N)$ spaces $\Xdm$. With this extension, one can show the regularity of solutions to \eqref{eq:potential_equation} taking $\Omega=\mathrm{X}$, using an exhaustion argument.
	
	Emphasizing the importance of regularity of nonlinear potentials of bounded domains is essential, as it deeply shapes various outcomes in Riemannian geometry. One achievement involves proving a Minkowski-type inequality which establishes a scale-invariant lower limit for the $L^1$-norm of mean curvature with respect to the perimeter. This inequality has been proven in the context of Riemannian manifolds with non-negative Ricci curvature by the first author of this paper in collaboration with Fogagnolo and Mazzieri in \cite{benatti_minkowskiinequalitycompleteriemannian_2022}. This result is the counterpart for $p\neq 2$ of the previous result by Agostiniani, Fogagnolo, and Mazzieri in \cite{AFM18} which is been extended in the non-smooth setting, by the second author with Gigli in \cite{gigli_monotonicityformulasharmonicfunctions_2023} (see also \cite{VPhDThesis}). 
	
	Building upon the above regularity and the defined regularity range, it may become feasible to establish the existence of a weak inverse mean curvature flow (IMCF for short). This conceptual tool, initially introduced by Huisken and Ilmanen in their work \cite{huisken_inversemeancurvatureflow_2001}, held a pivotal role in their demonstration of the Riemannian Penrose Inequality. The existence of this flow can be rigorously demonstrated by employing an approximation involving the $p$-capacitary potential. Indeed, taking $u_p$ the $p$-capacitary potential of some bounded $K$, the sequence $w_p= - (p-1) \log u_p$ converges in a suitable sense to the weak IMCF as the parameter \(p\to 1^+\). This has been shown firstly by Moser \cite{moser_inversemeancurvatureflow_2007,moser_inversemeancurvatureflow_2008} in Euclidean setting. It was subsequently extended to curved spaces by Kotschwar and Ni \cite{kotschwar_localgradientestimatesharmonic_2009}, and more recently by Mari Rigoli and Setti \cite{mari_flowlaplaceapproximationnew_2022}. 
	
	We emphasize that the specified regularity interval $\mathcal{RI}_{\mathrm{X}}$ in \eqref{eq:regularity interval} is not excessively restrictive in the context of $\RCD(0,N)$ spaces, since for $K=0$ we have $\mathcal{RI}_{\X}=(1,3+2/(N-2))$.

	\appendix
	\section{Appendix: H\"older regularity for elliptic equations in PI spaces}\label{sec:moser}
	In this section, we give a proof of Harnack's inequalities and H\"older continuity for a broad class of elliptic equations in PI spaces involving the $p$-Laplacian.  The validity of this results seems to be folklore among the community and already appeared often in different settings and in different forms, even if mostly for $p=2$ (see for example \cite[Chapter 8]{BB13}, \cite{KiSha01,BjM06,LMP06,Hua09},\cite[Section 4]{Jiang}, \cite[Theorem 6.7]{G19}, \cite[Lemma 3.5]{ZZweyl} and also the recent \cite[Section 6]{honda2023sharp}). Since we explicitly need this for $p\neq 2$ both in Corollary \ref{cor:p eigen} and Corollary \ref{cor:pharm}, but also for future use, we decided to include a self-contained proof.
	
	The main idea for the proof is that, thanks to the doubling property, the local Poincaré inequality improves to a local Sobolev inequality  \cite{HK00}  (see also \cite{Saloff92}), which then allows to use of the iteration methods of De Giorgi-Nash-Moser.

	Throughout this section we assume that $\Xdm$ is a PI-space (see Definition \ref{def:PI}) satisfying
	\begin{equation}\label{eq:pi dimension app}
		\frac{\mea(B_r(y))}{\mea(B_{R}(y))}\ge c(R_0)\left(\frac rR\right)^s, \quad \forall\, 0<r<R\le R_0, \, \forall y \in \X,
	\end{equation}
	for some constant $s>1$ and some function $c:(0,\infty)\to (0,\infty)$.  
	
	\begin{remark}\label{rmk:app}
		If $\Xdm$ is a PI-space then \eqref{eq:pi dimension app} always holds for some $s>0$ (see \cite[Lemma 3.3]{BB13}). Moreover, by the locally doubling assumption, if \eqref{eq:pi dimension app} holds for some $s$ then it holds also for any $s'>s$, up to changing the function $c$. In particular \eqref{eq:pi dimension app} always holds for some $s>1.$  \fr
	\end{remark}
	
	\begin{theorem}[Harnack inequality]\label{thm:holder}
		Let $\Xdm$ be an infinitesimally Hilbertian PI-space satisfying \eqref{eq:pi dimension app} with some $s>1$ and let $c_0>0$ be any constant. Fix also $p\in(1,s]$ and $q>s/p.$  Suppose $u\in W^{1,p}_\loc(B)$, where $B\coloneqq B_R(x)\subset \X$,  satisfies
		\begin{equation}\label{eq:sol}
			\int |\nabla u|^{p-2}\la \nabla u, \nabla \phi\ra\d \mm= \int g u|u|^{p-2} \phi + f \phi \d \mm, \quad \forall \phi \in \LIP_{c}(B),
		\end{equation}
		for some $f,g\in L^{q}(B)$ and such that $R^p\left(\fint_B |g|^q\d \mm\right)^\frac{1}{q}\le c_0$. Then $u$ has a locally H\"older continuous representative in $B$. Moreover, if $u$ is also non-negative,  for every $B_{70\lambda r}(y)\subset B$ with $r\le R$ it holds
		\begin{equation}\label{eq:harnack}
			\esssup_{B_r(y)} u \le C \essinf_{B_r(y)} u + C  r^{\frac{p-s/q}{p-1}} R^{\frac s{q(p-1)}}\left(\fint_B |f|^q\d \mm\right)^\frac{1}{q(p-1)},
		\end{equation}    
		where $C$ is a constant depending only on $p,q,\lambda,C_D,C_P,R_0,s,c$ and $c_0$ ($C_P,C_D,\lambda$ being the constants appearing in  Definition \ref{def:PI} of PI-space and $c$ the function  in \eqref{eq:pi dimension app}).
	\end{theorem}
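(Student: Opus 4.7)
The strategy is the classical De Giorgi--Nash--Moser iteration, relying on the Sobolev inequality \eqref{eq:local sobolev} guaranteed by Theorem~\ref{thm:improved poincaret}. Hölder continuity of a representative follows from the Harnack inequality \eqref{eq:harnack} by the standard oscillation argument (apply Harnack to $\esssup_{B_R} u - u$ and $u - \essinf_{B_R} u$ on shrinking concentric balls to get geometric decay of the oscillation, with the $f$-tail absorbed by a routine iteration balancing arithmetic and geometric error). So the essential task is \eqref{eq:harnack} for $u\ge 0$. Introduce the shift $\tau\coloneqq \bigl(r^{p-s/q}R^{s/q}\fint_B|f|^q\bigr)^{1/(p-1)}$, chosen so that after H\"older the $f$-contribution is comparable to $\tau^{p-1}/r^p$; Harnack for $v\coloneqq u+\tau$ then implies \eqref{eq:harnack}.

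The Caccioppoli step is to test the equation \eqref{eq:sol} with $\phi=\eta^p v^{\beta}$, where $\eta\in\LIP_c(B)$ is a cutoff and $\beta\in\R$. Using the chain rule (valid since $\X$ is infinitesimally Hilbertian with independent weak upper gradients), Young's inequality, and, crucially, the hypothesis $q>s/p$ to absorb the contribution of $g$ via H\"older against $L^{p^*}$ (the factor $R^p\bigl(\fint_B|g|^q\bigr)^{1/q}\le c_0$ then yields uniform constants), one obtains, for $\beta\neq-(p-1)$,
\begin{equation*}
    \int \eta^p\,|\nabla v^{(\beta+p-1)/p}|^p\,\d\mm \;\le\; C\,|\beta+p-1|^p\int |\nabla\eta|^p\, v^{\beta+p-1}\,\d\mm,
\end{equation*}
and at the critical exponent $\beta=-(p-1)$ the BMO-type estimate
\begin{equation*}
    \int \eta^p\,|\nabla \log v|^p\,\d\mm \;\le\; C\int |\nabla\eta|^p\,\d\mm.
\end{equation*}
Combining the first with \eqref{eq:local sobolev} on shrinking concentric balls and iterating over $\chi_k=\chi_0(p^*/p)^k$ gives two one-sided estimates: for every $\alpha_1>0$,
\begin{equation*}
    \esssup_{B_r(y)} v \;\le\; C\Bigl(\fint_{B_{2\lambda r}(y)} v^{\alpha_1}\,\d\mm\Bigr)^{1/\alpha_1}
\end{equation*}
(from $\beta+p-1>0$), and for every $\alpha_2<0$ small enough,
\begin{equation*}
    \Bigl(\fint_{B_{2\lambda r}(y)} v^{\alpha_2}\,\d\mm\Bigr)^{1/\alpha_2} \;\le\; C\,\essinf_{B_r(y)} v
\end{equation*}
(from $\beta+p-1<0$), with constants depending only on the allowed data. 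The case $\alpha_1,|\alpha_2|<p$ can be reached from any starting exponent by a standard interpolation trick once the sup/inf is on the left.

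The main obstacle is bridging positive and negative powers. For this I use the BMO estimate on $\log v$: combined with the $(p,p)$-Poincar\'e inequality available in our PI-setting, it shows that $\log v$ has bounded mean oscillation on balls $B_{Cr}(y)\subset B$ with constant depending only on the structural data, whence by the John--Nirenberg lemma in doubling Poincar\'e spaces (see e.g.\ \cite[Chapter~3]{BB13}) there exist $\alpha_0>0$ and $C$ such that
\begin{equation*}
    \Bigl(\fint_{B_{Cr}(y)} v^{\alpha_0}\,\d\mm\Bigr)\Bigl(\fint_{B_{Cr}(y)} v^{-\alpha_0}\,\d\mm\Bigr) \;\le\; C.
\end{equation*}
Choosing $\alpha_1=\alpha_0$ and $\alpha_2=-\alpha_0$ in the two one-sided estimates above, applied on suitably nested balls, and concatenating yields the Harnack inequality \eqref{eq:harnack} for $v$; the large factor $70\lambda$ in the hypothesis accommodates the successive dilations $(2\lambda)$ in Sobolev/Poincar\'e, the enlargement for John--Nirenberg, and a final safety factor for the oscillation argument. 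The main technical care is to track that the John--Nirenberg step does not require extra scale-dependent hypotheses beyond doubling + Poincar\'e, which is ensured by working on balls of radius $\le R_0$ and folding the dependence on $R_0$ into the final constant $C$.
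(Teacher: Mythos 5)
Your proposal follows essentially the same route as the paper: Caccioppoli estimates from testing with $\eta^p$ times powers of the shifted solution, Moser iteration via the local Sobolev inequality of Theorem~\ref{thm:improved poincaret} to get the two one-sided estimates, and bridging positive and negative exponents through a BMO bound on $\log v$ plus the John--Nirenberg lemma in doubling Poincar\'e spaces. Two small points to tidy: the shift should read $\tau=\bigl(r^{p-s/q}R^{s/q}(\fint_B|f|^q)^{1/q}\bigr)^{1/(p-1)}$ (you dropped the $1/q$ on the $f$-average), and the test function $\eta^p v^\beta$ needs a truncation $v\wedge m$ (as the paper does) before the chain rule guarantees membership in $W^{1,p}$ for $\beta>1$.
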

	
	\begin{remark}[The case $s>p$]
		Recall that  if $\Xdm$ is a PI-space satisfying \eqref{eq:pi dimension app} for some $s>0$, then every function $u \in W^{1,p}_\loc(\Omega)$, with $p>s$ and $\Omega \subset \X$ open, is automatically locally H\"older continuous (see \cite[Theorem 5.1]{HK00} or \cite[Corollary 5.49]{BB13}). \fr
	\end{remark}
	
	We start by proving a Harnack-type inequality for supersolutions.
	\begin{theorem}\label{thm:supersol}
			Let $\Xdm$ be an infinitesimally Hilbertian PI-space satisfying \eqref{eq:pi dimension app} with some $s>1$ and $c_0>0$ be a constant. Fix also $p\in(1,s]$ and $q>s/p.$  Suppose $u\in W^{1,p}_\loc(B_R(x))$, for some $B=B_R(x) \subset \X$  with $R\le R_0,$   satisfies
		\begin{equation}\label{eq:supersol}
			\int |\nabla u|^{p-2}\la \nabla u, \nabla \phi\ra\d \mm\le \int g |u|^{p-1} \phi + f \phi \d \mm, \quad \forall \phi \in \LIP_{c}(B_R(x)), \,\, \phi\ge 0,
		\end{equation}
			for some $f,g\in L^{q}(B)$, $g\ge 0,f\ge 0$ and such that
			\begin{equation*}
				R^{p}\left(\fint_{B}|g|^q \d\mm  \right)^\frac1q\le c_0.
			\end{equation*} 
		Then for all $B_r(y)\subset B$ with $r\le R$, every $\theta\in (0,1)$ and every  $l\in[1,\infty)$ it holds
		\begin{equation}\label{eq:sup estimate}
			\esssup_{B_{\theta r}(y)} u^+ \le \frac C {(1-\theta)^{\frac s l}}\left( \fint_{B_r(y)}|u^+|^l \d \mm \right)^\frac1l+C\left( \fint_{B_r(y)}|r^pf|^q \d \mm \right)^\frac1{(p-1)q},
		\end{equation}
		where  $C$ is a constant depending only on $p,q,\lambda,C_D,C_P,R_0,s,c,l$ and $c_0$ ($C_P,C_D,\lambda$ being the constants appearing in  Definition \ref{def:PI} of PI-space and $c$ the constant in \eqref{eq:pi dimension app}).
	\end{theorem}
	\begin{proof}
		It is enough to show \eqref{eq:sup estimate} for $r=R$ and $y=x$. Indeed suppose for a moment that this holds. Then for every $B_t(z)\subset B_R(x)$ with $t\le R$ we have
		\begin{equation}\label{eq:from R to r}
			\begin{split}
				t^{p}\left(\fint_{B_t(z)}|g|^q \d\mm  \right)^\frac1q&= \frac{t^p}{\mea(B_t(z))^\frac 1q}\left(\int_{B_R(x)}|g|^q \d\mm  \right)^\frac1q\\\overset{\eqref{eq:pi dimension app}}&{\le }c(2R_0)(2R)^\frac sq \frac{t^{p-\frac s q}}{\mea(B_{2R}(z))^\frac 1q}\left(\int_{B_R(x)}|g|^q \d\mm  \right)^\frac1q\\
				&\le  c(2R_0)(2R)^p \left(\fint_{B_R(x)}|g|^q \d\mm  \right)^\frac1q\le  2^pc_0c(2R_0)
			\end{split}
		\end{equation}
		having used that $p-\frac s q>0,$ where $c(\cdot)$ is the constant given by \eqref{eq:pi dimension app}. Therefore the hypotheses of the theorem are satisfied taking $B\coloneqq B_t(z)$  and with $c_0$ replaced by $2^pc_0c(2R_0)$. Then we can just apply \eqref{eq:sup estimate} taking $B_r(y)=B$ (which we are assuming to hold). 
		
		Next, we observe that is sufficient to prove \eqref{eq:sup estimate} in the case $\theta=\frac12$. Indeed the general case follows considering the balls $B_{(1-\theta)r}(z)\subset B$ for all $z \in B_{\theta r}(y)$ and using that $\mea(B_{(1-\theta)r}(z))\ge 2^{-s}(1-\theta)^{s}\mea(B_r(y))$, thanks to \eqref{eq:pi dimension app} (for the second term in the right-hand side of \eqref{eq:sup estimate} we also use $pq>s$). 
		
		Moreover, we need to prove \eqref{eq:sup estimate} only for $l\ge p$. Indeed if this was true, for the case $l< p$ we would conclude 
			with a standard iteration argument as in the proof of Proposition \ref{prop:lip a priori} (see also \cite[p.\ 75]{HanLinbook}).
	
		All in all we only need to prove \eqref{eq:sup estimate} for $y=x$, $r=R$, $\theta=\frac12$ and  $l\ge p.$ Along the proof $C>0$ will denote a constant depending only on $p,q,s,c_0,R_0,C_P,C_D,c,\lambda,l$ and possibly changing from line to line. 
		Thanks to the scaling of the statement we can assume that $\mea(B_R(x))=1.$ 
		By the Sobolev embedding (recall Theorem \ref{thm:improved poincaret}) $W^{1,p}_\loc(B_R(x_0))\subset L^{p^*}_\loc(B_R(x_0))$, with $p^*=\frac{sp}{s-p}$ if $s<p$ and for any $p^*>1$ if $s=p$, hence by the integrability assumptions on $g$ and $f$ (and the density of Lipschitz functions) we have that \eqref{eq:supersol} holds also for all $\phi \in W^{1,p}_\loc(B_R(x_0)).$ Fix $\eps>0$ arbitrary and set $k\coloneqq (\|R^pf\|_{L^q(B_R(x))}+\eps)^{\frac1{p-1}}<+\infty$. Set $\bar u\coloneqq (u^++k)$ and for every $m\ge k$ define  $\bar u_m\coloneqq \bar u \wedge m.$ By the chain rule it holds $\bar u, \bar u_m\in W^{1,p}_\loc(B_R(x_0))$ and $\bar u_m \in L^{\infty}(B_R(x_0))$. Moreover $\bar u,\bar u_m\ge k.$ For every $\beta\ge 1$ and $\eta \in \LIP_c(B_R(x))$, with $\eta \ge 0$, we choose as test function
		\[
		\phi \coloneqq \eta^p(\bar u \bar u_m^{p(\beta-1)}-k^{p(\beta-1)+1})\ge 0, \quad \alme.
		\]
		Note that that $\phi=0$ in $\{u\le 0\}$ and $\phi \le \eta^p\bar u \bar u_m^{p(\beta-1)}$. In particular $\nabla u=\nabla \bar u$ $\alme$ in $\{\phi >0\}$. 
			By the Leibniz rule we have $\phi \in W^{1,p}(\X)$ with
		\begin{equation}\label{eq:grad phi}
			\nabla \phi= p\eta^{p-1}\nabla \eta (\bar u \bar u_m^{p(\beta-1)}-k^{p(\beta-1)+1})+\eta^p\left(\bar u_m^{p(\beta-1)}\nabla \bar u + p(\beta-1)\bar u^{p(\beta-1)} \nabla \bar u_m\right)
		\end{equation}
		Substituting $\phi$ in  \eqref{eq:supersol} and using \eqref{eq:grad phi}
		\begin{align*}
			\int |\nabla \bar u|^p& \bar u_m^{p(\beta-1)}\eta^p+p(\beta-1)|\nabla \bar u_m|^p\bar u^{p(\beta-1)}\eta^p\d\mm\\
			&\le \int p|\nabla \bar u|^{p-1}|\nabla \eta| \eta^{p-1}\bar u \bar u_m^{p(\beta-1)}+g \bar u^p\bar u_m^{p(\beta-1)}\eta^p + f \bar u\bar u_m^{p(\beta-1)}\eta^p\d \mm\\
			&\le \int (\delta p)^\frac{p}{p-1}|\nabla \bar u|^p \bar u_m^{p(\beta-1)} \eta^p +\delta^{-p}
			|\nabla \eta|^p  \bar u^p\bar u_m^{p(\beta-1)} + \left(g+\frac{f}{k^{p-1}}\right) \bar u^p\bar u_m^{p(\beta-1)}\eta^p\,\d \mm,
		\end{align*}
		for every $\delta>0$, having used that 
		\[
		|\nabla u|^{p-2}\la \nabla \bar u, \nabla u\ra=|\nabla \bar u|^p, \quad \alme, \quad
		|\nabla u|^{p-2}\la \nabla \bar u_m, \nabla u\ra=|\nabla \bar u_m|^p,\quad \alme,
		\]
		by the locality.  In the last step, we used also that $\bar u \ge k.$ Choosing $\delta=1/(2p)$ and absorbing the first term of the right-hand side into the left one we reach
		\begin{equation}\label{eq:first moser bound}
			\begin{multlined}[c][.7\textwidth]
				\int |\nabla \bar u|^p \bar u_m^{p(\beta-1)}\eta^p+p(\beta-1)|\nabla \bar u_m|^p\bar u^{p(\beta-1)}\eta^p\d\mm\\\le C\int
				|\nabla \eta|^p  \bar u^p\bar u_m^{p(\beta-1)} +  R^{-p}h\bar u^p\bar u_m^{p(\beta-1)}\eta^p \d \mm,
			\end{multlined}
		\end{equation}
		where  $h\coloneqq R^{p}g+\frac{R^{p}f}{k^{p-1}}\in L^{q}(B_R(x))$. Note that  by how we chose $k$ it holds $ \|h\|_{L^{q}(B_R(x))}\le c_0+1$. 
		Set $w\coloneqq \bar u \bar u_m^{\beta-1}$. As above  we have $w \in W^{1,p}_\loc(\Omega)$ with
		\[
		|\nabla w|\le|\nabla \bar u|  \bar u_m^{(\beta-1)}+(p(\beta-1))|\nabla \bar u_m|\bar u^{(\beta-1)} , \quad \alme.
		\]
		Therefore by \eqref{eq:first moser bound} 
		\[
		\int |\nabla (w \eta) |^p\d \mm\le \int 2^p(|\nabla w|^p |\eta|^p+|w|^p |\nabla \eta|^p)\d \mm\le C \beta^p \int
		|\nabla \eta|^p w^p + R^{-p}h w^p\eta^p\d \mm,
		\]
		having used that $(\beta-1)^p\le (\beta-1)\beta^p$, since $\beta \ge 1.$
		We define $p^*\coloneqq \frac{ps}{s-p}$ if $p<s$, while if $p=s$ we set $p^*\coloneqq \frac{2sq}{q-1}$ (any number strictly greater than $\frac{sq}{q-1}$ would do). 
		Applying the Sobolev inequality in \eqref{eq:local sobolev} and recalling $\eta \in \LIP_c(B_R(x))$
		\begin{equation}\label{eq:sobolev in moser}
			\begin{split}
				\|w \eta \|_{L^{p^*}(\mm)}^p&\le C \beta^p \int
				R^p|\nabla \eta|^p w^p +(h+1) w^p\eta^p \d \mm \\&\le  C \beta^p \int
				R^p|\nabla \eta|^p w^p\d \mm +C\beta^p \|w \eta \|_{L^{\frac{pq}{q-1}}(\mm)}^p,    
			\end{split}
		\end{equation}
		where in the Sobolev inequality we used that $\mea(B_{2\lambda R}(x))\ge \mea(B_R(x))\ge 1$ and in the last step the H\"older inequality together with $\|h\|_{L^{q}(B_R(x))}\le C$. Since $q>s/p$, we have that $p<\frac{pq}{q-1}<p^*$ (this holds also if $s=p$, since in that case we defined $p^*=\frac{2pq}{q-1}$). Hence by interpolation of norms, we have that for every $\delta>0$ it holds
		$$
		\|w \eta \|_{L^{\frac{pq}{q-1}}(\mm)}^p\le \delta^{-\alpha_1} \|w \eta \|^{p}_{L^p(\mm)}+\delta^{\alpha_2}\|w \eta\|_{L^{p^*}(\mm)}^p,
		$$ for suitable exponents $\alpha_1,\alpha_2>0$ depending only on $q,p$ and $s$. Plugging this estimate in \eqref{eq:sobolev in moser} and choosing $\delta\coloneqq   (\delta'\beta^{-p})^\frac{1}{\alpha_2}$ for some $\delta'>0$ small enough we obtain
		\begin{equation}\label{eq:moser lol}
			\|w \eta \|_{L^{p^*}(\mm)}^p\le C \beta^{\alpha}  \int
			R^p|\nabla \eta|^p w^p +|w\eta |^p \d \mm,
		\end{equation}
		for some number $\alpha>0$   depending only on $q,p$ and $s$. Now we fix $1/2\le a<b\le 1$ arbitrary and  choose $\eta$ so that $\eta=1$ in $B_{aR}(x)$, $0\le \eta \le 1$, $\eta\in \LIP_c(B_{bR}(x))$ and $\Lip \eta \le \frac{2}{(b-a)R}$. Using this choice in \eqref{eq:moser lol}, recalling that  $w=\bar u \bar u_m^{(\beta-1)}$ and sending $m\to +\infty$ we reach
		\[
		\|\bar u\|_{L^{\chi \gamma}(B_{aR}(x))}^{\gamma} \le  \frac{C \beta^{\alpha}}{(b-a)^p}\|\bar u\|_{L^{\gamma}(B_{bR}(x))}^{\gamma}, 
		\]
		where $\gamma\coloneqq p\beta$, $\chi\coloneqq \frac{s}{s-p}>1$ if $p<s$ and   $\chi\coloneqq \frac{2q}{q-1}>1$ if $p=s$. Raising both sides to the $\frac1\gamma$ and choosing $\beta=\frac{l}{p}\chi^{i}\ge 1$ (recall that $\beta$ was arbitrary), $a=\frac12+\frac{1}{2^{i+2}}$ and $b=\frac12+\frac{1}{2^{i+1}}$ for $i=0,1,\dots,+\infty$,  we obtain 
		\[
		\|\bar u\|_{L^{\gamma_{i+1}}(B_{r_{i+1}}(x))} \le  (C (l/p)^{\alpha})^\frac{1}{\gamma_i} (\chi)^\frac{\alpha i}{\gamma_i} (4)^\frac{pi}{\gamma_i}\|\bar u\|_{L^{\gamma_i}(B_{r_{i}}(x))}\le (\tilde C)^{\frac{i}{\chi^i}}\|\bar u\|_{L^{\gamma_i}(B_{r_{i}}(x))}, \;\forall \, i=0,1,\dots,+\infty,
		\]
		where  $\gamma_i\coloneqq l\chi^i$,  $r_i\coloneqq \frac R2+\frac{R}{2^{i+1}},$ for $i=0,1,\dots,+\infty$, and where $\tilde C>0$ is a constant depending only on $p,q,s,c_0,R_0,C_P,C_D,c,\lambda,l$.  Iterating this estimate and recalling that $\bar u=u^++k$ and $\mea(B_R(x))=1$
		we obtain 
		\[
		\esssup_{B_{R/2}(x)} u^+ \le \|\bar u\|_{L^\infty({B_{\frac R2}(x)})}\le (\tilde C)^{\sum_{i=0}^{+\infty} \frac{i}{\chi^i}} \|\bar u\|_{L^l(B_R(x))}\le  C \left(\|u\|_{L^l(B_R(x))}+k\right),
		\]
		where as above $C$ is a constant  depending only on $p,q,s,c_0,R_0,C_P,C_D,c,\lambda,l$.
		Recalling the value of $k$ and by the arbitrariness of $\eps>0$ 
		gives \eqref{eq:sup estimate} in the case $x=y$, $r=R$, $\theta=1/2$ and $l\ge p$. This, as observed at the beginning, is sufficient to conclude.
	\end{proof}
	
	We pass to Harnack's inequality for non-negative subsolutions.
	\begin{theorem}\label{thm:subsol}Let $\Xdm$ be an infinitesimally Hilbertian PI-space satisfying \eqref{eq:pi dimension app} with some $s>1$ and let $c_0>0$ be a constant. Fix also $p\in(1,s]$ and $q>s/p.$  Suppose $u\in W^{1,p}_\loc(B)$, for some $B\coloneqq B_R(x) \subset \X$  with $R\le R_0,$ is non-negative and satisfies
		\begin{equation}\label{eq:subsol}
			\int |\nabla u|^{p-2}\la \nabla u, \nabla \phi\ra\ge \int -g u^{p-1} \phi  -f \phi \d \mm, \quad \forall \phi \in \LIP_{c}(B), \,\, \phi\ge 0,
		\end{equation}
		for some $f,g\in L^{q}(B)$, $g\ge 0,f\ge 0$ and such that
		\begin{equation*}
			R^{p}\left(\fint_{B}|g|^q \d\mm  \right)^\frac1q\le c_0.
		\end{equation*} 
	Then there exist  constants $C>0$ and $l\in[1,\infty)$ both depending only on $p,q,\lambda,C_D,C_P,R_0,s,c$ and $c_0$ ($C_P,C_D,\lambda$ being the constants appearing in  Definition \ref{def:PI} of PI-space and $c$ the constant in \eqref{eq:pi dimension app}) such that for every $B_{35\lambda r}(y)\subset B$ with $r\le R$ it holds
	\begin{equation}\label{eq:inf estimate}
		\essinf_{B_{r/2}(y)} u^+ + \left( \fint_{B_{25r}(y)}|r^pf|^q \d \mm \right)^\frac1{(p-1)q} \ge C \left( \fint_{B_r(y)}|u^+|^l \d \mm \right)^\frac1l.
	\end{equation}
\end{theorem}
\begin{proof}
	Fix $B_{35\lambda r}(y)\subset B$ with $r\le R$.
	Along the proof $C>0$ will denote a constant depending only on  $p,q,\lambda$,$C_D,C_P,R_0,s,c$, $c_0$ and possibly changing from line to line. As for \eqref{eq:supersol} we have that \eqref{eq:subsol} holds also for all $\phi \in W^{1,p}_\loc(B).$ By scaling we can assume that $\mea(B_r(y))=1.$ Let $\eps>0$ be arbitrary and set $k\coloneqq (\|r^pf\|_{L^q(B_{25r}(y))}+\eps)^{\frac1{p-1}}$. Setting $\bar u \coloneqq (u+k)$ and choosing $\phi\coloneqq \psi \bar u^{-2(p-1)}$, with $\psi \in \LIP_c(B_{r}(y))$, we obtain
	\[
	\int |\nabla (\bar u)^{-1}|^{p-2}\la \nabla  (\bar u)^{-1}, \nabla \psi\ra \le \int\left(g+\frac{f}{k^{p-1}}\right)   |\bar u^{-1}|^{p-1} \psi \d \mm,
	\]
	having neglected the term $\la \nabla u, \nabla \bar u^{-2(p-1)}\ra \psi\le 0$ $\mea$-a.e.. Setting $h\coloneqq g+\frac{f}{k^{p-1}} $ and arguing exactly as in \eqref{eq:from R to r}, since $\mea(B_r(y)=1$, we obtain $r^p\|h\|_{L^q(B_r(y))}\le \tilde c$, where $\tilde c$ is a constant depending only on $p,R_0,c_0$ and $c(\cdot)$ (i.e.\ the constant given by \eqref{eq:pi dimension app}). 
	Therefore the hypotheses of Theorem \ref{thm:supersol} are satisfied taking $B=B_r(y)$, $u=\bar u^{-1}$, $g=h$, $f=0$ and with the constant $2^pc_0+1$ in place of $c_0$. Hence we obtain that for every $l \in[1,\infty)$ and every $B_{35\lambda r}(y)\subset B_R(x)$ with $r\le R$ it holds
	\begin{equation}\label{eq:moser da sotto quasi}
		\frac{1}{\essinf_{B_{r/2}(y)} u+k}=\esssup_{B_{r/2}(y)} \bar u^{-1} \le C_l \left( \fint_{B_r(y)}|\bar u|^{-l} \d \mm \right)^\frac1l,
	\end{equation}
	where $C_l$ is a constant depending only on $p,q,\lambda,C_D,C_P,R_0,s,c$, $c_0$ and  also on $l.$
	Therefore to conclude it is enough to show that there exists $l\in[1,\infty)$ (depending only on  $p,q,\lambda,C_D,C_P,R_0,s,c$, $c_0$) such that
	\begin{equation}\label{eq:pp}
		\fint_{B_{r}(y)} \bar u^{l}\d \mm \cdot  \fint_{B_{r}(y)} \bar u^{-l}\d \mm \le 9.
	\end{equation}
	Indeed plugging this estimate in \eqref{eq:moser da sotto quasi}, combined with the assumption $\mea(B_r(y))=1$ and our choice of $k$ would yield \eqref{eq:inf estimate}.
	To show \eqref{eq:pp}, it is sufficient to show that
	\begin{equation}\label{eq:bmo}
		\fint_{B_t(z)} \left|w-\fint_{B_t(z)} w\d \mm \right|\d \mm \le C,
	\end{equation}
	for every $B_t(z)\subset B_{5r}(y),$ where $w\coloneqq \log(\bar u)\in W^{1,p}_\loc(\Omega)$, i.e.\ that $w\in {\rm BMO}(B_{5r}(y),\X)$. 
		Then \eqref{eq:pp} would follow applying the  John-Nirenberg lemma in the metric setting (see \cite[Corollary 3.21]{Bjorn-Bjorn07}). 
	
	To show \eqref{eq:bmo} fix $B_{t}(z)\subset B_{5r}(y)$ arbitrary. We can assume that $t\le 10r,$ and in particular $B_{2t}(z)\subset B_{25r}(y)$ and $B_{3\lambda t}(z)\subset B_{35\lambda r}(y)\subset B.$ Let $\psi \in \LIP_c(B_{2t}(z))$ satisfy $\psi=1$ in $B_t(z)$, $\psi\ge 0$ and $\Lip(\psi)\le 2/t.$
	Choosing as test function  $\phi\coloneqq \psi^p \bar u^{1-p}$ in \eqref{eq:subsol} we obtain
	\[
	\begin{split}
		(p-1)\int |\nabla w|^p \psi^p \d \mm& \le \int \psi^p\left(g+\frac{f}{k^{p-1}}\right)+|\nabla \psi^p||\nabla w|^{p-1}\d \mm\\
		&\le c_p \int \psi^ph+\delta^{p/(p-1)}|\nabla w|^p \psi^p+\delta^{-p}|\nabla \psi|^p  \d \mm,
	\end{split}
	\]
	for every $\delta>0,$ where $c_p>0$ is a constant depending only on $p$ and where $h\coloneqq g+\frac{f}{k^{p-1}}. $ In particular taking $\delta>0$ small enough with respect to $p,$ we can (and will) absorb the second term in the right-hand side to the left of the inequality.  Applying the H\"older inequality we estimate the first term on the right-hand side as follows
	\[
	\begin{split}
		\int \psi^ph\d \mm &\le \int_{B_{2t}(z)} h \d \mm \le \|h\|_{L^q(B_{2t}(z))} \mea(B_{2t}(z))^{1-\frac1q}
		\le \tilde c r^{-p} \mea(B_{2t}(z))^{1-\frac1q},
	\end{split}
	\]
	having used that $\|r^ph\|_{L^q(B_{25r}(y))}\le \tilde c$, where $\tilde c$ is a constant depending only on $p,R_0,c_0$ and $c(\cdot)$, which as above can be shown as in \eqref{eq:from R to r}, recalling that $\mea(B_r(y))=1.$
	
	Plugging  this estimate in the above inequality (using that $\Lip(\psi)\le 2/t$) we obtain
	\begin{align*}
		\int_{B_t(z)} |\nabla w|^p \d \mm &\le C\mea(B_{2t}(z))\left( r^{-p}\mea(B_{2t}(z))^{-\frac 1q}+t^{-p}\right)\\\overset{\eqref{eq:pi dimension app}}&{\le }  C\mea(B_{2t}(z))\left( r^{\frac sq-p} t^\frac{-s}{q}\mea(B_{6r}(z))^{-\frac 1q}+t^{-p}\right)\\
		&\le   C\mea(B_{2t}(z))\left( r^{\frac sq-p}  t^{-p}t^{p-\frac{s}{q}}+t^{-p}\right)\le C\mea(B_{2t}(z))t^{-p} \overset{\eqref{eq:pi dimension app}}{\le } C\mea(B_{t}(z))t^{-p},
	\end{align*}
	where in the third inequality we used that $\mea(B_{6r}(z)\ge \mea(B_r(y))\ge 1$ and in the fourth one that $p-\frac{s}{q}\ge 0$ and that $t\le 10r$.
	From this \eqref{eq:bmo} follows by the Poincar\'e inequality  (see \eqref{eq:improved poincaret}) applied to $w\eta$, where $\eta \in \LIP_c(B_{3\lambda t}(z))$ is a cut-off function with $\eta=1$ in $B_{2\lambda t}(z)$ indeed $w\eta\in W^{1,p}(\X)$, since as observed above  $B_{3\lambda t}(z)\subset B$. This concludes the proof.
\end{proof}
We can now prove the main result of this section.
\begin{proof}[Proof of Theorem \ref{thm:holder}]
	We prove only \eqref{eq:harnack}, from which the H\"older regularity of $u$ follows immediately by standard arguments exactly as in the Euclidean case (see e.g.\ \cite[Corollary 4.18]{HanLinbook}).
	Fix $B_{70\lambda r}(y)\subset B$. Then combining Theorem \ref{thm:supersol}, Theorem \ref{thm:subsol} and the doubling property we get
	\begin{align*}
		\esssup_{B_\frac r2(y)} u&\le C \essinf_{B_\frac r2(y)} u + C\left( \fint_{B_{25r}(y)}|r^pf|^q \d \mm \right)^\frac1{(p-1)q} 
		\le C \essinf_{B_\frac r2(y)} u + C \frac{r^\frac{p}{p-1}}{\mea(B_{25r}(y))^\frac1{q(p-1)}}\|f\|_{L^q(B)}^\frac{1}{p-1}\\
		\overset{\eqref{eq:pi dimension app}}&{\le} C \essinf_{B_\frac r2(y)} u + C \frac{r^{\frac{p-s/q}{p-1}} R^{\frac s{q(p-1)}}}{\mea(B)^\frac1q }\|f\|_{L^q(B)}^\frac{1}{p-1}
	\end{align*}
	which is \eqref{eq:harnack}.
\end{proof}

\begin{remark}
	Both Theorem \ref{thm:supersol} and Theorem \ref{thm:subsol} hold without the infinitesimally Hilbertian assumption, replacing $\la \nabla u, \nabla \phi \ra $ respectively by the objects $D^+\phi (\nabla u)$ and $D^-\phi (\nabla u)$ introduced in \cite[Section 3]{Gigli12}. The proof is exactly the same, only the first part needs to be translated by using the suitable calculus rules present in \cite{Gigli12}.\fr
\end{remark}

	\subsection*{Acknowledgements}
	Part of this work has been carried out during the authors' attendance to the \emph{Thematic Program on Nonsmooth Riemannian and Lorentzian Geometry} that took place at the Fields Institute in Toronto. The authors warmly thank the staff, the organizers and the colleagues for the wonderful atmosphere and the excellent working conditions set up there. 
L.B. is supported by the European Research Council’s (ERC) project n.853404 ERC VaReg -- \textit{Variational approach to the regularity of the free boundaries}, financed by the program Horizon 2020, by PRA\_2022\_11 and by PRA\_2022\_14.  I.Y.V. was partially supported by the Academy of Finland projects No.\ 328846 and No. 321896.
	
	The authors want to thank M. Fogagnolo, N. Gigli, J. Liu, F. Nobili and A. Pluda for their interest in the work and for pleasureful and useful conversations on the subject.

\medskip

\textbf{Competing interests:}
The authors have no competing interests to declare that are relevant to the content of this article.

\def\cprime{$'$}

\end{document}